\numberwithin{equation}{section}            
\theoremstyle{plain}
\newtheorem{thm}{Theorem}[section]
\newtheorem{prop}[thm]{Proposition}
\newtheorem{defi}[thm]{Definition}
\newtheorem{lem}[thm]{Lemma}
\newtheorem{cor}[thm]{Corollary}
\newtheorem{eg}[thm]{{Example}}
\theoremstyle{remark}
\newtheorem{rema}[thm]{Remark}
\newcommand{\Ad}{{\mbox{\upshape{Ad}}}}
\newcommand{\Aut}{\mathrm{Aut}}
\newcommand{\bc}{{\mathbf{c}}}
\newcommand{\Btil}{{\widetilde{B}}}
\newcommand{\bfrak}{{\mathfrak b}}
\newcommand{\Bfrak}{{\mathfrak B}}
\newcommand{\N}{{\mathbb N}}
\newcommand{\cF}{{\mathcal F}}
\newcommand{\cI}{{\mathcal I}}
\newcommand{\cJ}{{\mathcal J}}
\newcommand{\cK}{{\mathcal K}}
\newcommand{\cR}{{\mathcal R}}
\newcommand{\cS}{{\mathcal S}}
\newcommand{\cU}{{\mathcal U}}
\newcommand{\cYD}{{\mathcal YD}}
\newcommand{\cop}{\mathrm{cop}}
\newcommand{\End}{\mathrm{End}}
\newcommand{\Etil}{\widetilde{E}}
\newcommand{\field}{{k}}
\newcommand{\gfrak}{{\mathfrak g}}
\newcommand{\ufofrak}{{\mathfrak{ufo}}}
\newcommand{\gr}{{\mathrm{gr}}}
\newcommand{\Heis}{\mathrm{Heis}}
\newcommand{\Hom}{{\mathrm{Hom}}}
\newcommand{\Ifrak}{\mathfrak{I}}
\newcommand{\id}{{\mathrm{id}}}
\newcommand{\kfrak}{{\mathfrak k}}
\newcommand{\kow}{{\varDelta}}
\newcommand{\lact}{{\triangleright}}
\newcommand{\mx}{\mathrm{max}}
\newcommand{\okow}{{\overline{\varDelta}}}
\newcommand{\oB}{{\overline{B}}}
\newcommand{\oF}{{\overline{F}}}
\newcommand{\oP}{{\overline{P}}}
\newcommand{\opi}{\overline{\pi}}
\newcommand{\op}{\mathrm{op}}
\newcommand{\ot}{\otimes}
\newcommand{\osigma}{\overline{\sigma}}
\newcommand{\poly}{{\mathrm{poly}}}
\newcommand{\pfrak}{{\mathfrak p}}
\newcommand{\rel}{{\mathrm{rel}}}
\newcommand{\ract}{\triangleleft}
\newcommand{\slfrak}{{\mathfrak{sl}}}
\newcommand{\uqg}{{U_q(\mathfrak{g})}}
\newcommand{\Util}{\widetilde{U}}
\newcommand{\Hmin}{{\mathrm{Heis}}(\chi)\spcheck}
\newcommand{\Upoly}{U(\chi)^{\mathrm{poly}}}
\newcommand{\uB}{\underline{B}}
\newcommand{\uF}{\underline{F}}
\newcommand{\uE}{\underline{E}}
\newcommand{\vep}{\varepsilon}
\newcommand{\Z}{{\mathbb Z}}
\newcommand{\wt}{\widetilde}
\newcommand{\wh}{\widehat}
\begin{document}
\title[Symmetric pairs for Nichols algebras of diagonal type]
{Symmetric pairs for Nichols algebras of diagonal type via star products}
\author[Stefan Kolb]{Stefan Kolb}
\address{School of Mathematics, Statistics and Physics,
Newcastle University, Newcastle upon Tyne NE1 7RU, United Kingdom}
\email{stefan.kolb@newcastle.ac.uk}
\author[Milen Yakimov]{Milen Yakimov}
\address{
Department of Mathematics \\
Louisiana State University \\
Baton Rouge, LA 70803 \\
U.S.A.
}
\email{yakimov@math.lsu.edu}


\keywords{Quantum symmetric pairs, universal $K$-matrices, Nichols algebras, star products}

\subjclass[2010]{Primary: 17B37, Secondary: 53C35, 16T05, 17B67}
\begin{abstract}
  We construct symmetric pairs for Drinfeld doubles of pre-Nichols algebras of diagonal type and determine when they possess an Iwasawa decomposition. This extends G.~Letzter's theory of quantum symmetric pairs. Our results can be uniformly applied to Kac--Moody quantum groups for a generic quantum parameter, for roots of unity in respect to both big and small quantum groups, to quantum supergroups and to exotic quantum groups of ufo type. We give a second construction of symmetric pairs for Heisenberg doubles in the above generality and prove that they always admit an Iwasawa decomposition.

  For symmetric pair coideal subalgebras with Iwasawa decomposition in the above generality we then address two problems which are fundamental already in the setting of quantum groups. Firstly, we show that the symmetric pair coideal subalgebras are isomorphic to intrinsically defined deformations of partial bosonizations of the corresponding pre-Nichols algebras.
  To this end we develop a general notion of star products on $\N$-graded connected algebras which provides an efficient tool to prove that two deformations of the partial bosonization are isomorphic. The new perspective also provides an effective algorithm for determining the defining relations of the coideal subalgebras. 

  Secondly, for Nichols algebras of diagonal type, we use the linear isomorphism between the coideal subalgebra and the partial bosonization to give an explicit construction of quasi $K$-matrices as sums over dual bases. We show that the resulting quasi $K$-matrices give rise to weakly universal $K$-matrices in the above generality.
\end{abstract}
\maketitle

\section{Introduction}
\subsection{(Pre-)Nichols algebras of diagonal type}
Since their inception in the 1980s quantum groups have become an integral part of representation theory with many deep applications. Quantum groups in particular reinvigorated the general investigation of Hopf algebras as they provided many new noncommutative, noncocommutative examples. In the late 1990s N.~Andruskiewitsch and H.-J.~Schneider proposed an approach to the classification of finite dimensional, pointed Hopf algebras \cite{MSRI-AS02}. In this approach a central role is played by Nichols algebras which are Hopf algebras in a braided category of Yetter-Drinfeld modules. Important examples of Nichols algebras include the positive part $U^+$ of quantized enveloping algebras $\uqg$ for $q$ not a root of unity, and the positive part of the small quantum group $\mathfrak{u}_q(\gfrak)$ if $q$ is a root of unity. Other examples come from quantum Lie superalgebras, but there are also large example classes which had not been studied previously.

The starting point for the construction of a Nichols algebra $\Bfrak(V)$ is a Hopf algebra $H$ and a Yetter-Drinfeld module $V$ over $H$. If $H$ is the group algebra of an abelian group and $V$ is semisimple and finite rank, the $\Bfrak(V)$ is called a Nichols algebra of diagonal type. Nichols algebras of diagonal type are determined by a bicharacter $\chi:\Z^n\times \Z^n \rightarrow \field$ into the base field $\field$. The finite dimensional such Nichols algebras were classified by I.~Heckenberger in \cite{a-Heck09}. The Nichols algebra $\Bfrak(V)$ is a quotient of the tensor algebra $T(V)$ by the uniquely determined maximal proper biideal $\cI_\mx\subset \oplus_{m=2}^\infty V^{\ot m}$. If instead one considers any
$H$-stable biideal $\cI$ with $\{0\}\subseteq \cI \subseteq \cI_\mx$ then $T(V)/\cI$ is a pre-Nichols algebra as introduced by Angiono in \cite{a-Ang16}. Prominent examples of pre-Nichols algebras which are not Nichols algebras are the positive parts of quantized enveloping (super) algebras at roots of unity. 
\subsection{Quantum symmetric pairs}\label{sec:QSP-intro}
 Let $\gfrak$ be a semisimple complex Lie algebra and let $\theta:\gfrak\rightarrow \gfrak$ be an involutive Lie algebra automorphism with pointwise fixed Lie subalgebra $\kfrak=\{x\in \gfrak\,|\,\theta(x)=x\}$. The theory of quantum symmetric pairs provides quantum group analogs $B_\bc=U'_q(\kfrak)\subset \uqg$ of the universal enveloping algebra $U(\kfrak)$. Crucially, $B_\bc\subset\uqg$ is not a Hopf subalgebra but satisfies the weaker coideal property
\begin{align*}
  \kow(B_\bc)\subset B_\bc\ot \uqg
\end{align*}
for the coproduct $\kow$ of $\uqg$. Quantum symmetric pairs for classical $\gfrak$ were originally introduced by M.~Noumi, M.~Dijkhuizen and T.~Sugitani, case by case, to perform harmonic analysis on quantum group analogs of symmetric spaces, see \cite{a-Noumi96}, \cite{a-Dijk96}, \cite{a-NS95}.
Independently, G.~Letzter developed a comprehensive theory of quantum symmetric pairs based on the classification of involutive automorphisms of $\gfrak$ in terms of Satake diagrams \cite{a-Letzter99a}, \cite{MSRI-Letzter}. A Satake diagram $(X,\tau)$ consists of a subset $X$ of the nodes of the Dynkin diagram for $\gfrak$ and a diagram automorphism $\tau$ satisfying certain compatibility conditions, see \cite{a-Araki62}. Letzter's construction was extended to the Kac-Moody case in \cite{a-Kolb14}.

Much is known about the structure of the algebras $B_\bc$. Generators and relations for $B_\bc$ were determined in \cite[Section 7]{a-Letzter03}, see also \cite[Section 7]{a-Kolb14}. Let $\pfrak_X$ be the standard parabolic subalgebra corresponding the $X$. The algebra $B_\bc$ has a natural filtration such that the associated graded algebra is isomorphic to a subalgebra $U_q'(\pfrak_X)$ of the quantized enveloping algebra $U_q(\pfrak_X)$. This suggests that it is possible to interpret $B_\bc$ as a deformation of $U_q'(\pfrak_X)$.

\medskip

\noindent{\bf Problem I.} Explicitly define an associative product $\star$ on 
$U_q'(\pfrak_X)$ such that the algebra $(U_q'(\pfrak_X),\star)$ is canonically isomorphic to $B_\bc$.

\medskip

In the quasi-split case $X=\emptyset$, the algebras $B_\bc$ were already introduced in \cite{a-Letzter97}. In this case the involution $\theta$ can be given in terms of the Chevalley generators $\{e_i, f_i, h_i\,|\,i\in I\}$ of $\gfrak$ by
\begin{align*}
  \theta(e_i)=-f_{\tau(i)}, \qquad \theta(f_i)=-e_{\tau(i)}, \qquad \theta(h_i)=-h_{\tau(i)}.
\end{align*}
Let $E_i$, $F_i$, $K_i^{\pm 1}$ for $i\in I$ denote the standard generators of $\uqg$. Then the quantum symmetric pair coideal subalgebra $B_\bc$ corresponding to $(\emptyset, \tau)$ is generated by the elements
\begin{align}\label{eq:Bcgens}
  B_i=F_i + c_i E_{\tau(i)}K_i^{-1}, \qquad K_iK_{\tau(i)}^{-1} \qquad \mbox{for all $i\in I$}
\end{align}
where $\bc=(c_i)_{i\in I}\in \field^I$ are fixed parameters. The parameters $c_i$ need to satisfy certain compatibility conditions which assure that $\gr(B_\bc)$ is canonically isomorphic to the subalgebra $U'_q(\bfrak)=\field\langle F_i, K_i K_{\tau(i)}^{-1}\,|\,i\in I \rangle$ of $\uqg$. This conditions is equivalent to the fact that the pair $(\uqg,B_\bc)$ satisfies a quantum Iwasawa decomposition. In the quasi-split case this means that the multiplication map
\begin{align}\label{eq:q-Iwasawa-Uqg}
  U^+\ot U_0'\ot B_\bc \rightarrow \uqg
\end{align}  
is a linear isomorphism. Here $U_0'$ is the subalgebra of $\uqg$ generated by $\{K_i^{\pm 1}\,|\,i\in I_\tau\}$ where $I_\tau\subseteq I$ is a set of representatives of the $\tau$-orbits in $I$, and $U^+\subset \uqg$ is the subalgebra generated by $\{E_i\,|\,i\in I\}$. The central role of the quantum Iwasawa decomposition was first highlighted in \cite{a-Letzter97}. More general versions appeared in \cite{a-Letzter99a}, \cite{a-Letzter04}, \cite{a-Kolb14}.
In the general case \cite{MSRI-Letzter}, \cite{a-Kolb14}, the generators $B_i$ may come with a second parameter $s_i$. Here we suppress this parameter for simplicity, but we note that the theory can be extended by twisting by a character, see for example \cite[Section 3.5]{a-DK18p}.

The theory of quantum symmetric pairs received a big push in 2013 when the preprint versions of \cite{a-BaoWang18a} and \cite{a-ES18} introduced the notion of a bar involution for quantum symmetric pairs. H.~Bao and W.~Wang showed that much of G.~Lusztig's theory of canonical bases allows analogs for quantum symmetric pairs \cite{a-BaoWang18a}, \cite{a-BaoWang18b}. Of pivotal importance in Lusztig's theory is the quasi $R$-matrix $\Theta$ which lives in a completion of $U^-\ot U^+$ and intertwines two bar involution on $\kow(\uqg)$, see \cite[Theorem 4.1.2]{b-Lusztig94}. For the symmetric pair of type AIII with $X=\emptyset$, Bao and Wang showed in particular that there exists an intertwiner $\Theta^\theta$ in a completion of $B_\bc\ot U^+$ which plays a similar role as the quasi $R$-matrix $\Theta$. The existence of the intertwiner $\Theta^\theta$ was established in full generality in \cite{a-Kolb17p}. Following the program outlined in \cite{a-BaoWang18a}, the intertwiner $\Theta^\theta$ was used in \cite{a-BalaKolb15p}, \cite{a-Kolb17p} to construct a universal $K$-matrix for quantum symmetric pairs. The universal $K$-matrix is an analog of the universal $R$-matrix for $\uqg$. For this reason we call the intertwiner $\Theta^\theta$ the \textit{quasi $K$-matrix} for $B_\bc$.

The construction of the quasi $K$-matrix in \cite{a-BaoWang18a}, \cite{a-BalaKolb15p} is recursive and based on the intertwiner property for the bar involutions on $\uqg$ and $B_\bc$. 
This differs from the situation with (quasi) $R$-matrices.
Drinfeld constructed universal $R$-matrices for the doubles of all Hopf algebras as sums of dual bases \cite{inp-Drinfeld1}.
In this direction, the quasi $R$-matrix $\Theta$ has a second description in terms of dual bases of $U^-$ and $U^+$ with respect to a non-degenerate pairing, see \cite[Theorem 4.1.2]{b-Lusztig94}. 
It is an open question to give a similar description of the quasi $K$-matrix $\Theta^\theta$.
\medskip

\noindent{\bf Problem II.} Give a conceptual, non-recursive description of the quasi $K$-matrix $\Theta^\theta$ for quantum symmetric pairs in terms of dual bases of $U^-$ and $U^+$. This description should be parallel to the Drinfeld--Lusztig construction of the quasi $R$-matrices $\Theta$ as sums of dual bases, and should not involve the bar-involutions which are not applicable in closely related situations, such as roots of unity.

\medskip
For large classes of examples there exist explicit formulas for the quasi $K$-matrix, see \cite{a-DK18p}. However, these formulas do not come from dual bases on $U^-$ and $U^+$.
\subsection{Goal of this paper}
In the present paper we propose a construction of symmetric pairs for pre-Nichols algebras which extends Letzter's construction of quantum symmetric pairs. To keep things manageable, we restrict to pre-Nichols algebras of diagonal type. For quantum symmetric pairs this means that we restrict to the case $X=\emptyset$. The theory developed in the present paper includes examples of symmetric pairs for quantized enveloping algebras at roots of unity, quantum Lie superalgebras, and the more exotic examples which arose from Heckenberger's classification \cite{a-Heck09} of Nichols algebras of diagonal type. We do not place any restrictions on the Gelfand-Kirillov dimension of the
pre-Nichols algebra.
The case of general $X$ will involve Nichols algebras for Yetter-Drinfeld modules over more general Hopf algebras. We intend to address this more general case in the future.
One of the upshots of this is an intrinsic construction of quantum symmetric pairs in terms of a base Hopf algebra $H$, an involutive Hopf algebra 
automorphism of $H$, and an isomorphism between two Yetter-Drinfeld modules for $H$.

For pre-Nichols algebras of diagonal type we develop a general theory in full analogy to Letzter's theory \cite{a-Letzter99a}, \cite{MSRI-Letzter}, \cite{a-Kolb14}. For a symmetric 
bicharacter $\chi:\Z^n\times \Z^n\rightarrow \field$, we consider a Hopf algebra $U(\chi)$ with triangular decomposition $U(\chi)\cong U^+\rtimes H\ltimes U^-$ where $H=\field[\Z^n]$ is the group algebra of $\Z^n$ and $U^+, U^-$ are pre-Nichols algebra associated to $\chi$. We call $U(\chi)$ the Drinfeld double of $U^+$, see Remark \ref{rem:DD}. We define a coideal subalgebra $B_\bc\subset U(\chi)$ which depends on parameters $\bc=(c_i)\in \field^n$ and is generated by elements analogous to those given in \eqref{eq:Bcgens}. The coideal subalgebra
$B_\bc$ has a natural filtration and we determine the set of parameters $\bc$ for which $\gr(B_\bc)$ is isomorphic to a partial bosonization $H_\theta\ltimes U^-$.

In this setting we answer Problems I and II from Section \ref{sec:QSP-intro}. Lusztig's quasi $R$-matrix $\Theta$ also exists in the general setting of the present paper. To answer Problem I, we define two associative products on $H_\theta\ltimes U^-$. First, by a twisting construction,
we define a product $\star$ by a closed formula which only involves the quasi $R$-matrix $\Theta$ 
and an explicitly given algebra homomorphism $\osigma:U^-\hookrightarrow U^+\rtimes H$. Secondly, we use a linear isomorphism 
\begin{align*}
  \psi: B_\bc\rightarrow H_\theta\ltimes U^-,
\end{align*}
coming from a triangular decomposition of $U(\chi)$,
to push forward the algebra structure on $B_\bc$. We develop a general theory of star products on $\N$-graded algebras generated in degree $0$ and $1$ to show that the two algebra structures on $H_\theta\ltimes U^-$ coincide. Hence the map $\psi:B_\bc\rightarrow (H_\theta\ltimes U^-,\star)$ is an algebra isomorphism.

To resolve Problem II we need to restrict to the case where $U^+$, $U^-$ are Nichols algebras. We show that the element
\begin{align}\label{eq:Thetatheta}
  \Theta^\theta=(\psi^{-1}\ot \id)(\Theta)
\end{align}
which lives in a completion of $B_\bc\ot U^+$, has all the desired properties of a quasi $K$-matrix, and indeed coincides with the quasi $K$-matrix in the case of quantum symmetric pairs. We then use $\Theta^\theta$ to essentially construct a universal $K$-matrix for $B_\bc$ in the setting of Nichols algebras of diagonal type. We do not discuss the representation theory of $U(\chi)$, but follow an approach proposed by N.~Reshetikhin and T.~Tanisaki for universal $R$-matrices in \cite{a-Tanisaki92}, \cite{a-Reshetikhin95}. We obtain a weak notion of a universal $K$-matrix, which consists of an automorphism of a completion of $B_\bc\ot U(\chi)$ which satisfies the properties of conjugation by a universal $K$-matrix. In the following we discuss the results of the present paper in more detail. All through this paper the symbol $\N$ denotes the natural numbers including $0$, that is $\N=\{0,1,2,\dots\}$.
\subsection{Symmetric pairs for pre-Nichols algebras}
For the construction of the Hopf algebra $U(\chi)$ we mostly follow \cite{a-Heck10},  which extended Lusztig's braid group action to Nichols algebras of diagonal type, but we allow pre-Nichols algebras as introduced in \cite{a-Ang16}. Associated to the bicharacter $\chi$ is a Yetter-Drinfeld module $V^+(\chi)$ with basis $\{E_1,\dots, E_n\}$. We consider the corresponding pre-Nichols algebra $U^+=T(V^+(\chi))/\cI$ where $\cI$ is a $\Z^n$-graded biideal of the tensor algebra $T(V^+(\chi))$. We then form the bosonization $U^+\rtimes H$ and consider a quotient $U(\chi)$ of the quantum double of $U^+\rtimes H$ obtained by identifying the two copies of $H$. The Hopf algebra $U(\chi)$ is a natural generalisation of $\uqg$. In particular, it is generated by elements $E_i, F_i, K_i^{\pm 1}$ for $i\in I=\{1,\dots, n\}$, has a triangular decomposition $U(\chi)=U^+\rtimes H \ltimes U^-$, and satisfies relations similar to those for $\uqg$, see Section \ref{sec:setting}. Let $\{\alpha_i\,|\,i\in I\}$ be the standard basis of $\Z^n$, and let $\tau:I\rightarrow I$ be an involutive bijection such that $\chi(\alpha_{\tau(i)},\alpha_{\tau(j)})=\chi(\alpha_i,\alpha_j)$ for all $i,j\in I$. We define $B_\bc$ to be the subalgebra of $U(\chi)$ generated by the elements given in \eqref{eq:Bcgens} where $\bc=(c_1,\dots,c_n)\in \field^n$ are fixed parameters. Moreover, we let $H_\theta$ denote the subalgebra of $H$ generated by the elements $K_i K_{\tau(i)}^{-1}$ for all $i\in I$. The algebra $B_i$ has a natural filtration given by the degree function $\deg(B_i)=1$, $\deg(K_i K_{\tau(i)}^{-1})=0$. There is always a surjective algebra homomorphism
\begin{align}\label{eq:varphi-intro}
  \varphi:\gr(B_\bc) \rightarrow H_\theta \ltimes U^-.
\end{align}
We use linear projection maps $\pi_{0,0}:U(\chi)\rightarrow H$ and $P_{\mu}:U(\chi) \rightarrow U(\chi)$ for $\mu\in \Z^n$, which were first defined in \cite{MSRI-Letzter}, to show the following result.

\medskip

\noindent{\bf Theorem A.} (Theorem \ref{thm:c-cond}) \textit{The map $\varphi$ is an algebra isomorphism if and only if the following condition holds:
\begin{enumerate} 
  \item[({\bf c})] The ideal $\cI\subset T(V^+(\chi))$ is generated by homogeneous, noncommutative polynomials $p_j(E_1, \dots, E_n)$ for $j=1,\dots, k$ of degree $\lambda_j\in \N^n$, respectively, for which $\pi_{0,0}\circ P_{-\lambda_j}(p_j(B_1, \dots, B_n))=0$.
\end{enumerate}  
}
\medskip

As in the quantum case, the map $\varphi$ is an isomorphism if and only if the pair $(U(\chi), B_\bc)$ admits an Iwasawa decomposition analogous to \eqref{eq:q-Iwasawa-Uqg}, see Remark \ref{rem:Iwasawa}. 

Let $\Upoly$ be the subalgebra of $U(\chi)$ generated by the elements $E_iK_i^{-1}$, $F_i$, $K_i^{-1}$, $K_i K_{\tau(i)}^{-1}$ for all $i\in I$. The algebra $\Upoly$ contains $B_\bc$ and has a natural surjection $\kappa:\Upoly\rightarrow \Heis(\chi)$ onto a Heisenberg double $\Heis(\chi)$ associated to the bicharacter $\chi$. By construction, the kernel of $\kappa$ is the ideal generated by $K_i^{-1}$ for all $i\in I$. We can consider the image $\overline{B}_\bc=\kappa(B_\bc)$ inside $\Heis(\chi)$. Again we have a natural filtration given by a degree function and a surjection
\begin{align*}
  \overline{\varphi}: \gr ( \overline{B}_\bc ) \rightarrow H_\theta\ltimes U^-.
\end{align*}
It turns out that map $\overline{\varphi}$ is an algebra isomorphism irrespective of the choice of parameters $\bc$. Let $\overline{G}^+$ be the subalgebra of $\Heis(\chi)$ generated by the elements $\kappa(E_iK_i^{-1})$ for all $i\in I$.
\medskip

\noindent{\bf Theorem B.} (Theorem \ref{thm:Heis-no-cond}) \textit{The map $\overline{\varphi}$ is an isomorphism, that is, 
the pair $(\Heis(\chi), B_\bc)$ always admits an Iwasawa decomposition $\Heis(\chi)\cong \overline{G}^+\ot \oB_\bc$.}

\medskip

The algebra $\Upoly$ has an $\N$-filtration given by the degree function defined by
\begin{align*}
  \deg(E_iK_i^{-1})=\deg(F_i)=\deg(K_i^{-1})=1, \qquad \deg(K_iK_{\tau(i)}^{-1})=0 
\end{align*}
for all $i\in I$. We call the associated graded algebra $\Heis(\chi)^\vee=\gr(\Upoly)$ the \textit{negative Heisenberg double} associated to $U^+$. We observe that condition ({\bf c}) in Theorem A can be 
verified in the negative Heisenberg double. Indeed, the projection map $\pi_{0,0}$ has an analog $\pi_{0,0}^\vee:\Heis(\chi)^\vee\rightarrow H$. For all $i\in I$ set $B_i^\vee=F_i+c_i E_{\tau(i)}K_i^{-1}\in \Heis(\chi)^\vee$. 

\medskip

\noindent{\bf Theorem C.} (Theorem \ref{thm:Hmin}) \textit{For any homogeneous, noncommutative polynomial $p(x_1,\dots, x_n)$ of degree $\lambda\in \N^n$ we have}
\begin{align*}
  \pi_{0,0}\circ P_{\lambda}(p(B_1, \dots, B_n))=\pi_{0,0}^\vee(p(B_1^\vee,\dots,B_n^\vee)).
\end{align*}

The point of Theorem C is that calculations in $\Heis(\chi)^\vee$ are easier than calculations in $U(\chi)$. We can summarize the situation in the following diagram:
\[
\begin{tikzpicture}[scale=1.4]
\node (A) at (0,0) {$\oB_\bc$}; 
\node (B) at (2,0) {$\Heis(\chi)$};
\node (C) at (4,0) {$\Hmin$};
\node (D) at (1,1) {$B_\bc$};
\node (E) at (3,1) {$\Upoly$};
\node (E') at (5,1) {$U(\chi)$};
\node (F) at (2,2) {$\Btil_\bc$};
\node (G) at (4,2) {$\Util(\chi)^\poly$}; 
\node (G') at (6,2) {$\Util(\chi)$};
\draw
(A) edge[right hook ->,font=\scriptsize,>=angle 90] (B)
(D) edge[right hook ->,font=\scriptsize,>=angle 90] (E)
(E) edge[right hook ->,font=\scriptsize,>=angle 90] (E')
(F) edge[right hook ->,font=\scriptsize,>=angle 90] (G)
(G) edge[right hook ->,font=\scriptsize,>=angle 90] (G')
(E) edge[->>,font=\scriptsize,>=angle 90] node[above]{$\kappa$} (B)
(G) edge[->>,font=\scriptsize,>=angle 90] (E)
(G') edge[->>,font=\scriptsize,>=angle 90] node[above]{$\eta$} (E')
(F) edge[->>,font=\scriptsize,>=angle 90] (D)
(D) edge[->>,font=\scriptsize,>=angle 90] (A)
(E) edge[->,font=\scriptsize,>=angle 90] node[above]{$\gr$} (C);
\end{tikzpicture}
\]
Here the tildes $\sim$ denote the versions of $U(\chi)$, $B_\bc$, $\Upoly$ in the case where the biideal $\cI$ is trivial, that is $\cI=\{0\}$. In this case $\Util(\chi)=T(V^+(\chi))$ is just the tensor algebra. The map $\eta$ denotes the canonical projections.

In Section \ref{sec:Examples} we apply Theorems A and C to various classes of examples. For each example class we determine the parameters $\bc\in \field^n$ for which the maps $\varphi$ in \eqref{eq:varphi-intro} is an algebra isomorphism. In each case the calculation simplifies significantly because Theorem C allows us to calculate in the negative Heisenberg double. We first consider quantized enveloping algebras in Section \ref{sec:large} extending known results from \cite{MSRI-Letzter}, \cite{a-Kolb14} to the root of unity case. In Section \ref{sec:small} we consider the small quantum groups $\mathfrak{u}_\zeta(\slfrak_3)$ where $\zeta$ is an arbitrary root of unity. The calculations for this example naturally lead us to consider the Al-Salam-Carlitz I discrete orthogonal polynomials $U_n^{(a)}(x;q)$ originally defined in \cite{a-AlCa65}, see also \cite{b-KoLeSw00}. As further examples we consider quantized enveloping algebras of Lie superalgebras of type $\slfrak(m|k)$ and the distinguished pre-Nichols algebra of type $\mathfrak{ufo}(8)$ in Sections \ref{sec:super} and \ref{sec:ufo}, respectively.
\subsection{Star products on partial bosonizations}
In Section \ref{sec:star} we introduce star products and apply them to solve Problem I from Section \ref{sec:QSP-intro}. We define a star product on an $\N$-graded $\field$-algebra $A=\bigoplus_{j\in \N} A_j$
to be an associative bilinear operation 
\[
* : A \times A \to A, \quad (a,b)\mapsto a\ast b
\]
such that
\[
  a * b - ab \in A_{< m+ n} \qquad \mbox{for all $a \in A_m$, $b \in A_n$.}
\]
A star product  will be called $0$-equivariant
if
\[
a * h = ah \quad \mbox{and} \quad h * a= h a \quad \mbox{for all} \quad h \in A_0, a \in A. 
\]
Star products provide us with an efficient way to prove that two filtered deformations of $A$ are isomorphic.
Namely, if $A$ is generated in degrees $0$ and $1$, and  $A_1 = \sum_i A_0 F_i A_0$ for a subset 
$\{ F_i \} \subset A_1$, then every 0-equivariant star product on $A$ is uniquely determined by the collection of 
$\field$-linear maps 
\begin{align} \label{eq:star-linear-maps}
u \in A \mapsto F_i * u -  F_i u \in A,
\end{align}
see Lemma \ref{lem:sprod-isom}. The above conditions are satisfied for the algebra $A=H_\theta\ltimes U^-$ which is graded with $A_0=H_\theta$ and $A_1=H_\theta \, \mathrm{span}_\field\{F_i\,|\,i\in I\}$.

We have the decomposition $\Upoly \cong (H_\theta\ltimes U^-) \otimes \field \langle K_i^{-1}, E_i K_i^{-1} \mid i \in I \rangle$.
Consider the $\field$-linear map $\psi : \Upoly \twoheadrightarrow H_\theta\ltimes U^-$ 
which is the identity map on $H_\theta\ltimes U^-$ and is the algebra homomorphism given by $K_i^{-1} \mapsto 0$, $E_i K_i^{-1} \mapsto 0$ on the second factor. By restriction to $B_\bc$ we obtain the following commutative diagram:
\[
\begin{tikzpicture}[scale=1.4]
\node (A) at (0,1) {$B_\bc$}; 
\node (B) at (3.5,1) {$(H_\theta\ltimes U^-) \otimes \field \langle K_i^{-1}, E_i K_i^{-1} \mid i \in I \rangle$};
\node (C) at (7,1) {$\Upoly$};
\node (D) at (3.5,0) {$H_\theta\ltimes U^-$};
\draw
(A) edge[->,font=\scriptsize,>=angle 90] (D)
(B) edge[->,font=\scriptsize,>=angle 90] node[above]{$\cong$} (C)
(B) edge[->>,font=\scriptsize,>=angle 90] node[right]{$\psi$} (D)
(A) edge[right hook ->,font=\scriptsize,>=angle 90] (B);
\end{tikzpicture}
\]
In the setting of quantized enveloping algebras the map $\psi$ recently appeared in \cite[Corollary 4.4]{a-Letzter17p}.
It turns out that the restriction of $\psi$ to $B_\bc$ is a linear isomorphism if and only if the map $\varphi$ given by \eqref{eq:varphi-intro} is an algebra isomorphism, see Remark \ref{rem:phi-psi}. We may hence use the map $\psi$ to push forward the algebra structure from $B_\bc$ to $H_\theta\ltimes U^-$.

\medskip

\noindent{\bf Theorem D.} (Theorem \ref{thm:first-star}) \textit{If the map $\varphi$ is an algebra isomorphism {\em{(}}i.e.~if 
$(U(\chi), B_\bc)$ admits an Iwasawa decomposition{\em{)}}, then the the restriction $\psi : B_\bc \to H_\theta\ltimes U^-$ 
is an algebra isomorphism to the uniquely determined 0-equivariant star product on $H_\theta\ltimes U^-$ such that
\[
F_i * u  = F_i u + c_i q_{i\tau(i)}(K_{\tau(i)} K_i^{-1}) \partial_{\tau(i)}^L(u) \quad \mbox{for all} \quad i \in I, u\in U^-
\]
where $\partial_i^L$ are the frequently used skew derivations of $U^-$ given by \eqref{eq:partialFj}--\eqref{eq:skew-properties}.
}

\medskip

In addition to determining the algebraic structure of $B_\bc$, Theorem D also gives an effective way for the explicit description of the 
relations among the generators of $B_\bc$. In Proposition \ref{prop:def-rels} we prove that 
the relations among the generators $F_i$ and $K_i K_{\tau(i)}^{-1}$ 
of the star product algebra $(H_\theta\ltimes U^-, *)$ are the relations with respect to the usual product on $H_\theta\ltimes U^-$ 
but re-expressed in terms of the star product, see Section \ref{sec:RelsRev} for details and examples.

In Section \ref{sec:twist-def-ass} we define a second associative binary operation $\star$ on $H_\theta\ltimes U^-$. 
Denote by $U^\pm_\mx$ the Nichols algebras that are factors of $U^\pm$ and by $U(\chi)_\mx$ the corresponding Drinfeld double. By \cite[Theorem 5.8]{a-Heck10} there exists a pairing of Hopf algebras
\begin{align}\label{eq:pair-intro}
  \langle\, ,\, \rangle_\mx: (H \ltimes U^-_\mx)\ot (U^+_\mx\rtimes H) \rightarrow \field
\end{align}
which is nondegenerate when restricted to $U^-_\mx\ot U^+_\mx$. The pairing induces a left action $\lact$ and a right action $\ract$ of $U^+_\mx\rtimes H$ on $H\ltimes U^-$, see Section \ref{sec:quasiR}. The pairing \eqref{eq:pair-intro} allows us to define quasi $R$-matrix for $U(\chi)_\mx$ as a sum of tensor products of dual bases of $U^-_\mx$ and $U^+_\mx$. We write formally 
\begin{equation}
\label{eq:quasiRintro}
\Theta=\sum_\mu (-1)^{|\mu|} F_\mu\ot E_\mu.
\end{equation}
In Sections \ref{sec:quasiR} and \ref{sec:skew-derivation} we show that this quasi $R$-matrix retains essential properties of the quasi $R$-matrix for quantum groups in \cite{b-Lusztig94}.
There exists an algebra homomorphism $\osigma : U^-_\mx \hookrightarrow U^+_\mx \rtimes H$ such that $\osigma(F_i)=c_{\tau(i)}K_i E_{\tau(i)}$ for all $i\in I$, see Section \ref{sec:osigma1}. The associative binary operation $\star$ on $H_\theta\ltimes U^-$ is defined solely in terms of the quasi $R$-matrix $\Theta$ and the algebra homomorphism $\osigma$ and exists irrespective of the choice of parameters $\bc$. Let $S$ denote the antipode of $U(\chi)_\mx$.

\medskip

\noindent{\bf Theorem E.} (Theorem \ref{thm:star-twist}, Proposition \ref{prop:second-star} and Corollary \ref{cor:star=ast}) \textit{For any $\bc\in \field^n$ the operation
\begin{align}\label{eq:fstarg-intro}
  f \star g = \sum_{\rho} (-1)^{|\rho|}(\osigma(F_\rho)\lact f) K_\rho [g \ract (S^{-1}(E_\rho)K_\rho)] 
  \quad \mbox{for all $f, g \in U^-$}
\end{align}
defines a $0$-equivariant star product $\star$ on $H_\theta \ltimes U^-$. The star product $\star$ coincides with the star product $\ast$ from Theorem D
when the latter is defined.}

\medskip

Theorem E provides the desired explicit formula for the star product on $H_\theta\ltimes U^-$ and hence solves Problem I. The main step in the proof of the first part of Theorem E is to show that the bilinear operation $\star$ defined by \eqref{eq:fstarg-intro} is associative. The second statement then follows by comparison of the linear maps \eqref{eq:star-linear-maps} for the two star products $\star$ and $\ast$.

In the situation of Theorem D, the algebra isomorphism $\psi$ turns the algebra $(H_\theta\ltimes U^-,\star)$ into a $U(\chi)_\mx$-comodule algebra. In Section \ref{sec:comod} we give an explicit formula for the corresponding coaction $\kow_\star$. This formula again only involves the quasi $R$-matrix $\Theta$ and the homomorphism $\osigma$. The $U(\chi)_\mx$-comodule algebra structure on $(H_\theta\ltimes U^-,\star)$ again exists irrespective of the choice of parameters $\bc\in \field^n$.  

\subsection{Quasi $K$-matrices versus quasi $R$-matrices}
In Section \ref{sec:quasiK} we address Problem II from Section \ref{sec:QSP-intro}. We need to restrict to the case that $U^\pm=U^\pm_\mx$ are Nichols algebras and we assume that the conditions of Theorem D are satisfied.
Under these assumptions the map $\psi$ is an isomorphism and we may define an element $\Theta^\theta$ in a completion of $B_\bc\ot U^+$ by \eqref{eq:Thetatheta}. In Proposition \ref{prop:kowidThetatheta} we give explicit formulas for $(\kow\ot \id)(\Theta^\theta)$ and $(\id \ot \kow)(\Theta^\theta)$ which are analogs of the formulas for  $(\kow\ot \id)(\Theta)$ and $(\id \ot \kow)(\Theta)$ in \cite[4.2]{b-Lusztig94}. We then show in Proposition \ref{prop:quasiK-intertwiner} that $\Theta^\theta$ satisfies an intertwiner property which reproduces the intertwiner property for bar involutions of $B_\bc$ and $\uqg$ from \cite[Proposition 3.2]{a-BaoWang18a} in the case of quantized enveloping algebras. For this reason we call $\Theta^\theta$ the quasi $K$-matrix for the pair $(U(\chi)_\mx,B_\bc)$.

The following diagram illustrates our double construction for quasi $K$-matrices versus the Drinfeld--Lusztig construction for quasi $R$-matrices:
\[
\begin{tikzpicture}
\draw (-1.5,0) -- (2.8,0) node[right] {$(H_\theta \ltimes U^-_\mx, *)$};
\draw [dashed] (-1.2,-1.2) -- (1.2,1.2);
\node at (1.5,1.4) {$B_\bc$};
\node at (1.3,0.6) {$\psi$};
\draw (0,-1.5) -- (0,2.8) node[above] {$U^+_\mx \rtimes H'$};
\draw [->>, thick] (1,0.8) -- (1cm,3pt);
\draw[<->, red, semithick] (2.5,0) arc (0:90:2.5);
\node at (1.064,2.56) {\textcolor{red}{R}};
\draw[<->, blue, semithick] (1,1) arc (45:90:1.41);
\node at (0.635,1.517) {\textcolor{blue}{K}};
\end{tikzpicture}
\]
The two axes represent the decomposition $U(\chi)_\mx \cong  (U^+_\mx \rtimes H') \otimes (H_\theta \ltimes U^-_\mx)$ for a Hopf subalgebra 
$H'$ of $H$, and the corresponding quasi $R$-matrix is a sum of dual bases of $U^-_\mx$ and $U^+_\mx$. The diagonal 
represents the coideal subalgebra $B_\bc$ which is isomorphic via the projection $\psi$ to a star product on the horizontal axes,
and the corresponding quasi $K$-matrix is the pull back under $\psi^{-1} \otimes \id$ of the quasi $R$-matrix.

In Section \ref{sec:w-quasi-Hopf} we review the theory of weakly quasitriangular Hopf algebras from \cite{a-Tanisaki92}, \cite{a-Reshetikhin95}, see also \cite{a-Gav97}. This theory is extended to comodule algebras in Section \ref{sec:wq-coid}. The notion of a weakly quasitriangular comodule algebra captures the existence of a universal $K$-matrix. Using the coproduct identities and the intertwiner property for $\Theta^\theta$ we show the following result. 

\medskip

\noindent{\bf Theorem F.} (Theorem \ref{thm:wq-tri2})
\textit{Under the above assumptions the coideal subalgebra $B_\bc$ of $U(\chi)_\mx$ is weakly quasitriangular up to completion.}

\medskip

\noindent
{\bf Acknowledgements.} We would like to thank Nicol\'as Andruskiewitsch, Iv\'an Angiono and Istv\'an Heckenberger for valuable discussions and correspondence. Part of the work on this paper was carried out while S.K. visited Louisiana State University in March 2017. This visit was supported by a 
Scheme 4 grant from the London Mathematical Society. The research of M.Y. was supported by NSF grant DMS-1601862 and Bulgarian Science Fund grant H02/15.
\section{The size of coideal subalgebras of Heisenberg doubles and Drinfeld doubles}\label{sec:size}
In this first section we describe the general setting and introduce the coideal subalgebras $B_\bc$ which are the main objects of investigation in the present paper. The algebras $B_\bc$ have a natural filtration. We determine the parameters $\bc$ for which $\mathrm{gr}(B_\bc)$ is of the right size. To this end we use methods first employed for quantized universal enveloping algebras by G.~Letzter in \cite[Section 7]{MSRI-Letzter}.
\subsection{The setting}\label{sec:setting}
We review the Drinfeld double $\Util(\chi)$ of the tensor algebra of a braided vector space of diagonal type, following \cite[Section 4]{a-Heck10}. We will need in particular the description of ideals of $\Util(\chi)$ which preserve the triangular decomposition from \cite[Proposition 4.17]{a-Heck10}. This allows us to consider quotients of $\Util(\chi)$ which are generalizations of Drinfeld-Jimbo quantized enveloping algebras for deformation parameters including roots of unity.

Let $\field$ be a field and set $\field^\times=\field \setminus \{0\}$. Let $I=\{1,\dots,n\}$ and let $\{\alpha_i\,|\,i\in I\}$ denote the standard basis of $\Z^n$. Let $H=\field[K_i, K_i^{-1}\,|\,i\in I]$ denote the group algebra of $\Z^n$. Let $\chi:\Z^n\times \Z^n\rightarrow \field^\times$ be a bicharacter and set $q_{ij}=\chi(\alpha_i,\alpha_j)$ for all $i,j\in I$. In this paper we always assume that the matrix $(q_{ij})$ is symmetric, that is $q_{ij}=q_{ji}$ for all $i,j\in I$. Recall that every bicharacter is twist-equivalent to a symmetric bicharacter, and that the corresponding Nichols algebras are linearly isomorphic, see \cite[Proposition 3.9]{MSRI-AS02}. Let
\begin{align*}
  V^+(\chi)\in {}^{H}_{H} \cYD, \qquad V^-(\chi)\in {}^{H}_{H} \cYD
\end{align*}  
be the Yetter-Drinfeld modules with linear basis $\{E_i\,|\,i\in I\}$ and  $\{F_i\,|\,i\in I\}$, respectively, such that the left action $\cdot$ and the left coaction $\delta$ of $H$ on $V^+(\chi)$ and on $V^-(\chi)$ are given by
\begin{equation}\label{eq:YD+-}
\begin{aligned}
  K_i\cdot E_j&=q_{ij} E_j, & \delta(E_i)&=K_i\ot E_i,\\
  K_i\cdot F_j&=q_{ji}^{-1} F_j, & \delta(F_i)&=K_i^{-1}\ot F_i,
\end{aligned}
\end{equation}
respectively.
Let $T(V^+(\chi))$ and $T(V^-(\chi))$ denote the tensor algebras of $V^+(\chi)$ and $V^-(\chi)$, respectively. Recall that $T(V^+(\chi))$ and $T(V^-(\chi))$ are braided Hopf algebras in the category ${}^{H}_{H} \cYD$. Let $T(V^+(\chi))\rtimes H$ and $T(V^-(\chi))\rtimes H$ denote the bosonization of $T(V^+(\chi))$ and $T(V^-(\chi))$, respectively, which are Hopf algebras, see \cite{a-Radford85}, \cite{a-Majid94}, \cite[(4.5)]{a-Heck10}. We write $(T(V^-(\chi))\rtimes H)^\cop$ to denote the Hopf algebra structure on $T(V^-(\chi))\rtimes H$ with the opposite coproduct. There exists a skew Hopf-pairing between $T(V^+(\chi))\rtimes H$ and $(T(V^-(\chi))\rtimes H)^\cop$, see \cite[Proposition 4.3]{a-Heck10}. We consider the quotient of the corresponding Drinfeld double by the ideal identifying the two copies of $H$
\begin{align*}
  \Util(\chi)=\Big(\big(T(V^+(\chi))\rtimes H\big) \ot \big(T(V^-(\chi))\rtimes H \big)^{\mathrm{cop}}\Big)/(K_i L_i-1\,|\,i\in I)
\end{align*}
where $L_i$ denotes the inverse of $K_i$ in the second copy of $H$, see \cite[Definition 4.5, Remark 5.7]{a-Heck10}. More explicitly, $\Util(\chi)$ is a Hopf algebra generated by the elements $E_i, F_i, K_i, K_i^{-1}$ with coproducts
\begin{align}
  \kow(E_i)&=E_i\ot 1 + K_i\ot E_i,\nonumber\\
  \kow(F_i)&=F_i\ot K_i^{-1} + 1\ot F_i, \label{eq:kowEFK}\\
  \kow(K_i)&=K_i\ot K_i\nonumber
\end{align}
for all $i\in I$. Defining algebra relations for $\Util(\chi)$ are given by
\begin{align}
  K_i K_i^{-1}&=1,\nonumber\\
  K_i E_j = q_{ij} E_j K_i,& \qquad K_i F_j = q_{ij}^{-1} F_j K_i, \label{eq:def-rel}\\
  E_i F_j - F_j E_i&= \delta_{i,j}(K_i-K^{-1}_i), \nonumber
\end{align}
for all $i,j\in I$, see \cite[Proposition 4.6]{a-Heck10}.
In view of the defining relations \eqref{eq:def-rel} of $\Util(\chi)$ it follows that $\omega$ extends to an isomorphism of Hopf algebras
  $\omega:\Util(\chi)\rightarrow \Util(\chi)^{\mathrm{cop}}$ such that
  \begin{align*}
    \omega(K_i)=L_i, \quad \omega(L_i)=K_i, \quad \omega(E_i)=F_i, \quad \omega(F_i)=E_i,
  \end{align*}
  for all $i\in I$. The automorphism $\omega$ is denoted by $\phi_3$ in \cite[Proposition 4.9.(6)]{a-Heck10}.

The algebra $\Util(\chi)$ has a triangular decomposition in the sense that the multiplication map
\begin{align}\label{eq:BHB}
  T(V^+(\chi))\ot H \ot T(V^-(\chi))\rightarrow \Util(\chi)
\end{align}
is a linear isomorphism, see \cite[Proposition 4.14]{a-Heck10}. We write this as 
\[
\Util(\chi)\cong T(V^+(\chi))\rtimes H \ltimes T(V^-(\chi))
\]
to indicate that the bosonizations $T(V^+(\chi))\rtimes H$ and  $H\ltimes T(V^-(\chi))=(T(V^-(\chi)\rtimes H)^\cop$ are subalgebras of $U(\chi)$. We will use similar notation for other triangular decompositions later in the paper.
We are interested in ideals of $\Util(\chi)$ which are compatible with the triangular decomposition. Let
\begin{align*}
  \cI\subseteq \bigoplus_{m=2}^\infty T(V^+(\chi))_m
\end{align*}
be a $\Z^n$-graded biideal of $T(V^+(\chi))$. By \cite[Corollary 4.24]{a-Heck10} the subspace $\cI H T(V^-(\chi))$ is a Hopf ideal of $\Util(\chi)$. Similarly one shows that the subspace $T(V^+(\chi))H \omega(\cI)$ is a Hopf ideal of $\Util(\chi)$. Let $(\cI,\omega(\cI))$ denote the ideal of $\Util(\chi)$ generated by $\cI$ and $\omega(\cI)$. We define
\begin{align*}
  U(\chi)=\Util(\chi)/(\cI,\omega(\cI)), \qquad U^+=T(V^+(\chi))/\cI, \qquad U^-=T(V^-(\chi))/\omega(\cI).
\end{align*}
By \cite[Proposition 4.17]{a-Heck10} the Hopf algebra $U(\chi)$ has a triangular decomposition
\begin{align}\label{eq:U+H-}
  U(\chi)\cong U^+ \rtimes H \ltimes U^-.
\end{align}  
The subalgebras $U^+$ and $U^-$ are pre-Nichols algebras as defined in \cite{a-Ang16}. Recall from \cite{a-Ang16} that a pre-Nichols algebra of a braided vector space $V$ is any graded braided Hopf algebra of the form $T(V)/\cI$ where $\cI$ is a graded biideal. In particular, if we choose $\cI=\cI_{\mathrm{max}}(\chi)\subseteq  T(V^+(\chi))$ to be the maximal $\Z^n$-graded biideal in $\bigoplus_{m=2}^\infty T(V^+(\chi))_m$, then $U^+$ is the Nichols algebra of $V^+(\chi)$. We allow more general graded biideals $\cI$ to cover non-restricted 
specializations of quantized universal enveloping algebras at roots of unity \cite{a-DKP}.

\begin{rema}\label{rem:DD}
  The algebra $U(\chi)$ is a factor of the Drinfeld double of the bosonization of $U^+$. For the sake of brevity, we will refer to $U(\chi)$ as the {\em{Drinfeld double of the pre-Nichols algebra $U^+$}}. 
\end{rema}
We end this introductory section by recalling two projection maps which play an important role in Letzter's theory of quantum symmetric pairs, see \cite[Section 4, Lemma 7.3]{MSRI-Letzter}.
 Let $G^-$ be the subalgebra of $U(\chi)$ generated by the elements $F_iK_i$ for all $i\in I$. We can rewrite the triangular decomposition \eqref{eq:U+H-} as
  \begin{align*}
     U(\chi) \cong U^+\rtimes H \ltimes G^-.
  \end{align*}  
  As a vector space $U(\chi)$ has a direct sum decomposition
  \begin{align}\label{eq:Uchi-UKG}
     U(\chi)=\bigoplus_{\lambda\in \Z^n} U^+ K_\lambda G^-.
  \end{align}
  Here we write $K_\lambda=K_1^{\lambda_1} \cdot \dots \cdot K_n^{\lambda_n}$ for any $\lambda=(\lambda_1, \dots, \lambda_n)\in \Z^n$.
  For $\lambda\in \Z^n$ let
  \begin{align}\label{eq:Plambda-def}
    P_\lambda:U(\chi)\rightarrow U^+ K_\lambda G^-
  \end{align}
  be the canonical projection with respect to the direct sum decomposition \eqref{eq:Uchi-UKG}. It follows from the definition of the coproduct \eqref{eq:kowEFK} that $P_\lambda$ is a homomorphism of left $U(\chi)$-comodules, that is
  \begin{align}\label{eq:kowPlambda}
     \kow(P_\lambda(x)) = (\id \ot P_\lambda)(\kow(x))
  \end{align}
  for all $x\in U(\chi)$. The algebras $U^+$ and $U^-$ are $\Z^n$-graded with $\deg(E_i)=\alpha_i$ and $\deg(F_i)=-\alpha_i$ for all $i\in I$. Degrees of homogeneous elements in $U^+$ and $U^-$ lie in $\N^n$ and $-\N^n$, respectively. 
 Hence we obtain a second direct sum decomposition
  \begin{align}\label{eq:Uchi-UHU}
     U(\chi)=\bigoplus_{\alpha,\beta\in \N^n} U^+_\alpha H U^-_{-\beta}.
  \end{align}
  For $\alpha, \beta \in \N^n$ let
  \begin{align}
     \pi_{\alpha,\beta}: U(\chi)\rightarrow  U^+_\alpha H U^-_{-\beta}
  \end{align}
  denote the canonical projection with respect to the direct sum decomposition \eqref{eq:Uchi-UHU}.
\subsection{The partial bosonization $H_\theta\ltimes U^-$ and the coideal subalgebra $B_\bc$}\label{sec:Bc}
Let $\tau:I\rightarrow I$ be a bijection such that $\tau^2=\id$ and $q_{ij}=q_{\tau(i)\tau(j)}$ for all $i,j\in I$. We may consider $\tau$ as an automorphism of the braided bialgebra $T(V^+(\chi))$. We always assume that the ideal $\cI$ used to define $U(\chi)$ satisfies the relation $\tau(\cI)=\cI$. We also consider $\tau$ as a group automorphism of $\Z^n$ given by $\tau(\alpha_i)=\alpha_{\tau(i)}$ for all $i\in I$. Let $\theta:\Z^n\rightarrow \Z^n$ be the involutive group automorphism given by
\begin{align*}
  \theta(\lambda)=-\tau(\lambda)\qquad \mbox{for all $\lambda\in \Z^n$}
\end{align*}
and set 
\begin{equation}
\label{Ztheta}
\Z^n_\theta=\{\lambda\in \Z^n\,|\,\theta(\lambda)=\lambda\}.
\end{equation}
Define $H_\theta$ to be the subalgebra of $H$ generated by the elements $K_iK_{\tau(i)}^{-1}$ for all $i\in I$. By construction, $H_\theta$ is the group algebra of $\Z^n_\theta$. We call the subalgebra $H_\theta \ltimes U^-$ of $U(\chi)$ generated by $H_\theta$ and $U^-$ the partial bosonization of $U^-$. As a vector space we have $H_\theta\ltimes U^-=H_\theta\ot U^-$.

For $\bc=(c_1,\dots, c_n)\in \field^n$ we define $B_\bc$ to be the subalgebra of $U(\chi)$ generated by $H_\theta$ and the elements
  \begin{align*}
     B_i=F_i + c_i E_{\tau(i)} K_i^{-1} \qquad \mbox{for all $i\in I$.}
  \end{align*}
  The definition of the coproduct $\kow$ on $U(\chi)$ implies that
  \begin{align}\label{eq:kowB}
     \kow(B_i)=B_i\ot K_i^{-1} + 1\ot F_i + c_i K_{\tau(i)}K_i^{-1}\ot E_{\tau(i)}K_i^{-1} \qquad \mbox{for all $i\in I$}
  \end{align}
and hence $B_\bc\subset U(\chi)$ is a right coideal subalgebra, that is
\begin{align*}
    \kow(B_\bc)\subset B_\bc \ot U(\chi).
\end{align*}
The algebra $B_\bc$ has a filtration $\cF$ defined by the degree function given by
\begin{equation}
  \label{filt-B}
  \begin{aligned}
  \deg(B_i)&=1 \qquad \mbox{for all $i\in I$,}\\
  \deg(h)&=0 \qquad \mbox{for all $h\in H_\theta$.}
  \end{aligned} 
\end{equation}  
In the following we want to compare the associated graded algebra $\gr(B_\bc)$ with the algebra $H_\theta\ltimes U^-$. To this end, we first introduce some more notation.
For any multi-index $J=(j_1, \dots,j_m)\in I^m$ we write $|J|=m$,
and we write $F_J=F_{j_1}\cdot \dots \cdot F_{j_m}$ and $B_J=B_{j_1}\cdot \dots \cdot B_{j_m}$. The commutation relations \eqref{eq:def-rel} imply that $K_i K_{\tau(i)}^{-1} B_j = q_{ij}^{-1} q_{\tau(i)j} B_j K_i K_{\tau(i)}^{-1}$ for all $i,j\in I$ and hence $B_\bc=\sum_J H_\theta B_J$.
  Let $p=p(x_1, \dots, x_n)$ be a noncommutative polynomial in variables $x_i$ for $i\in I$. To shorten notation we write $p(\uF)=p(F_1, \dots, F_n)$, $p(\uE)=p(E_1,\dots,E_n)$, $p(\uB)=p(B_1, \dots, B_n)$, $p(\underline{E_\tau K^{-1}})=p(E_{\tau(1)}K_1^{-1}, \dots, E_{\tau(n)}K_n^{-1})$, $p(\underline{K E_\tau})=p(K_1E_{\tau(1)}, \dots, K_nE_{\tau(n)})$ and $p(\underline{F_\tau K^{-1}}) =p(F_{\tau(1)}K_1^{-1}, \dots, F_{\tau(n)} K_n^{-1})$.
For any $m\in \N$ define
  \begin{align*}
    U^-_{\le m}=\mathrm{span}_\field\{F_{i_1}\dots F_{i_j}\,|\,j\le m\}.
  \end{align*}
By definition of the generators $B_i$ and the defining relations \eqref{eq:def-rel} of $U(\chi)$ we have
  \begin{align}\label{eq:cFBm-1}
    \cF_{m-1}(B_\bc)\subset U^+HU^-_{\le m-1} \qquad \mbox{for any $m\in \N$.}
  \end{align}
  Hence, if $p$ be a non-commutative, homogeneous polynomial of degree $m$ then
  \begin{align*}
     p(\uB)\in \cF_{m-1}(B_\bc)  \quad \Longrightarrow \quad p(\uF)=0.
  \end{align*}
 Hence we obtain a surjective homomorphism of graded algebras 
 \begin{align}\label{eq:varphi}
   \varphi: \gr(B_\bc)\rightarrow H_\theta \ltimes U^-
 \end{align}
 such that $\varphi(B_i)=F_i$ and $\varphi(h)=h$ for all $i\in I$, $h\in H_\theta$. We want to know under which conditions the map $\varphi$ is an isomorphism. To this end, for any homogeneous noncommutative polynomial $p$ of degree $m$ we consider the following property
  \begin{align}\label{eq:Bc-rel}
    p(\uF)=0 \quad \Longrightarrow \quad p(\uB)\in \cF_{m-1}(B_\bc).\tag{$B_\bc$-rel}
  \end{align}
  We consider the set $\N_\rel$ of all degrees for which homogeneous relations in $U^-$ lead to relations in $B_\bc$, that is
  \begin{align}\label{eq:Nrel-def}
    \N_\rel=\{k\in \N\,|\, \mbox{any polynomial $p$ of degree $m\le k$ satisfies \eqref{eq:Bc-rel} }\}.
  \end{align}
By definition of $\N_\rel$ the map $\varphi$ is injective if and only if $\N_\rel=\N$. 
\begin{prop}
\label{prop:Bc-assoc-grad}
  The map $\varphi$ is an isomorphism of graded algebras if and only if $\N_\rel=\N$. 
\end{prop}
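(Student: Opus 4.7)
The plan is to observe that $\varphi$ is surjective by construction (it sends the generators $B_i$ and $h \in H_\theta$ of $\gr(B_\bc)$ to the corresponding generators $F_i$ and $h$ of $H_\theta \ltimes U^-$), so the proposition reduces to the claim that $\varphi$ is injective if and only if $\N_\rel = \N$. Both implications revolve around translating between the inclusion $p(\uB) \in \cF_{m-1}(B_\bc)$ that defines $\N_\rel$ and the vanishing of graded classes in $\gr_m(B_\bc)$.

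The $(\Rightarrow)$ direction is essentially a tautology: given a homogeneous polynomial $p$ of degree $m$ with $p(\uF) = 0$, \eqref{eq:cFBm-1} gives $p(\uB) \in \cF_m(B_\bc)$, and its class in $\gr_m(B_\bc)$ is sent by $\varphi$ to $p(\uF) = 0$. Injectivity forces this class to vanish, i.e.\ $p(\uB) \in \cF_{m-1}(B_\bc)$, so $m \in \N_\rel$.

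The $(\Leftarrow)$ direction is the main step. I plan to decompose a homogeneous element $x \in \gr_m(B_\bc)$ with $\varphi(x) = 0$ into a finite family of scalar-polynomial relations in $U^-$. Lift $x$ to $\sum_{|J|=m} h_J B_J$ modulo $\cF_{m-1}(B_\bc)$, fix a basis $\{K_\mu : \mu \in \Z^n_\theta\}$ of $H_\theta$, and expand each $h_J = \sum_\mu a_{J,\mu} K_\mu$ with $a_{J,\mu} \in \field$. Via the linear isomorphism $H_\theta \otimes U^- \cong H_\theta \ltimes U^-$ (multiplication from the left) and the linear independence of the $K_\mu$, the identity $\varphi(x) = \sum_\mu K_\mu \bigl(\sum_{|J|=m} a_{J,\mu} F_J\bigr) = 0$ decouples into $p_\mu(\uF) = 0$ for each scalar polynomial $p_\mu(x_1,\ldots,x_n) = \sum_{|J|=m} a_{J,\mu} x_{j_1}\cdots x_{j_m}$. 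Applying the hypothesis $m \in \N_\rel$ to each $p_\mu$ yields $p_\mu(\uB) \in \cF_{m-1}(B_\bc)$; since left multiplication by $K_\mu$ preserves the filtration, summing $\sum_\mu K_\mu p_\mu(\uB) = \sum_J h_J B_J$ lands in $\cF_{m-1}(B_\bc)$, so $x = 0$.

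The main obstacle is exactly this coefficient bookkeeping in the $(\Leftarrow)$ direction: $\N_\rel$ is defined only in terms of scalar polynomials in the $B_i$, while $\gr_m(B_\bc)$ carries arbitrary $H_\theta$-coefficients, and bridging these requires both the vector space factorization $H_\theta \ltimes U^- \cong H_\theta \otimes U^-$ and the stability of the filtration on $B_\bc$ under left multiplication by $H_\theta$.
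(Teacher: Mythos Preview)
Your proof is correct and matches the paper's approach. The paper does not give an explicit proof of this proposition: the sentence ``By definition of $\N_\rel$ the map $\varphi$ is injective if and only if $\N_\rel=\N$'' immediately preceding the statement is the entire justification offered. Your argument spells out precisely the details the paper leaves implicit, including the one genuinely nontrivial point --- that the scalar-coefficient formulation of $\N_\rel$ suffices to control elements of $\gr_m(B_\bc)$ with arbitrary $H_\theta$-coefficients, via the decomposition $h_J = \sum_\mu a_{J,\mu} K_\mu$ and the fact that $H_\theta$ normalizes each filtered piece.
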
  
In Section \ref{sec:prop-c} we will formulate necessary and sufficient conditions on the parameters $\bc$ which imply that $\N_\rel=\N$. First, however, we show in Section \ref{sec:HeisNrel=N} that a quotient of $B_\bc$ inside a Heisenberg double satisfies the relation $\N_\rel=\N$ irrespective of the parameters $\bc$.
For later reference we note the following technical lemma.
\begin{lem}\label{lem:EKF}
  Let $p$ be a homogeneous polynomial. The following are equivalent:
  \begin{enumerate}
    \item $p(\uF)=0$,
    \item $p(\uE)=0$,
    \item $p(\underline{E_{\tau}K^{-1}})=0$,
    \item $p(\underline{K E_{\tau}})=0$.
    \item $p(\underline{F_\tau K^{-1}})=0$.  
  \end{enumerate}  
\end{lem}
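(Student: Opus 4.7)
The plan is to reduce each of the five conditions to a single uniform statement---the vanishing of $p(\uE)$ in $U^+$---by factoring out an invertible element of $H$ and an overall nonzero scalar. The equivalence $(1)\iff(2)$ is immediate from Section~2.1: the involution $\omega$ is an algebra automorphism of $U(\chi)$ with $\omega(E_i)=F_i$, so $\omega(p(\uE))=p(\uF)$ and injectivity of $\omega$ gives the claim.

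For $(2)\iff(3)$ I would move every $K^{-1}$ to the right using $K_i^{-1}E_j=q_{ij}^{-1}E_jK_i^{-1}$ to obtain, for a multi-index $I=(i_1,\ldots,i_m)$ of total weight $\lambda=\alpha_{i_1}+\cdots+\alpha_{i_m}$,
\[
(E_{\tau(i_1)}K_{i_1}^{-1})\cdots(E_{\tau(i_m)}K_{i_m}^{-1})
=\Bigl(\prod_{j<k}q_{i_j\tau(i_k)}^{-1}\Bigr)\,E_{\tau(i_1)}\cdots E_{\tau(i_m)}\,K_\lambda^{-1}.
\]
The key observation is that the scalar $\prod_{j<k}q_{i_j\tau(i_k)}$ is invariant under every permutation of $(i_1,\ldots,i_m)$: combining the standing symmetry $q_{ij}=q_{ji}$ with the hypothesis $q_{ij}=q_{\tau(i)\tau(j)}$ from Section~2.2 yields $q_{a\tau(b)}=q_{\tau(b)a}=q_{b\tau(a)}$, which is exactly what is needed to absorb an adjacent transposition. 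Hence the scalar depends only on the multiset of indices---i.e.\ only on $\lambda$---and summing over a $\lambda$-homogeneous polynomial $p=\sum_I a_I x_{i_1}\cdots x_{i_m}$ gives
\[
p(\underline{E_\tau K^{-1}})=c_\lambda\,p(\underline{E_\tau})\,K_\lambda^{-1},\qquad c_\lambda\in\field^\times.
\]
By the triangular decomposition \eqref{eq:U+H-} and invertibility of $K_\lambda$, the left-hand side vanishes in $U(\chi)$ iff $p(\underline{E_\tau})=0$ in $U^+$; since $\tau(\cI)=\cI$ by hypothesis, $E_i\mapsto E_{\tau(i)}$ is an algebra automorphism of $U^+$, so $p(\underline{E_\tau})=0\iff p(\uE)=0$.

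The remaining equivalences $(2)\iff(4)$ and $(1)\iff(5)$ are established by the same template. For~(4), moving the $K$'s in $(K_{i_1}E_{\tau(i_1)})\cdots(K_{i_m}E_{\tau(i_m)})$ to the left produces $K_\lambda\cdot E_{\tau(i_1)}\cdots E_{\tau(i_m)}$ multiplied by a scalar of the same form; for~(5), using $K_i^{-1}F_j=q_{ij}F_jK_i^{-1}$ in $(F_{\tau(i_1)}K_{i_1}^{-1})\cdots(F_{\tau(i_m)}K_{i_m}^{-1})$ gives $p(\underline{F_\tau})K_\lambda^{-1}$ up to a nonzero scalar. Permutation-invariance of the scalar follows from the same $q_{a\tau(b)}=q_{b\tau(a)}$ identity, the triangular decomposition reduces the statement to vanishing in $U^\pm$, and the $\tau$-automorphism of $U^\pm$ converts $\underline{E_\tau}$ or $\underline{F_\tau}$ back to $\uE$ or $\uF$. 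The only non-formal step throughout is the permutation-invariance of the scalar $\prod_{j<k}q_{i_j\tau(i_k)}$; this is what forces the statement to hold uniformly for every monomial of a given degree and is the one place where both the symmetry of $\chi$ and its $\tau$-compatibility are genuinely used.
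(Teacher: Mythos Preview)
Your proof is correct and follows essentially the same strategy as the paper: apply $\omega$ for $(1)\Leftrightarrow(2)$, then for the remaining equivalences commute all $K$-factors to one side, observe that the resulting scalar depends only on the degree $\lambda$, and invoke the triangular decomposition together with $\tau(\cI)=\cI$. One minor point worth noting: the paper attributes the permutation-invariance of the scalar solely to the symmetry $q_{ij}=q_{ji}$, whereas you correctly identify that the argument also uses the $\tau$-compatibility $q_{ij}=q_{\tau(i)\tau(j)}$ (via the identity $q_{a\tau(b)}=q_{\tau(b)a}=q_{b\tau(a)}$); your account is the more precise one here.
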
  
\begin{proof}
The equivalence between (1) and (2) follows from $p(\uE)=\omega(p(\uF))$.
As $\tau(\cI)=\cI$, the latter is equivalent to $p(E_{\tau(1)},\dots,E_{\tau(n)})=0$. By the triangular decomposition \eqref{eq:U+H-}, this is equivalent to the relation $p(E_{\tau(1)}K_1^{-1},\dots,E_{\tau(n)}K_n^{-1})=0$ in $U(\chi)$. Indeed, the factor which is obtained by commuting all negative $K_i$-powers to the very right of any monomial of weight $\lambda\in \Z^n$ depends only on $\lambda$ and not on the monomial because $(q_{ij})$ is symmetric. This shows that (2) and (3) are equivalent. The equivalence between (2) and (4) is verified analogously, and so is the equivalence between (1) and (5).
\end{proof}
  \subsection{The Heisenberg double}\label{sec:Heis}
Let $\Upoly$ be the subalgebra of $U(\chi)$ generated by the elements $F_i$, $E_i K_i^{-1}$, $K_i^{-1}$, $K_i K_{\tau(i)}^{-1}$ for all $i\in I$. Let $G^+$ 
denote the subalgebra of $U(\chi)$ generated by the elements $\Etil_i=E_iK_i^{-1}$ for all $i\in I$. The following description of $\Upoly$ 
in terms of generators and relations follows from the corresponding description of $U(\chi)$.
\begin{lem}\label{lem:Upoly-rels}
  The algebra $\Upoly$ is the factor of the free product of the algebras
\[
G^+, \quad U^-, \quad \field[K_\lambda\,|\, \lambda \in - \N^n + \Z^n_\theta]
\]
by the relations
\begin{equation}
\label{KEF}
K_\lambda \Etil_i = \chi(\lambda, \alpha_i) \Etil_i K_\lambda, \quad K_\lambda F_i = \chi(\lambda, \alpha_i)^{-1} F_i K_\lambda
\end{equation}
for $i \in I$, $\lambda \in - \N^n + \Z^n_\theta$ and the cross relations
\begin{equation}
\label{cross-D}
q^{-1}_{ij}\Etil_i F_j - F_j \Etil_i = \delta_{ij} (1 - K_i^{-2})
\end{equation}
for $i,j \in I$.
\end{lem}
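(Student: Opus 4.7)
The plan is to treat this as a PBW-type presentation argument. First, I would verify that the relations \eqref{KEF} and \eqref{cross-D} hold inside $\Upoly$. The relations \eqref{KEF} are immediate from the defining commutation relations \eqref{eq:def-rel} extended multiplicatively to $K_\lambda$, using $\Etil_i = E_i K_i^{-1}$. For \eqref{cross-D}, a direct computation gives
\[
\Etil_i F_j = q_{ij} E_i F_j K_i^{-1} = q_{ij}\bigl(F_j E_i + \delta_{ij}(K_i-K_i^{-1})\bigr)K_i^{-1} = q_{ij} F_j \Etil_i + q_{ij}\delta_{ij}(1-K_i^{-2}),
\]
which, after multiplication by $q_{ij}^{-1}$, is exactly \eqref{cross-D}. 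Since the given generating sets of the three factor subalgebras collectively generate $\Upoly$ (noting that $-\N^n + \Z^n_\theta$ is precisely the submonoid of $\Z^n$ generated by $-\alpha_i$ and $\alpha_i - \alpha_{\tau(i)}$, i.e.\ the exponents of $K_i^{-1}$ and $K_iK_{\tau(i)}^{-1}$), the universal property of the free product yields a surjective algebra homomorphism $\pi$ from the free-product quotient onto $\Upoly$.

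Next, I would show that the free-product quotient is linearly spanned by ordered monomials $gK_\lambda f$ with $g\in G^+$, $\lambda\in -\N^n + \Z^n_\theta$, $f\in U^-$. Any word in the generators is normalized by repeatedly applying \eqref{KEF} to move all $K_\lambda$'s past the $\Etil_i$'s and $F_j$'s (at the cost of scalar factors), and \eqref{cross-D} to exchange each pair $F_j\Etil_i$ for $q_{ij}\Etil_i F_j$ modulo a correction in $\field \oplus \field K_i^{-2}$. Since the correction's $K$-exponents ($0$ and $-2\alpha_i$) lie in $-\N^n + \Z^n_\theta$, and the correction strictly reduces the total number of $\Etil/F$ letters, induction on this length terminates the normalization; this termination and closure is the only delicate point in the proof.

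Finally, I would establish injectivity of $\pi$ via linear independence in $\Upoly$. Using $K_j^{-1}E_i = q_{ji}^{-1}E_iK_j^{-1}$ to push all inverse powers of $K$ to the right inside each $g=\Etil_{i_1}\cdots\Etil_{i_k}$, the ordered monomial $gK_\lambda F_J$ becomes a nonzero scalar multiple of $E_{i_1}\cdots E_{i_k}K_{\lambda - \alpha_{i_1}-\cdots-\alpha_{i_k}}F_J$. Since $-\N^n + \Z^n_\theta$ is closed under translation by $-\N^n$, the shifted $K$-exponent still ranges over $-\N^n + \Z^n_\theta$ as $\lambda$ does, so the ordered monomials map onto the subspace $\bigoplus_{I,J;\,\mu\in -\N^n + \Z^n_\theta}\field\,E_I K_\mu F_J$ of $U(\chi)$. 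The triangular decomposition \eqref{eq:U+H-} combined with PBW bases of $U^\pm$ (induced from the $\Z^n$-grading) then yields linear independence, forcing $\pi$ to be an isomorphism.
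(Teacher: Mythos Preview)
Your proof is correct and follows essentially the same approach as the paper, which simply asserts that the presentation ``follows from the corresponding description of $U(\chi)$'' without further detail. You have supplied the standard PBW-type argument (verify relations, span by ordered monomials, inject via the triangular decomposition \eqref{eq:U+H-}) that makes this sketch precise.
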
  
It follows from the above Lemma and from the triangular decomposition \eqref{eq:U+H-} of $U(\chi)$ that $\Upoly$ has a triangular decomposition
\begin{align}\label{eq:Upoly-triang}
  \Upoly \cong G^+ \ot \field[K_\lambda\,|\, \lambda \in - \N^n + \Z^n_\theta] \ot U^-.
\end{align}  
  Observe that $\Upoly$ is a sub-bialgebra of $U(\chi)$ but not a sub-Hopf algebra. By construction we have $B_\bc\subset \Upoly$.
  By definition of the coproduct \eqref{eq:kowEFK} the two sided ideal $\Ifrak^-=\langle K_i^{-1}\,|\,i\in I\rangle$ in $\Upoly$ is a right coideal, that is
  \begin{align*}
     \kow(\Ifrak^-)\subseteq \Ifrak^- \ot \Upoly.
  \end{align*}  
  Hence the quotient $\Heis(\chi)=\Upoly/\Ifrak^-$ is a right $\Upoly$-comodule algebra with generators $\Etil_i=E_iK_i^{-1}, F_i, K_i K_{\tau(i)}^{-1}$ for $i\in I$. 
  We call $\Heis(\chi)$ the {\em{Heisenberg double}} associated to bicharacter $\chi$ and the pre-Nichols algebra $U^+$.  We write
\begin{align*}
  \okow:\Heis(\chi) \rightarrow \Heis(\chi)\ot \Upoly
\end{align*}
to denote the coaction. 
  Let $\kappa: \Upoly \rightarrow \Heis(\chi)$
  be the projection map and observe that
  \begin{align}\label{eq:okowkappa}
     \okow(\kappa(x)) = (\kappa \ot \id)\kow(x) \qquad \mbox{for all $x\in \Upoly$.}
  \end{align}
  Lemma \ref{lem:Upoly-rels} implies that $\Heis(\chi)$ is the factor of the free product of $G^+$, $U^-$ and $\field[K_\lambda\,|\,\lambda\in Z^n_\theta]$ by the relations \eqref{KEF} for $i\in I$, $\lambda\in \Z^n_\theta$ and the cross relations
  \begin{align*}
     q^{-1}_{ij}\Etil_i F_j - F_j \Etil_i = \delta_{ij}
  \end{align*}
for $i,j \in I$.
  This implies that $\Heis(\chi)$ has a triangular decomposition
  \begin{align}\label{eq:Heis-triang}
       \Heis(\chi) \cong G^+ \rtimes H_\theta \ltimes U^-
  \end{align}
  where $G^+=\field\langle \Etil_i\,|\,i\in I\rangle$.
\begin{rema} In the special case of the quantized universal enveloping algebra $U_q(\gfrak)$ of a symmetrizable Kac--Moody algebra at 
a non-root of unity $q$ and $\tau=\id$, the algebra $\Heis(\chi)$ is isomorphic to Kashiwara's bosonic algebra ${\mathscr{B}}_q(\gfrak)$ \cite[Section 3.3]{a-Kash91}.
When $\gfrak$ is finite dimensional, in \cite[Theorem 6.2]{a-GoodYak16p} it was proved that it has the structure of a quantum cluster algebra; the algebra
was denoted by $\cU^-_\op \bowtie \cU^+$ in \cite[Theorem 4.7, Remark 4.8]{a-GoodYak16p}.
\end{rema}
The projection maps $P_\lambda$ for $\lambda\in \Z^n$ and $\pi_{\alpha,\beta}$ for $\alpha,\beta\in \Z^n_+$ from the end of Section \ref{sec:setting} have analogs for the Heisenberg double. For $\alpha \in \Z^n_+$ we write $G^+_\alpha=U^+_\alpha H\cap G^+$. In view of the triangular decomposition \eqref{eq:Heis-triang} of the Heisenberg double we get a direct sum decomposition
  \begin{align}\label{eq:HG+HU-}
    \Heis(\chi)=\bigoplus_{\alpha,\beta\in \N^n,\mu\in \Z^n_\theta} G_\alpha^+K_\mu U^-_{-\beta}.
  \end{align}
  Now the projection $P_\lambda$ from \eqref{eq:Plambda-def} induces a projection
\begin{align}\label{eq:oPlambda}
  \oP_\lambda:\Heis(\chi)\rightarrow \bigoplus_{\mu-\alpha-\beta=\lambda} G^+_\alpha K_\mu U^-_{-\beta}.
\end{align}
By \eqref{eq:kowPlambda} we obtain
\begin{align}\label{eq:kowoPlambda}
  \okow\circ \oP_\lambda(x)=(\id\ot P_\lambda)\okow(x) \qquad \mbox{for all $x\in \Heis(\chi)$.}
\end{align}
Moreover, for $\alpha,\beta\in \Z_+^n$ let
\begin{align}\label{eq:opi-alphabeta}
  \opi_{\alpha,\beta}: \Heis(\chi)\rightarrow G^+_\alpha H_\theta U^-_{-\beta}
\end{align}
be the projection with respect to the decomposition \eqref{eq:HG+HU-}. We consider the partial order on $\Z^n$ given by
  \begin{align*}
    (j_1, \dots,j_n)\le (j'_1, \dots,j'_n) \quad \Longleftrightarrow \quad j_i\le j'_i \quad \mbox{for $i=1, \dots, n$.}
  \end{align*}
  For later use we note the following property of the projection map \eqref{eq:opi-alphabeta}.
\begin{lem}\label{lem:neq0}
  Let $u\in \Heis(\chi)$ and let $\alpha\in \N^n$ be maximal with respect to the partial order such that $\opi_{\alpha,\beta}(u)\neq 0$ for some $\beta\in \N^n$. Then
  \begin{align*}
     0\neq (\id\ot \pi_{\alpha,0})\okow(u).
  \end{align*}
\end{lem}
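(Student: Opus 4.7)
The plan is to expand $u$ along the triangular decomposition $\Heis(\chi) \cong G^+ \rtimes H_\theta \ltimes U^-$ from \eqref{eq:Heis-triang}, apply $\okow$ term by term using the coproducts on generators, and use the maximality of $\alpha$ to isolate a nonzero component of $(\id \ot \pi_{\alpha, 0})\okow(u)$.

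First write $u = \sum_{\alpha', \mu', \beta'} u_{\alpha', \mu', \beta'}$ with $u_{\alpha', \mu', \beta'} \in G^+_{\alpha'} K_{\mu'} U^-_{-\beta'}$, and pick $(\mu_0, \beta_0)$ with $u_{\alpha, \mu_0, \beta_0} \neq 0$. From
\[
\okow(\Etil_i) = \Etil_i \ot K_i^{-1} + 1 \ot \Etil_i, \quad \okow(F_i) = F_i \ot K_i^{-1} + 1 \ot F_i, \quad \okow(K_{\mu'}) = K_{\mu'} \ot K_{\mu'},
\]
iterating the coproduct shows that for $g \in G^+_{\alpha'}$ and $f \in U^-_{-\beta'}$ the second tensor factor of $\okow(g)$ has $U^+$-weight bounded componentwise by $\alpha'$, with the weight-$\alpha'$ part equal to $1 \ot g$, while the second tensor factor of $\okow(f)$ has $U^+$-weight zero throughout with its $U^-$-weight-zero part equal to $f \ot K_{-\beta'}$. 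Hence $(\id \ot \pi_{\alpha, 0})\okow(u_{\alpha', \mu', \beta'})$ can be nonzero only if $\alpha \le \alpha'$, and the maximality of $\alpha$ then eliminates every $\alpha' > \alpha$, leaving only the contribution from $\alpha' = \alpha$.

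Next, fix a basis $\{g_i\}$ of $G^+_\alpha$ and write $u_{\alpha, \mu, \beta} = \sum_i g_i K_\mu f_i^{(\mu, \beta)}$ with $f_i^{(\mu, \beta)} \in U^-_{-\beta}$. A direct calculation in $\Heis(\chi) \ot \Upoly$ then gives
\[
(\id \ot \pi_{\alpha, 0})\okow(u_{\alpha, \mu, \beta}) = \sum_i (1 \ot g_i)(K_\mu \ot K_\mu)(f_i^{(\mu, \beta)} \ot K_{-\beta}) = \sum_i K_\mu f_i^{(\mu, \beta)} \ot g_i K_{\mu - \beta}.
\]
The first tensor factors $K_\mu f_i^{(\mu, \beta)}$ lie in the direct summand $K_\mu U^-_{-\beta}$ of $\Heis(\chi)$, so contributions from distinct pairs $(\mu, \beta)$ are linearly independent in $\Heis(\chi) \ot \Upoly$. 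Projecting onto the summand indexed by $(\mu_0, \beta_0)$ produces $\sum_i K_{\mu_0} f_i^{(\mu_0, \beta_0)} \ot g_i K_{\mu_0 - \beta_0}$, which is nonzero: left multiplication by $K_{\mu_0}$ and right multiplication by $K_{\mu_0 - \beta_0}$ are injective, and $u_{\alpha, \mu_0, \beta_0} \neq 0$ forces $\sum_i g_i \ot f_i^{(\mu_0, \beta_0)} \neq 0$ by linear independence of the $g_i$.

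The main subtlety is verifying that the $U^+$-weight of the second tensor factor of $\okow(u_{\alpha', \mu', \beta'})$ is controlled by the $G^+$-part alone and bounded by $\alpha'$, so that the maximality hypothesis genuinely forces $\alpha' = \alpha$; once this is in place, the rest is bookkeeping inside the triangular decomposition.
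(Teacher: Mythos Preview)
Your proof is correct and follows essentially the same route as the paper's. Both arguments expand $u$ along the triangular decomposition of $\Heis(\chi)$, observe that only the $G^+$-part of weight exactly $\alpha$ can contribute to $(\id\ot\pi_{\alpha,0})\okow(u)$ by the coproduct formulas and maximality, compute that contribution explicitly as $\sum K_\mu f\ot g K_{\mu-\beta}$ (the paper uses Sweedler notation for the $H_\theta$-part but the content is identical), and conclude nonvanishing from the linear independence of the chosen basis of $G^+_\alpha$ in the second tensor leg. The only cosmetic difference is that the paper argues nonvanishing directly from the second tensor factor, while you first project onto a fixed $(\mu_0,\beta_0)$-summand in the first tensor factor; both work.
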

\begin{proof}
  Using the direct sum decomposition \eqref{eq:HG+HU-} we write
  \begin{align*}
     u=\sum_{\gamma,\beta\in \N^n, i} x_{\gamma,i} u^0_{\gamma,\beta,i} y_{-\beta,i}
  \end{align*}
  where $x_{\gamma,i}\in G^+_\gamma$ are linearly independent, and $u^0_{\gamma,\beta,i}\in H_\theta$ and $y_{-\beta,i}\in U_{-\beta}^-$. Let now $\alpha$ be as in the assumption and set
  \begin{align*}
    u_\alpha=\sum_{\beta\in \N^n, i} x_{\alpha,i} u^0_{\alpha,\beta,i} y_{-\beta,i}
  \end{align*}
  and $u_{\neq\alpha}=u-u_\alpha$. By the maximality of $\alpha$ we have
  \begin{align*}
     (\id \ot \pi_{\alpha,0})\okow(u_{\neq \alpha})=0.
  \end{align*}
  Hence, using Sweedler notation for the coaction $\okow$ in the form
  $\okow(u)=u_{(0)}\ot u_{(1)}$ we obtain
    \begin{align*}
      (\id \ot \pi_{\alpha,0})\okow(u) &=
      (\id \ot \pi_{\alpha,0})\okow(u_{\alpha})\\
      &= \sum_{\beta\in \N^n, i} u^0_{\alpha,\beta,i(1)} y_{-\beta,i} \ot x_{\alpha,i}  u^0_{\alpha,\beta,i(2)} K_{-\beta}
    \end{align*}
  and the latter expression is non-zero by the linear independence of the $x_{\alpha,i}$.  
\end{proof}  
\subsection{Relations in $\oB_\bc$}\label{sec:HeisNrel=N}
Recall the projection map $\kappa:\Upoly\rightarrow \Heis(\chi)$ and define $\oB_\bc=\kappa(B_\bc)$.  We also use the notation $\overline{x}=\kappa(x)$ for $x\in \Upoly$ and write in particular $\oB_i=\kappa(B_i)$ for all $i\in I$. We proceed as in Section \ref{sec:Bc}. The algebra $\oB_\bc$ is filtered by a degree function with $\deg(\oB_i)=1$ for all $i\in I$ and $\deg(h)=0$ for all $h\in H_\theta$. Let $\gr(\oB_\bc)$ denote the associated graded algebra. We obtain a surjective homomorphism of graded algebras
\begin{align*}
  \overline{\varphi}: \gr(\oB_\bc)\rightarrow H_\theta \ltimes U^- 
\end{align*}
such that $\overline{\varphi}(\oB_i)=\oF_i$ and $\overline{\varphi}(h)=h$ for all $i\in I$, $h\in H_\theta$. For any non-commutative polynomial $p$ in variables $x_1,\dots,x_n$ we write $p(\underline{\oB})=p(\oB_1,\dots,\oB_n)$.
Assume that the noncommutative polynomial $p$ has degree $m$. In analogy to property \eqref{eq:Bc-rel} we are interested in the following property
  \begin{align}\label{eq:oBc-rel}
    p(\uF)=0 \quad \Longrightarrow \quad p(\overline{\uB})\in \cF_{m-1}(\oB_\bc). \tag{$\oB_\bc$-rel}
  \end{align}
  We consider the set $\overline{\N}_\rel$ of all degrees for which homogeneous relations in $U^-$ lead to relations in $B_\bc$, that is
  \begin{align}
    \overline{\N}_\rel=\{k\in \N\,|\, \mbox{any polynomial $p$ of degree $l\le k$ satisfies \eqref{eq:oBc-rel} }\}.
  \end{align}
  We know that $1\in \overline{\N}_\rel$. The following lemma provides a main step in the proof that $\N=\overline{\N}_\rel$ below.
\begin{lem}\label{lem:Nrel}
  Let $\beta\in \Z^n$ with $\beta>0$ and $n\in \overline{\N}_\rel$. Then
  \begin{align*}
     \kappa(G^-K_{-\beta}) \cap \sum_{|J|\le n} H_\theta \oB_J=\{0\}.
  \end{align*}  
\end{lem}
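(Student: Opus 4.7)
The plan is to prove the lemma by induction on $n\in\overline{\N}_\rel$, with Lemma~\ref{lem:neq0} as the key technical tool.

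For the base case $n=0$, I would first unwind the definition of $\kappa(G^-K_{-\beta})$ using the decomposition $G^-=\bigoplus_{\rho\in\N^n}U^-_{-\rho}K_\rho$ together with the fact that $\kappa(K_\mu)=0$ in $\Heis(\chi)$ whenever $\mu\notin\Z^n_\theta$. This yields
\[
\kappa(G^-K_{-\beta}) \;=\; \bigoplus_{\rho\in\N^n,\,\rho-\beta\in\Z^n_\theta} U^-_{-\rho}\,K_{\rho-\beta}.
\]
Intersecting with $\sum_{|J|\le 0}H_\theta\oB_J=H_\theta$ forces $\rho=0$, hence $-\beta\in\Z^n_\theta$, i.e.\ $\tau(\beta)=-\beta$. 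Since $\tau$ permutes the standard basis of $\Z^n$ and $\beta>0$, one has $\tau(\beta)\in\N^n\setminus\{0\}$, contradicting $\tau(\beta)=-\beta$, so the intersection is trivial.

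For the inductive step, assume the statement for all $k<n$ and take $u$ in the intersection, writing $u=\sum_{|J|\le n}h_J\oB_J$ with $h_J\in H_\theta$. The description of $\kappa(G^-K_{-\beta})$ above shows $u\in H_\theta\ltimes U^-$, so $u$ carries no positive $G^+$-weight in the triangular decomposition $\Heis(\chi)\cong G^+\rtimes H_\theta\ltimes U^-$. Hence by the contrapositive of Lemma~\ref{lem:neq0}, $(\id\otimes\pi_{\alpha,0})\okow(u)=0$ for every $\alpha>0$.

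Next, I would apply the coproduct formula
\[
\okow(\oB_j) \;=\; \oB_j\otimes K_j^{-1} + 1\otimes F_j + c_j\,\bar K_j\otimes E_{\tau(j)}K_j^{-1},
\]
where $\bar K_j=K_{\tau(j)}K_j^{-1}$, together with multiplicativity of $\okow$ to compute $(\id\otimes\pi_{\alpha_{\tau(i)},0})\okow(\oB_J)$ for each $i\in I$. Only those terms survive in which exactly one position $k$ of $J$ contributes the $c_i\bar K_i\otimes E_{\tau(i)}K_i^{-1}$-piece (forcing $j_k=i$) and all other positions contribute the $\oB_{j_l}\otimes K_{j_l}^{-1}$-piece; after commuting $\bar K_i$ to the left, the first tensor factor becomes $c_i\bar K_i$ times a $q$-weighted sum of $\oB_{J\setminus k}$. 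Vanishing of $(\id\otimes\pi_{\alpha_{\tau(i)},0})\okow(u)$, separated by the $K$-weight of the second tensor factor, then produces elements $v_i\in\sum_{|J'|\le n-1}H_\theta\oB_{J'}$. By direct comparison with the corresponding coproduct identity for $\oF_{-\rho}K_{\rho-\beta}$, each such $v_i$ lies in $\kappa(G^-K_{-\beta'})$ for a suitable $\beta'>0$ obtained from $\beta$, so the inductive hypothesis forces $v_i=0$ and yields relations among the $h_J$'s with $|J|=n$.

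Finally, the hypothesis $n\in\overline{\N}_\rel$ lifts these $U^-$-relations to the conclusion $\sum_{|J|=n}h_J\oB_J\in\cF_{n-1}(\oB_\bc)$; hence $u\in\cF_{n-1}(\oB_\bc)\cap\kappa(G^-K_{-\beta})$, and a final application of the inductive hypothesis at degree $n-1$ gives $u=0$. The main obstacle is the identification of each $v_i$ as an element of $\kappa(G^-K_{-\beta'})$: this requires careful tracking of the $q$-commutation scalars produced when pushing $\bar K_i$ past each $\oB_{j_l}$, and the symmetry $q_{ij}=q_{\tau(i)\tau(j)}$ of the bicharacter is essential to match these scalars with those arising from the coproduct structure of $G^-K_{-\beta}$.
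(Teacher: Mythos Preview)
Your base case is fine, but the inductive step has a genuine gap. Once you know $u\in\kappa(G^-K_{-\beta})\subset H_\theta\ltimes U^-$, the coaction $\okow(u)$ lies entirely in $(H_\theta\ltimes U^-)\otimes (H\ltimes U^-)$, so $(\id\otimes\pi_{\alpha,0})\okow(u)=0$ for every $\alpha>0$ \emph{directly}---there is no nontrivial element $v_i$ to compare with $\kappa(G^-K_{-\beta'})$, and the inductive hypothesis plays no role at this point. What you actually obtain from the $\oB_J$-expansion are the relations $v_{i,\lambda}=0$ in $\Heis(\chi)$; these are degree-$(n-1)$ relations among the $\oB_{J\setminus k}$. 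Passing to their top $U^-$-part gives ``derivative'' relations of the form $\sum_{J,k} s_{J,k}\,h_J\,F_{J\setminus k}=0$ in $H_\theta\ltimes U^-$. But these are \emph{not} the relations $p_i(\underline{F})=0$ that you need in order to invoke $n\in\overline{\N}_\rel$; for a general pre-Nichols algebra, vanishing of all such first-order pieces does not force the degree-$n$ element $\sum_{|J|=n}h_JF_J$ to vanish. So the sentence ``the hypothesis $n\in\overline{\N}_\rel$ lifts these $U^-$-relations to the conclusion $\sum_{|J|=n}h_J\oB_J\in\cF_{n-1}(\oB_\bc)$'' does not follow.

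The paper's argument avoids the coaction entirely and is much shorter. Choose $k$ minimal with $a\in\sum_{|J|\le k}H_\theta\oB_J$ and write the degree-$k$ part as $a_k=\sum_i h_i\,p_i(\overline{\uB})$ with the $h_i\in H_\theta$ linearly independent. Using the triangular decomposition $\Heis(\chi)\cong G^+\rtimes H_\theta\ltimes U^-$, look at the component of $a$ of total $G^+$-degree $k$: this component receives no contribution from $a_0$ (which has $G^+$-degree $\le k-1$) and must vanish because $a\in H_\theta\ltimes U^-$. On the other hand the $G^+$-degree-$k$ part of $p_i(\overline{\uB})$ is exactly $p_i(\underline{E_\tau K^{-1}})$ (take the $E_{\tau(j)}K_j^{-1}$-summand in every factor). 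Linear independence of the $h_i$ then forces $p_i(\underline{E_\tau K^{-1}})=0$, hence $p_i(\underline{F})=0$ by Lemma~\ref{lem:EKF}, and now $k\in\overline{\N}_\rel$ gives $p_i(\overline{\uB})\in\cF_{k-1}(\oB_\bc)$, contradicting minimality of $k$. If you want to keep your coaction framework, the fix is to apply $(\id\otimes\pi_{\alpha,0})\okow$ with $|\alpha|=n$ rather than $|\alpha|=1$: at top degree only the all-$E$ terms of the $\oB_J$ with $|J|=n$ survive, and you recover the same conclusion without any inductive detour.
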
  
\begin{proof}
  Let $a\in  \kappa(G^-K_{-\beta}) \cap \sum_{J\le n} H_\theta \oB_J$. Choose $k\in \N_0$ minimal such that $a\in  \kappa(G^-K_{-\beta}) \cap \sum_{J\le k} H_\theta \oB_J$. We want to show that $k=0$. Assume on the contrary that $k\ge 1$ and write $a=a_0+a_k$ with $a_0\in \sum_{|J|\le k-1}H_\theta \oB_J$ and  $a_k\in \sum_{|J|= k}H_\theta \oB_J$. By the minimality of $k$ we have $a_k\neq 0$. Write $a_k=\sum_i h_i p_i(\oB)$ where $h_i\in H_\theta$ are linearly independent and $p_i=p_i(x_1,\dots,x_n)$ are homogeneous polynomials of degree $k$. The relation $a\in \kappa(G^-K_{-\beta})$ together with the definition of the generators $\oB_i$ of $\oB_\bc$ (and the linear independence of the $h_i$) imply that $p_i(\underline{E_{\tau}K^{-1}})=0$. Hence we have $p_i(\uF)=0$ by Lemma \ref{lem:EKF}. As $k\in \overline{\N}_\rel$ we obtain
$p_i(\overline{\uB})\in \sum_{|J|\le k-1}H_\theta \oB_J$. But then $a=a_0+a_k\in \sum_{|J|\le k-1}H_\theta \oB_J$ in contradiction to the minimality of $k$. Hence the assumption $k\ge 1$ was incorrect and we obtain $k=0$. Hence $a\in \kappa(G^- K_{-\beta}\cap H_\theta)=\{0\}$ which concludes the proof of the lemma. 
\end{proof}  
With these preparations we can show that $\oB_\bc$ is not too big.
\begin{prop}\label{prop:Nrel}
  $\overline{\N}_\rel=\N$.
\end{prop}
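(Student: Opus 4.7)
We argue by induction on $k$, the cases $k = 0, 1$ being immediate from the definitions. For the induction step, assume $k - 1 \in \overline{\N}_\rel$ and let $p$ be a homogeneous noncommutative polynomial of degree $k$ with $p(\underline{F}) = 0$. Because $U^-$ is $\Z^n$-graded, we may decompose $p$ into its multi-homogeneous components and so assume $p$ is multi-homogeneous of some degree $\lambda \in \N^n$ with $|\lambda| = k$, which also implies $p(c_1 F_1, \ldots, c_n F_n) = c^\lambda p(\underline{F}) = 0$. The plan is to analyze $p(\overline{\underline{B}})$ in the PBW basis of $\Heis(\chi) \cong G^+ \rtimes H_\theta \ltimes U^-$ by expanding $\oB_i = \oF_i + c_i \overline{E_{\tau(i)}K_i^{-1}}$ and normalizing with the commutation rule $\oEtil_i \oF_j = q_{ij}\oF_j\oEtil_i + q_{ij}\delta_{ij}$, which decomposes $p(\overline{\underline{B}})$ into components $\opi_{\alpha,\beta}(p(\overline{\underline{B}})) \in G^+_\alpha H_\theta U^-_{-\beta}$.

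The core step is to prove that $\opi_{\alpha,\beta}(p(\overline{\underline{B}})) = 0$ for every $\alpha \neq 0$, so that $p(\overline{\underline{B}}) \in H_\theta \ltimes U^-$. Choose $\alpha$ maximal in the partial order with some $\opi_{\alpha,\beta}(p(\overline{\underline{B}})) \neq 0$ and suppose $\alpha \neq 0$. By Lemma \ref{lem:neq0}, $(\id \otimes \pi_{\alpha,0})\okow(p(\overline{\underline{B}})) \neq 0$. Using the coproduct formula \eqref{eq:kowB} together with \eqref{eq:okowkappa} and the multiplicativity of $\okow$, a combinatorial expansion of the iterated coproduct shows that the right tensor factor of this projection is a nonzero scalar multiple of $r(\underline{E_\tau K^{-1}}) K_\delta$, where $r$ is a multi-homogeneous polynomial distilled from $p$ by restricting to the variable-indices prescribed by $\alpha$ (those positions contributing to the chosen $G^+$-weight). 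Multi-homogeneity and $p(\underline{F}) = 0$ force $r(\underline{F}) = 0$, and Lemma \ref{lem:EKF} then gives $r(\underline{E_\tau K^{-1}}) = 0$, a contradiction. Hence $\alpha = 0$.

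To conclude, observe that under the inductive hypothesis the canonical projection $\pi' : \Heis(\chi) \twoheadrightarrow H_\theta \ltimes U^-$ onto the zero-$G^+$-weight subspace restricts to a linear bijection $\cF_j(\oB_\bc) \xrightarrow{\sim} H_\theta U^-_{\le j}$ for every $j \le k-1$, proved by a direct induction on $j$ using that $\overline{\varphi}$ is a graded isomorphism in degrees $\le k-1$. The core step shows $\pi'(p(\overline{\underline{B}})) = p(\overline{\underline{B}})$, and since the PBW normal form of $p(\overline{\underline{B}})$ only produces surviving $U^-$-degrees of the form $k - 2s$ for $s \geq 1$ (the $s = 0$ part being $p(\underline{\oF}) = 0$), we actually have $p(\overline{\underline{B}}) \in H_\theta U^-_{\le k-2}$. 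Lifting through the bijection produces $b \in \cF_{k-1}(\oB_\bc)$ with $\pi'(b) = p(\overline{\underline{B}})$; Lemma \ref{lem:Nrel} applied to the difference $p(\overline{\underline{B}}) - b \in \oB_\bc$ through a weight-by-weight analysis (each weight component of the difference sitting in some $\kappa(G^- K_{-\beta})$ with $\beta > 0$) together with the inductive hypothesis then forces the difference to vanish, yielding $p(\overline{\underline{B}}) = b \in \cF_{k-1}(\oB_\bc)$.

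The main technical obstacle is the coproduct computation in the core step: tracking the three summands of each $\okow(\oB_i)$ through the iterated coproduct of $p(\overline{\underline{B}})$ and isolating the polynomial $r$ via multi-homogeneity requires careful bookkeeping of scalar factors arising from the commutation relations in $U^+ \rtimes H$. The closing argument is comparatively routine once the inductive bijection for $\pi'$ and Lemma \ref{lem:Nrel} are in place.
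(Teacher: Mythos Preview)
Your core step is false, not merely under-argued. Take the rank-one Nichols algebra with $q_{11}=\zeta$ a primitive cube root of unity, $\tau=\id$, so that $H_\theta=\field$, $F_1^3=\Etil_1^3=0$, and $\oB_1=F_1+c_1\Etil_1$ in $\Heis(\chi)$. Using $\Etil_1 F_1=\zeta F_1\Etil_1+\zeta$ one computes
\[
\oB_1^{\,3}=c_1(\zeta-1)\,\oB_1=c_1(\zeta-1)F_1+c_1^{2}(\zeta-1)\Etil_1,
\]
so $\opi_{\alpha_1,0}(\oB_1^{\,3})=c_1^{2}(\zeta-1)\Etil_1\neq 0$. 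Thus $p(\overline{\uB})\notin H_\theta\ltimes U^-$ in general, even though $p(\overline{\uB})\in\cF_{k-1}(\oB_\bc)$ as the proposition asserts. The heuristic that the right tensor factor of $(\id\otimes\pi_{\alpha,0})\okow(Y)$ is a single polynomial $r(\underline{E_\tau K^{-1}})K_\delta$ with $r(\uF)=0$ does not survive this example: the ``distilled'' sub-polynomials need not vanish in $U^-$. Your closing step is also off target: having $\pi'(p(\overline{\uB})-b)=0$ places the difference in $G^+_+H_\theta U^-$, whereas Lemma~\ref{lem:Nrel} concerns $\kappa(G^-K_{-\beta})\subset H_\theta U^-$, a disjoint subspace, so the lemma does not apply.

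The paper's argument avoids both problems by passing to $Z=\oP_{-\lambda}(Y)$, which lies in $\oB_\bc$ because $\oP_{-\lambda}$ is a comodule map. The coproduct formula for $Y$ then yields
\[
\okow(Z)\in Y\otimes K_{-\lambda}+\sum_{|J|\le k-1}H_\theta\oB_J\otimes P_{-\lambda}(\Upoly),
\]
and one shows $Z=0$ by analysing the \emph{left} tensor factor via Lemma~\ref{lem:Nrel}: for $\alpha$ maximal with $\opi_{\alpha,\beta}(Z)\neq 0$, the left factor of $(\id\otimes\pi_{\alpha,0})\okow(Z)$ lands in $\kappa(G^-K_{-\lambda+\alpha})\cap\cF_{k-1}(\oB_\bc)=\{0\}$, forcing $\alpha=0$ and then $Z=0$ since $p(\uF)=0$. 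Applying $\vep$ to the second leg then gives $Y\in\cF_{k-1}(\oB_\bc)$. The essential trick you are missing is this comodule-equivariant projection $\oP_{-\lambda}$; it produces an element of $\oB_\bc$ sitting in a single $P_\lambda$-weight space, where the intersection lemma genuinely applies.
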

\begin{proof}
We proceed by induction. Let $k\in \N$ and assume that $\{1,\dots,k-1\}\subseteq \overline{\N}_\rel$.
  Let $p$ be a polynomial of degree $k$ such that $p(\uF)=0$. 
  Without loss of generality we may assume that $p$ is homogeneous of degree $\lambda\in \Z^n$ with $|\lambda|=k$. Write $Y=p(\overline{\uB})$ and $Z=\oP_{-\lambda}(Y)$ where $\oP_{-\lambda}$ is the projection operator from \eqref{eq:oPlambda}. Note that $Z\in \oB_\bc$ by \eqref{eq:kowoPlambda}. Relations \eqref{eq:kowB} and \eqref{eq:okowkappa} imply that
  \begin{align*}
    \okow(Y) \in Y\ot K_{-\lambda} + \sum_{|J|\le k-1} H_\theta \oB_J \ot \Upoly. 
  \end{align*}  
  Hence \eqref{eq:kowoPlambda} implies that the element $Z$ satisfies the relation
  \begin{align}\label{eq:okowZ}
     \okow(Z)\in Y\ot K_{-\lambda} + \sum_{|J|\le k-1} H_\theta \oB_J \ot P_{-\lambda}(\Upoly). 
  \end{align}
   We now prove $Z=0$ as in \cite[Proposition 5.16]{a-Kolb14}. Assume that $Z\neq 0$. Let $\alpha\in \Z^n_+$ be maximal with respect to the partial order such that $\opi_{\alpha,\beta}(Z)\neq 0$ for some $\beta\in \Z^n_+$. By Lemma \ref{lem:EKF} we know that $\alpha<\lambda$. Moreover, by Lemma \ref{lem:neq0} we have
  \begin{align}\label{eq:pialpha0}
      0\neq (\id \ot \pi_{\alpha,0})\okow(Z)\in \kappa(G^-K_{-\lambda+\alpha}) \ot U^+ K_{-\lambda}.
    \end{align}  
  If $\alpha\neq 0$ then \eqref{eq:okowZ} implies that
  \begin{align*}
     (\id \ot \pi_{\alpha,0})\okow(Z)\in \Big(\kappa(G^-K_{-\lambda+\alpha})\cap \sum_{|J|\le k-1}H_\theta \oB_J\Big)\ot U^+ K_{-\lambda}.
  \end{align*}
  However, the left hand side of the above expression is $\{0\}$ by Lemma \ref{lem:Nrel} in contradiction to \eqref{eq:pialpha0}. Hence $\alpha=0$ and $Z\in \kappa(G^-K_{-\lambda})$. But then the relation $p(\uF)=0$ implies that $Z\in \bigoplus_{\beta<\lambda}\kappa(G_{-\beta}^-K_{-\lambda})=\{0\}$. Hence $Z=0$. 

  Now we apply the counit $\vep$ to the second tensor factor in \eqref{eq:okowZ} to obtain
  \begin{align*}
    Y\in \sum_{|J|\le k-1} H_\theta B_J.
  \end{align*}
  Hence the polynomial $p$ satisfies property \eqref{eq:oBc-rel}. This proves that $k\in \overline{\N}_\rel$.
\end{proof}
 We can now repeat the argument which led to Proposition \ref{prop:Bc-assoc-grad} to obtain the following result.
\begin{thm}\label{thm:Heis-no-cond}
  For all pre-Nichols algebras $U^+$ of diagonal type and $\bc\in \field^n$, 
  the map $\overline{\varphi} : \gr(\oB_\bc)\rightarrow H_\theta \ltimes U^- $ is an isomorphism of graded algebras.
\end{thm}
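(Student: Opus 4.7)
The plan is to reduce the theorem to Proposition \ref{prop:Nrel} by essentially repeating the argument that underlies Proposition \ref{prop:Bc-assoc-grad}. The map $\overline{\varphi}$ is a surjective homomorphism of graded algebras by construction (the generators $\oF_i$ of $U^-$ and the elements of $H_\theta$ are all in its image), so the content is entirely in injectivity. I will show that injectivity of $\overline{\varphi}$ is equivalent to the statement $\overline{\N}_\rel=\N$, which Proposition \ref{prop:Nrel} has already established.

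First I would unpack injectivity in each degree $m$. An element of $\gr_m(\oB_\bc)$ is represented by some $y\in\cF_m(\oB_\bc)$, and it vanishes in $\gr_m$ precisely when $y\in\cF_{m-1}(\oB_\bc)$. Writing $y=y_0+y_m$ with $y_0\in\cF_{m-1}(\oB_\bc)$ and $y_m=\sum_i h_i\,p_i(\underline{\oB})$, where the $h_i\in H_\theta$ are linearly independent and the $p_i$ are noncommutative homogeneous polynomials of degree $m$, the class of $y$ in $\gr_m$ is determined by $y_m$. Its image under $\overline{\varphi}$ equals $\sum_i h_i\,p_i(\underline{F})\in H_\theta\ltimes U^-$, and since $H_\theta\ltimes U^-\cong H_\theta\otimes U^-$ as a vector space, the linear independence of the $h_i$ forces $p_i(\underline{F})=0$ for each $i$.

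Now I invoke Proposition \ref{prop:Nrel}: since $\overline{\N}_\rel=\N$, property \eqref{eq:oBc-rel} applies to each $p_i$, yielding $p_i(\underline{\oB})\in\cF_{m-1}(\oB_\bc)$. Multiplying by $h_i$ and summing gives $y_m\in\cF_{m-1}(\oB_\bc)$ and hence $y\in\cF_{m-1}(\oB_\bc)$, so the class of $y$ in $\gr_m(\oB_\bc)$ was zero to begin with. This proves injectivity of $\overline{\varphi}$ in every degree $m$, and combined with surjectivity yields the desired isomorphism of graded algebras.

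There is no serious obstacle left at this stage; all the hard work has been absorbed into Proposition \ref{prop:Nrel}, whose proof relied on Lemma \ref{lem:Nrel}, Lemma \ref{lem:neq0}, and Lemma \ref{lem:EKF} together with the coaction identity \eqref{eq:kowoPlambda}. The remaining step is the purely formal extraction of the equivalence between the isomorphism statement for $\overline{\varphi}$ and the numerical statement $\overline{\N}_\rel=\N$, which is exactly the Heisenberg-double analogue of Proposition \ref{prop:Bc-assoc-grad}. Crucially, no condition on $\bc$ is needed anywhere, because Proposition \ref{prop:Nrel} itself was proved unconditionally, the point being that inside $\Heis(\chi)$ the generators $K_i^{-1}$ vanish and so the obstruction terms that force condition \textbf{(c)} in the Drinfeld double setting disappear.
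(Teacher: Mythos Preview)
Your proof is correct and takes essentially the same approach as the paper, which simply says ``repeat the argument which led to Proposition \ref{prop:Bc-assoc-grad}'' after having established $\overline{\N}_\rel=\N$ in Proposition \ref{prop:Nrel}. You have faithfully unpacked that one-line reference, spelling out why $\overline{\N}_\rel=\N$ forces injectivity of $\overline{\varphi}$ in each degree.
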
  
\subsection{Relations in $B_\bc$}
\label{sec:prop-c}
We now want to see how much of the argument in the previous section translates from $\oB_\bc$ to the algebra $B_\bc$. Recall the definition of the subset $\N_\rel\subset \N$ from \eqref{eq:Nrel-def}. A word by word translation of the proof of Lemma \ref{lem:Nrel} gives the following result.
 \begin{lem}\label{lem:Nrel2}
   Let $\beta\in \Z^n$ with $\beta>0$ and $n\in \N_\rel$. Then
   \begin{align*}
      G^-K_{-\beta} \cap \sum_{|J|\le n} H_\theta B_J=\{0\}.
   \end{align*}  
 \end{lem}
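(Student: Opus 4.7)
The plan is to repeat the proof of Lemma \ref{lem:Nrel} word-for-word, with the Heisenberg double $\Heis(\chi)$ replaced by $U(\chi)$, $\overline{\N}_\rel$ replaced by $\N_\rel$, and $\oB_i=\kappa(B_i)$ replaced by $B_i$. Every ingredient used there has an immediate counterpart here: the triangular decomposition \eqref{eq:U+H-}, the weight decomposition \eqref{eq:Uchi-UHU}, and the chain of equivalences $p(\uF)=0\Leftrightarrow p(\underline{E_\tau K^{-1}})=0$ from Lemma \ref{lem:EKF}. The hypothesis $n\in\N_\rel$ plays exactly the role that $n\in\overline{\N}_\rel$ played in the previous proof.

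First I would fix $a\in G^-K_{-\beta}\cap\sum_{|J|\le n}H_\theta B_J$ and take the smallest $k\in\{0,1,\dots,n\}$ with $a\in \sum_{|J|\le k}H_\theta B_J$; the goal is to force $k=0$, for then $a\in G^-K_{-\beta}\cap H_\theta=\{0\}$ by inspection of the $\Z^n$-grading inside the triangular decomposition (since $\beta>0$). Assuming for contradiction that $k\ge 1$, I would split $a=a_0+a_k$ with $a_0\in \sum_{|J|\le k-1}H_\theta B_J$ and $0\ne a_k=\sum_i h_i p_i(\uB)$, where the $h_i\in H_\theta$ are linearly independent and the $p_i$ are homogeneous noncommutative polynomials of degree $k$.

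The heart of the argument is the claim that $p_i(\underline{E_\tau K^{-1}})=0$ for each $i$. To extract this, I would expand $B_j=F_j+c_jE_{\tau(j)}K_j^{-1}$ in each monomial of $p_i(\uB)$ and convert the result into the decomposition \eqref{eq:Uchi-UHU} via the cross relations in \eqref{eq:def-rel}; the top $\Z^n$-graded piece lying in $U^+H$ is exactly $p_i(\underline{E_\tau K^{-1}})$, while every other contribution lands in some $U^+_\alpha H U^-_{-\gamma}$ with $\gamma>0$. Since $a\in G^-K_{-\beta}\subseteq U^- H$ has no component with $\alpha\ne 0$ and $a_0$ contributes only in degree $\le k-1$ in the $B$-filtration, linear independence of the $h_i$ forces the vanishing. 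Then Lemma \ref{lem:EKF} yields $p_i(\uF)=0$, and invoking $k\le n\in\N_\rel$ via property \eqref{eq:Bc-rel} gives $p_i(\uB)\in\cF_{k-1}(B_\bc)=\sum_{|J|\le k-1}H_\theta B_J$, so $a_k$, and hence $a$, lies in $\sum_{|J|\le k-1}H_\theta B_J$, contradicting minimality of $k$.

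The main obstacle, trivial in the Heisenberg-double version because $\kappa$ kills the $K_i^{-1}$, is bookkeeping the $K_i^{-1}$ factors in this top-grading extraction step. This is handled exactly by the scalar-compatibility observation based on the symmetry $q_{ij}=q_{ji}$ already used in the proof of Lemma \ref{lem:EKF}: the factor produced by commuting all $K_i^{-1}$ to the right of a monomial of a given weight $\lambda$ depends only on $\lambda$ and not on the monomial, so the identification between the top component of $p_i(\uB)$ and $p_i(\underline{E_\tau K^{-1}})$ is intact up to a nonzero common scalar. Once this is in place, the argument transfers verbatim and the proof is complete.
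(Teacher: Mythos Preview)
Your proposal is correct and matches the paper exactly: the paper's own proof of Lemma~\ref{lem:Nrel2} is the one-line remark that it is ``a word by word translation of the proof of Lemma~\ref{lem:Nrel}'', and your proposal is precisely that translation, with the expected replacements ($U(\chi)$ for $\Heis(\chi)$, $\N_\rel$ for $\overline{\N}_\rel$, $B_i$ for $\oB_i$). Your observation about the $K_i^{-1}$-bookkeeping via the symmetry $q_{ij}=q_{ji}$ is exactly the point already used in the proof of Lemma~\ref{lem:EKF} that makes the extraction of the top $U^+$-component go through here just as in the Heisenberg-double version.
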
  
 Translating the initial steps of the proof of Proposition \ref{prop:Nrel} into the setting of $B_\bc$ we obtain the following result.
 \begin{prop}\label{prop:K-laminB}
   Let $p$ be a homogeneous polynomial of degree $\lambda\in \Z^n$ with $|\lambda|$ minimal such that $p(\uF)=0$ but
   \begin{align*}
      p(\uB) \notin \sum_{|J|<|\lambda|} H_\theta B_J.
   \end{align*}
   Then $P_{-\lambda}(p(\uB))= \pi_{0,0}(P_{-\lambda}(p(\uB)))=a_pK_{-\lambda}$ for some $a_p\in \field^\times$ and hence $K_{-\lambda}\in B_\bc$.
 \end{prop}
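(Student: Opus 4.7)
My plan is to adapt the strategy behind Proposition \ref{prop:Nrel} to the Drinfeld double $U(\chi)$. The essential difference is that in $U(\chi)$ the element $K_{-\lambda}$ does not vanish, as it does in $\Heis(\chi)$, so the argument should end with $Z = a_p K_{-\lambda}$ rather than $Z = 0$. I set $Y = p(\uB)$ and $Z = P_{-\lambda}(Y)$. The coideal property $\kow(B_\bc) \subset B_\bc \otimes U(\chi)$ combined with the comodule identity \eqref{eq:kowPlambda} and an application of $(\id \otimes \vep)$ to the second tensor factor immediately gives $Z \in B_\bc$.

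Next, the coproduct formula \eqref{eq:kowB} together with the extraction of the $B$-leading term $B_i \otimes K_i^{-1}$ at every factor of $\kow(B_i)$ produces
\begin{equation*}
\kow(Y) \in Y \otimes K_{-\lambda} + \cF_{|\lambda|-1}(B_\bc) \otimes U(\chi).
\end{equation*}
Applying $(\id \otimes P_{-\lambda})$ and invoking \eqref{eq:kowPlambda} gives the same identity for $\kow(Z)$ with the right tensor factor in $P_{-\lambda}(U(\chi))$. I then aim to show $\pi_{\alpha,\beta}(Z) = 0$ for all $(\alpha,\beta) \ne (0,0)$ by two parallel case analyses. For $\alpha \ne 0$ I mimic the proof of Proposition \ref{prop:Nrel}: take $\alpha$ maximal in the partial order on $\N^n$ with $\pi_{\alpha,\beta}(Z) \ne 0$ for some $\beta$; use Lemma \ref{lem:EKF} and the identity $p(\underline{E_\tau K^{-1}}) = 0$ to exclude the top case $\alpha = \tau(\lambda)$; apply the $U(\chi)$-analogue of Lemma \ref{lem:neq0}, whose proof carries over verbatim, to get a non-zero element of $G^- K_{-\lambda+\alpha} \otimes U^+_\alpha K_{-\lambda}$; compare with the expansion of $\kow(Z)$ above to force the left tensor factor into $G^- K_{-\lambda+\alpha} \cap \cF_{|\lambda|-1}(B_\bc)$; and invoke Lemma \ref{lem:Nrel2} (using $|\lambda|-1 \in \N_\rel$ from the minimality of $|\lambda|$) to obtain a contradiction. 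For $\beta \ne 0$ with $\alpha = 0$, decompose $Z = \sum_\beta K_{-\lambda} z_{-\beta}$ with $z_{-\beta} \in G^-_{-\beta}$; a direct computation of $(\id \otimes \pi_{0,\beta})\kow(Z)$ yields $K_{-\lambda+\beta} \otimes K_{-\lambda} z_{-\beta}$, which combined with the expansion forces $K_{-\lambda+\beta} \in \cF_{|\lambda|-1}(B_\bc)$, again contradicting Lemma \ref{lem:Nrel2} whenever $0 < \beta < \lambda$. The boundary $\beta = \lambda$ is excluded directly by $p(\uF) = 0$. Together these show $Z \in \field K_{-\lambda}$, that is $Z = a_p K_{-\lambda}$ for some scalar $a_p$.

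To finish, I argue $a_p \ne 0$. Were $a_p = 0$, then $Z = 0$, and a counit-style manipulation applied to the expansion of $\kow(Y)$ would force $Y = p(\uB) \in \cF_{|\lambda|-1}(B_\bc)$, contradicting the hypothesis. Since $Z = a_p K_{-\lambda} \in B_\bc$ with $a_p \in \field^\times$, dividing by $a_p$ yields $K_{-\lambda} \in B_\bc$, completing the proof.

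\noindent\textbf{Main obstacle.} The most delicate point is ensuring that Lemma \ref{lem:Nrel2} genuinely applies in the $\alpha \ne 0$ argument, which requires $\lambda - \alpha \in \N^n \setminus \{0\}$. The structural bound $\alpha \le \tau(\lambda)$ does not imply $\alpha \le \lambda$ when $\tau(\lambda) \ne \lambda$, and in that case $\lambda - \alpha$ can have negative coordinates. The Heisenberg proof circumvents this via the collapse $\kappa(K_\mu) = 0$ for $\mu$ involving any $K_i^{-1}$, but this mechanism is unavailable in $U(\chi)$; a finer case analysis will be needed, likely leveraging the $\tau$-equivariance of $\cI$ together with the fact that only specific $\alpha$'s can actually appear as $U^+$-degrees in the support of $Z$.
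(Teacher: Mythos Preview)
Your approach is essentially identical to the paper's: set $Y=p(\uB)$, $Z=P_{-\lambda}(Y)\in B_\bc$, obtain the expansion \eqref{eq:kowZ} for $\kow(Z)$, and kill first the $U^+$-part and then the $G^-$-part of $Z$ by combining the analogue of Lemma~\ref{lem:neq0} with Lemma~\ref{lem:Nrel2}, finishing with the counit argument to get $a_p\neq 0$.

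Your flagged ``main obstacle'' is not a genuine obstruction, and no finer case analysis is needed. The paper simply asserts $\alpha<\lambda$ via Lemma~\ref{lem:EKF}; as you correctly note, what that lemma literally yields is that the top $U^+$-contribution $p(\underline{cE_\tau K^{-1}})$ vanishes, so $\alpha<\tau(\lambda)$ rather than $\alpha<\lambda$. But this suffices: from $\alpha<\tau(\lambda)$ one has $|\alpha|<|\tau(\lambda)|=|\lambda|$, while $\lambda-\alpha\in\Z^n_\theta$ would force $\alpha+\tau(\alpha)=\lambda+\tau(\lambda)$ and hence $|\alpha|=|\lambda|$. Thus $\lambda-\alpha\notin\Z^n_\theta$. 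Now observe that in the proof of Lemma~\ref{lem:Nrel2} (the verbatim translation of Lemma~\ref{lem:Nrel}) the hypothesis $\beta>0$ is used only at the very last step, to conclude $G^-K_{-\beta}\cap H_\theta=\{0\}$; this vanishing holds whenever $K_{-\beta}\notin H_\theta$, i.e.\ whenever $\beta\notin\Z^n_\theta$. So Lemma~\ref{lem:Nrel2} applies with $\beta=\lambda-\alpha$ after this harmless extension of its hypothesis, and your argument goes through unchanged.
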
  
 \begin{proof} 
   Write $Y=p(\uB)$ and $Z=P_{-\lambda}(Y)$. Equation \eqref{eq:kowB} for the coproducts of the generators $B_i$ implies that
   \begin{align*}
      \kow(Y) \in Y\ot K_{-\lambda} + \sum_{|J|\le |\lambda|-1} H_\theta B_J \ot \Upoly. 
  \end{align*}  
  Hence \eqref{eq:kowPlambda} implies that the element $Z$ satisfies the relation
  \begin{align}\label{eq:kowZ}
     \kow(Z)\in Y\ot K_{-\lambda} + \sum_{|J|\le |\lambda|-1} H_\theta B_J \ot P_{-\lambda}(\Upoly). 
  \end{align}
  If $Z=0$ then we can apply the counit $\vep$ to the second tensor leg of the above expression and obtain $Y\in \sum_{|J|\le |\lambda|-1} H_\theta B_J$ in contradiction to the assumption. Hence $Z\neq 0$.

  Let $\alpha\in \Z^n_+$ be maximal such that $\pi_{\alpha,\beta}(Z)\neq 0$ for some $\beta\in \Z^n_+$. By Lemma \ref{lem:EKF} we know that $\alpha<\lambda$. Moreover, in complete analogy to Lemma \ref{lem:neq0}, we obtain
  \begin{align}\label{eq:pialpha02}
      0\neq (\id \ot \pi_{\alpha,0})\kow(Z)\in G^-K_{-\lambda+\alpha} \ot U^+ K_{-\lambda}.
  \end{align}   
  If $\alpha\neq 0$ then \eqref{eq:kowZ} implies that
  \begin{align*}
     (\id \ot \pi_{\alpha,0})\kow(Z)\in \Big(G^-K_{-\lambda+\alpha}\cap \sum_{|J|\le |\lambda|-1}H_\theta B_J \Big) \ot U^+ K_{-\lambda}.
  \end{align*}
  However, the left hand side of the above expression is $\{0\}$ by Lemma \ref{lem:Nrel2} in contradiction to \eqref{eq:pialpha02}. Hence $\alpha=0$ and $Z\in G^-K_{-\lambda}$.

  Now choose $\beta\in \Z^n_+$ maximal such that $\pi_{0,\beta}(Z)\neq 0$. As $p(\uF)=0$ we have $\beta<\lambda$. In analogy to Lemma \ref{lem:neq0} we have
  \begin{align}\label{eq:K-l}
    0\neq (\id \ot \pi_{0,\beta})\kow(Z)\in K_{-\lambda+\beta} \ot G^-_{\beta} K_{-\lambda}
  \end{align}
  Comparison with \eqref{eq:kowZ} and application of Lemma \ref{lem:Nrel2} implies (as before for $\alpha$) that $\beta=0$. Hence $Z=\pi_{0,0}(Z)=a_p K_{-\lambda}$ for some $a_p\in \field$ and the claim follows from the relation $Z\in B_\bc\setminus \{0\}$. 
 \end{proof}
 Recall that $\cI\subset T(V^+(\chi))$ denotes the ideal in the tensor algebra such that
 \begin{align*}
   U^+=T(V^+(\chi))/\cI.
 \end{align*}
 The above proposition provides us with a method to check that condition \eqref{eq:Bc-rel} holds for all polynomials.
\begin{cor}\label{cor:NrelN}
  Let $p_j$ for $j=1,\dots,k$ be homogeneous, non-commutative polynomials of degree $\lambda_j\in \Z^n_+$, respectively, such that the set $\{p_j(\uE)\,|\,j=1,\dots,k\}$ generates the ideal $\cI$. Assume that
  \begin{align}\label{eq:p-assume}
      \pi_{0,0}\circ P_{-\lambda_j}(p_j(\uB))=0 \qquad \mbox{for $j=1, \dots,k$.}
  \end{align}
Then $\N_\rel=\N$.  
\end{cor}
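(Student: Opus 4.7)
The plan is to argue by contradiction, pinning down the structure of a hypothetical minimum-degree counterexample via Proposition \ref{prop:K-laminB} and then ruling it out using the decomposition of $\cI$ together with hypothesis \eqref{eq:p-assume}. Suppose $\N_\rel \neq \N$, and pick a $\Z^n$-homogeneous noncommutative polynomial $p$ of multi-degree $\lambda$ with $m := |\lambda|$ minimal such that $p(\uF) = 0$ but $p(\uB) \notin \cF_{m-1}(B_\bc)$. Proposition \ref{prop:K-laminB} then yields $\pi_{0,0}(P_{-\lambda}(p(\uB))) = a_p K_{-\lambda}$ with $a_p \in \field^\times$, and the target is to derive the contradiction $a_p = 0$.

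I would first invoke Lemma \ref{lem:EKF} to translate $p(\uF) = 0$ into $p(\uE) \in \cI$ inside the free algebra $T(V^+(\chi))$. Since $\cI$ is $\Z^n$-graded and generated as a two-sided ideal by the homogeneous elements $p_j(\uE)$ of respective degrees $\lambda_j$, the $\Z^n$-graded structure produces a decomposition $p = \sum_k r_k \, p_{j_k} \, s_k$ in $\field\langle x_1,\ldots, x_n\rangle$ with $r_k, s_k$ homogeneous of $\Z^n$-degrees $\mu_k, \nu_k$ satisfying $\mu_k + \lambda_{j_k} + \nu_k = \lambda$. Substituting yields $p(\uB) = \sum_k r_k(\uB)\, p_{j_k}(\uB)\, s_k(\uB)$, and the strategy reduces to showing each $p_{j_k}(\uB) \in \cF_{|\lambda_{j_k}| - 1}(B_\bc)$; multiplicativity of the filtration $\cF$ then forces every summand into $\cF_{m-1}(B_\bc)$, whence $p(\uB) \in \cF_{m-1}(B_\bc)$, contradicting the choice of $p$.

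To establish $p_{j_k}(\uB) \in \cF_{|\lambda_{j_k}|-1}(B_\bc)$, I would split on whether $|\lambda_{j_k}| < m$ or $|\lambda_{j_k}| = m$; the value $|\lambda_{j_k}| > m$ is impossible since $|\mu_k|, |\nu_k| \ge 0$. In the first case, Lemma \ref{lem:EKF} applied to $p_{j_k}(\uE) \in \cI$ gives $p_{j_k}(\uF) = 0$, and minimality of $m$ among counterexample degrees finishes. In the second case, $r_k, s_k$ must be scalars; if $p_{j_k}(\uB) \notin \cF_{m-1}(B_\bc)$ then $p_{j_k}$ itself would tie for the minimum counterexample degree, so Proposition \ref{prop:K-laminB} applies to $p_{j_k}$ and yields $\pi_{0,0}(P_{-\lambda_{j_k}}(p_{j_k}(\uB))) \in \field^\times \cdot K_{-\lambda_{j_k}}$, in direct violation of \eqref{eq:p-assume}.

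The main conceptual obstacle I anticipate is the dual use of Proposition \ref{prop:K-laminB}: it is phrased for a single chosen minimal counterexample, but the argument applies it both to $p$ and to any $p_{j_k}$ of degree $m$. The observation that any polynomial achieving the minimum counterexample degree is a valid input to the proposition makes this coherent, and it is precisely the assumption \eqref{eq:p-assume} that kills the top-degree generators while strict minimality of $|\lambda_{j_k}|$ handles the lower-degree ones.
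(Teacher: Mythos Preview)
Your proof is correct and follows essentially the same route as the paper's: both pick a minimal-degree counterexample $p$, decompose it via the generators $p_j$ of $\cI$, use minimality to dispose of summands with $|\lambda_{j_k}|<m$, and then invoke Proposition~\ref{prop:K-laminB} together with hypothesis~\eqref{eq:p-assume} on the remaining top-degree generators to reach a contradiction. The only cosmetic difference is that the paper first reduces to $p=\sum_j a_j p_j$ and applies linearity of $\pi_{0,0}\circ P_{-\lambda}$, whereas you handle each top-degree $p_{j_k}$ individually; also note that your initial invocation of Proposition~\ref{prop:K-laminB} for $p$ itself is not actually needed, since your contradiction comes directly from $p(\uB)\in\cF_{m-1}(B_\bc)$.
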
   
\begin{proof}
  We prove this indirectly. Let $p$ be a homogeneous polynomial of minimal degree $\lambda\in \Z^n_+$ such that $p(\uE)\in \cI$ but
  \begin{align*}
     p(\uB)\notin \sum_{|J|<|\lambda|}H_\theta B_J.
  \end{align*}
  We can write
  \begin{align*}
    p=\sum_{j=1}^k \sum_\ell q_{j,\ell}\, p_j\, r_{j,\ell}
  \end{align*}
  where $q_{j,\ell}, r_{r,\ell} \in T(V^+(\chi))$ are homogeneous polynomials and
  \begin{align*}
    \deg(q_{j,\ell}) + \lambda_j + \deg(r_{j,\ell})=\lambda \qquad \mbox{for all $j,\ell$.}
  \end{align*}
  By the minimality assumption, any summand $s= q_{j,\ell} p_j r_{j,\ell}$ with $\deg(s)>\deg(p_j)$ satisfies $s(\uB)\in \sum_{|J|<|\lambda_j|}H_\theta B_J$ and hence may be omitted. Thus we may assume that
  \begin{align*}
      p=\sum_{j=1}^k a_j p_j \qquad \mbox{for some $a_j\in \field$.}
  \end{align*}
However, by Proposition \ref{prop:K-laminB} this is impossible, because of the assumption \eqref{eq:p-assume}.  
\end{proof}
Corollary \ref{cor:NrelN} suggests the following assumption about the parameters $\bc$ in the definition of the coideal subalgebra $B_\bc$:
\begin{align}\label{assume:parameters}
  \mbox{\begin{minipage}[t]{11.5cm}
      The ideal $\cI\subset T(V^+(\chi))$ is generated by homogeneous, non-commutative polynomials $p_j(\uE)$ for $j=1,\dots,k$ of degree $\lambda_j\in \Z^n_+$, respectively, for which $\pi_{0,0}\circ P_{-\lambda_j}(p_j(\uB))=0$.
        \end{minipage}}\tag{$\bc$}
\end{align}
Condition \eqref{assume:parameters} provides a reformulation of the condition $\N_\rel=\N$ which can be verified in explicit examples.
\begin{thm}\label{thm:c-cond}
  For all pre-Nichols algebras $U^+$ of diagonal type the following statements are equivalent:
\begin{enumerate}
\item The map  $\varphi: \gr(B_\bc)\rightarrow H_\theta \ltimes U^-$ is an isomorphism.
\item $\N_\rel=\N$.
\item Condition \eqref{assume:parameters} holds.
\end{enumerate}  
Moreover, if $\N_\rel\neq \N$ then there exists $\lambda\in \N^n \setminus\{0\}$ such that $K_{-\lambda}\in B_\bc$.  
\end{thm}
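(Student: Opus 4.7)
The plan is to chain together the three preceding results. The equivalence (1)$\Leftrightarrow$(2) is Proposition~\ref{prop:Bc-assoc-grad} and the implication (3)$\Rightarrow$(2) is Corollary~\ref{cor:NrelN}, so these require no new work. For the ``moreover'' clause, if $\N_\rel\neq\N$ then by definition of $\N_\rel$ I would pick a homogeneous polynomial $p$ of minimal total degree $|\lambda|$ with $p(\uF)=0$ but $p(\uB)\notin\cF_{|\lambda|-1}(B_\bc)$; necessarily $|\lambda|\ge 1$, so $\lambda\in\N^n\setminus\{0\}$, and Proposition~\ref{prop:K-laminB} applies verbatim and delivers $K_{-\lambda}\in B_\bc$.

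The main remaining step is (2)$\Rightarrow$(3). Assuming $\N_\rel=\N$ and taking any homogeneous generating set $\{p_j\}_{j=1}^k$ of $\cI$ with $\deg(p_j)=\lambda_j$, Lemma~\ref{lem:EKF} gives $p_j(\uF)=0$, so (2) yields $Y:=p_j(\uB)\in\cF_{|\lambda_j|-1}(B_\bc)=\sum_{|J|\le|\lambda_j|-1}H_\theta B_J$. Set $Z:=P_{-\lambda_j}(Y)$; the goal is $\pi_{0,0}(Z)=0$. Following the coproduct computation at the start of the proof of Proposition~\ref{prop:K-laminB}, the formulas \eqref{eq:kowB} and \eqref{eq:kowPlambda} give
\[
\kow(Z)\in Y\otimes K_{-\lambda_j}+\sum_{|J|\le|\lambda_j|-1}H_\theta B_J\otimes P_{-\lambda_j}(\Upoly),
\]
and since $Y$ itself lies in the right-hand sum, this simplifies to $\kow(Z)\in\sum_{|J|\le|\lambda_j|-1}H_\theta B_J\otimes P_{-\lambda_j}(\Upoly)$. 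Applying $(\id\otimes\vep)$ and using $\vep(P_{-\lambda_j}(\Upoly))=\vep(\field K_{-\lambda_j})=\field$ yields $Z\in\cF_{|\lambda_j|-1}(B_\bc)$.

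Independently, the $\alpha$- and $\beta$-maximality arguments from the proof of Proposition~\ref{prop:K-laminB} apply unchanged, since they depend only on the coproduct inclusion above and on Lemma~\ref{lem:Nrel2} with $|\lambda_j|\in\N_\rel=\N$. They force $Z\in H\cap U^+K_{-\lambda_j}G^- = \field K_{-\lambda_j}$, so $Z=aK_{-\lambda_j}$ with $a\in\field$. Combining with the previous paragraph, $aK_{-\lambda_j}\in\cF_{|\lambda_j|-1}(B_\bc)$; if $a\neq 0$ then $K_{-\lambda_j}\in\cF_{|\lambda_j|-1}(B_\bc)$, but $K_{-\lambda_j}=1\cdot K_{-\lambda_j}\in G^-_0 K_{-\lambda_j}\subseteq G^-K_{-\lambda_j}$, and Lemma~\ref{lem:Nrel2} with $\beta=\lambda_j\in\N^n\setminus\{0\}$ and $n=|\lambda_j|-1\in\N_\rel$ gives $G^-K_{-\lambda_j}\cap\cF_{|\lambda_j|-1}(B_\bc)=\{0\}$, a contradiction. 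Hence $a=0$, $\pi_{0,0}(Z)=0$, and condition~(3) holds.

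The point to watch is that the $\alpha$- and $\beta$-maximality steps of Proposition~\ref{prop:K-laminB} were originally deployed under the hypothesis $Y\notin\cF_{|\lambda|-1}(B_\bc)$ to produce $Z=a_pK_{-\lambda}$ with $a_p\neq 0$, whereas here they must pin $Z$ into $\field K_{-\lambda_j}$ under the reverse hypothesis $Y\in\cF_{|\lambda_j|-1}(B_\bc)$; this adaptation should be mechanical, with the role of $Z\neq 0$ now taken up by the final contradiction via Lemma~\ref{lem:Nrel2}.
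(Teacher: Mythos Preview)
Your proof is correct. The treatment of (1)$\Leftrightarrow$(2), (3)$\Rightarrow$(2), and the ``moreover'' clause matches the paper exactly.

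For (2)$\Rightarrow$(3) you argue directly, whereas the paper argues by contrapositive. The substantive difference is that the paper rests on the single observation
\[
P_{-\lambda}\Big(\sum_{|J|<|\lambda|}H_\theta B_J\Big)=0,
\]
which immediately kills $Z=P_{-\lambda_j}(p_j(\uB))$ once (2) gives $p_j(\uB)\in\cF_{|\lambda_j|-1}(B_\bc)$; conversely, in the contrapositive direction, if $\pi_{0,0}(P_{-\lambda}(p(\uB)))\neq 0$ then $p(\uB)\notin\cF_{|\lambda|-1}(B_\bc)$ and $p$ violates \eqref{eq:Bc-rel}. This vanishing is an easy consequence of the fact that each $B_i\in U^+K_{-\alpha_i}G^-$ and that commuting $G^-$ past $U^+$ can only \emph{raise} the $K$-exponent, so $B_J$ has no $P_\rho$-component with $|\rho|<-|J|$. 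You instead reconstruct the conclusion $Z=0$ by a three-step detour: step A pins $Z$ back into $\cF_{|\lambda_j|-1}(B_\bc)$ via coproduct and counit, step B re-runs the maximality arguments of Proposition~\ref{prop:K-laminB} to force $Z\in\field K_{-\lambda_j}$, and step C invokes Lemma~\ref{lem:Nrel2} to kill the scalar. All three steps are sound (and your care in noting that the $\alpha,\beta$-arguments no longer need $Z\neq 0$ is well placed), but together they amount to a circuitous proof of the special case of the displayed vanishing that you actually need. The paper's route is shorter; yours has the minor advantage of staying entirely within tools already used verbatim in Section~\ref{sec:prop-c}.
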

\begin{proof}
The equivalence between (1) and (2) is the statement of Proposition \ref{prop:Bc-assoc-grad}. By Corollary \ref{cor:NrelN} we have that (3) implies (2). Conversely, if condition \eqref{assume:parameters} does not hold, then Proposition \ref{prop:K-laminB} implies that $P_{-\lambda}(p(\uB))=a_pK_{-\lambda}$ with $a_p\in \field^\times$ for some homogeneous polynomial $p$ of degree $\lambda$ for which $p(\uF)=0$. As $$P_{-\lambda}\left(\sum_{|J|<|\lambda|}H_\theta B_J\right)=0,$$
we see that the polynomial $p$ violates condition \eqref{eq:Bc-rel}. This proves that (2) implies (3) and the final statement of the theorem.
\end{proof}
If condition \eqref{assume:parameters} holds then the above theorem allows us to write down a basis of $B_\bc$ as a left $H_\theta$-module. Let $\displaystyle\cJ\subset \bigcup_{k=0}^\infty I^k$ be a subset of multiidices such that
$\{F_J\,|\,J\in \cJ\}$ is a linear basis of $U^-$. The following corollary is a version of \cite[Proposition 6.2]{a-Kolb14} in our setting. It is a consequence of the implication (3) $\Rightarrow$ (1) in the theorem. 
\begin{cor}\label{cor:basis-Bc}
  Let $U^+$ be a pre-Nichols algebra of diagonal type and assume that condition \eqref{assume:parameters} holds. Then $B_\bc$ is a free left $H_\theta$-module with basis $\{B_J\,|\,J\in \cJ\}$.
\end{cor}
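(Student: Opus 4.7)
The plan is to deduce the corollary directly from the fact that, under condition \eqref{assume:parameters}, the map $\varphi : \gr(B_\bc) \to H_\theta \ltimes U^-$ is an isomorphism of graded algebras (Theorem \ref{thm:c-cond}). The target $H_\theta \ltimes U^- \cong H_\theta \otimes U^-$ is manifestly a free left $H_\theta$-module, and since $\{F_J \mid J \in \cJ\}$ is a $\field$-basis of $U^-$ refining the $\N$-grading $\deg(F_i)=1$, it is also a free left $H_\theta$-module basis of $H_\theta \ltimes U^-$; this will be the basis that we transport through $\varphi$ to give the claim.

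For the spanning assertion, I would argue by induction on the $\cF$-degree. For $x \in B_\bc$ with $\deg_\cF(x) \le m$, let $\bar x \in \gr_m(B_\bc)$ denote its symbol and write $\varphi(\bar x) = \sum_{J \in \cJ,\, |J|=m} h_J F_J$ with uniquely determined $h_J \in H_\theta$. The element
\[
y = x - \sum_{J \in \cJ,\, |J|=m} h_J B_J \in B_\bc
\]
has symbol $0$ in $\gr_m(B_\bc)$, because $\varphi$ is a graded algebra isomorphism sending $\overline{B_i} \mapsto F_i$ and fixing $H_\theta$. Hence $\deg_\cF(y) \le m-1$ and the inductive hypothesis applies, expressing $y$, and therefore $x$, as a left $H_\theta$-linear combination of the $B_J$ with $J \in \cJ$, $|J| \le m$. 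The base case $m=0$ is $B_\bc \cap \cF_0 = H_\theta$, handled by taking $J$ to be the empty multiindex.

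For linear independence, suppose $\sum_{J \in \cJ} h_J B_J = 0$ with finitely many $h_J \in H_\theta$ nonzero, and let $m$ be the maximum of $|J|$ over indices with $h_J \neq 0$. The relation lies in $\cF_m(B_\bc)$, and passing to the symbol in $\gr_m(B_\bc)$ yields $\sum_{J \in \cJ,\, |J|=m} h_J \overline{B_J} = 0$. Applying $\varphi$ gives $\sum_{J \in \cJ,\, |J|=m} h_J F_J = 0$ in $H_\theta \ltimes U^-$, contradicting the fact that $\{F_J \mid J \in \cJ\}$ is free over $H_\theta$.

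There is no real obstacle; the entire content is bundled into Theorem \ref{thm:c-cond}, and the only small point requiring attention is the verification that the grading of $\gr(B_\bc)$ via $\deg(B_i)=1$ is compatible under $\varphi$ with the length grading on $U^-$ underlying the decomposition of the basis $\{F_J\}$, which is immediate from the definitions in \eqref{filt-B} and Section \ref{sec:Bc}.
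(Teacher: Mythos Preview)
Your argument is correct and follows exactly the approach the paper indicates: the paper simply states that the corollary is a consequence of the implication (3) $\Rightarrow$ (1) in Theorem \ref{thm:c-cond}, and you have spelled out the standard filtered/graded lifting argument that makes this precise. There is nothing to add.
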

\begin{rema}\label{rem:Iwasawa} One of the reasons for which the condition in Theorem \ref{thm:c-cond} is important is its relation to 
Iwasawa decompositions. The definition of the filtration \eqref{eq:cFBm-1} implies at once that for all $B_\bc$ the following statements are equivalent:
\begin{enumerate}
\item The map  $\varphi: \gr(B_\bc)\rightarrow H_\theta \ltimes U^-$ is an isomorphism.
\item The algebra $\Upoly$ admits the Iwasawa decomposition 
$$\Upoly \cong G^+ \otimes \field[K_i^{-1} \mid i \in I] \otimes B_\bc.$$
\item The algebra $U(\chi)$ admits the Iwasawa decomposition 
$$U(\chi) \cong U^+ \ot \field [K_i^{\pm 1} \mid i \in I_\tau] \ot B_\bc,$$
where $I_\tau \subset I$ is a set of representatives of the $\tau$-orbits in $I$.
\end{enumerate}   
\end{rema}

 \subsection{The negative Heisenberg double}\label{sec:Hmin}
 Recall the algebra $\Upoly$ defined at the beginning of Section \ref{sec:Heis}.
In this section we show that condition \eqref{assume:parameters} for $B_\bc$ to be of the right size can be verified in a simpler algebra which is closely related to quantum Weyl algebras. This fact will be applied extensively in Section \ref{sec:Examples}.
 
 The algebra $\Upoly$ has an $\N$-filtration $\cF$ defined by the following degree function on the generators
 \begin{align}\label{eq:deg-Hmin}
    \deg(\Etil_i)=\deg(F_i)=\deg(K_i^{-1})=1, \qquad \deg(K_i K_{\tau(i)}^{-1})=0
 \end{align}
 for all $i\in I$. It follows from the triangular decomposition \eqref{eq:Upoly-triang} that the multiplication map
 \begin{align}\label{eq:Fm-iso}
    \bigoplus_{{\alpha,\beta,\gamma\in \N^n}\atop{|\alpha+\beta+\gamma|\le m} } H_\theta\ot G_\alpha^+ \ot \field K_{-\beta} \ot U^-_{-\gamma} \rightarrow \cF_m(\Upoly) 
 \end{align}
 is a linear isomorphism for any $m\in \N$. With the notation
 \begin{align*}
    (G^+\ot \field[K_i^{-1}\,|\, i\in I]\ot U^-)_m= \bigoplus_{{\alpha,\beta,\gamma\in \N^n}\atop{|\alpha+\beta+\gamma|=m}} G_\alpha^+ \ot \field K_{-\beta} \ot U^-_{-\gamma}.
 \end{align*}
 the linear isomorphism \eqref{eq:Fm-iso} provides a direct sum decomposition
 \begin{align}\label{eq:Fm-sum}
    \cF_m(\Upoly)=\cF_{m-1}(\Upoly) \oplus \big(H_\theta \ot (G^+\ot \field[K_i^{-1}\,|\, i\in I]\ot U^-)_m\big).
 \end{align}
 We call the graded algebra
 \begin{align*}
    \Hmin=\mathrm{gr}_\cF(\Upoly)
 \end{align*}
 associated to the filtration $\cF$ of $\Upoly$ the {\em{negative Heisenberg double}} associated to the pre-Nichols algebra $U^+$. By \eqref{eq:Fm-sum} for any $m\in \N$ the graded component $\Hmin_m$ is a free $H_\theta$-module
 \begin{align*}
    \Hmin_m \cong H_\theta \ot (G^+\ot \field[K_i^{-1}\,|\, i\in I]\ot U^-)_m.
 \end{align*}  
 In particular $\Hmin_0\cong H_\theta$. The above also implies that $G^+$, $\field[K_\lambda\,|\, \lambda\in -\N^n+\Z_\theta^n]$ and $U^-$ are graded subalgebras of $\Hmin$ and that the multiplication map
 \begin{align}\label{eq:Heis-iso}
     G^+ \ot \field[K_\lambda\,|\, \lambda\in -\N^n+\Z_\theta^n]\ot U^- \rightarrow \Hmin
 \end{align}  
is a linear isomorphism.  The presentation of $\Upoly$ in Lemma \ref{lem:Upoly-rels} and the triangular decomposition \eqref{eq:Heis-iso} allow us to describe the negative Heisenberg double $\Hmin$ in terms of generators and relations.
 \begin{lem}\label{lem:Hmin-rels}
   The negative Heisenberg double $\Hmin$ is canonically isomorphic to the quotient of the free product of the algebras $G^+$, $\field[K_\lambda\,|\, \lambda\in \-\N^n+\Z_\theta^n]$ and $U^-$ by the relations \eqref{KEF} and the cross relations
   \begin{align}\label{eq:cross-Hmin}
       q_{ij}^{-1} \Etil_i F_j - F_j \Etil_i=-\delta_{ij} K_i^{-2} \qquad \mbox{for all $i,j\in I$}.
   \end{align}
 \end{lem}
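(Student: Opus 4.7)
The plan is to identify the algebra $\cA$ presented in the statement with the associated graded $\gr_\cF(\Upoly) = \Hmin$ by tracking each defining relation of $\Upoly$ from Lemma \ref{lem:Upoly-rels} through the filtration \eqref{eq:deg-Hmin}. Concretely, I would construct a canonical algebra map $\cA \to \Hmin$ sending the generators of $\cA$ to the principal symbols of the corresponding generators of $\Upoly$, and then show it is an isomorphism by comparing triangular decompositions on both sides.

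For the well-definedness of $\cA \to \Hmin$, the three factors $G^+$, $\field[K_\lambda \mid \lambda\in-\N^n+\Z^n_\theta]$ and $U^-$ embed canonically into $\Hmin$ via the linear isomorphism \eqref{eq:Heis-iso}, so all intra-factor relations are automatically satisfied. The cross relations of $\Upoly$ from Lemma \ref{lem:Upoly-rels} split into two types. The commutation relations \eqref{KEF} are filtration-homogeneous of degree $\deg(K_\lambda)+1$, and therefore descend unchanged to $\Hmin$. The relation \eqref{cross-D} is inhomogeneous: its left-hand side lies in filtration degree at most $2$, whereas the right-hand side $\delta_{ij}(1 - K_i^{-2})$ decomposes as the degree-$0$ scalar $\delta_{ij}$ plus the degree-$2$ term $-\delta_{ij} K_i^{-2}$. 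Passing to $\gr_\cF$ kills the scalar contribution and produces precisely the claimed cross relation \eqref{eq:cross-Hmin}.

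The induced map $\cA \to \Hmin$ is surjective since its image contains all algebra generators of $\Hmin$. For injectivity, I would use \eqref{KEF} together with the rewritten form $F_j \Etil_i = q_{ij}^{-1}\Etil_i F_j + \delta_{ij} K_i^{-2}$ of \eqref{eq:cross-Hmin} to bring every word in the generators of $\cA$ into a sum of PBW-ordered monomials $\Etil_\alpha K_\lambda F_\beta$. This yields a surjective linear map
\[
G^+ \ot \field[K_\lambda \mid \lambda \in -\N^n + \Z^n_\theta] \ot U^- \twoheadrightarrow \cA
\]
whose composition with $\cA \to \Hmin$ is the multiplication map \eqref{eq:Heis-iso}, already known to be a linear isomorphism. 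Consequently the surjection $\cA \to \Hmin$ is forced to be injective as well, giving $\cA \cong \Hmin$.

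The main technical point is verifying that the PBW straightening step truly defines a linear surjection from the triangular tensor product onto $\cA$, i.e., that the rewriting system associated with \eqref{KEF} and the reformulated cross relation is confluent. The only overlap ambiguities are monomials of the form $F_j K_\lambda \Etil_i$, and direct computation shows that resolving them along either path yields the same expression; the extra $K_i^{-2}$-terms agree because they only appear for $i=j$, in which case the intermediate scalars coincide automatically. This is a routine diamond-lemma check, but it is where one must be careful to ensure that no additional relations are hidden in $\cA$ beyond those listed.
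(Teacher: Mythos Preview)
Your argument is essentially the paper's own: build the map $\cA \to \Hmin$ from the presentation, note it is surjective, then produce a linear surjection $G^+ \ot \field[K_\lambda\mid \lambda\in -\N^n+\Z^n_\theta] \ot U^- \twoheadrightarrow \cA$ by straightening and observe that its composition with $\cA \to \Hmin$ is the known isomorphism \eqref{eq:Heis-iso}, forcing $\cA \to \Hmin$ to be injective.

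One correction to your final paragraph: the confluence check is not needed, and your ``i.e.'' there is a misidentification. Surjectivity of the straightening map $G^+ \ot \field[K_\lambda] \ot U^- \to \cA$ only requires that every monomial in the generators of $\cA$ can be rewritten as \emph{some} linear combination of ordered monomials; this follows from termination of the rewriting rules (each application of \eqref{KEF} or of the rearranged \eqref{eq:cross-Hmin} decreases the number of out-of-order pairs, the correction term $K_i^{-2}$ being shorter). Confluence would be needed if you wanted to show directly that the ordered monomials are linearly \emph{independent} in $\cA$, but that is exactly what you extract for free from the composition with the isomorphism \eqref{eq:Heis-iso}. So the diamond-lemma discussion can be dropped entirely; the paper's proof does not mention it.
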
  
 \begin{proof}
   Let ${\Hmin}'$ be the algebra described in the lemma. The algebra ${\Hmin}'$ is graded by the degree function \eqref{eq:deg-Hmin} because the defining relations for ${\Hmin}'$ are homogeneous. It follows from Lemma \ref{lem:Upoly-rels} that there is a surjective homomorphism of graded algebras
   \begin{align*}
     \varphi: {\Hmin}' \rightarrow \Hmin
   \end{align*}
   which maps $\Etil_i, K_{-\lambda}, F_i\in {\Hmin}'$ to $\Etil_i, K_{-\lambda}, F_i\in \Hmin$, respectively, for all $i\in I, \lambda\in -\N^n+\Z^n_\theta$. The defining relations for ${\Hmin}'$ imply that the multiplication map
   \begin{align*}
      \mu':\Hmin\cong G^+ \ot \field[K_\lambda\,|\, \lambda\in \-\N^n+\Z_\theta^n]\ot U^- \rightarrow {\Hmin}'
   \end{align*}
   is surjective where we use the triangular decomposition \eqref{eq:Heis-iso}.
   With this identification the composition $\varphi\circ\mu':\Hmin \rightarrow \Hmin$ is the identity map which implies that $\varphi$ is also injective.
 \end{proof}  
 We now show that condition \eqref{assume:parameters} can be verified in the negative Heisenberg double. Let $G_+^+$ and $U^-_+$ denote the augmentation ideals of $G^+$ and $U^-$, respectively. The triangular decomposition \eqref{eq:Heis-iso} of $\Hmin$ implies that
 \begin{align}\label{eq:Hmin00}
    \Hmin= \field[K_\lambda \mid \lambda \in - \N^n + \Z_\theta^n] \oplus \big( G^+_+ \Hmin + \Hmin U^-_+\big).
 \end{align}  
 Let $\pi_{0,0}^\vee:\Hmin\to \field[K_\lambda \mid \lambda \in - \N^n + \Z_\theta^n]$ denote the projection onto the first term in \eqref{eq:Hmin00}. For any $i\in I$ we define $B_i^\vee=F_i+c_i \Etil_{\tau(i)}(K_\tau(i)K_i^{-1})\in \Hmin$, and for any non-commutative polynomial $p(x_1,\dots,x_n)$ we write $p(\uB^\vee)=p(B_1^\vee,\dots,B_n^\vee)$.
 \begin{thm} \label{thm:Hmin}
Let $U^+$ be a pre-Nichols algebra of diagonal type corresponding to a bicharacter $\chi$. Let $p(x_1, \ldots, x_n)$ be a homogeneous, non-commutative polynomial of degree $\lambda \in \N^n$. Then
\begin{align}\label{eq:piPpi}
\pi_{0,0}\circ P_{- \lambda} (p(\uB)) = \pi^\vee_{0,0}(p(\uB^\vee)). 
\end{align}
Furthermore, if 
\begin{equation}
\label{no-good-deg}
\lambda \notin \oplus_{i \in I} \N ( \alpha_i + \alpha_{\tau(i)} ), 
\end{equation}
then $P_{- \lambda} \circ \pi_{0,0} (p(\uB)) = 0$ in $\Upoly$.
\end{thm}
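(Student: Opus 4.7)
The plan is to compute both sides of \eqref{eq:piPpi} by expanding the products $p(\uB)$ and $p(\uB^\vee)$ and bringing the resulting monomials to normal form in the respective triangular decompositions. The crucial observation is that $\Upoly$ and $\Heis(\chi)^\vee = \gr_\cF(\Upoly)$ share the same triangular form $G^+ \ot \field[K_\lambda \mid \lambda \in - \N^n + \Z^n_\theta] \ot U^-$ from \eqref{eq:Upoly-triang} and \eqref{eq:Heis-iso}, while their cross relations differ only by a filtration-degree-zero term: in $\Upoly$ one has $q_{ij}^{-1} \Etil_i F_j - F_j \Etil_i = \delta_{ij}(1 - K_i^{-2})$, whereas the ``$1$''-term is absent in $\Heis(\chi)^\vee$ by Lemma \ref{lem:Hmin-rels}. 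Moreover, since $\pi_{0,0}$ and $P_{-\lambda}$ commute on $U(\chi)$ and $\pi_{0,0}(p(\uB))$ lies in $H \cap \Upoly = \field[K_\lambda \mid \lambda \in - \N^n + \Z^n_\theta]$, the restriction of $\pi_{0,0}$ to $\Upoly$ agrees with the analog of $\pi_{0,0}^\vee$ defined via \eqref{eq:Upoly-triang}, and $\pi_{0,0} \circ P_{-\lambda}(p(\uB))$ is simply the $K_{-\lambda}$-component of this projection.

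By linearity I may assume that $p(x_1, \dots, x_n) = x_{i_1} \cdots x_{i_m}$ is a single monomial with $m = |\lambda|$. Writing $B_{i_k} = F_{i_k} + c_{i_k} \Etil_{\tau(i_k)} K_{\tau(i_k)} K_{i_k}^{-1}$ via $E_j = \Etil_j K_j$, both $p(\uB) \in \Upoly$ and $p(\uB^\vee) \in \Heis(\chi)^\vee$ admit the same formal expansion as a sum of $2^m$ monomials indexed by subsets $S \subseteq \{1, \dots, m\}$ of positions choosing the $\Etil$-summand. I would then bring each such monomial to normal form by moving all $\Etil$-factors to the left and all $F$-factors to the right using the respective cross relations. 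Since $\pi_{0,0}^\vee$ kills any term with a surviving $G^+$- or $U^-$-factor, only monomials in which every $\Etil$ pairs with a matching $F$ contribute to the projection; this forces $|S| = m/2$ and the existence of a bijection $\sigma : S \to S^c$ with $i_{\sigma(k)} = \tau(i_k)$.

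The core of the argument is the bookkeeping of $K$-exponents. For a pairing in which every contraction uses the $-K_i^{-2}$ term, the resulting $K$-monomial has exponent $\mu = -\sum_{k \in S}(\alpha_{i_k} + \alpha_{\tau(i_k)}) = -\lambda$, where the last equality uses the bijection $\sigma$ to rewrite $\sum_{k \in S} \alpha_{\tau(i_k)} = \sum_{l \in S^c} \alpha_{i_l}$. In $\Upoly$ there are additional pairings in which some contractions use the ``$1$''-term; a direct calculation shows that such a pairing produces a $K$-monomial of weight $\mu = -\lambda + 2 \sum_k \alpha_{\tau(i_k)}$ summed over the $1$-contractions, which lies strictly above $-\lambda$ in $\Z^n$ and is therefore annihilated by $P_{-\lambda}$. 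The remaining pure $-K_i^{-2}$-contributions match those computed in $\Heis(\chi)^\vee$ term by term (the $q$-scalars produced by the commutations depend only on the indices being swapped), which gives \eqref{eq:piPpi}.

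For the final statement, the same bookkeeping forces $\lambda = \sum_{k \in S}(\alpha_{i_k} + \alpha_{\tau(i_k)})$ for any contributing pairing, hence $\lambda \in \bigoplus_{i \in I} \N(\alpha_i + \alpha_{\tau(i)})$. If $\lambda$ is not in this submonoid, then no pairing contributes and $\pi_{0,0}^\vee(p(\uB^\vee)) = 0$; the identity \eqref{eq:piPpi} together with the commutativity of $\pi_{0,0}$ with $P_{-\lambda}$ then gives $P_{-\lambda} \circ \pi_{0,0}(p(\uB)) = 0$. The main obstacle is the careful combinatorial bookkeeping of the $K$-exponents and the $q$-scalars through the normal-form procedure, in particular verifying that each use of the ``$1$'' term strictly raises the $K$-exponent away from $-\lambda$ so that such terms are uniformly filtered out by $P_{-\lambda}$.
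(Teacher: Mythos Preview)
Your approach is correct but takes a different, more hands-on route than the paper. The paper avoids entirely the Wick-type combinatorial bookkeeping that you flag as ``the main obstacle'' by exploiting the associated-graded structure directly. Its key observation is that $\cF_{m-1}(\Upoly)\subseteq \ker(P_{-\lambda})$: any element of $H_\theta\cdot G^+_\alpha K_{-\beta} U^-_{-\gamma}$ with $|\alpha+\beta+\gamma|<m=|\lambda|$ lies in $U^+ K_{\mu} G^-$ for some $\mu$ with $|\mu|<m$ (using $|\nu|=0$ for $\nu\in\Z^n_\theta$), hence $\mu\neq -\lambda$. Therefore $\pi_{0,0}\circ P_{-\lambda}$ factors through $\Hmin_m=\cF_m(\Upoly)/\cF_{m-1}(\Upoly)$, where it equals $\pi^\vee_{0,0}\circ P^\vee_{-\lambda}$; since $B_i^\vee$ is homogeneous of degree $-\alpha_i$ in the $\Hmin$-grading, $p(\uB^\vee)$ already sits in degree $-\lambda$, so $P^\vee_{-\lambda}$ is the identity on it and \eqref{eq:piPpi} follows in one line from the commutative diagram.

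Your direct computation is essentially this argument unpacked at the level of monomials: the ``$1$'' in the cross relation \eqref{cross-D} is precisely the filtration-lower piece that disappears in $\gr_\cF$, and your verification that each use of ``$1$'' shifts the $K$-exponent by $+2\alpha_{\tau(i_k)}$ (away from $-\lambda$) is the explicit content of $\cF_{m-1}(\Upoly)\subseteq\ker(P_{-\lambda})$. What your approach buys is a concrete formula-level understanding of which terms survive; what the paper's approach buys is brevity and the complete elimination of normal-ordering combinatorics, since the single filtration inclusion carries all the weight. For the second statement the two arguments are essentially the same.
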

\begin{proof}
   By Lemma \ref{lem:Hmin-rels} the negative Heisenberg double is $-\N+\Z^n_\theta$ graded by the degree function given by
   \begin{align*}
     \deg(\Etil_i)=\deg(F_i)=\deg(K_i^{-1})=-\alpha_i, \qquad \deg(K_iK_{\tau(i)}^{-1})=\alpha_i-\alpha_{\tau(i)}
   \end{align*}
   for all $i\in I$.  
   For any $\mu \in  \N^n+\Z^n_\theta$ let $P^\vee_{-\mu}:\Hmin \rightarrow \Hmin_{-\mu}$ be the projection onto the graded component $\Hmin_{-\mu}$.

   Let $\lambda=\sum_{i\in I} m_i\alpha_i\in \N^n$ and set $m=|\lambda|=\sum_{i\in I} m_i$. As $\cF_{m-1}(\Upoly)\subseteq \mathrm{Ker}(P_{-\lambda})$ we obtain a commutative diagram
 \begin{align}\label{eq:PP-diag}
     \xymatrix{
     \cF_{m-1}(\Upoly) \ar@{^{(}->}[r]& \cF_{m}(\Upoly) \ar[d]_{\pi_{0,0}^{\phantom{\vee}}\circ P_{-\lambda}} \ar[r]& \Hmin_m \ar[dl]^{\pi^\vee_{0,0}\circ P^\vee_{-\lambda}}\\
     & \field K_{-\lambda} & }
\end{align}
Let now $p(x_1,\dots,x_n)$ be a homogeneous non-commutative polynomial of degree $\lambda$. As $B_i^\vee\in \Hmin_{-\alpha_i}$ the element $p(\uB^\vee)\in \Hmin_m$ is homogeneous of degree $-\lambda$ and hence $\pi_{0,0}^\vee \circ P^\vee_{-\lambda} (p(\uB^\vee))=\pi_{0,0}^\vee (p(\uB^\vee))$. The relation \eqref{eq:piPpi} now follows from the commutativity of the diagram \eqref{eq:PP-diag}.

To prove the second statement in the theorem write $p(\uB)$ as a linear combination of noncommutative monomials in $F_i$ and $\Etil_{i} (K_i K_{\tau(i)}^{-1})$ for $i \in I$. Here we distribute parenthesis, but do not commute the $\Etil$ and $F$ generators. If \eqref{no-good-deg} holds, 
then there is no monomial of this kind that contains equal number of 
terms $F_i$ and $\Etil_{i} (K_{i} K_{\tau(i)}^{-1})$ for all $i \in I$. It follows from the cross relations \eqref{eq:cross-Hmin} that in this case 
\[
\pi^\vee_{0,0} (p(\uB^\vee))=0.
\]
Now the second statement of the theorem follows from the relation \eqref{eq:piPpi}.
\end{proof}

\section{Examples of coideal subalgebras}\label{sec:Examples}
We now consider various classes of pre-Nichols algebras $U^+$ which fall into the setting of Section \ref{sec:size}. In each case, using Theorems \ref{thm:c-cond} and \ref{thm:Hmin}, we determine all parameters $\bc$ for which the map $\varphi:\gr(B_\bc)\rightarrow H_\theta\ltimes U^-$ given by \eqref{eq:varphi} is an isomorphism.
It is convenient to work with non-symmetric quantum integers. Given $\xi \in \field$, set $[n]_\xi = 1 + \xi + \cdots + \xi^{n-1}$ and  
\begin{align*}
[n]_q! &= [n]_\xi \ldots [1]_\xi,& [2n -1]_\xi !! &= [2n-1]_\xi [2n-3]_\xi \ldots [1]_\xi
\end{align*}
for $n \in \N$, and
\[
\begin{pmatrix}
n
\\
k
\end{pmatrix}_\xi =
\frac{[n]_\xi!}{[k]_\xi![n-k]_\xi!}
\] 
for $0 \leq k \leq n$. Note that the $\xi$-binomial coefficient is a polynomial in $\Z[\xi]$ and 
therefore defined even for roots of unity.
\subsection{Quantized universal enveloping algebras and nonrestricted specializations}
\label{sec:large}
Let $\gfrak$ be a symmetrizable Kac--Moody algebra with (generalized) Cartan matrix $(a_{ij})_{i,j \in I}$ 
where $I =\{1, \ldots, n\}$. Denote by 
$\{d_i \mid i \in I\}$ a set of relatively prime positive integers such that the matrix $(d_i a_{ij})$ is symmetric.
Let $\gfrak':= [\gfrak,\gfrak]$ be the derived subalgebra of $\gfrak$. 
Fix $\zeta \in \field^\times$, $\zeta \neq \pm 1$. Denote by $U_\zeta(\gfrak')$ the $\field$-algebra
with generators $E_i, F_i, K_i^{\pm 1}$, $i \in I$ and the following relations for $i, j \in I$:
\begin{align}
&K_i K_j = K_j K_i, \quad K_i E_j = \zeta^{d_i a_{ij}} E_j K_i, \quad K_i F_j = \zeta^{-d_i a_{ij}} F_j K_i,\nonumber 
\\
&E_i F_j - F_j E_i = \delta_{ij} (K_i - K_i^{-1}), \label{eq:uqg-rels}
\\
&p_{ij}(E_i, E_j) = p_{ij} (F_i, F_j) = 0, \; \; i \neq j,\nonumber
\end{align}
where $p_{ij}(x,y)$ are the noncommutative polynomials in two variables given by
\[
p_{ij}(x,y) = \sum_{k=0}^{1-a_{ij}} (-1)^k \zeta^{-d_i k (1- a_{ij} -k)}
\begin{pmatrix}
1 - a_{ij}
\\
k
\end{pmatrix}_{\zeta^{2 d_i}}
x^{1- a_{ij} -k} y x^k.
\]
In the case when $\zeta$ is not a root of unity, $U_\zeta(\gfrak')$ is the quantized universal enveloping algebra of $\gfrak'$ 
for the deformation parameter $\zeta$. If $\zeta$ is a root of unity, then $U_\zeta(\gfrak')$ is the big quantum group of $\gfrak'$ at $\zeta$, 
defined and studied by De Concini, Kac and Procesi \cite{a-DKP}. In either case
$U_\zeta(\gfrak')$ is a Hopf algebra with coproduct given by
\[
\Delta(K_i) = K_i \ot K_i, \quad \Delta(E_i) = E_i \ot 1 + K_i \ot E_i, 
\quad \Delta(F_i) = F_i \ot K_i^{-1} + 1 \ot F_i
\]
for $i \in I$. Denote by $U^\pm$ the unital $\field$-subalgebras of $U_\zeta(\gfrak')$
generated by $\{E_i \mid i \in I \}$ and $\{F_i \mid i \in I \}$, respectively.
Set $H = \field [K_i^{\pm 1} \mid i \in I ]$. Consider the symmetric bicharacter
\[
\chi : \Z^n \times \Z^n \to \field^\times \quad \mbox{defined by} \quad
\chi(\alpha_i, \alpha_j) = \zeta^{d_i a_{ij}}.
\]
If $\zeta \in \field^\times$ is not a root of unity, then $U^+$ is isomorphic to the Nichols algebra of the 
Yetter--Drinfeld module $V(\chi)$. 
If $\zeta \in \field^\times$ is a root of unity and $\gfrak$ is finite dimensional (and $\zeta^3 \neq 1$ if $\gfrak$ is of type $G_2$), then $U^+$ 
is isomorphic to the distinguished pre-Nichols algebra of $V(\chi)$ defined by Angiono \cite[Definition 1]{a-Ang16}.
For all $\zeta \in \field^\times\setminus \{\pm 1\}$ and symmetrizable Kac--Moody algebras $\gfrak$, the algebra
$U^+$ is a pre-Nichols algebra of $V(\chi)$ and $U_\zeta(\gfrak') \cong U(\chi)$ is the Drinfeld double of $U^+$ in the sense of Remark \ref{rem:DD}.
Thus the constructions from the previous section are applicable to $U_\zeta(\gfrak')$. 

Let $\tau : I \to I$ be a diagram automorphism, that is, it satisfies $a_{\tau(i) \tau(j)} = a_{ij}$ for all $i, j \in I$. Given $\bc=(c_1,\dots c_n)\in \field^n$, consider the coideal subalgebra $B_\bc$ of $U_\zeta(\gfrak')$
generated by the elements
\[
     B_i=F_i + c_i E_{\tau(i)} K_i^{-1} = F_i + c_i \Etil_{\tau(i)} (K_{\tau(i)} K_i^{-1}), 
     \quad K_i K^{-1}_{\tau(i)} \qquad \mbox{for all $i\in I$.}
\]
In the case when $\zeta$ is not a root of unity, the following result is contained in \cite[Lemma 5.4, Proposition 5.16 and Theorem 7.3]{a-Kolb14}, see also \cite[Section 7]{MSRI-Letzter} for a similar discussion for $\gfrak$ of finite type.
\begin{prop} \label{prop:non-restrict}
  Let $\gfrak$ be a symmetrizable Kac--Moody algebra, $\zeta \in \field^\times\setminus \{\pm 1\}$, and let $\tau : I \to I$ be a diagram automorphism.
\begin{enumerate}
\item[(i)] If $a_{ij} \neq 0$ or $\tau(i) \neq j$, then $\pi_{0,0}\circ P_{-\lambda}(p_{ij}(B_i,B_j))=0$ for $\lambda = (1- a_{ij}) \alpha_i + \alpha_j$.
If $a_{ij} = 0$ and $\tau(i) = j$, then 
\begin{align}\label{eq:Palphalph}
P_{- \alpha_i - \alpha_j }\circ \pi_{0,0}(p_{ij}(B_i,B_j))= (c_j - c_i) K_i^{-1} K_j^{-1}.
\end{align}
\item[(ii)] For the coideal subalgebra $B_\bc$ of $U_\zeta(\gfrak')$ the map  
$\varphi: \gr(B_\bc)\rightarrow H_\theta \ltimes U^-$ is an algebra isomorphism if and only if $c_i = c_{\tau(i)}$ for all $i\in I$ with  $a_{i \tau(i)}=0$. 
\end{enumerate}
\end{prop}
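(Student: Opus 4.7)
The plan is to use Theorem \ref{thm:Hmin} to reduce the computations to the negative Heisenberg double $\Hmin$, and then to invoke Theorem \ref{thm:c-cond} to deduce the equivalence in part (ii). Throughout, I will use that the two projections $\pi_{0,0}$ and $P_{-\lambda}$ commute: refining both \eqref{eq:Uchi-UKG} and \eqref{eq:Uchi-UHU} to the common decomposition $U(\chi) = \bigoplus_{\alpha, \beta \in \N^n,\, \nu \in \Z^n} U^+_\alpha K_\nu U^-_{-\beta}$, both compositions $\pi_{0,0}\circ P_{-\lambda}$ and $P_{-\lambda} \circ \pi_{0,0}$ extract the component with $\alpha=\beta=0$ and $\nu = -\lambda$.

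For part (i), set $\lambda = (1 - a_{ij})\alpha_i + \alpha_j$. The first sub-case is handled by \eqref{no-good-deg}: a short check on the coefficient of $\alpha_j$ shows that $\lambda$ lies in $\bigoplus_k \N(\alpha_k + \alpha_{\tau(k)})$ precisely when $a_{ij} = 0$ and $\tau(i) = j$ with $i \neq j$ (indeed the generators $\alpha_k + \alpha_{\tau(k)}$ contribute either $2\alpha_k$ or $\alpha_k + \alpha_{\tau(k)}$ with equal coefficients, so the coefficient $1$ on $\alpha_j$ can only arise from the generator $\alpha_j + \alpha_{\tau(j)}$ with $\tau(j) = i$). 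In all other cases Theorem \ref{thm:Hmin} gives $\pi_{0,0} \circ P_{-\lambda}(p_{ij}(\uB)) = 0$.

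The remaining diagonal case $a_{ij} = 0$, $\tau(i) = j$ requires an explicit computation. Here $p_{ij}(x,y) = xy - yx$, and by \eqref{eq:piPpi} it suffices to evaluate $\pi_{0,0}^\vee([B_i^\vee, B_j^\vee])$ in $\Hmin$ with $B_i^\vee = F_i + c_i \Etil_j (K_j K_i^{-1})$ and $B_j^\vee = F_j + c_j \Etil_i (K_i K_j^{-1})$. The pure $FF$-terms and pure $\Etil\Etil$-terms project to zero. Using the cross relation \eqref{eq:cross-Hmin} combined with \eqref{KEF} and $a_{ij} = a_{ji} = 0$, one obtains $\Etil_i (K_i K_j^{-1}) F_i = F_i \Etil_i (K_i K_j^{-1}) - K_i^{-1} K_j^{-1}$ and the analogous identity with $i,j$ swapped. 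Summing the two cross-term contributions in the commutator yields $\pi_{0,0}^\vee([B_i^\vee, B_j^\vee]) = (c_j - c_i) K_i^{-1} K_j^{-1}$, which proves \eqref{eq:Palphalph}.

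For part (ii), the defining ideal $\cI \subset T(V^+(\chi))$ of the pre-Nichols algebra $U^+$ is generated by the quantum Serre polynomials $p_{ij}(\uE)$ for $i \neq j$. By part (i), the quantities $\pi_{0,0} \circ P_{-\lambda_{ij}}(p_{ij}(\uB))$ vanish except when $a_{ij} = 0$ and $\tau(i) = j$, in which case they equal $(c_j - c_i) K_i^{-1} K_j^{-1}$. Hence condition \eqref{assume:parameters} holds if and only if $c_i = c_{\tau(i)}$ for every $i$ with $\tau(i) \neq i$ and $a_{i\tau(i)} = 0$; the case $\tau(i) = i$ is vacuous since then $a_{i\tau(i)} = 2$. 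The equivalence with the Iwasawa property of $\varphi$ then follows from Theorem \ref{thm:c-cond}. The main obstacle I anticipate is the combinatorial identification of the degrees $\lambda$ to which \eqref{no-good-deg} applies; the commutator computation itself is clean because \eqref{eq:cross-Hmin} in $\Hmin$ lacks the positive-$K$ correction present in the corresponding relation \eqref{cross-D} of $U(\chi)$.
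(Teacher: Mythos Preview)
Your proposal is correct and follows essentially the same approach as the paper's proof: both reduce to the negative Heisenberg double via Theorem \ref{thm:Hmin}, invoke condition \eqref{no-good-deg} for the generic case, compute the commutator explicitly when $a_{ij}=0$ and $\tau(i)=j$, and then deduce part (ii) from Theorem \ref{thm:c-cond}. Your explicit observation that $\pi_{0,0}$ and $P_{-\lambda}$ commute is a helpful addition, since the statement of the proposition uses both orderings while Theorem \ref{thm:Hmin} is phrased with a fixed one.
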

\begin{proof} (i) We work in the corresponding negative Heisenberg double, which we denote by $\mathrm{Heis}_\zeta(\gfrak')\spcheck$,
and apply Theorem \ref{thm:Hmin} to get the statement in $U_\zeta(\gfrak')$.

Let $i \neq j \in I$. If $a_{ij} \neq 0$ or $\tau(i) \neq j$, then $\lambda = (1- a_{ij}) \alpha_i + \alpha_j$ satisfies \eqref{no-good-deg}, 
and by the second part of Theorem \ref{thm:Hmin} we have 
\[
\pi_{0,0}\circ P_{-\lambda}(p_{ij}(B_i,B_j))=0
\]
in this case.

Now assume that $a_{ij} = 0$ and $\tau(i) = j$. Then in the notation of Section \ref{sec:Hmin} we have
\begin{align*}
&\pi_{0,0}^\vee(p_{ij}(B_i^\vee,B_j^\vee)) = \pi_{0,0}^\vee\Big( (F_i + c_i \Etil_j (K_i K_j^{-1})^{-1})
(F_j + c_j  \Etil_i (K_j K_i^{-1})^{-1}) 
\\
&- (F_j + c_j  \Etil_i (K_j K_i^{-1})^{-1}) (F_i + c_i  \Etil_j (K_i K_j^{-1})^{-1}) \Big) = (c_j - c_i) K_i^{-1} K_j^{-1}
\end{align*}
in  $\mathrm{Heis}_\zeta(\gfrak')\spcheck$. Hence Theorem \ref{thm:Hmin} implies \eqref{eq:Palphalph}.
Part (ii) follows from the first part and Theorem \ref{thm:c-cond}.
\end{proof} 
\subsection{The small quantum group $\mathfrak{u}_\zeta(\slfrak_3)$}\label{sec:small}
Consider the Nichols algebra of type $A_2$ at a root of unity. For this we fix an integer $N >2$ and set
\begin{equation}
\label{MN}
M:= \frac{N}{\gcd(N,2)} \cdot
\end{equation}
Let $\zeta$ be a primitive $N$-th root of unity and $\chi : \Z^2 \times \Z^2 \to \field^\times$ be the 
symmetric bicharacter defined by 
\begin{align*}
  q_{11}=q_{22}=\zeta^2, \qquad q_{12}=q_{21}=\zeta^{-1}.
\end{align*}
The Nichols algebra $B(V^+(\chi))$ is an algebra in ${}^{H}_{H} \cYD$ with braiding $c$, and it is generated by elements $x_1,x_2$. Recall that the braided commutator is defined by $[x,y]_c=\mu\circ(\mathrm{id}-c)(x\ot y)$ for all $x,y\in B(V^+(\chi))$ where $\mu$ denotes multiplication. Set $x_{12}=[x_1,x_2]_c=x_1x_2-\zeta^{-1}x_2x_1$. With this notation defining  relations for $B(V^+(\chi))$ are given by \cite[Equation (4.5)]{a-AAReview}
\begin{align*}
  x_1^M=x_2^M=x_{12}^M=0, \qquad [x_1,[x_1,x_2]_c]_c=0=[x_2,[x_2,x_1]_c]_c.
\end{align*}
Denote by $u_\zeta(\slfrak_3)$ the Drinfeld double of $B(V^+(\chi))$. Its factor by the ideal generated by $K_i^N-1$ for $i=1,2$ is isomorphic to the small quantum group $\mathfrak{u}_\zeta(\slfrak_3)$ of type $A_2$. 

Consider the diagram automorphism $\tau$ given by $\tau(1)=2$, $\tau(2)=1$. It follows from Theorem \ref{thm:Hmin} that the only relation which gives a condition on the parameters $c_1, c_2$ of the coideal subalgebra is the relation $x_{12}^M=0$ because the other four relations are homogeneous of a degree $\lambda$ which satisfies \eqref{no-good-deg}.
This relation gives a condition for any integer $N$ (even or odd!). Recall that
\begin{equation}
  \label{B12}
  \begin{aligned}
  B_1&=  F_1+c_1 E_2 K_1^{-1} = F_1+c_1 \Etil_2 (K_1 K_2^{-1})^{-1},
  \\
  B_2&=F_2+c_2 E_1 K_2^{-1} = F_2+c_2 \Etil_1 (K_2 K_1^{-1})^{-1}.
  \end{aligned}
\end{equation}
We define a non-commutative polynomial $p(x_1,x_2)$ by
\begin{align*}
  p(x_1,x_2)=(x_1 x_2 - \zeta^{-1} x_2 x_1)^M.
\end{align*}
Note that  $p(x_1,x_2)$ is homogeneous of degree $\lambda=(M,M)\in \Z^2$.
\begin{prop} 
\label{prop:A2} Let $N\in \N$ with $N\ge 2$ and let $\zeta \in \field$ be a primitive $N$-th root of unity. Let $M$ be given by \eqref{MN}.
\begin{enumerate}
\item[(i)] In the quantum double $u_\zeta(\slfrak_3)$ of the Nichols algebra of type $A_2$ corresponding 
to the root of unity $\zeta$, we have
\[
\big[ \pi_{0,0}\circ P_{-\lambda}(p(B_1,B_2)) \big] K_\lambda = 
\begin{cases}
c_2^M + c_1^M, & \mbox{if} \; \; N \equiv 2 \mod 4
\\
c_2^M - c_1^M, &\mbox{otherwise}.
\end{cases}
\]

\item[(ii)] For the coideal subalgebra $B_\bc$ of $u_\zeta(\slfrak_3)$ the map  
$\varphi: \gr(B_\bc)\rightarrow H_\theta \ltimes U^-$ is an algebra isomorphism
if and only if
\[
c_1 = \upsilon c_2
\]
where $\upsilon \in \field$ is such that $\upsilon^M =-1$ if $N \equiv 2 \mod 4$, 
and $\upsilon^M=1$, otherwise.
\end{enumerate}
\end{prop}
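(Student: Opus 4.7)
The plan is first to deduce (ii) from (i) and Theorem \ref{thm:c-cond}. The Nichols algebra of type $A_2$ at $\zeta$ has five defining relations: $x_i^M=0$ for $i\in\{1,2\}$, $x_{12}^M=0$, and two quantum Serre relations of degrees $2\alpha_i+\alpha_j$. Among these five relation degrees, only $M(\alpha_1+\alpha_2)$ lies in $\bigoplus_i\N(\alpha_i+\alpha_{\tau(i)})=\N(\alpha_1+\alpha_2)$; the other four fail condition \eqref{no-good-deg}, so by the second half of Theorem \ref{thm:Hmin} their associated expressions $\pi_{0,0}\circ P_{-\lambda}(p(\uB))$ vanish automatically. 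Hence by Theorem \ref{thm:c-cond}, $\varphi$ is an isomorphism if and only if $\pi_{0,0}\circ P_{-\lambda}(p(B_1,B_2))=0$ for $\lambda=M(\alpha_1+\alpha_2)$ and the polynomial $p(x_1,x_2)=(x_1x_2-\zeta^{-1}x_2x_1)^M$; granting (i), this translates to $c_2^M\pm c_1^M=0$ with the sign as stated, which is exactly the condition on $\upsilon$.

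For (i), the first step is to invoke the first part of Theorem \ref{thm:Hmin} to reduce the calculation to the negative Heisenberg double $\Hmin$: it suffices to compute $\pi_{0,0}^\vee(p(B_1^\vee,B_2^\vee))$, where $B_i^\vee=F_i+c_i\phi_i$ with $\phi_i:=\Etil_{\tau(i)}(K_{\tau(i)}K_i^{-1})$. Using the relations of $\Hmin$ from Lemma \ref{lem:Hmin-rels} I would next verify two key commutation facts. First, $\phi_1\phi_2-\zeta^{-1}\phi_2\phi_1$ is, up to a scalar and a $K$-factor, the braided commutator $\Etil_{12}=\Etil_1\Etil_2-\zeta^{-1}\Etil_2\Etil_1$, which satisfies $\Etil_{12}^M=0$ in $G^+$ since $\tau(\cI)=\cI$. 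Second, the cross-product $F_i\phi_j-s_{ij}\phi_jF_i$ (for a suitable braiding scalar $s_{ij}$) contributes to $\pi_{0,0}^\vee$ only when $j=\tau(i)$, via the single non-trivial relation $\Etil_iF_i=\zeta^2F_i\Etil_i-\zeta^2K_i^{-2}$.

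Let $Y_{12}:=B_1^\vee B_2^\vee-\zeta^{-1}B_2^\vee B_1^\vee$. Expanding $Y_{12}$ into the four bilinear pieces in $(F_i,\phi_j)$ and applying the identities above, $Y_{12}$ takes the form $F_{12}+c_1c_2\,\gamma\,\Etil_{12}K_1^{-1}K_2^{-1}+\alpha(\bc)K_1^{-1}K_2^{-1}$ for a scalar $\gamma$ and an $\alpha(\bc)\in\field$ linear in $c_1,c_2$; the $M=1$ case $\zeta=-1$ directly gives $\alpha(\bc)=c_1+c_2$, which serves as a sanity check against the formula. To pass to $Y_{12}^M$ I would apply a noncommutative binomial expansion using the commutation relations between $F_{12}$, $\Etil_{12}$ and the $K$-factors. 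Because $F_{12}^M=\Etil_{12}^M=0$, only monomials with matched numbers of $F$- and $\Etil$-generators survive $\pi_{0,0}^\vee$, and each matched pair must be collapsed via $\Etil_iF_i=\zeta^2F_i\Etil_i-\zeta^2K_i^{-2}$.

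The main obstacle is the final combinatorial identity, which should show that the mixed monomials $c_1^ac_2^b$ with $0<a<M$ cancel, leaving only the pure $c_1^M$ and $c_2^M$ contributions, combined with the sign $(-\zeta)^M$. That sign equals $+1$ precisely when $N\equiv 2\pmod 4$ and $-1$ otherwise: for $N$ odd, $M=N$ and $(-\zeta)^M=-\zeta^N=-1$; for $N\equiv 2\pmod 4$, $M=N/2$ is odd and $\zeta^{N/2}=-1$ giving $(-\zeta)^M=1$; for $N\equiv 0\pmod 4$, $M=N/2$ is even and $\zeta^M=-1$ giving $(-\zeta)^M=-1$. The identity itself should reduce to a $\zeta$-binomial sum at a primitive $N$-th root of unity, and as indicated in the introduction this is naturally expressible through the Al-Salam--Carlitz I polynomials $U_M^{(a)}(x;q)$, which provide a closed form for the coefficients emerging from the expansion of $Y_{12}^M$.
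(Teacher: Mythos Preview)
Your derivation of (ii) from (i) via Theorem \ref{thm:c-cond} and the second part of Theorem \ref{thm:Hmin} is correct and matches the paper. The reduction of (i) to the negative Heisenberg double via Theorem \ref{thm:Hmin} is also the right first move.

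The genuine gap is in your decomposition of $Y_{12}=B_1^\vee B_2^\vee-\zeta^{-1}B_2^\vee B_1^\vee$. Expanding the four bilinear pieces and using the cross relation \eqref{eq:cross-Hmin} one finds
\[
Y_{12}=F_{12}+c_1c_2\,\overline{E}_{21}+(\zeta-\zeta^{-1})c_2\,\overline{E}_1F_1+(c_2-\zeta^{-1}c_1)K_{12}^{-1},
\]
where $\overline{E}_i=\Etil_i K_iK_{\tau(i)}^{-1}$ and $\overline{E}_{21}=\overline{E}_2\overline{E}_1-\zeta^{-1}\overline{E}_1\overline{E}_2$. The term $(\zeta-\zeta^{-1})c_2\,\overline{E}_1F_1$ is \emph{not} a scalar times $K_{12}^{-1}$; it lies in $G^+_+U^-_+$ and does not collapse under $\pi_{0,0}^\vee$ until one takes the full $M$-th power. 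Your claimed form for $Y_{12}$ omits exactly this term, and your $M=1$ sanity check misses the problem because $(\zeta-\zeta^{-1})=0$ when $\zeta=-1$. For $N>2$ this extra summand has nontrivial commutation with both $F_{12}$ and $\overline{E}_{21}$ (indeed $[F_{12},\overline{E}_{21}]$ itself produces a further $\overline{E}_1F_1K_{12}^{-1}$ contribution), so the three-term ``noncommutative binomial expansion'' you propose does not organize the computation of $Y_{12}^M$.

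The paper handles this by observing that the four elements $X=c_1c_2\overline{E}_{21}$, $Y=F_{12}$, $Z_1=(\zeta-\zeta^{-1})c_2\overline{E}_1F_1+(c_2-\zeta^{-1}c_1)K_{12}^{-1}$ and $Z_2=YX-\zeta^{-2}XY$ satisfy the relations of a quantum Weyl algebra $A_1^{\zeta^2}$, giving a homomorphism $\rho:A_1^{\zeta^2}\to\Hmin$ with $Y_{12}=\rho(X+Y+Z_1)$. One then computes $(X+Y+Z_1)^M$ modulo the left ideal $A_1^{\zeta^2}Y$ via the natural representation of $A_1^{\zeta^2}$ on polynomials, which is exactly where the recursion for the Al-Salam--Carlitz polynomials (Lemma \ref{lem:A2}) arises. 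Applying $\pi_{0,0}^\vee$ then kills the $\overline{E}_1F_1$ pieces in $\rho(Z_1)$ and $\rho(Z_2)$, leaving an evaluation $U_M^{(-\zeta^{-1}c_1/c_2)}(0;\zeta^2)$ that collapses to $c_2^M\pm c_1^M$ because $\zeta^2$ is a primitive $M$-th root of unity. Your sign analysis of $(-\zeta)^M$ is correct and is indeed the final step.
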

For example, when $N=4$ we have $\zeta = \sqrt{-1}$. Then  
\[
\pi_{0,0}\circ P_{-\lambda}(p(B_1,B_2)) = ( c_2^2 - c_1^2) K_\lambda 
\]
and $B_\bc\subset u_{\sqrt{-1}}(\slfrak_3)$ is of the right size if and only if $c_2 = \pm c_1$.

In the proof of the proposition we will use the Al-Salam-Carlitz I discrete orthogonal polynomials $U_n^{(a)}(x; q)$, see \cite{a-AlCa65} and \cite[pp. 534-537]{b-KoLeSw00}. 
They have been used in the related setting of the $q$-harmonic oscillator in \cite{a-ASu93}. 
From an algebraic point of view $U_n^{(a)}(x; q) \in \Z[a,q,x]$ is given by 
\[
U_n^{(a)} (x;q) = \sum_{k=0}^n 
\begin{pmatrix} 
n \\k
\end{pmatrix}_q (-a)^k q^{k(k-1)/2} (x-1) \ldots (x - q^{n-k-1}). 
\]
The Al-Salam-Carlitz I polynomials satisfy the backward shift recursion 
\begin{align}\label{eq:ASC-backward}
- q^{-n+1}x U_n^{(a)}(x; q) = a  U_{n-1}^{(a)}(x; q) - (x-1)(x-a) U_{n-1}^{(a)}(q^{-1}x; q)
\end{align}
for all $n>0$, see \cite[Eq. (14.24.8)]{b-KoLeSw00}.
Consider the $q$-derivative ${\mathscr{D}}_q f (x) = (f(qx) - f(x))/((q-1)x)$ for $f(x) \in \field[x]$. 
The recursion \eqref{eq:ASC-backward} implies the following lemma. The proof is left to the reader.
\begin{lem} \label{lem:A2}
  Consider the polynomials $p_n(x;t,q)\in \Z[t,q^{\pm 1}, x]$ defined recursively by
  \begin{align}\label{p-recursion}
    p_0(x;t,q)&=1, & p_n(x;t,q) = (x + t q^{-2n} {\mathscr{D}}_q + q^{-n}) p_{n-1}(x; t, q), \; \; \forall n >0.
  \end{align}
  Consider $\Z[t,q^{\pm 1},x]$ as a subring of $\Z[t_1^{\pm 1}, q^{\pm 1},x]$ via the map $t\mapsto (q-1)t_1(t_1+1)$. Then in $\Z[t_1^{\pm 1}, q^{\pm 1},x]$ we have
  \begin{align*}
p_n(x; t, q)  = t_1^n q^{- n^2} U_n^{(-t_1^{-1}-1)} (q ^n t_1^{-1} x; q) 
  \end{align*}
for all $n\in\N$. 
\end{lem}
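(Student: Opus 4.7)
The plan is to proceed by induction on $n$. The base case $n = 0$ is immediate since both sides equal $1$. For the inductive step, set $a = -t_1^{-1} - 1$ and $y = q^n t_1^{-1} x$, so that the argument appearing in the inductive hypothesis equals $q^{n-1} t_1^{-1} x = q^{-1} y$. Substituting the inductive hypothesis into the recursion \eqref{p-recursion} and using the chain rule ${\mathscr{D}}_q[g(\alpha x)] = \alpha ({\mathscr{D}}_q g)(\alpha x)$ with $\alpha = q^{n-1} t_1^{-1}$, we obtain $p_n(x;t,q)$ as a sum of three terms involving $U_{n-1}^{(a)}(q^{-1} y;q)$, $y\, U_{n-1}^{(a)}(q^{-1} y;q)$, and $({\mathscr{D}}_q U_{n-1}^{(a)})(q^{-1} y;q)$.

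After factoring out the common power $t_1^{n-1} q^{-n^2 + n - 1}$ and applying the substitution $t = (q-1) t_1 (t_1 + 1)$ to the derivative term, I would multiply both sides of the desired identity by $y$. The key algebraic fact at this stage is the identity $(q-1) q^{-1} y\, ({\mathscr{D}}_q f)(q^{-1} y) = f(y) - f(q^{-1} y)$, which follows at once from the definition of ${\mathscr{D}}_q$ after the substitution $x \mapsto q^{-1} y$. Applying this to $f = U_{n-1}^{(a)}(\cdot\,;q)$ converts the $q$-derivative term into a linear combination of $U_{n-1}^{(a)}(y;q)$ and $U_{n-1}^{(a)}(q^{-1} y;q)$.

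On the right-hand side of the desired identity (multiplied by $y$), the backward shift recursion \eqref{eq:ASC-backward} expresses $y\, U_n^{(a)}(y;q)$ directly as a linear combination of $U_{n-1}^{(a)}(y;q)$ and $U_{n-1}^{(a)}(q^{-1} y;q)$ with coefficients in $\Z[a, y, q^{\pm 1}]$. Substituting $a = -(1 + t_1)/t_1$ and expanding $(y-1)(y-a) = t_1^{-1}(y-1)(t_1 y + 1 + t_1)$, a direct comparison of coefficients on each of $U_{n-1}^{(a)}(y;q)$ and $U_{n-1}^{(a)}(q^{-1} y;q)$ shows that the two sides of the multiplied-by-$y$ identity agree. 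Since $\Z[t_1^{\pm 1}, q^{\pm 1}, x]$ is an integral domain and $y = q^n t_1^{-1} x$ is nonzero, the factor $y$ may be cancelled, completing the inductive step.

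The main obstacle is the careful bookkeeping of the many $q$-powers and scaling factors during substitution of the inductive hypothesis. More conceptually, the particular normalization $t = (q-1) t_1 (t_1 + 1)$ has to be used at exactly the right moment: it is precisely this choice of substitution map that aligns the coefficient of the derivative term, after conversion via $(q-1) q^{-1} y\, ({\mathscr{D}}_q f)(q^{-1} y) = f(y) - f(q^{-1} y)$, with the factor $(y-1)(y-a)$ appearing in the backward shift recursion. Once the substitutions are organized, the verification is purely mechanical.
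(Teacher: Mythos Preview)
Your proposal is correct and follows precisely the route the paper indicates: the paper leaves the proof to the reader, noting only that the backward shift recursion \eqref{eq:ASC-backward} implies the lemma, and your induction argument carries out exactly this verification. The key identities you isolate---the chain rule for ${\mathscr{D}}_q$, the relation $(q-1)q^{-1}y\,({\mathscr{D}}_q f)(q^{-1}y)=f(y)-f(q^{-1}y)$, and the expansion $t_1(y-1)(y-a)=t_1y^2+y-t_1-1$ under $a=-t_1^{-1}-1$---are the right ones, and the coefficient comparison goes through as you describe.
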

Define the quantum Weyl algebra $A_1^q$ as the $\field[q^{\pm 1}]$-algebra with generators  $X, Y, Z_1, Z_2$
and relations
\begin{align*}
YX - q^{-1} XY = Z_2, \; \;  Z_i Y = q^i Y Z_i, \; \; Z_i X = q^{-i} X Z_i,\; \;  Z_1 Z_2 = Z_2 Z_1.
\end{align*}
Inside the localization $A_1^q[Z_1^{-1}]$ we have a copy of the first quantized Weyl algebra ${\mathcal{A}}_q^1$, which is the $\field[q^{\pm 1}]$-algebra with generators 
$y = Y Z_1^{-1}$, $x= X Z_1^{-1}$, $z = qZ_2 Z_1^{-2}$ and relations
\[
yx - q xy = z, \; \; z x = x z, \; \; z y = y z.
\]
The algebra ${\mathcal{A}}_q^1$ acts on $\field[q^{\pm1}, t, x]$ by $x \mapsto (x \cdot)$, $y \mapsto t {\mathscr{D}}_q$, $z \mapsto (t \cdot)$. 
Iterating the recursion \eqref{p-recursion} gives that the polynomials $p_n(x; z,q) \in \Z[q^{\pm 1}, t, x]$ satisfy
\[
(x + q^{- 2n} y + q^{-n}) \ldots (x + q^{-2} y + q^{-1})  \cdot 1 = p_n(x; z, q). 
\]
Since $\field[q^{\pm1}, t, x] \cong {\mathcal{A}}_q^1/ ({\mathcal{A}}_q^1y)$ as left ${\mathcal{A}}_q^1$-modules, 
we have 
\begin{align}
\label{power-n}
(X+Y+Z_1)^n &= q^{n(n+1)/2} Z_1^n (x + q^{- 2n} y + q^{-n}) \ldots (x + q^{-2} y + q^{-1}) 
\\
\nonumber
&\equiv q^{n (n+1)/2} Z_1^n p_n(x; z, q) \mod A_1^q Y.
\end{align}

For $\xi \in \field^\times$, let $A^\xi_1 = A^q_1/(q-\xi) A^q_1$ denote the specialization of $A_1^q$ at $\xi$. 
\begin{proof}[Proof of Proposition \ref{prop:A2}]

(i) We work in the negative Heisenberg double ${\mathrm{Heis}}_\zeta(\slfrak_3)\spcheck$ 
corresponding to $u_\zeta(\slfrak_3)$ and apply Theorem \ref{thm:Hmin} to get the statement in $u_\zeta(\slfrak_3)$.
Set 
\[
\overline{E}_1:= \Etil_1 (K_1 K_2^{-1}) = E_1 K_2^{-1}, \quad \overline{E}_2:=  \Etil_2 (K_2 K_1^{-1}) = E_2 K_1^{-1}, \quad K_{12} = K_1 K_2,
\]
so $B_1^\vee = F_1 + c_1 \overline{E}_2$ and $B_2^\vee = F_2 + c_2 \overline{E}_1$. Denote also
\[
F_{12} = F_1 F_2 - \zeta^{-1} F_2 F_1, \quad 
\overline{E}_{21} = \overline{E}_2 \overline{E}_1 - \zeta^{-1} \overline{E}_1 \overline{E}_2.
\]
One verifies that 
\[
F_{12} \overline{E}_j = \zeta^{-1} \overline{E}_j F_{12} + \delta_{j2} F_1 K_{12}^{-1} \qquad \mbox{for $j=1,2$}
\]
from which it follows that 
\[
F_{12} \overline{E}_{21} = \zeta^{-2} \overline{E}_{21} F_{12} + (1 - \zeta^{-2})^2 \overline{E}_1 F_1 K_{12}^{-1} + \zeta^{-1} (1 - \zeta^{-2}) K_{12}^{-2}.
\]
In a similar fashion one shows that 
\[
F_{12} (\overline{E}_1 F_1) = \zeta^{-2} (\overline{E}_1 F_1) F_{12}, \quad 
 \overline{E}_{21} (\overline{E}_1 F_1) = \zeta^2 (\overline{E}_1 F_1)  \overline{E}_{21}.
\]
From the last three identities one derives that we have a homomorphism $\rho : A_1^{\zeta^2} \to {\mathrm{Heis}}_\zeta(\slfrak_3)\spcheck$
given by 
\begin{align*}
\rho(X) &= c_1 c_2 \overline{E}_{21}, \; \; \rho(Y)= F_{12}, 
\\
\rho(Z_1) &= (\zeta - \zeta^{-1}) c_2 \overline{E}_1 F_1 +(c_2 - \zeta^{-1} c_1) K_{12}^{-1},
\\
\rho(Z_2)&=  (1 - \zeta^{-2})^2 c_1 c_2 \overline{E}_1 F_1 K_{12}^{-1} +  \zeta^{-1} (1 - \zeta^{-2}) c_1 c_2 K_{12}^{-2}.
\end{align*}
Equation \eqref{power-n} implies that 
\begin{align}
&p(B_1^\vee,B_2^\vee) = \rho(X+Y+Z_1)^M
\label{first-step}
\\
&\equiv \zeta^{M(M+1)} \rho(Z_1)^M p_M(\rho(X Z_1^{-1}); \zeta^2 \rho(Z_2 Z_1^{-1}), \zeta^2) 
\mod {\mathrm{Heis}}_\zeta(\slfrak_3)\spcheck F_{12}.
\nonumber
\end{align}
There are no terms with $Z_1$-denominators in the right hand side because 
$\deg p_n(x; t, q) =n$ when $p_n(x; t, q)$ is considered as a polynomial in $x$ and $t$ and the degree is computed
with respect to the grading $\deg x =1$, $\deg t =2$. 
This follows from the recursion \eqref{p-recursion} and the fact that the operator ${\mathscr{D}}_{\zeta^2}$
lowers the degree by $1$. 

Every pair of the six terms of $\rho(X)$, $\rho(Z_1)$ and $\rho(Z_2)$ quasi-commute. Therefore
\[
\pi_{0,0}^\vee( \rho(X^i Z_1^j Z_2^k)) = 
\delta_{i,0} \big( (c_2 - \zeta^{-1} c_1) K_{12}^{-1} \big)^j \big( \zeta^{-1} (1 - \zeta^{-2}) c_1 c_2 K_{12}^{-2} \big)^k
\]
for all $i, j, k \in \N$. 
Combining this with \eqref{first-step} gives that
\begin{equation}
\pi_{0,0}^\vee (p(B_1^\vee,B_2^\vee))   =  \zeta^{M (M+1)} (c_2 - \zeta^{-1} c_1)^M p_M(0; t, \zeta^2) K_{12}^{-M} \label{eq:pi00veep}
\end{equation}
where
\[
t = \frac{(\zeta - \zeta^{-1}) c_1 c_2}{(c_2 - \zeta^{-1} c_1)^2} \cdot
\]
As $t=(\zeta^2-1)t_1(t_1+1)$ for $t_1 = -c_2 /(c_2 - \zeta^{-1} c_1)$
we can apply Lemma \ref{lem:A2} and obtain
\begin{align}\label{eq:pU}
  p_M(0;t,\zeta^2)=t_1^M\zeta^{-2M^2} U_M^{(-\zeta^{-1}c_1/c_2)}(0;\zeta^2).
\end{align}
Since $\zeta^2$ is a primitive $M$-th root of unity, 
$\small{\begin{pmatrix} M \\ k \end{pmatrix}_{\zeta^2}} =0$ for all $0<k <M$. 
For the corresponding Al-Salam-Carlitz I polynomials we hence have 
\begin{align}\label{eq:U0}
U_M^{(a)} (0; \zeta^2) = (-1)^M \zeta^{M(M-1)} (1+ a^M).
\end{align}
Inserting \eqref{eq:pU} and \eqref{eq:U0} into \eqref{eq:pi00veep} we obtain
\begin{align*}
\pi_{0,0}^\vee (p(B_1^\vee,B_2^\vee))  K_{12}^M &= \zeta^{M (M+1)} (c_2 - \zeta^{-1} c_1)^M t_1^M \zeta^{- 2 M^2} U_M^{(- \zeta^{-1} c_1/c_2)}(0; \zeta^2) 
\\
&=  c_2^M +  (-1)^M \zeta^M c_1^M, 
\end{align*}
which proves part (i).
Part (ii) follows directly from the first part.
\end{proof}
\subsection{The quantum supergroups of type $\slfrak(m|k)$}\label{sec:super}
Let $m, k$ be positive integers such that $(m,k) \neq (1,1)$. Denote $n= m+k -1$. The (super) Dynkin diagrams of the Lie superalgebra 
$\slfrak(m|k)$ associated to different choices of Borel subalgebras 
are the Dynkin diagrams of type $A_n$ where each vertex is denoted in two different 
ways: by $\bigotimes$ if the vertex is odd and by $\bigcirc$ if it is even, cf. \cite[Sections 2.5.5-2.5.6]{a-Kac77}. 
(There is a dependence between the number of odd vertices, $m$ and $k$ which will not play a role below.)
All odd simple roots are necessarily isotropic. We label the vertices in an increasing way from left to right by 
the elements of $I= \{1, \ldots, n\}$. Define the parity function $p : I \to \{0,1\}$ by letting $p(i) =0$ for even 
vertices and $p(i) =1$ for odd vertices. The corresponding (super) Cartan matrix is given by
\[
a_{ii} = 
\begin{cases} 
2, & p(i)=0
\\
0, & p(i)=-1
\end{cases}
\quad \mbox{and} \quad \mbox{for $i\neq j$} 
\quad
a_{ij} = 
\begin{cases}
-1, &p(i)=0 \; \mbox{and} \; j = i \pm 1,
\\
\pm 1, & p(i)=1 \; \mbox{and} \; \; j = i \pm 1, 
\\
0, & |i-j| >1,
\end{cases}
\]
cf. \cite[Section 5.1.5]{a-AAReview}. 

Fix $\zeta \in \field^\times$, $\zeta \neq \pm 1$ and consider the bicharacter $\chi : \Z^n \times \Z^n \to \field^\times$ 
given by
\[
\chi(\alpha_i, \alpha_j) = (-1)^{p(i) p(j)} \zeta^{a_{ij}}
\]
for $i,j \in I$. Denote by $U^+$ the $\field$-algebra with generators $x_i$, $i \in I$ and relations
\begin{align*}
&[x_i, [x_i, x_{i\pm1}]_c]_c = 0, \; \; p(i)=0, \qquad &[x_i, x_j]_c =0, \; \; i < j-i,
\\
&[[x_{i-1}, [x_i, x_{i+1}]_c]_c, x_i]_c =0, \; \; p(i)=1, \quad &x_i^2 =0, \; \; p(i)=1.
\end{align*}
In the case $\zeta = \sqrt{-1}$, following \cite[Definition 1]{a-Ang16},  
we also add the relations 
\[
[x_i, [x_i, x_{i\pm1}]_c]_c = 0 \quad \mbox{for} \quad p(i)=1.
\]
If $\zeta$ is not a root of unity, then $U^+$ is isomorphic to the Nichols algebra of $V^+(\chi)$, see \cite[Eq. (5.10)]{a-AAReview}.
If $\zeta$ is a root of unity, then $U^+$ is isomorphic to the distinguished pre-Nichols algebra of $V^+(\chi)$, 
see  \cite[Definition 1]{a-Ang16} and \cite[Eq. (5.10)]{a-AAReview}. 

Denote the set of odd vertices $J = \{ i \in I \mid p(i) = 1\}$. Denote the Drinfeld double of $U^+$ 
by $U(\chi)$ and form the smash product
\[
U_\zeta(\slfrak(m|k))_J = U(\chi) \rtimes \field \Z_2 
\]
where the generator $\sigma$ of $\Z_2$ acts on $U(\chi)$ by 
\[
\sigma(E_i) = (-1)^{p(i)} E_i, \quad  
\sigma(F_i) = (-1)^{p(i)} F_i, \quad
\sigma(K_i^{\pm 1}) = K_i^{\pm 1}
\]
for all $i \in I$. Our generators differ from those in \cite{a-Yam94,a-BKK00}. In terms of the generators $e_i, f_i, t_i$ 
of \cite{a-BKK00}, our generators are given by
\[
E_i = \sigma^{p(i)} e_i, \quad F_i = f_i, \quad K_i^{\pm1} = \sigma^{p(i)} t_i^{\pm1}
\]
for all $i \in I$. The coproduct convention of \cite{a-BKK00} is also slightly different from ours. 
By \cite[Theorem 6.11]{a-Heck10}, for different choices of $J$, the Hopf algebras $U_\zeta(\slfrak(m|k))_J$ 
are isomorphic to each other as algebras with isomorphisms provided by generalized Lusztig isomorphisms (these isomorphisms 
descend from the actual Drinfeld double to its quotient $U(\chi)$). However, the 
Lusztig isomorphisms are not Hopf algebra isomorphisms, and as a consequence,  
$U_\zeta(\slfrak(m|k))_J$ are not isomorphic to each other as Hopf algebras for different choices of $J$. The Hopf algebra 
$U_\zeta({\mathfrak{gl}}(m|k))$ in \cite{a-BKK00} is our $U_\zeta(\slfrak(m|k))_{ \{ m \} }$ up a slightly different convention for the coproduct.

If $\zeta \in \field^\times$ is not a root of unity, then $U_\zeta(\slfrak(m|k))_J$ exhaust all 
different quantum supergroups of type $\slfrak(m|k)$. If $\zeta \in \field^\times$ is a root of unity, then 
$U_\zeta(\slfrak(m|k))_J$ are the corresponding nonrestricted specializations at roots of unity.

Let $\tau : I \to I $ be the identity or the involution $\tau(i) = n-i +1$ (for $i \in n-i$) in the case when the vertices $i$ and $n-i+1$ have the same parity for all $i \in I$.   
For $\bc=(c_1,\dots, c_n)\in \field^n$, let $B_\bc$ to be the coideal subalgebra of $U_\zeta(\slfrak(m|k))_J$ 
generated by $H_\theta$ and the elements
\[
     B_i=F_i + c_i E_{\tau(i)} K_i^{-1} \qquad \mbox{for all $i\in I$.}
\]
\begin{prop}
\label{prop:super}
For all choices of odd roots $J \subseteq I$ and $\zeta \in \field^\times$, $\zeta \neq \pm1$, for the coideal 
subalgebra $B_\bc$ of the the quantum linear supergroup  $U_\zeta(\slfrak(m|k))_J$, the map  
$\varphi: \gr(B_\bc)\rightarrow H_\theta \ltimes U^-$ is an algebra isomorphism if and only if
\begin{equation}
\label{c1-super}
c_{\tau(i)} = c_i \; \; \mbox{for $i \in I$ such that $|\tau(i)-i| >1$}
\end{equation}
and
\begin{equation}
\label{c2-super}
c_i =0  \; \; \mbox{for all odd vertices $i$ fixed by $\tau$}.
\end{equation}
\end{prop}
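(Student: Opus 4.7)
The plan is to combine Theorem~\ref{thm:c-cond} and Theorem~\ref{thm:Hmin} to reduce the question to a case-by-case computation inside the negative Heisenberg double $\Hmin$. By Theorem~\ref{thm:c-cond}, $\varphi$ is an isomorphism iff condition~\eqref{assume:parameters} is satisfied for a generating set of homogeneous relations $p_j(\uE)$ of the defining ideal $\cI\subset T(V^+(\chi))$, and the second part of Theorem~\ref{thm:Hmin} guarantees that a relation of weight $\lambda\in\N^n$ contributes no constraint as soon as $\lambda\notin\bigoplus_{i\in I}\N(\alpha_i+\alpha_{\tau(i)})$. Every element of this sublattice has even coordinate sum, which immediately rules out the quantum Serre relations of weight $2\alpha_i+\alpha_{i\pm 1}$ (coordinate sum $3$) and, for the same reason, the additional odd-Serre relations $[x_i,[x_i,x_{i\pm 1}]_c]_c=0$ imposed at $\zeta=\sqrt{-1}$.

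What remains is a short finite list of test relations:
\begin{itemize}
\item[(a)] $[x_i,x_j]_c=0$ with $|i-j|>1$, where the weight $\alpha_i+\alpha_j$ is admissible precisely when $\tau(i)=j$;
\item[(b)] $x_i^2=0$ for odd $i$, where $2\alpha_i$ is admissible iff $\tau(i)=i$;
\item[(c)] super Serre relations $[[x_{i-1},[x_i,x_{i+1}]_c]_c,x_i]_c=0$ for odd $i$, whose weight $\alpha_{i-1}+2\alpha_i+\alpha_{i+1}$ is admissible only in the borderline case in which $\tau$ swaps $i-1$ and $i+1$, which forces $n$ odd, $i=(n+1)/2$ and $\tau(i)=i$.
\end{itemize}
For (a) I use Lemma~\ref{lem:Hmin-rels}, the cross relations~\eqref{eq:cross-Hmin} and the commutation rules~\eqref{KEF} to move $K$-factors past $F$-factors and to reorder the four mixed products $F_i\Etil_i$, $\Etil_j F_j$, $F_j\Etil_j$, $\Etil_i F_i$; the expansion of $[B_i^\vee,B_j^\vee]_c$ then collapses under $\pi_{0,0}^\vee$ to a nonzero scalar multiple of $(c_{\tau(i)}-c_i)\,K_i^{-1}K_{\tau(i)}^{-1}$, which forces \eqref{c1-super}. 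For (b), using $F_i^2=\Etil_i^2=0$ together with the odd anticommutation $F_i\Etil_i+\Etil_i F_i=K_i^{-2}$ (from $q_{ii}=-1$), a direct calculation yields $\pi_{0,0}^\vee((B_i^\vee)^2)=c_i K_i^{-2}$, forcing \eqref{c2-super}.

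For (c), under the hypotheses~\eqref{c1-super} and~\eqref{c2-super} one already has $c_i=0$ and $c_{i-1}=c_{i+1}$, hence $B_i^\vee=F_i$ and the super Serre polynomial $p(\uB^\vee)$ inherits the involution that $\tau$ induces by swapping the labels $i-1$ and $i+1$. The plan is to pair each monomial surviving $\pi_{0,0}^\vee$ with its image under this involution and check that the two contributions to $H_\theta K_{-\lambda}$ cancel, so that no new constraint on $\bc$ is produced. Combining (a), (b) and (c) with Theorem~\ref{thm:c-cond} shows that the joint conditions~\eqref{c1-super}--\eqref{c2-super} are equivalent to $\N_{\rel}=\N$, while the failure of either condition produces, via the last statement of Theorem~\ref{thm:c-cond}, a nonzero $K_{-\lambda}\in B_\bc$ with $\lambda\in\N^n\setminus\{0\}$, establishing the converse.

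The main technical obstacle is the symmetry-based cancellation in case (c); cases (a) and (b) are close analogues of the computations in Propositions~\ref{prop:non-restrict} and~\ref{prop:A2}, whereas (c) requires careful bookkeeping of every monomial in the $\Hmin$-expansion of the super Serre expression to see that the $\tau$-symmetry actually annihilates each $\pi_{0,0}^\vee$-contribution after imposing $c_{i-1}=c_{i+1}$.
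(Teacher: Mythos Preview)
Your overall strategy matches the paper's: reduce via Theorem~\ref{thm:c-cond} to checking condition~\eqref{assume:parameters} on a generating set of relations, and use the second part of Theorem~\ref{thm:Hmin} to discard all relations whose degree violates~\eqref{no-good-deg}. Your treatment of cases~(a) and~(b) is essentially identical to the paper's, and your identification of the borderline situation in~(c) (forcing $\tau(i)=i$) is correct.

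The difference is in how you handle~(c). You propose to exploit the $\tau$-symmetry swapping $i-1\leftrightarrow i+1$ together with $c_{i-1}=c_{i+1}$ to pair surviving monomials and exhibit a cancellation. The paper instead proves a short general lemma (Lemma~\ref{lem:no-cond}): whenever $\tau(i)=i$, $c_i=0$, and the polynomial $p$ has positive degree in $x_i$, one has $\pi_{0,0}(p(\uB))=0$ outright. The reason is immediate once you have written $B_i^\vee=F_i$: the only source of an $\Etil_i$-factor in any monomial of $p(\uB^\vee)$ would be the $c_i\Etil_{\tau(i)}$-term of some $B_j^\vee$ with $\tau(j)=i$, i.e.\ $j=i$; but that term has coefficient $c_i=0$. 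Hence every monomial carries at least one $F_i$ with nothing to commute it away, and $\pi_{0,0}^\vee$ kills it. So in fact \emph{no} monomial survives, and there is nothing to pair.

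Your symmetry plan is not wrong, but it is aimed at a cancellation that never needs to happen; the ``main technical obstacle'' you flag dissolves once you notice that the absence of $\Etil_i$ already forces $\pi_{0,0}^\vee(p(\uB^\vee))=0$. Replacing your monomial-pairing sketch by this one-line observation (or by invoking Lemma~\ref{lem:no-cond}) makes the argument complete without any bookkeeping.
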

More precisely, the conditions on $\bc$ in the proposition are as follows:
\begin{enumerate}
\item If $\tau = \id$, then $c_i =0$ for all odd vertices $i \in J$ (there is only one such vertex for the standard choice of simple roots 
corresponding to $J = \{m \}$); 
\item If $\tau$ is the flip $\tau(i) = n-i +1$, then $c_i = c_{n-i+1}$ for $i \in \{1, \ldots, \lceil n/2 \rceil -1 \}$ and 
$c_{(n+1)/2}=0 $ if $n$ is odd and $(n+1)/2$ is an odd vertex.
\end{enumerate}
For the proof of Proposition \ref{prop:super} we will need the following lemma.
\begin{lem} 
\label{lem:no-cond}
Let $p(x_1, \ldots, x_n)$ be a homogeneous noncommutative polynomial in $x_1, \ldots, x_n$ 
of degree $\sum_j m_j \alpha_j$, and $i \in I$ be such that $m_i >0$. For all 
bicharacters $\chi : \Z^n \times \Z^n \to \field^\times$, $\tau : I \to I$ and 
$\bc = (c_1, \ldots, c_n) \in \field^n$ such that $\tau(i) =i$ and $c_i =0$, we have
\[
\pi_{0,0} (p (B_1, \ldots, B_n)) =0
\]
in $U(\chi)$.
\end{lem}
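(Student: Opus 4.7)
The approach is to exploit the $\Z^n$-grading of $U(\chi)$. First I would note that the defining relations \eqref{eq:def-rel} are homogeneous if we set $\deg E_j = \alpha_j$, $\deg F_j = -\alpha_j$, $\deg K_j^{\pm 1} = 0$. This yields a $\Z^n$-grading $U(\chi) = \bigoplus_{\lambda \in \Z^n} U(\chi)_\lambda$ which refines the triangular decomposition \eqref{eq:Uchi-UHU}, in the sense that $U^+_\alpha H U^-_{-\beta} \subseteq U(\chi)_{\alpha-\beta}$. In particular $H = U^+_0 H U^-_0 \subseteq U(\chi)_0$, so $\pi_{0,0}$ kills every graded component $U(\chi)_\lambda$ with $\lambda \neq 0$.

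The main claim is that $p(B_1, \ldots, B_n)$ is a sum of $\Z^n$-homogeneous elements whose degrees all have $\alpha_i$-coefficient equal to $-m_i$. Since $c_i = 0$, we have $B_i = F_i$, which is homogeneous of degree $-\alpha_i$, contributing $-1$ to the $\alpha_i$-coefficient. For $j \neq i$, each of the two summands of $B_j = F_j + c_j E_{\tau(j)} K_j^{-1}$ has $\Z^n$-degree with zero $\alpha_i$-coefficient: for $F_j$ (degree $-\alpha_j$) this is immediate from $j \neq i$, while for $c_j E_{\tau(j)} K_j^{-1}$ (degree $\alpha_{\tau(j)}$) we need $\tau(j) \neq i$. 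The latter follows from $\tau(i) = i$ together with the involutivity of $\tau$: if $\tau(j) = i$ then $j = \tau(\tau(j)) = \tau(i) = i$, a contradiction.

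By the multi-homogeneity of $p$, every monomial in $p$ contains exactly $m_i$ occurrences of $x_i$. When such a monomial $B_{j_1} \cdots B_{j_N}$ is expanded by distributing the two summands in each $B_{j_k}$ with $j_k \neq i$, every resulting term has $\alpha_i$-coefficient of $\Z^n$-degree equal to $-m_i$, since the $m_i$ factors with $j_k = i$ each contribute $-1$ and all other factors contribute $0$. As $-m_i < 0$, no such term lies in $U(\chi)_0$, and therefore $\pi_{0,0}(p(B_1, \ldots, B_n)) = 0$.

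I do not anticipate any serious obstacle; the argument is essentially a grading count. The only subtlety is to ensure that both summands of $B_j$ for $j \neq i$ contribute zero to the $\alpha_i$-coefficient of degree, which reduces to the observation $\tau(j) \neq i$ coming from the involutivity of $\tau$ and the fixed-point condition $\tau(i) = i$.
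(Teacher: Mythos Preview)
Your proof is correct and follows essentially the same idea as the paper's. The paper expands $p(B_1,\ldots,B_n)$ into monomials in $F_j$ and $E_{\tau(j)}K_j^{-1}$, observes that any monomial containing the factor $E_iK_i^{-1}$ (coming from $B_i$, since $\tau(i)=i$) has coefficient $c_i=0$, and then argues that the remaining monomials satisfy $\pi_{0,0}(f)=0$ ``by directly commuting the factors.'' Your packaging via the $\Z^n$-grading makes this last step precise and transparent: since $\tau(j)\neq i$ for $j\neq i$, every surviving term has $\alpha_i$-component of degree equal to $-m_i<0$, so it cannot land in $H\subseteq U(\chi)_0$.
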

\begin{proof} After distributing the parenthesis in $p (B_1, \ldots, B_n)$, we 
get an expression for $p (B_1, \ldots, B_n)$ as a sum of monomials $f \in U(\chi)$ in $F_j$, $E_{\tau(j)} K_j^{-1}$ with $j \in I$. 
If such a monomial $f$ contains the factor $E_i^{-1} K_i$, then its coefficient equals 0 because $c_i =0$. 
For all other monomials $\pi_{0,0}(f) =0$ which is obtained by directly commuting the factors of the monomials.
\end{proof}
\begin{proof}[Proof of Proposition \ref{prop:super}] We apply Theorem \ref{thm:c-cond} and explicitly compute
condition \eqref{assume:parameters} in Section \ref{sec:prop-c}. 
As in the proof of Proposition \ref{prop:non-restrict}(i), the first set of relations of $U^+$ and the extra relations in the case 
$\zeta = \sqrt{-1}$ give no condition of $\bc$, while the 
second set of relations of $U^+$ gives condition \eqref{c1-super}. If $\tau(i) =i$ for some $i \in I$, 
then in the negative Heisenberg double $\Hmin$ we have 
\[
\pi_{0,0}^\vee \big( (B_i^\vee)^2 \big) = \pi_{0,0}^\vee \big( (F_i + c_i \Etil_i)^2 \big)  = c_i K_i^{-2}.
\] 
It follows from Theorem \ref{thm:Hmin}(ii) that the fourth set of relations of $U^+$ gives condition 
\eqref{c2-super} on $\bc$. Finally, we consider the third set of relations of $U^+$. If the third relation of $U^+$ 
for a given odd vertex $i$ gives a condition on $\bc$, then by Theorem \ref{thm:Hmin}(ii), 
\[
\tau (\alpha_{i-1} + 2 \alpha_i + \alpha_{i+1}) = - (\alpha_{i-1} + 2 \alpha_i + \alpha_{i+1}).
\]
This implies that $\tau(i) = i$. If \eqref{c2-super} is satisfied, then we also have $c_i =0$.
Now it follows that in the presence of condition \eqref{c2-super}, the third set of relations of $U^+$ do not 
give any new condition on $\bc$ because of Lemma \ref{lem:no-cond}.
\end{proof}
The techniques of this proof can be used to classify the coideal subalgebras $B_\bc$ of the quantized enveloping algebras
of all finite dimensional and affine contragredient Lie superalgebras $\gfrak$ with the property that $\varphi: \gr(B_\bc)\rightarrow H_\theta \ltimes U^-$ 
is an algebra isomorphism. This is more technical and will appear in a subsequent publication.
 \subsection{The Drinfeld double of the distinguished pre-Nichols algebra of type $\ufofrak(8)$}\label{sec:ufo}
Let $\zeta$ be a primitive 12-th root of unity and $\zeta^{1/2}$ be a primitive 24-th root of unity that squares to $\zeta$.
Consider the symmetric bicharachter $\chi$ given by
\begin{align*}
  q_{11}=q_{22}=-\zeta^2, \qquad q_{12}=q_{21}=\zeta^{1/2}.
\end{align*}
It is associated to the first of the three generalized Dynkin diagrams on row 8 of Table 1 in \cite{a-Heck09}. 
The corresponding Nichols algebra is one of three such algebras of type $\ufofrak(8)$. It is one of the non-Cartan type 
examples that appeared in Heckenberger's classification of arithmetic root systems \cite{a-Heck09}.

The generalized Cartan matrix of the bicharacter $\chi$ is 
\[
C^\chi = 
\begin{pmatrix}
2 &-2
\\
-2 & 2
\end{pmatrix}
\cdot
\]
The generalized root system of $\chi$ is finite and has three Cartan matrices 
corresponding to the generalized Dynkin diagrams on row 8 of Table 1 in \cite{a-Heck09}.
We refer the reader to \cite[Sections 3 and 5]{a-Heck06}, 
\cite[Section 2.7]{a-AAReview} and \cite[Section 4]{a-HeYa08} for details
on this topic and Weyl groupoids.

The relations of the Nichols algebra of $\chi$ are given in \cite[Section 10.8.6]{a-AAReview}. 
Let $U^+$ denote the distinguished pre-Nichols algebra of $\chi$ 
defined by Angiono in \cite[Definition 1]{a-Ang16}
as the factor of $T(V^+(\chi))$ by removing from the Nichols ideal 
the power relations for Cartan roots and adding certain quantum Serre relations. 
There are none of the latter in this case and 
the algebra $U^+$ has two generators $x_1, x_2$ with relations
\[
x_1^3=x_2^3=0 \quad \mbox{and} \quad [x_1, x_{\alpha_1 + 2 \alpha_2}]_c = -(1 + \zeta^{-1} + \zeta^{-2}) \zeta^{1/2} x_{12}^2,
\]
the third of which is the last relation in \cite[Eq. (10.55)]{a-AAReview}. Here
\[
x_{12} = [x_1, x_2]_c \quad \mbox{and} \quad x_{\alpha_1 + 2 \alpha_2} = [x_{12}, x_2]_c 
\]
in the free algebra in $x_1$, $x_2$.

Consider the diagram automorphism $\tau(1)=2$, $\tau(2)=1$ and the coideal subalgebra generators $B_1$, $B_2$ given by 
\eqref{B12}.
\begin{prop}
\label{prop:ufo8} 
The following hold for the quantum double $U(\chi)$ of the distinguished pre-Nichols algebra of type $\ufofrak(8)$:
\begin{enumerate}
\item[(i)] For $p(x_1, x_2)=[x_1, x_{\alpha_1 + 2 \alpha_2}]_c + (1 + \zeta^{-1} + \zeta^{-2}) \zeta^{1/2} x_{12}^2$ and
$\lambda = 2 \alpha_1 + 2 \alpha_2$, 
\[
P_{-\lambda}\circ \pi_{0,0}(p(B_1,B_2)) = (1+ \zeta) \zeta^{1/2} \big(c_1^2 - 2\zeta^{-1/2}c_1c_2 + c_2^2\big) K_{-\lambda}.
\]

\item[(ii)] For the coideal subalgebra $B_\bc$ of $U(\chi)$ the map  
$\varphi: \gr(B_\bc)\rightarrow H_\theta \ltimes U^-$ is an algebra isomorphism if and only if
\[
c_1 = (1\pm \sqrt{1-\zeta})\zeta^{-1/2} c_2.
\]
\end{enumerate}
\end{prop}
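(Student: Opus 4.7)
The plan is to apply Theorem \ref{thm:c-cond} together with Theorem \ref{thm:Hmin} and compute the condition $(\bc)$ explicitly. The defining ideal $\cI \subset T(V^+(\chi))$ of the distinguished pre-Nichols algebra $U^+$ is generated by the three homogeneous relations $x_1^3$, $x_2^3$, and $p(x_1,x_2) = [x_1, x_{\alpha_1+2\alpha_2}]_c + (1+\zeta^{-1}+\zeta^{-2})\zeta^{1/2}x_{12}^2$ of degrees $3\alpha_1$, $3\alpha_2$, and $2\alpha_1 + 2\alpha_2$, respectively. With $\tau(1)=2$, $\tau(2)=1$ we have $\oplus_{i}\N(\alpha_i + \alpha_{\tau(i)}) = \N(\alpha_1 + \alpha_2)$, so $3\alpha_1, 3\alpha_2 \notin \oplus_{i}\N(\alpha_i + \alpha_{\tau(i)})$. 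Thus the second part of Theorem \ref{thm:Hmin} immediately gives $P_{-3\alpha_i} \circ \pi_{0,0}(B_i^3) = 0$ for $i=1,2$, and these relations impose no conditions on $\bc$. Only the degree $\lambda = 2\alpha_1 + 2\alpha_2$ relation matters.

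For the third relation, Theorem \ref{thm:Hmin}(i) reduces the computation to
\[
\pi_{0,0} \circ P_{-\lambda}(p(B_1,B_2)) = \pi^{\vee}_{0,0}(p(B_1^\vee, B_2^\vee))
\]
inside the negative Heisenberg double $\Hmin$, where $B_1^\vee = F_1 + c_1 \Etil_2(K_2 K_1^{-1})$ and $B_2^\vee = F_2 + c_2 \Etil_1(K_1 K_2^{-1})$. I would introduce the auxiliary elements
\[
F_{12} := [F_1, F_2]_c, \quad F_{\alpha_1 + 2\alpha_2} := [F_{12}, F_2]_c, \quad \Etil_{21} := [\Etil_2, \Etil_1]_c, \quad \Etil_{2\alpha_1 + \alpha_2} := [\Etil_1, \Etil_{21}]_c
\]
in $\Hmin$ and expand $p(B_1^\vee, B_2^\vee)$ by distributing. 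Using the cross relations \eqref{eq:cross-Hmin}, one commutes all $\Etil$ factors past all $F$ factors in each of the resulting monomials; the only summands that survive the projection $\pi^\vee_{0,0}$ are those in which every $F_i$ is paired with an $\Etil_i$, producing a power of $K_\mu K_{\tau(\mu)}^{-1}$ multiplied by a Laurent monomial in $K_i^{-1}$.  Since the total degree is 4, the number of surviving monomials is small and each contributes a monomial in $c_1, c_2$ of bidegree $(2,2)$ times $K_{-\lambda}$.

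After collecting, part (i) reduces to a single identity of Laurent polynomials in $\zeta^{1/2}$, which I expect to simplify to $(1+\zeta)\zeta^{1/2}(c_1^2 - 2\zeta^{-1/2} c_1 c_2 + c_2^2)K_{-\lambda}$; this is the main computational obstacle and is essentially a bookkeeping exercise in braided commutator identities and tracking the scalars produced by the cross relations in $\Hmin$. Once part (i) is established, part (ii) follows from Theorem \ref{thm:c-cond}: by the analysis above, condition $(\bc)$ reduces to the vanishing of the coefficient in part (i), and solving the quadratic equation $c_1^2 - 2\zeta^{-1/2}c_1 c_2 + c_2^2 = 0$ in the ratio $c_1/c_2$ yields
\[
c_1 = \zeta^{-1/2}\bigl(1 \pm \sqrt{1-\zeta}\bigr)c_2,
\]
as required.
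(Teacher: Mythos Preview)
Your approach is essentially identical to the paper's: both reduce to the negative Heisenberg double via Theorem~\ref{thm:Hmin}, dispose of the cubic relations by the degree condition~\eqref{no-good-deg}, compute $\pi_{0,0}^\vee(p(B_1^\vee,B_2^\vee))$ directly, and then invoke Theorem~\ref{thm:c-cond}. The paper streamlines the computation slightly by first writing out $p(x_1,x_2)$ explicitly as a linear combination of the six monomials $x_1^2x_2^2,\,x_2^2x_1^2,\,x_1x_2x_1x_2,\,x_2x_1x_2x_1,\,x_1x_2^2x_1,\,x_2x_1^2x_2$ (see~\eqref{p-poly}), which avoids introducing the auxiliary braided commutators $F_{12},\Etil_{21}$, etc.\ that you propose; either route works.

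One small slip: you say the surviving terms have ``bidegree $(2,2)$'' in $c_1,c_2$, but in fact each surviving monomial has total degree~$2$ in the $c_i$ (two of the four generators contribute an $F$ and two contribute a $c_\bullet\Etil_\bullet$), giving the $c_1^2$, $c_1c_2$, $c_2^2$ terms visible in the answer. This does not affect the argument.
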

\begin{proof} (i) We have
\begin{equation}
\label{p-poly}
p(x_1,x_2) = (x_1^2 x_2^2 + x_2^2 x_1^2)+ a (x_1 x_2 x_1 x_2 + x_2 x_1 x_2 x_1) + b (x_1 x_2^2 x_1 + x_2 x_1^2 x_2)
\end{equation}
in the free algebra in $x_1$, $x_2$, where
\[
a = (1 + \zeta^{-1})\zeta^{1/2}, \quad b= -(1+ \zeta^{-1} + \zeta^{-2}) \zeta.
\]
From this one directly computes $\pi_{0,0}^\vee(p(B_1^\vee,B_2^\vee))$ in the negative Heisenberg double $\mathrm{Heis}_\zeta(\chi)\spcheck$ corresponding to $U(\chi)$.
Now part (i) follows from Theorem \ref{thm:Hmin}.

(ii) It follows from the second statement in Theorem \ref{thm:Hmin} that $\pi^\vee_{0,0}((B^\vee_1)^3)= \pi_{0,0}((B^\vee_2)^3)=0$ in $\mathrm{Heis}_\zeta(\chi)\spcheck$.
Theorem \ref{thm:c-cond} implies the validity of part (ii).
\end{proof}
\section{A twist product on partial bosonizations}\label{twist}
Assume that condition \eqref{assume:parameters} from Section \ref{sec:prop-c} holds. By Theorem \ref{thm:c-cond} the algebra $B_\bc$ has a filtration such that the associated graded algebra is isomorphic to the partial bosonization $H_\theta\ltimes U^-$. In the present section we use the quasi $R$-matrix for $U(\chi)$ to define a twisted algebra structure $\star$ on $H_\theta\ltimes U^-.$ We will see in Section \ref{sec:star} that $(H_\theta\ltimes U^-, \star)$ is canonically isomorphic to $B_\bc$.
\subsection{The quasi $R$-matrix for $U(\chi)_\mx$}\label{sec:quasiR}
Recall that $\cI_\mx(\chi)\subset T(V^+(\chi))$ denotes the maximal $\Z^n$-graded biideal in the braided Hopf algebra $T(V^+(\chi))$.
In the following we use the subscript ${}_\mx$ to indicate constructions involving $\cI_\mx$. In particular, we use the notation $U^+_\mx$, $U^-_\mx$ for the Nichols algebras corresponding to $\cI_\mx$ and we write $U(\chi)_\mx$ for the corresponding Drinfeld double as defined in Section \ref{sec:setting}. By \cite[Theorem 5.8]{a-Heck10} there exists a uniquely determined skew-Hopf pairing
\begin{align}\label{eq:pairing}
  \langle \,\, ,\,\rangle_\mx: (U^-_\mx \rtimes H)^\cop \ot (U^+_\mx \rtimes H) \rightarrow \field
\end{align}
such that
\begin{equation}\label{eq:pair-def}
\begin{aligned}
  \langle F_i,E_j\rangle_\mx&=\delta_{ij}, & \langle K_i, K_j\rangle_\mx&=q_{ij}^{-1},\\
  \langle F_i, K_j\rangle_\mx &=0, & \langle K_i, E_j\rangle_\mx&=0
\end{aligned}
\end{equation}
for all $i,j\in I$. Recall that by definition of a skew-Hopf pairing we have
\begin{equation}\label{eq:skew-hopf}
\begin{aligned}
  \langle y, x x'\rangle_\mx&=\langle y_{(1)},x' \rangle_\mx \langle y_{(2)},x\rangle_\mx,\\
   \langle y y', x \rangle_\mx&=\langle y,x_{(1)} \rangle_\mx \langle y', x_{(2)}\rangle_\mx
\end{aligned}
\end{equation}
for all $y,y' \in (U^-_{\mx} \rtimes H)^\cop$ and $x,x'\in U^+_{\mx} \rtimes H$.
Let
\begin{align*}
  \pi_\mx: U^-\rtimes H \rightarrow U^-_\mx\rtimes H
\end{align*}  
denote the canonical projection. By construction $\pi_\mx$ is a surjective Hopf algebra homomorphism. The pairing \eqref{eq:pairing} allows us to define a right and a left $U^+_\mx\rtimes H$ module structure on $H\ltimes U^-=(U^-\rtimes H)^\cop$ by
\begin{align}\label{eq:lactract}
  e\lact f&= \langle \pi_\mx(f_{(1)}), e\rangle_\mx f_{(2)}, &
  f\ract e&= \langle \pi_\mx(f_{(2)}), e\rangle_\mx f_{(1)}
\end{align}
for all $e\in U^+_\mx\rtimes H$, $f\in (U^-\rtimes H)^\cop$. The properties in \eqref{eq:skew-hopf} imply that $(U^-\rtimes H)^\cop$ is a right and a left $U^+_\mx\rtimes H$-module algebra.

The pairing $\langle\,\, ,\,\rangle_\mx$ respects the $\Z^n$-grading of $U^-_\mx \rtimes H$ and  $U^+_\mx \rtimes H$.  Moreover, by \cite[Theorem 5.8]{a-Heck10} the restriction of $\langle \,\, ,\,\rangle_\mx$ to $U^-_\mx\ot U^+_\mx$ is nondegenerate. This allows us to formulate the notion of a quasi $R$-matrix for $U(\chi)_\mx$ in complete analogy to \cite[Chapter 4]{b-Lusztig94}.
Let $U(\chi)_\mx\widehat{\ot} U(\chi)_\mx$ denote the completion of $U(\chi)_\mx\ot U(\chi)_\mx$ with respect to the descending sequence of subspaces
\begin{align*}
  \mathcal{H}_N = \big(U^+_\mx H \sum_{|\nu|\ge N} (U^-_\mx)_{-\mu}\big) \ot U(\chi)_\mx +
  U(\chi)_\mx \ot \big(U^-_\mx H \sum_{|\nu|\ge N} (U^+_\mx)_{\mu}\big).
\end{align*}  
The $\field$-algebra structure on  $U(\chi)_\mx\ot U(\chi)_\mx$ extends to a $\field$-algebra structure on  $U(\chi)_\mx\widehat{\ot} U(\chi)_\mx$.

  For any $\mu\in \N^n$ let $\{F_{\mu,j}\}\subset (U^-_\mx)_{-\mu}$ and $\{E_{\mu,j}\}\subset (U^+_\mx)_\mu$ be dual bases with respect to the nondegenerate pairing $\langle\,,\,\rangle_\mx$ and define $\Theta_\mu = \sum_j (-1)^{|\mu|} F_{\mu,j}\ot E_{\mu,j}$. For simplicity we usually suppress the summation and write formally
  \begin{align*}
    \Theta_\mu=(-1)^{|\mu|} F_\mu\ot E_\mu.  
  \end{align*}
Define an element $\Theta\in U(\chi)_\mx\widehat{\ot} U(\chi)_\mx$ by
\begin{align}
\label{eq:Theta}
  \Theta=\sum_{\mu\in \N^n} \Theta_\mu = \sum_{\mu\in \N^n} (-1)^{|\mu|} F_\mu\ot E_\mu.
\end{align}
For quantized enveloping algebras the element $\Theta$ coincides with the quasi $R$-matrix constructed in \cite[Chapter 4]{b-Lusztig94}. 
Analogously to \cite[Proposition 4.2.2]{b-Lusztig94} we have the following result.
\begin{lem}\label{lem:kowidTheta}
  The following relations hold
  \begin{align}
  (\kow\ot \id)(\Theta_\mu) &= (-1)^{|\mu|}\sum_{\lambda+\nu=\mu} F_\lambda \ot F_\nu K_\lambda^{-1}\ot E_\nu E_\lambda,\label{eq:kow-Theta1}\\
    (\id \ot\kow)(\Theta_\mu) &= (-1)^{|\mu|}\sum_{\lambda+\nu=\mu} F_\lambda  F_\nu \ot E_\lambda K_\nu\ot E_\nu.\label{eq:kow-Theta2}
  \end{align}  
\end{lem}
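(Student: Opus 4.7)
The plan is to prove both identities by matching coefficients in the respective triple-tensor decompositions on each side and then invoking the duality of the bases $\{F_{\mu,j}\}$ and $\{E_{\mu,j}\}$ via the skew-Hopf pairing identities \eqref{eq:skew-hopf}. I describe the argument for \eqref{eq:kow-Theta1} in detail; \eqref{eq:kow-Theta2} is entirely parallel.

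First I would record that the bosonization structure of $H\ltimes U^-_\mx=(U^-_\mx\rtimes H)^\cop\subset U(\chi)_\mx$ forces the coproduct of any $F\in(U^-_\mx)_{-\mu}$ to take the form
\[
\kow(F)=\sum_{\lambda+\nu=\mu} F^\lambda_{(1)}\otimes F^\nu_{(2)}K_\lambda^{-1},
\]
with $F^\lambda_{(1)}\in(U^-_\mx)_{-\lambda}$ and $F^\nu_{(2)}\in(U^-_\mx)_{-\nu}$. Applying this to $F=F_{\mu,j}$ and expanding in the chosen bases produces coefficients $c^{\lambda\nu}_{ikj}\in\field$ with
\[
\kow(F_{\mu,j})=\sum_{\lambda+\nu=\mu}\sum_{i,k} c^{\lambda\nu}_{ikj}\,F_{\lambda,i}\otimes F_{\nu,k}K_\lambda^{-1}.
\]
Extracting the coefficient of $F_{\lambda,i}\otimes F_{\nu,k}K_\lambda^{-1}\otimes\bullet$ on both sides of \eqref{eq:kow-Theta1} reduces the identity to the family of equalities
\[
\sum_j c^{\lambda\nu}_{ikj}\,E_{\mu,j}=E_{\nu,k}E_{\lambda,i}\qquad \forall\;\lambda+\nu=\mu,\;i,k,
\]
which by the duality $\langle F_{\mu,j'},E_{\mu,j}\rangle_\mx=\delta_{j,j'}$ is in turn equivalent to showing $c^{\lambda\nu}_{ikj}=\langle F_{\mu,j},E_{\nu,k}E_{\lambda,i}\rangle_\mx$ for all $j$.

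I would verify this last equality by applying the first identity of \eqref{eq:skew-hopf} to rewrite
\[
\langle F_{\mu,j},E_{\nu,k}E_{\lambda,i}\rangle_\mx=\langle F_{\mu,j,(1)},E_{\lambda,i}\rangle_\mx\,\langle F_{\mu,j,(2)},E_{\nu,k}\rangle_\mx.
\]
The first factor, by homogeneity, collects precisely the $F_{\lambda,i'}\otimes F_{\nu,k}K_\lambda^{-1}$-summands of $\kow(F_{\mu,j})$ and evaluates by duality to $\delta_{i,i'}$. For the second factor, the key observation is the auxiliary identity $\langle FK_\lambda^{-1},E\rangle_\mx=\langle F,E\rangle_\mx$ for homogeneous $F\in U^-_\mx$, $E\in U^+_\mx$ of matching positive degree: applying the second identity of \eqref{eq:skew-hopf} together with the bosonized expansion $\kow(E)=\sum E^{\nu_1}_{(1)}K_{\nu_2}\otimes E^{\nu_2}_{(2)}$, only the $\nu_2=0$ summand survives the pairing against $K_\lambda^{-1}$ (since $\langle K_\bullet,E\rangle_\mx=0$ on positively graded elements of $U^+_\mx$ by \eqref{eq:pair-def} and induction), and counitality identifies that summand as $E\otimes 1$. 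Consequently $\langle F_{\nu,k'}K_\lambda^{-1},E_{\nu,k}\rangle_\mx=\delta_{k,k'}$, independently of $\lambda$. Combining both evaluations yields exactly $c^{\lambda\nu}_{ikj}$.

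The proof of \eqref{eq:kow-Theta2} is strictly parallel: using the bosonized form $\kow(E_{\mu,j})=\sum_{\lambda+\nu=\mu}\sum_{i,k} d^{\lambda\nu}_{ikj}\,E_{\lambda,i}K_\nu\otimes E_{\nu,k}$ coming from $U^+_\mx\rtimes H$, the identity reduces to $\sum_j d^{\lambda\nu}_{ikj}\,F_{\mu,j}=F_{\lambda,i}F_{\nu,k}$, which is evaluated through $\langle F_{\lambda,i}F_{\nu,k},E_{\mu,j}\rangle_\mx=d^{\lambda\nu}_{ikj}$, using now the second identity of \eqref{eq:skew-hopf} together with the symmetric auxiliary identity $\langle F,EK_\lambda\rangle_\mx=\langle F,E\rangle_\mx$. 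The main subtlety in both parts is the careful bookkeeping of the $K$-twists inherited from the bosonization structure of the coproducts; the observation that these twists are invisible to the pairing on matching positive-degree elements is what collapses the entire computation to the duality of the chosen bases.
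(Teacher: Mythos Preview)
Your proof is correct and follows essentially the same approach as the paper: both arguments reduce the identity to the skew-Hopf pairing relation \eqref{eq:skew-hopf} applied to $\langle F_\mu, E_\nu E_\lambda\rangle_\mx$, together with the observation that the $K$-twists from the bosonized coproduct are invisible to the pairing on positively graded elements. The paper's version is slightly more compressed---it compares two expansions of a generic product $ee'$ in $U^+_\mx$ rather than introducing explicit coefficients $c^{\lambda\nu}_{ikj}$---but the content is the same.
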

\begin{proof}
  By definition of the coproduct of $U(\chi)_\mx$ in \eqref{eq:kowEFK} we have
  \begin{align*}
     (\kow\ot \id)(\Theta_\mu)\in \sum_{\lambda+\nu=\mu} (U^-_\mx)_{-\lambda}\ot (U^-_\mx)_{-\nu} K_\lambda^{-1}\ot (U^+_\mx)_\mu.
  \end{align*}
  For $e, e'\in U^+$ the definition of $\Theta$ and the properties of a skew pairing \eqref{eq:skew-hopf} imply that
  \begin{align}\label{eq:ee'1}
    ee'&=\sum_\mu \langle F_\mu, ee'\rangle_\mx E_\mu
    =\sum_\mu \langle F_{\mu (1)}, e'\rangle_\mx \langle F_{\mu (2)}, e\rangle_\mx E_\mu.
  \end{align}
  On the other hand
  \begin{align}\label{eq:ee'2}
    e e' = \sum_{\lambda, \nu} \langle F_\nu, e\rangle_\mx E_\nu
    \langle F_\lambda, e'\rangle_\mx E_\lambda
         =  \sum_{\lambda, \nu} \langle F_\lambda, e'\rangle_\mx \langle F_\nu K_\lambda^{-1}, e\rangle_\mx E_\nu
     E_\lambda.
  \end{align} 
  Comparison of \eqref{eq:ee'1} and \eqref{eq:ee'2} implies \eqref{eq:kow-Theta1}, as the componentwise pairing between $\bigoplus_\lambda (U_\mx^-)_{-\lambda}\ot U^-_\mx K_\lambda^{-1}$ and  $U_\mx^+\ot U_\mx^+$ is nondegenerate. Equation \eqref{eq:kow-Theta2} is verified analogously.
\end{proof}
\subsection{The skew derivations $\partial_i^L$ and $\partial_i^R$ on $U^-$} \label{sec:skew-derivation}
For quantized universal enveloping algebras Kashiwara \cite[3.4]{a-Kash91} and Lusztig \cite[1.2.13, 3.1.6]{b-Lusztig94} consider skew-derivations on $U^+$ and $U^-$. As observed in \cite[Section 5]{a-Heck10}, these skew derivations allow a straightforward generalisation to the setting of (pre-)Nichols algebras of diagonal type. In the case of $U^-$,  for any $i\in I$, the skew derivations are the uniquely determined linear maps $\partial_i^R, \partial_i^L: U^- \rightarrow U^-$ such that
\begin{align}\label{eq:partial-def}
  [E_i,f]=K_i \partial_i^L(f)- \partial_i^R(f)K_i^{-1} \qquad \mbox{for all $f\in U^-$.}
\end{align}
For later reference we collect the main properties of the skew derivations $\partial_i^L$ and $\partial_i^R$ on $U^-$. It follows from the last relation in \eqref{eq:def-rel} that
\begin{align}\label{eq:partialFj}
  \partial_i^L(F_j)=\delta_{ij}=\partial_i^R(F_j) \qquad \mbox{for all $j\in I$.}
\end{align}
Moreover, Equation \eqref{eq:partial-def} implies that
\begin{equation}\label{eq:skew-properties}
\begin{aligned}
  \partial_i^L(f_\mu f_\nu) = \partial_i^L(f_\mu) f_\nu + \chi(\mu,\alpha_i) f_\mu \partial_i^L(f_\nu),\\
  \partial_i^R(f_\mu f_\nu) = \chi(\nu,\alpha_i)\partial_i^R(f_\mu) f_\nu + f_\mu \partial_i^L(f_\nu)
\end{aligned}
\end{equation}
for all $f_\mu\in U^-_{-\mu}$, $f_\nu\in U^-_{-\nu}$. In other words, $\partial_i^L$ is a left skew derivation on $U^-$ while $\partial_i^R$ is a right skew derivation. The skew derivations $\partial_i^L$ and $\partial_i^R$ are uniquely determined by the properties \eqref{eq:partialFj} and \eqref{eq:skew-properties}. They can also be read off from the coproduct on $U^-$. Indeed, for any $f_\mu\in U^-_{-\mu}$ one has
\begin{equation}\label{eq:kow-partial}
\begin{aligned}
  \kow(f_\mu)&=f_\mu\ot K_\mu^{-1} + \sum_i \partial_i^L(f_\mu)\ot F_i K^{-1}_{\mu-\alpha_i} + (\mathrm{rest})_1,\\
  \kow(f_\mu)&= 1\ot f_\mu + \sum_i F_i \ot \partial_i^R(f_\mu)K_i^{-1} +  (\mathrm{rest})_1
\end{aligned}
\end{equation}
where $(\mathrm{rest})_1\in \sum_{|\alpha|\ge 2} U^-_{\alpha-\mu}\ot U^-_{-\alpha} K^{-1}_{\mu-\alpha}$ and $(\mathrm{rest})_2\in \sum_{|\alpha|\ge 2} U^-_{-\alpha}\ot U^-_{\alpha-\mu} K^{-1}_{\alpha}$. The properties \eqref{eq:kow-partial} of the coproduct and the definition \eqref{eq:lactract} of the left and the right action of $U^+$ on $H\ltimes U^-$ imply that
\begin{align}\label{eq:lract-partial}
  E_i\lact f=\partial_i^R(f)K_i^{-1}, \qquad f\ract E_i=\partial_i^L(f) \qquad \mbox{for all $f\in U^-$, $i\in I$.}
\end{align}
Let $\langle \, , \,\rangle: U^-\ot U^+\rightarrow \field$ denote the pairing defined by
\begin{align*}
  \langle f,e \rangle = \langle \pi_\mx(f),\pi_\mx(e)\rangle_\mx \qquad\mbox{for all $f\in U^-, e\in U^+ $}
\end{align*}
where we use $\pi_\mx$ to denote both canonical projections $U^+\rightarrow U^+_\mx$ and $U^-\rightarrow U^-_\mx$. The relations \eqref{eq:kow-partial} and \eqref{eq:skew-hopf} imply that
\begin{align}\label{eq:pair-partial}
  \langle f, E_i e \rangle=\langle \partial^L_i(f),e\rangle, \qquad
  \langle f, e E_i  \rangle=\langle \partial^R_i(f),e\rangle
\end{align}
for all $f\in U^-$, $e\in U^+$ and $i\in I$. This tells us how the quasi $R$-matrix $\Theta$ behaves under the skew derivations.
\begin{lem}\label{lem:partialTheta}
  For any $i\in I$ the following relations hold:
\begin{align}\label{eq:Theta-partial}
  \qquad (\partial^L_i\ot \id)(\Theta)=-(1\ot E_i)\Theta, \qquad (\partial^R_i\ot \id)(\Theta)=-\Theta(1\ot E_i). 
\end{align}
\end{lem}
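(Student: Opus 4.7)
The plan is to verify the two identities graded component by graded component, reducing everything to the defining property \eqref{eq:pair-partial} of the skew derivations under the pairing and to the completeness of the dual bases $\{F_{\mu,j}\}$, $\{E_{\mu,j}\}$.

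Fix $\mu \in \N^n$ with $\mu_i \ge 1$ (otherwise $\partial^L_i(F_{\mu,j})$ lies in a zero weight space and the $\mu$-component of the left-hand side vanishes trivially) and set $\nu = \mu - \alpha_i$. First, I would expand $\partial^L_i(F_{\mu,j}) \in (U^-_\mx)_{-\nu}$ in the dual basis $\{F_{\nu,k}\}$ of that space, using the duality relation: for any $f \in (U^-_\mx)_{-\nu}$ one has $f = \sum_k \langle f, E_{\nu,k}\rangle_\mx F_{\nu,k}$. Applying the first identity in \eqref{eq:pair-partial} then gives
\[
\partial^L_i(F_{\mu,j}) = \sum_k \langle F_{\mu,j}, E_i E_{\nu,k}\rangle_\mx F_{\nu,k}.
\]
Substituting this into $(\partial^L_i \ot \id)(\Theta_\mu) = (-1)^{|\mu|} \sum_j \partial^L_i(F_{\mu,j}) \ot E_{\mu,j}$ and reversing the order of summation, the inner sum $\sum_j \langle F_{\mu,j}, E_i E_{\nu,k}\rangle_\mx E_{\mu,j}$ collapses to $E_i E_{\nu,k}$ by the dual-basis expansion of this element of $(U^+_\mx)_\mu$.

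This yields $(\partial^L_i \ot \id)(\Theta_\mu) = (-1)^{|\mu|} \sum_k F_{\nu,k} \ot E_i E_{\nu,k} = -(1 \ot E_i)\Theta_\nu$, since $|\mu| = |\nu| + 1$. Summing over $\mu \in \N^n$ and reindexing by $\nu$ then gives the first formula $(\partial^L_i \ot \id)(\Theta) = -(1 \ot E_i)\Theta$. The second formula is proved in exactly the same way, using the right-derivation identity $\langle \partial^R_i(f), e\rangle_\mx = \langle f, eE_i\rangle_\mx$ from \eqref{eq:pair-partial} in place of the left one, which produces the factor $E_{\nu,k} E_i$ on the right of each tensor leg and hence the product $\Theta(1 \ot E_i)$.

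There is no substantial obstacle here; the argument is a clean bookkeeping computation with dual bases. The only point requiring a small amount of care is making sure the weight indices line up: one has to check that $\partial^L_i$ (respectively $\partial^R_i$) maps $(U^-_\mx)_{-\mu}$ into $(U^-_\mx)_{-\mu+\alpha_i}$, which is automatic, and that the completeness relation in the dual basis may be applied to $E_i E_{\nu,k}$ (respectively $E_{\nu,k} E_i$), which lies in $(U^+_\mx)_\mu$ by the $\Z^n$-grading, so this is fine.
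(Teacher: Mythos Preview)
Your proof is correct and follows essentially the same route as the paper: both expand $\partial^L_i$ applied to a dual-basis element via \eqref{eq:pair-partial} and then use the dual-basis completeness relation on the other tensor leg to collapse the double sum. The only cosmetic difference is that you organize the computation by fixing a graded component $\Theta_\mu$ first, whereas the paper writes the whole sum at once; the underlying manipulation is identical.
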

\begin{proof}
For any $f\in U^-_\mx$ the first relation in \eqref{eq:pair-partial} implies that 
\begin{align*}
  \partial^L_i(f)= \sum_{\mu}\langle \partial^L_i(f),E_\mu\rangle F_\mu=\sum_\mu \langle f,E_i E_\mu\rangle F_\mu 
\end{align*}  
and hence
\begin{align*}
  \sum_\mu (-1)^{|\nu|} \partial_i^L(F_\nu)\ot E_\nu&=\sum_{\mu,\nu}(-1)^{|\nu|} \langle F_\nu,E_i E_\mu\rangle F_\mu \ot E_\nu \\
  &= \sum_\mu(-1)^{|\mu|+1}F_\mu\ot E_i E_\mu 
\end{align*}
which proves the first relation in \eqref{eq:Theta-partial}. The second relation is verified similarly.
\end{proof}
\begin{cor}\label{cor:EFTheta}{\upshape (see \cite[Theorem 4.1.2]{b-Lusztig94})}
  The element $\Theta$ satisfies the relations
  \begin{align}
    (E_j \ot 1 + K_j \ot E_j) \Theta &= \Theta (E_j \ot 1 + K_j^{-1} \ot E_j)\label{eq:EFTheta1},\\
    (F_j \ot K_j^{-1} + 1\ot F_j) \Theta &= \Theta (F_j \ot K_j + 1 \ot F_j)\label{eq:EFTheta2}
  \end{align}
  for all $j\in I$. 
\end{cor}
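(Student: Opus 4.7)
The plan is to derive the first identity \eqref{eq:EFTheta1} by substituting the defining relation \eqref{eq:partial-def} into $\Theta$ and invoking Lemma \ref{lem:partialTheta}, and to obtain the second identity \eqref{eq:EFTheta2} by the mirror argument on the other tensor slot.

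For \eqref{eq:EFTheta1}, I would begin by rewriting \eqref{eq:partial-def} applied to a homogeneous element $F_\mu \in (U^-_\mx)_{-\mu}$ as
\[
E_j F_\mu = F_\mu E_j + K_j \partial_j^L(F_\mu) - \partial_j^R(F_\mu) K_j^{-1},
\]
tensoring each such equation on the right with $E_\mu$ and summing over $\mu$ with signs $(-1)^{|\mu|}$. Interpreted in $U(\chi)_\mx \widehat\ot U(\chi)_\mx$ this reads
\[
(E_j \ot 1)\Theta = \Theta(E_j \ot 1) + (K_j \ot 1)(\partial_j^L \ot \id)(\Theta) - \bigl((\partial_j^R \ot \id)(\Theta)\bigr)(K_j^{-1} \ot 1).
\]
Substituting the two identities of Lemma \ref{lem:partialTheta} and using the elementary facts $(K_j \ot 1)(1 \ot E_j) = K_j \ot E_j$ and $(1 \ot E_j)(K_j^{-1} \ot 1) = K_j^{-1} \ot E_j$ transforms the right-hand side into $\Theta(E_j \ot 1) - (K_j \ot E_j)\Theta + \Theta(K_j^{-1} \ot E_j)$, and rearranging yields \eqref{eq:EFTheta1}.

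For \eqref{eq:EFTheta2} I would run the parallel computation on the second tensor slot. Since the pairing $\langle \cdot, \cdot \rangle_\mx$ is nondegenerate on $U^-_\mx \ot U^+_\mx$ and $U^+_\mx$ is a Nichols algebra in its own right, there are skew derivations ${}_j\partial^L, {}_j\partial^R : U^+_\mx \to U^+_\mx$ characterized by
\[
[e, F_j] = {}_j\partial^R(e)\, K_j - K_j^{-1}\, {}_j\partial^L(e), \qquad e \in U^+_\mx,
\]
together with the analog of Lemma \ref{lem:partialTheta}, namely
\[
(\id \ot {}_j\partial^L)(\Theta) = -(F_j \ot 1)\Theta, \qquad (\id \ot {}_j\partial^R)(\Theta) = -\Theta(F_j \ot 1),
\]
proved by the same dual-basis argument. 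These may equivalently be deduced from $\partial_j^{L/R}$ by transport along the Hopf algebra isomorphism $\omega : U(\chi)_\mx \to U(\chi)_\mx^\cop$ of Section \ref{sec:setting}. Substituting
\[
F_j E_\mu = E_\mu F_j + K_j^{-1}\, {}_j\partial^L(E_\mu) - {}_j\partial^R(E_\mu)\, K_j
\]
into $(1 \ot F_j)\Theta = \sum_\mu (-1)^{|\mu|} F_\mu \ot F_j E_\mu$ and applying the two displayed formulas above produces \eqref{eq:EFTheta2} after a rearrangement of the same shape as in the first case.

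The computations take place in the completion $U(\chi)_\mx \widehat\ot U(\chi)_\mx$, but both $\partial_j^{L/R}$ and ${}_j\partial^{L/R}$ strictly decrease $\Z^n$-degree, while left and right multiplication by $K_j^{\pm 1} \ot 1$, $E_j \ot 1$, $1 \ot F_j$, etc.\ preserves the filtration defining the spaces $\mathcal{H}_N$, so the formal manipulations are legitimate and no topological subtlety arises. I expect no substantive obstacle beyond setting up the skew derivations and the coproduct identity on $U^+$ cleanly enough for the second identity to fall out by direct symmetry with the first.
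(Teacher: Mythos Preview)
Your proof is correct and follows exactly the approach sketched in the paper: derive \eqref{eq:EFTheta1} by combining the commutator formula \eqref{eq:partial-def} with Lemma \ref{lem:partialTheta}, and derive \eqref{eq:EFTheta2} by the mirror argument using the skew derivations on $U^+$. You have simply spelled out in full the computation that the paper summarizes in one sentence.
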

\begin{proof}
  Relation \eqref{eq:EFTheta1} follows from Lemma \ref{lem:partialTheta} and the defining relation \eqref{eq:partial-def} of the skew derivations $\partial^L_i$ and $\partial^R_i$. The second relation is verified analogously using skew derivations on $U^+$.
\end{proof} 
\begin{rema}
Just as in \cite[Theorem 4.1.2]{b-Lusztig94} one can show that the element $\Theta \in $ is the unique element of the form $\Theta=\sum_{\mu\in \N^n}\Theta_\mu$ with $\Theta_\mu\in (U^-_\mx)_{-\mu}\ot (U^+_\mx)_\mu$ for which $\Theta_0=1\ot 1$ and the relations in Corollary \ref{cor:EFTheta} hold. 
\end{rema}  

\subsection{The algebra homomorphism $\osigma:U^-_\mx\rightarrow U^+_\mx\rtimes H $}\label{sec:osigma1}
By Lemma \ref{lem:EKF} there exists a well-defined algebra homomorphism
$\osigma:U^-_\mx\rightarrow U^+_\mx\rtimes H$ such that
\begin{align}\label{eq:sbFi}
  \osigma(F_i)= c_{\tau(i)}K_iE_{\tau(i)}=c_{\tau(i)}K_i\, \omega\circ \tau(F_i) \qquad \mbox{for all $i\in I$.}
\end{align}  
For any $f_\mu\in (U^-_\mx)_{-\mu}$ we can write
\begin{align}\label{eq:amu-def}
  \osigma(f_\mu)= a_\mu K_\mu\, \omega\circ \tau(f_\mu)
\end{align}
for some $a_\mu\in \field$.
\begin{lem}
  The coefficients $a_\mu$ for $\mu\in \N^n$ are uniquely determined by $a_{\alpha_i}=c_{\tau(i)}$ for all $i\in I$ and by the recursion
  \begin{align}\label{eq:amu-rec}
     a_{\mu+\nu} = \chi(-\nu,\tau(\mu)) a_\mu a_\nu \qquad \mbox{for all $\mu,\nu\in \N^n$.}
  \end{align}
   In particular, the coefficient $a_\mu$ in \eqref{eq:amu-def} only depends on $\mu\in \N^n$ and not on the chosen element $f_\mu\in (U^-_\mx)_{-\mu}$. 
\end{lem}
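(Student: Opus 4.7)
The plan is to construct an auxiliary $\field$-linear map $\osigma'\colon U^-_\mx \to U^+_\mx \rtimes H$ by the closed formula $\osigma'(f_\mu) = a_\mu K_\mu \omega\tau(f_\mu)$ on homogeneous components, with scalars $a_\mu$ defined directly via the asserted recursion, and then to show $\osigma' = \osigma$. This device bypasses the a priori ambiguity that the scalar $a_\mu$ extracted from \eqref{eq:amu-def} might depend on the particular choice of $f_\mu \in (U^-_\mx)_{-\mu}$: that well-definedness becomes automatic once $\osigma'$ is built first and then identified with the known algebra homomorphism $\osigma$.

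First I would verify that the assignment $a_0 = 1$, $a_{\alpha_i} = c_{\tau(i)}$, together with $a_{\mu+\nu} = \chi(-\nu,\tau(\mu))\, a_\mu a_\nu$ unambiguously defines a function $a\colon \N^n \to \field$. This reduces to the associativity check that $a_{(\mu+\nu)+\lambda}$ agrees with $a_{\mu+(\nu+\lambda)}$, which follows from bilinearity of $\chi$ and linearity of $\tau$: both expressions expand to $\chi(-\lambda,\tau(\mu))\,\chi(-\lambda,\tau(\nu))\,\chi(-\nu,\tau(\mu))\, a_\mu a_\nu a_\lambda$.

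Next I would verify that the linear map $\osigma'$ is multiplicative. The map $\omega\circ\tau$ restricts to an algebra isomorphism $U^-_\mx \to U^+_\mx$ sending $(U^-_\mx)_{-\mu}$ to $(U^+_\mx)_{\tau(\mu)}$, using $\tau(\cI_\mx) = \cI_\mx$. Hence for $f_\mu \in (U^-_\mx)_{-\mu}$ and $g_\nu \in (U^-_\mx)_{-\nu}$, the commutation relations \eqref{eq:def-rel} give $\omega\tau(f_\mu) K_\nu = \chi(-\nu,\tau(\mu))\, K_\nu \omega\tau(f_\mu)$, so that
\[
\osigma'(f_\mu)\,\osigma'(g_\nu) = a_\mu a_\nu\, \chi(-\nu,\tau(\mu))\, K_{\mu+\nu}\, \omega\tau(f_\mu g_\nu) = a_{\mu+\nu}\, K_{\mu+\nu}\, \omega\tau(f_\mu g_\nu) = \osigma'(f_\mu g_\nu),
\]
where the middle equality is precisely the recursion defining $a$.

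Finally, $\osigma'(F_i) = a_{\alpha_i} K_i E_{\tau(i)} = c_{\tau(i)} K_i E_{\tau(i)} = \osigma(F_i)$, so the algebra homomorphisms $\osigma$ and $\osigma'$ agree on the generators $\{F_i \mid i \in I\}$ of $U^-_\mx$ and therefore coincide on all of $U^-_\mx$. In particular \eqref{eq:amu-def} holds with $a_\mu$ depending only on $\mu$, and this $a_\mu$ obeys the recursion \eqref{eq:amu-rec}. The only delicate point is the well-definedness hidden in the statement: a direct inductive argument on $|\mu|$ would force one to compare the scalars arising from two different decompositions $f_\mu = F_i f'_{\mu - \alpha_i}$, but the construction above globalizes that check into the single associativity identity in step one, which then handles all cases uniformly.
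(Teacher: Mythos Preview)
Your approach is correct and runs the paper's argument in reverse: the paper takes $\osigma$ as given and reads off the recursion from $\osigma(fg)=\osigma(f)\osigma(g)$, whereas you first construct the scalars $a_\mu$ abstractly, build $\osigma'$ from them, and then identify $\osigma'=\osigma$ on generators. Both routes rest on the same one-line commutation $\omega\tau(f_\mu)\,K_\nu = \chi(-\nu,\tau(\mu))\,K_\nu\,\omega\tau(f_\mu)$, so the difference is purely organizational; your version has the virtue of making the well-definedness of $a_\mu$ an explicit step rather than an implicit consequence.

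There is one small omission in your consistency argument. Associativity of the recursion is not enough to define $a$ on the \emph{commutative} monoid $\N^n$: you also need the symmetry $\chi(-\nu,\tau(\mu)) = \chi(-\mu,\tau(\nu))$, so that the two one-step computations of $a_{\alpha_i+\alpha_j}$ (peeling off $\alpha_i$ first, or $\alpha_j$ first) agree. Without it, your step~2 invokes instances of the recursion that step~1 has not yet established. This symmetry is exactly where the standing hypotheses on $\chi$ and $\tau$ enter: using that $\chi$ is symmetric and that $\chi(\tau(\cdot),\tau(\cdot))=\chi(\cdot,\cdot)$, one has $\chi(\nu,\tau(\mu))=\chi(\tau(\mu),\nu)=\chi(\mu,\tau(\nu))$. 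Once this is noted, your globalized check is valid and the rest of the argument goes through. The paper's own proof is equally silent on this point, so your write-up is no less complete than the original.
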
  
\begin{proof}
  By \eqref{eq:sbFi} we have $a_{\alpha_i}=c_{\tau(i)}$ for all $i\in I$. Let  $f\in (U^-_\mx)_{-\mu}$ and  $g\in (U^-_\mx)_{-\nu}$. Then \eqref{eq:amu-def} implies that
  \begin{align*}
    a_{\mu+\nu}K_{\mu+\nu} \omega\circ \tau(fg)&=\osigma(fg)=\osigma(f) \osigma(g)\\
    &= a_\mu K_\mu  \omega\circ \tau(f) a_\nu K_\nu \omega \circ \tau(g)\\
    &=\chi(-\nu,\tau(\mu)) a_\mu a_\nu K_{\mu+\nu} \omega \circ \tau(fg).
  \end{align*}
  Hence we get the recursive formula \eqref{eq:amu-rec}.
\end{proof}
We want to apply the algebra homomorphism $\kow \circ\osigma$ to the first tensor factor of the quasi $R$-matrix $\Theta=\sum_\mu F_\mu \ot E_\mu$. As $\omega \circ \tau$ is a coalgebra antiaumorphism, Lemma \ref{lem:kowidTheta} implies that
\begin{align*}
  \sum_\mu \kow \circ \osigma (F_\mu) \ot E_\mu &=\sum_\mu a_\mu K_\mu \omega\circ\tau(F_{\mu (2)}) \ot K_\mu \omega\circ \tau(F_{\mu (1)}) \ot E_\mu\\
  &= \sum_{\lambda,\nu} a_{\lambda+\nu} K_{\lambda+\nu} \omega\circ \tau(F_\nu K_\lambda^{-1})\ot K_{\lambda+\nu} \omega\circ \tau(F_\lambda)\ot E_\nu E_\lambda .
\end{align*}
Hence using the recursion \eqref{eq:amu-rec} we obtain
\begin{align}\label{eq:kow-osigma1}
  \sum_\mu \kow \circ \osigma (F_\mu) \ot E_\mu &=\sum_{\lambda,\nu} \osigma(F_\nu) K_\lambda K_{\tau(\lambda)}\ot K_\nu \osigma(F_\lambda) \ot E_\nu E_\lambda.
\end{align}
On the other hand, Equation \eqref{eq:kow-Theta2} implies that
\begin{align}\label{eq:kow-osigma2}
  \sum_{\mu} \osigma(F_\mu)& \ot \kow(S^{-1}(E_\mu)K_\mu)\\
  &=\sum_{\nu,\rho} \chi(-\nu,\rho) \osigma(F_\nu F_\rho)\ot S^{-1}(E_\rho) K_{\nu+\rho} \ot S^{-1}(E_\nu)K_\nu.\nonumber
\end{align}
Formulas \eqref{eq:kow-osigma1} and \eqref{eq:kow-osigma2} will be used to verify the associativity of the twist product in the next section.
\subsection{Definition and associativity of the twist product}\label{sec:twist-def-ass}
We now use the quasi $R$-matrix $\Theta$ and the algebra homomorphism $\osigma$ to define a twisted product on the partial bosonization $H_\theta\ltimes U^-$. Recall that we write formally $\Theta=\sum_{\rho} (-1)^{|\rho|} F_\rho\ot E_\rho$ and that we write $S$ to denote the antipode of $U(\chi)$. For any $f,g\in U^-$ we define
\begin{align}\label{eq:fstarg}
  f \star g = \sum_{\rho} (-1)^{|\rho|}(\osigma(F_\rho)\lact f) K_\rho [g \ract (S^{-1}(E_\rho)K_\rho)] .
\end{align}
Observe that $\osigma(F_\rho)\lact f \in U^-K^{-1}_{\tau(\rho)}$ and that $g \ract (S^{-1}(E_\rho)K_\rho)\in U^-$ and hence $f\star g\in H_\theta\ltimes U^-\subset U(\chi)$. For later reference it is convenient to spell out the formula for the twist product \eqref{eq:fstarg} explicitly in the case where one of the factors equals a generator $F_i$.
\begin{lem}\label{lem:mu-map-star}
  For any $f,g\in U^-$ and any $i\in I$ the relations
  \begin{align}
    F_i\star g &= F_ig + c_i q_{i\tau(i)} K_{\tau(i)}K_i^{-1} \partial_{\tau(i)}^L(g),\label{eq:Fistarg}\\
     f\star F_i &= fF_i + c_{\tau(i)} q_{i\tau(i)} \partial_{\tau(i)}^R(f)  K_i K_{\tau(i)}^{-1}\label{eq:fstarFi}
  \end{align}
  hold in $H_\theta\ltimes U^-$.
\end{lem}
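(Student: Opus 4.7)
The plan is to expand the definition \eqref{eq:fstarg} in each of the two cases and argue by weight considerations that only two values of $\rho$ contribute.

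For \eqref{eq:Fistarg}, we write $F_i \star g = \sum_\rho (-1)^{|\rho|}(\osigma(F_\rho)\lact F_i)\,K_\rho\,[g \ract (S^{-1}(E_\rho)K_\rho)]$. The key observation is that $\osigma(F_\rho) = a_\rho K_\rho\,\omega\circ\tau(F_\rho)$ contains a positive-root part of weight $\tau(\rho) \in \N^n$. Since $\kow^{\mathrm{op}}(F_i) = K_i^{-1}\otimes F_i + F_i \otimes 1$ (reading the coproduct in $H \ltimes U^-$), the pairing $\langle F_{i(1)},\osigma(F_\rho)\rangle_\mx$ vanishes unless $\tau(\rho)\in\{0,\alpha_i\}$, that is, $\rho\in\{0,\alpha_{\tau(i)}\}$. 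The term $\rho = 0$ gives $F_i g$ since $\osigma(1)=1$ and $S^{-1}(1)=1$. For $\rho = \alpha_{\tau(i)}$, I would use $\osigma(F_{\tau(i)}) = c_i K_{\tau(i)} E_i$ together with $E_i \lact F_i = K_i^{-1}$ from \eqref{eq:lract-partial}, and then $K_{\tau(i)} \lact K_i^{-1} = \chi(\alpha_i,\alpha_{\tau(i)}) K_i^{-1} = q_{i\tau(i)} K_i^{-1}$. On the right factor, $S^{-1}(E_{\tau(i)})K_{\tau(i)} = -E_{\tau(i)}$ (since $S^{-1}(E_j) = -E_j K_j^{-1}$), so $g\ract (S^{-1}(E_{\tau(i)})K_{\tau(i)}) = -\partial^L_{\tau(i)}(g)$ by \eqref{eq:lract-partial}. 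Collecting the signs $(-1)^{|\alpha_{\tau(i)}|}\cdot(-1) = +1$ and shifting $K_{\tau(i)}$ past $K_i^{-1}$ yields the summand $c_i q_{i\tau(i)} K_{\tau(i)}K_i^{-1}\partial^L_{\tau(i)}(g)$.

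For \eqref{eq:fstarFi}, the same strategy works with the roles of the two tensor legs swapped. Now it is the right action $F_i \ract (S^{-1}(E_\rho)K_\rho) = -F_i \ract E_\rho K_\rho^{-1}\cdot (\cdots)$ whose weight constraints localize the sum: since $F_i$ has weight $-\alpha_i$, only $\rho \in\{0,\alpha_i\}$ survive. The $\rho = 0$ term gives $f F_i$. For $\rho = \alpha_i$, $\osigma(F_i) = c_{\tau(i)} K_i E_{\tau(i)}$ acts by $E_{\tau(i)}\lact f = \partial^R_{\tau(i)}(f) K_{\tau(i)}^{-1}$ followed by the character action of $K_i$, while $F_i \ract S^{-1}(E_i)K_i = F_i \ract(-E_i) = -1$. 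After collecting the sign $(-1)^{|\alpha_i|}\cdot(-1) = +1$ and commuting the $K$-factors (which produces the single power $q_{i\tau(i)}$), the summand equals $c_{\tau(i)} q_{i\tau(i)} \partial^R_{\tau(i)}(f)\,K_i K_{\tau(i)}^{-1}$.

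The main obstacle is bookkeeping: one has to carefully distinguish the coproduct used in the definition of $\lact,\ract$ from the opposite coproduct of the skew-Hopf pairing, and one has to carry out the several commutations of group-like $K$-factors with the correct $\chi$-character. Once one writes out these actions using only \eqref{eq:partial-def}--\eqref{eq:lract-partial} and the defining commutation relations \eqref{eq:def-rel} of $U(\chi)$, the computation reduces to verifying that in each case exactly one nontrivial $\rho$ contributes and that the product of the resulting $K$-weights produces the single factor $q_{i\tau(i)}$. No deeper structural input is needed beyond the formula $S^{-1}(E_j)=-E_j K_j^{-1}$ and the skew-derivation characterization \eqref{eq:lract-partial} of the $U^+_\mx$-action.
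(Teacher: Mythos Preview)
Your approach is correct and is essentially the same as the paper's: expand the defining sum \eqref{eq:fstarg}, observe by weight reasons that only $\rho=0$ and a single simple-root $\rho$ survive, and evaluate those two terms using $S^{-1}(E_j)=-E_jK_j^{-1}$ together with \eqref{eq:lract-partial}. Two small wording issues do not affect correctness: the actions \eqref{eq:lactract} use the ordinary coproduct of $U(\chi)$ on $H\ltimes U^-$, not $\kow^{\mathrm{op}}$; and in the second computation the factor $q_{i\tau(i)}$ arises from the action $K_i\lact K_{\tau(i)}^{-1}=q_{i\tau(i)}K_{\tau(i)}^{-1}$ rather than from commuting group-like elements.
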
  
\begin{proof}
  By \eqref{eq:fstarg} we have
  \begin{align*}
    F_i\star g &\stackrel{\phantom{\eqref{eq:lract-partial}}}{=} F_i g -(\osigma(F_{\tau(i)})\lact F_i)K_{\tau(i)}[g \ract(S^{-1}(E_{\tau(i)})K_{\tau(i)})]\\
    &\stackrel{\phantom{\eqref{eq:lract-partial}}}{=} F_i g + c_i \big((K_{\tau(i)}E_i)\lact F_i\big)K_{\tau(i)}[g\ract E_{\tau(i)}]\\
    &\stackrel{\eqref{eq:lract-partial}}{=} F_ig + c_i q_{i\tau(i)} K_i^{-1} K_{\tau(i)} \partial^L_i(g).
  \end{align*}
  This proves \eqref{eq:Fistarg}. Equation \eqref{eq:fstarFi} is verified by a similar calculation.
\end{proof}  
We now want to extend the definition of the twist product to all of $H_\theta\ltimes U^-$. For simplicity we suppress tensor symbols and write elements $h\ot f\in H_\theta\ltimes U^-$ as $hf$. We define a bilinear binary 
operation $\star$ on $H_\theta \ltimes U^-$ by
\begin{align}\label{eq:KfstarKg}
   (K_\lambda f) \star (K_\mu g) = \chi(\alpha,\mu) K_{\lambda+\mu} (f\star g) 
\end{align}
for all $\lambda,\mu\in \Z^n_\theta$, $f\in U^-_{-\alpha}$, $g\in U^-$ and where $f\star g\in H_\theta\ltimes U^-$ is defined by \eqref{eq:fstarg}.
\begin{thm}
\label{thm:star-twist}
 For all pre-Nichols algebras of diagonal type $U^+$, the bilinear binary operation on $H_\theta \ltimes U^-$ defined by \eqref{eq:KfstarKg} is associative.
\end{thm}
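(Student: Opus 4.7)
The plan is to verify associativity by direct expansion, exploiting the fact that the coproduct identities \eqref{eq:kow-osigma1} and \eqref{eq:kow-osigma2} were derived in Section \ref{sec:osigma1} explicitly in preparation for this computation. Observe that the formula \eqref{eq:fstarg} makes no use of the parameters $\bc$ beyond their role in the definition of $\osigma$, so associativity should be purely an algebraic consequence of the module algebra structures $\lact$ and $\ract$, combined with the compatibility between $\osigma$ and $\Theta$ encoded by \eqref{eq:kow-osigma1}--\eqref{eq:kow-osigma2}.

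The first step is to reduce to the case $f,g,h \in U^-$. For $f \in U^-_{-\alpha}$, $g \in U^-_{-\beta}$ and $\lambda,\mu,\nu\in \Z^n_\theta$, the extension rule \eqref{eq:KfstarKg} applied twice expresses both $[(K_\lambda f)\star (K_\mu g)]\star (K_\nu h)$ and $(K_\lambda f)\star[(K_\mu g)\star (K_\nu h)]$ as $K_{\lambda+\mu+\nu}\, (f\star g\star h)$ times a scalar built out of $\chi$; because $\chi$ is a bicharacter, these scalars agree on both bracketings. (One also needs to verify that the intermediate $K$-factor from $f\star g$ or $g\star h$, when absorbed into the outer product, contributes the same $\chi$-exponent on both sides, which again follows from bimultiplicativity of $\chi$.) This leaves the main task: proving $(f\star g)\star h = f\star(g\star h)$ for $f,g,h\in U^-$.

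Next, I expand the left-hand side. Writing $f\star g = \sum_\rho (-1)^{|\rho|} (\osigma(F_\rho)\lact f)K_\rho (g\ract S^{-1}(E_\rho)K_\rho)$ and applying the outer $\star$ via \eqref{eq:fstarg} together with \eqref{eq:KfstarKg}, the resulting expression is a double sum in $\rho,\rho'$. Because $H_\theta\ltimes U^-$ is a left $U^+_\mx\rtimes H$-module algebra under $\lact$ and a right one under $\ract$, the action of $\osigma(F_{\rho'})$ on the inner product distributes across the two $U^-$-factors $(\osigma(F_\rho)\lact f)$ and $(g\ract S^{-1}(E_\rho)K_\rho)$ via its coproduct, and likewise on the right. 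After moving all $K$-factors to the middle (which produces $\chi$-scalars controlled by the weights of $f,g,h$), the left-hand side becomes a triple sum whose combinatorial structure precisely involves $\sum_{\rho'}(-1)^{|\rho'|}\kow(\osigma(F_{\rho'}))\ot E_{\rho'}$. Identity \eqref{eq:kow-osigma1} rewrites this as a double sum in $\lambda,\nu$ of $\osigma(F_\nu)K_\lambda K_{\tau(\lambda)} \ot K_\nu \osigma(F_\lambda)\ot E_\nu E_\lambda$, which is exactly the form one obtains by expanding $f\star(g\star h)$ from the outside in and then applying \eqref{eq:kow-osigma2} to the second tensor factor. Matching the two triple sums termwise (after using $S(K_\rho)=K_\rho^{-1}$ and the $\chi$-bookkeeping) yields the equality.

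The main obstacle is purely combinatorial: tracking the $K_\rho$-insertions as they are commuted past homogeneous $U^-$-elements produces a proliferation of $\chi$-scalars that must be shown to cancel. Conceptually, however, the entire proof is dictated by the two coproduct identities for $\kow\circ\osigma$ and for $\osigma\otimes \kow\circ S^{-1}$ applied to $\Theta$: the former governs what happens when the outer $\star$ is applied to $(f\star g)\star h$, the latter governs $f\star(g\star h)$, and the $\chi$-scalars from commutation are exactly what is needed to interpolate between the two. No condition on the parameters $\bc$ is used, so the associativity holds in full generality.
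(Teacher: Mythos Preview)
Your proposal is correct and follows essentially the same approach as the paper's proof: reduce to $f,g,h\in U^-$ via the extension rule \eqref{eq:KfstarKg}, expand both triple products using the module-algebra structures, and use identities \eqref{eq:kow-osigma1} and \eqref{eq:kow-osigma2} to bring the two expansions into the same form, with the remaining work being a direct verification that the accumulated $\chi$-scalars agree. One small point to make explicit in the reduction step: the intermediate $K$-factor appearing in $f\star g$ has weight in $\Z^n_\theta$, and the fact that it contributes trivially when paired against another $\Z^n_\theta$-weight uses not just bimultiplicativity but also the symmetry of $\chi$ together with $\tau(\gamma)=-\gamma$ (the paper invokes $\chi(\rho+\tau(\rho),\gamma)=1$ for $\gamma\in\Z^n_\theta$).
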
  
\begin{proof}
  Let $\lambda, \mu, \nu\in \Z^n_\theta$ and $f\in U^-_{-\alpha}$, $g\in U^-_{-\beta}$, $h\in U^-_{-\gamma}$. By the discussion following \eqref{eq:fstarg} we can write
  \begin{align}\label{eq:u-def}
     f\star g = \sum_{\rho} K_{\rho-\tau(\rho)} u^-_\rho(f,g)  
  \end{align}  
  where $u_\rho^-(f,g)\in U^-_{-(\alpha+\beta-\rho-\tau(\rho))}$. With this notation we calculate
  \begin{align*}
    \big((K_\lambda f)\star (K_\mu g)\big) \star (K_\gamma h)&=\chi(\alpha,\mu)\sum_{\rho} \big(K_{\lambda+\mu+ \rho-\tau(\rho)}u^-_\rho(f,g)\big)\star(K_\gamma h)\\
  &=\chi(\alpha,\mu) \chi(\alpha+\beta,\gamma) K_{\lambda+\mu+\gamma}\big((f\star g)\star h\big)
  \end{align*}
  where we used the fact that $\chi(\rho+\tau(\rho),\gamma)=1$ as $\tau(\gamma)=-\gamma$. Similarly one calculates
  \begin{align*}
    (K_\lambda f)\star \big((K_\mu g)\star (K_\gamma h)\big)=
  \chi(\beta,\gamma) \chi(\alpha,\mu+\gamma)K_{\lambda+\mu+\gamma}\big(f\star (g\star h)\big).
  \end{align*}
  Hence it suffices to show that $(f\star g)\star h = f \star (g \star h)$. Using \eqref{eq:u-def} we obtain
  \begin{align}\label{eq:(fg)h1}
    (f\star g)\star h = \sum_{\rho,\sigma}K_{\rho-\tau(\rho)}K_{\sigma-\tau(\sigma)} u^-_\sigma (u^-_\rho(f,g),h).
  \end{align}  
    By definition of $u^-_\rho(f,g)$ in \eqref{eq:u-def} we have
  \begin{align}\label{eq:um-formula}
    u^-_\rho(f,g)=(-1)^{|\rho|} \chi(\alpha{-}\tau(\rho),\rho{-}\tau(\rho))\big[(\osigma(F_\rho)\lact f)K_{\tau(\rho)}\big] \big[g\ract (S^{-1}(E_\rho)K_\rho)\big].
  \end{align}
  Inserting the above formula into \eqref{eq:(fg)h1} twice, we obtain
  \begin{align*}
    (f \star g)\star h &= \sum_{\rho,\sigma} K_{\rho+\sigma-\tau(\rho+\sigma)} (-1)^{|\rho|+|\sigma|}\chi(\alpha{-}\tau(\rho),\rho{-}\tau(\rho)) \chi(\alpha{+}\beta{-}\tau(\sigma),\sigma{-}\tau(\sigma))\cdot\\
      \cdot\Big[\osigma(F_\sigma)&\lact  \Big(\big[(\osigma(F_\rho)\lact f)K_{\tau(\rho)}\big] \big[g\ract (S^{-1}(E_\rho)K_\rho)\big]\Big)K_{\tau(\sigma)}\Big]\,
    \big[h \ract(S^{-1}(E_\sigma)K_\sigma)\big].\nonumber
  \end{align*}
  Using the fact that $U^-\rtimes H$ is a left module algebra over $U^+\rtimes H$ and formula \eqref{eq:kow-osigma1} we obtain
  \begin{align}
    (f\star g)\star h =& \sum_{\rho,\lambda, \nu} K_{\rho,\lambda,\nu}^\theta a_{\alpha,\beta,\rho,\lambda,\nu} \big(\osigma(F_\nu)K_{\lambda+\tau(\lambda)}\big)\lact\big[(\osigma(F_\rho)\lact f)K_{\tau(\rho)}\big]\cdot\label{eq:(fg)h2}\\
    \cdot \big( K_\nu \osigma(&F_\lambda)\big)\lact \big[g \ract(S^{-1}(E_\rho)K_\rho)\big] K_{\tau(\lambda+\nu)} \big[h \ract(S^{-1}(E_\nu E_\lambda)K_{\nu+\lambda})\big] \nonumber
  \end{align}
  where we use the abbreviations $K^\theta_{\rho,\lambda,\nu}=K_{\rho+\lambda+\nu-\tau(\rho+\lambda+\nu)}$ and
  \begin{align}\label{eq:a-def}
    a_{\alpha,\beta,\rho,\lambda,\nu}=(-1)^{|\rho|+|\lambda|+|\nu|}& \chi(\alpha{-}\tau(\rho),\rho{-}\tau(\rho))\cdot\\
    &\qquad \cdot\chi(\alpha{+}\beta{-}\tau(\lambda{+}\nu),\lambda{+}\nu {-}\tau(\lambda{+}\nu)).\nonumber
  \end{align}
  Formula \eqref{eq:(fg)h2} can be rewritten as
  \begin{align}
    (f\star g)\star h =& \sum_{\rho,\lambda, \nu} K_{\rho,\lambda,\nu}^\theta a_{\alpha,\beta,\rho,\lambda,\nu} \chi(-\nu,\tau(\rho))^2\, \chi(\nu,\tau(\lambda)) \, \chi(\beta{-}\tau(\lambda),\tau(\nu))\cdot \label{eq:(fg)h3}\\
    &\cdot\big[(\osigma(F_\nu F_\rho)\lact f)K_{\tau(\rho+\nu)}\big]
    \cdot \big[\big(\osigma(F_\lambda)\lact g \ract(S^{-1}(E_\rho)K_\rho)\big) K_{\tau(\lambda)} \big]\cdot\nonumber\\
    & \cdot\big[h \ract(S^{-1}(E_\nu E_\lambda)K_{\nu+\lambda}) \big]. \nonumber
  \end{align}
  Similarly, to obtain an explicit expression for $f\star(g\star h)$, we use \eqref{eq:u-def} to write
  \begin{align*}
     f\star (g\star h) = \sum_{\sigma, \lambda} K_{\sigma-\tau(\sigma)} K_{\lambda-\tau(\lambda)} \chi(\alpha,\lambda{-}\tau(\lambda)) u^-_\sigma(f, u^-_\lambda(g,h)).
  \end{align*}
  Using again \eqref{eq:um-formula} we obtain
  \begin{align*}
    f&\star (g\star h) {=} \sum_{\sigma, \lambda}  K_{\sigma+\lambda-\tau(\sigma+\lambda)} (-1)^{|\sigma|+|\lambda|} \chi(\alpha{-}\tau(\sigma),\sigma{-}\tau(\sigma)) \chi(\alpha{+}\beta{-}\tau(\lambda),\lambda{-}\tau(\lambda))\cdot\\
    &\cdot \big[(\osigma(F_\sigma)\lact f)K_{\tau(\sigma)}\big]
    \Big[\Big(\big[(\osigma(F_\lambda)\lact g)K_{\tau(\lambda)}\big]\,\big[h\ract(S^{-1}(E_\lambda)K_\lambda)\big] \Big)\ract (S^{-1}(E_\sigma)K_\sigma)\Big].
  \end{align*}
  Using Equation \eqref{eq:kow-osigma2} and the fact that $U^-\rtimes H$ is a right module algebra over $U^+\rtimes H$, we obtain
  \begin{align}
    &f\star (g\star h) = \sum_{\nu, \rho, \lambda} K^\theta_{\rho,\lambda,\nu} b_{\alpha,\beta,\rho,\lambda,\nu} \big[(\osigma(F_\nu F_\rho)\lact f)K_{\tau(\nu+\rho)}\big]\cdot \label{eq:f(gh)2}\\
    &\cdot \big[(\osigma(F_\lambda)\lact g)K_{\tau(\lambda)}\big] \ract (S^{-1}(E_\rho)K_{\nu+\rho}) \,\, \big[h\ract (S^{-1}(E_\lambda) K_\lambda)\big]\ract(S^{-1}(E_\nu) K_\nu)\nonumber
  \end{align}
  where as before $K^\theta_{\rho,\lambda,\nu}=K_{\rho+\lambda+\nu-\tau(\rho+\lambda+\nu)}$ and
  \begin{align}\label{eq:b-def}
    b_{\alpha,\beta,\rho,\lambda,\nu}=(-1)^{|\rho|+|\lambda|+|\nu|} &\chi(\alpha{-}\tau(\nu{+}\rho), \nu{+}\rho{-}\tau(\nu{+}\rho))\cdot\\
     &\qquad \cdot\chi(\alpha{+}\beta{-}\tau(\lambda),\lambda{-}\tau(\lambda))\, \chi(-\nu,\rho).\nonumber
  \end{align}
  Formula \eqref{eq:f(gh)2} can be rewritten as
  \begin{align}\label{eq:f(gh)3}
    f\star (g\star h) =& \sum_{\nu, \rho, \lambda} K^\theta_{\rho,\lambda,\nu} b_{\alpha,\beta,\rho,\lambda,\nu} \chi(\nu,\beta+\lambda-\tau(\lambda)-\rho)\cdot\\
    &\cdot\big[(\osigma(F_\nu F_\rho)\lact f)K_{\tau(\rho+\nu)}\big]
    \cdot \big[\big(\osigma(F_\lambda)\lact g \ract(S^{-1}(E_\rho)K_\rho)\big) K_{\tau(\lambda)} \big]\cdot\nonumber\\
    & \cdot\big[h \ract(S^{-1}(E_\nu E_\lambda)K_{\nu+\lambda}) \big]. \nonumber
  \end{align}
  Now the relation $f\star(g\star h)=(f\star g)\star h$ follows from comparison of the Equations \eqref{eq:(fg)h3} and \eqref{eq:f(gh)3} and the fact that
  \begin{align*}
   a_{\alpha,\beta,\rho,\lambda,\nu} \chi(-\nu,\tau(\rho))^2\, \chi(\nu,\tau(\lambda)) \, \chi(\beta{-}\tau(\lambda),\tau(\nu))
    &= b_{\alpha,\beta,\rho,\lambda,\nu} \chi(\nu,\beta{+}\lambda{-}\tau(\lambda){-}\rho)
  \end{align*}
  which in turn follows from \eqref{eq:a-def} and \eqref{eq:b-def} by direct calculation.
\end{proof}
It is convenient to invert the formula \eqref{eq:fstarg}. In the following lemma we express the usual multiplication in $U^-$ in terms of the twist product $\star$ on $H_\theta \ltimes U^-$.
\begin{lem}\label{lem:fg}
  For any $f,g\in U^-$ the relation
  \begin{align*}
     fg = \sum_{\mu\in \N^n}(-1)^{|\mu|}\Big(\big(\osigma(F_\mu)\lact f\big)K_\mu\Big)\star [g\ract E_\mu]
  \end{align*}
  holds in $(H_\theta\ltimes U^-,\star)$.
\end{lem}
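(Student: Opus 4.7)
The plan is to prove the identity by direct substitution. Denote the right-hand side by $f \bullet g$. Applying \eqref{eq:fstarg} to each star product $((\osigma(F_\mu)\lact f) K_\mu) \star (g \ract E_\mu)$, after first rewriting $(\osigma(F_\mu)\lact f) K_\mu$ in the form $K_\lambda f'$ with $f' \in U^-$ in order to invoke \eqref{eq:KfstarKg}, produces a double sum indexed by $\mu, \rho \in \N^n$. The typical summand involves $\osigma(F_\rho) \lact [(\osigma(F_\mu)\lact f) K_\mu]$ as well as $(g \ract E_\mu) \ract (S^{-1}(E_\rho) K_\rho)$, multiplied by the sign $(-1)^{|\mu|+|\rho|}$ and by appropriate $\chi$-commutation factors produced by moving $K$-elements past elements of $U^-$.

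The next step is to regroup the double sum using that $\osigma : U^-_\mx \to U^+_\mx \rtimes H$ is an algebra homomorphism and that $\lact, \ract$ equip $H \ltimes U^-$ with module algebra structures. This allows me to combine the two left actions into $\osigma(F_\rho F_\mu) \lact f$ and the two right actions into $g \ract (E_\mu S^{-1}(E_\rho) K_\rho)$. After reorganizing by the combined index $\nu = \mu + \rho$ and using the dual-basis description of $\Theta$ from \eqref{eq:Theta}, the problem reduces to verifying an identity of the shape
\[
\sum_{\mu + \rho = \nu}(-1)^{|\mu|+|\rho|}\, F_\rho F_\mu \otimes E_\mu S^{-1}(E_\rho) K_\rho = \delta_{\nu,0}\, 1 \otimes 1
\]
in the completion $U(\chi)_\mx \widehat{\otimes} U(\chi)_\mx$. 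This expresses the fact that the quasi $R$-matrix $\Theta$ admits an explicit inverse built from the antipode, and can be established either from the antipode axiom $m (S \otimes \id) \kow = \vep$ together with the nondegeneracy of the pairing \eqref{eq:pairing}, or by induction on $|\nu|$ using the skew-derivation identities for $\Theta$ in Lemma \ref{lem:partialTheta}.

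Once this identity is available, only the $\nu = 0$ (equivalently $\mu = \rho = 0$) contribution survives in the double sum, and that term produces $fg$ on the nose, completing the proof. The main obstacle in executing this plan is the careful bookkeeping of the $\chi$-commutation factors and of the interactions between the $K_\mu, K_\rho$ factors and the $\osigma$-images, which must combine in precisely the right way to match the form of the antipode identity above. The underlying algebraic input, however, is a direct analogue of the classical statement that $(S \otimes \id)(\Theta)$ inverts $\Theta$, parallel to \cite[Chapter 4]{b-Lusztig94}.
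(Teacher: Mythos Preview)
Your approach is essentially the same as the paper's. The paper first records the key ``antipode-inverse'' identity for $\Theta$ in the tailored form \eqref{eq:111} (derived directly from the coproduct formula \eqref{eq:kow-Theta2}), then expands the right-hand side via \eqref{eq:KfstarKg} and \eqref{eq:fstarg} and applies \eqref{eq:111} to collapse the double sum; you describe the same computation in the reverse order. One small caveat: the collapsing identity as you have written it is off by a factor of $\chi(-\mu,\rho)$ in each summand (compare with \eqref{eq:111} after stripping $\osigma$ and commuting the $K$'s), which is precisely the $K$-commutation bookkeeping you already flagged as the main obstacle.
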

\begin{proof}
   Note first that \eqref{eq:kow-Theta2} implies that
  \begin{align}\label{eq:111}
     \sum_{\nu,\mu\in \N^n} (-1)^{|\mu|+|\nu|} \osigma(F_\nu F_\mu) \ot K_\nu K_\mu \ot E_\mu K_\mu^{-1} S^{-1}(E_\nu) K_\nu K_\mu = 1\ot 1 \ot 1.
  \end{align}
  By bilinearity we may assume that $f\in U^-_{-\alpha}$ for some $\alpha\in \N^n$. We obtain
  \begin{align*}
    &\sum_{\mu}\Big(\big(\osigma(F_\mu)\lact f\big)K_\mu\Big)\star [g\ract E_\mu]\\
    &\quad\stackrel{\eqref{eq:KfstarKg}}{=} \sum_\mu (-1)^{|\mu|} \chi(\alpha{-}\tau(\mu),\mu{-}\tau(\mu))\, K_{\mu-\tau(\mu)} \big[\big(\osigma(F_\mu)\lact f\big)K_{\tau(\mu)}\big] \star [g\ract E_\mu]\\
    &\quad\stackrel{\eqref{eq:fstarg}}{=}  \sum_{\mu,\nu} (-1)^{|\mu|+|\nu|} \chi(\alpha{-}\tau(\mu),\mu{-}\tau(\mu))\, K_{\mu-\tau(\mu)} \Big(\osigma(F_\nu)\lact\big[\big(\osigma(F_\mu)\lact f\big)K_{\tau(\mu)}\big]\Big) K_\nu \cdot\\
        &\qquad \qquad \qquad \qquad\qquad \qquad \qquad \qquad\qquad \cdot[g\ract E_\mu]\ract(S^{-1}(E_\nu)K_\nu)\\
    &\quad\stackrel{\phantom{\eqref{eq:111}}}{=}  \sum_{\mu,\nu} (-1)^{|\mu|+|\nu|} \Big(\osigma(F_\nu)\lact\big(\osigma(F_\mu)\lact f\big)\Big)K_{\mu+\nu}  \big[g\ract (E_\mu K_\mu^{-1} S^{-1}(E_\nu)K_{\nu+\mu})\big]\\
    &\quad\stackrel{\eqref{eq:111}}{=}fg  
  \end{align*}  
  which proves the lemma.  
\end{proof}  
\subsection{A twisted coaction}
\label{sec:comod}
Define a linear map $\kow_\star: H_\theta\ltimes U^- \rightarrow (H_\theta \ltimes U^-) \otimes U(\chi)_\mx$ by
\begin{align}\label{eq:kowstar-def}
  \kow_\star (K_\nu f_\alpha)= \sum_{\lambda,\mu} K_\nu\big(\osigma(F_\lambda) \lact f_\alpha \ract E_\mu\big) K_\lambda \ot K_\nu F_\mu K_{\mu-\alpha} E_\lambda
\end{align}
for all $\nu\in H_\theta$ and $f_\alpha\in U^-_{-\alpha}$. For any $f\in H_\theta\ot U^-$ and any $\nu\in H_\theta$ we have
\begin{align}
  \kow_\star(K_\nu f)&=(K_\nu\ot K_\nu) (\star \ot \cdot) \kow_\star(f) \label{eq:kow-Kstar},\\
  \kow_\star(f K_\nu)&= \kow_\star(f)(\star \ot \cdot)(K_\nu\ot K_\nu) \label{eq:kow-starK}
\end{align}
as $\chi(\nu,\lambda+\tau(\lambda))=1$ for all $\lambda\in \N^n$.
Moreover,
\begin{align}\label{eq:kowstarFi}
  \kow_\star(F_i) = F_i\ot K_i^{-1} + c_i K_i^{-1}K_{\tau(i)} \ot E_{\tau(i)} K_i^{-1} + 1 \ot F_i
\end{align}
for all $i\in I$. 
\begin{prop}\label{prop:kowstar}
  The map $\kow_\star$ endows $(H_\theta\ltimes U^-,\star)$ with the structure of a right $U(\chi)_\mx$-comodule algebra. 
\end{prop}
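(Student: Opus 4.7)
The plan is to verify directly that $\kow_\star$ satisfies the three defining axioms of a right $U(\chi)_\mx$-comodule algebra: the counit property, coassociativity, and multiplicativity with respect to $\star$.

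The counit axiom $(\id \ot \vep) \circ \kow_\star = \id$ is immediate from \eqref{eq:kowstar-def}: only the summand with $\lambda=\mu=0$ contributes, because $\vep(F_\mu) = \delta_{\mu,0}$ and $\vep(E_\lambda) = \delta_{\lambda,0}$, and that summand returns $K_\nu f_\alpha$.

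For coassociativity, I would evaluate both $(\kow_\star\ot \id)\kow_\star(K_\nu f_\alpha)$ and $(\id \ot \kow)\kow_\star(K_\nu f_\alpha)$. On the first side, applying $\kow_\star$ to the first tensor factor produces iterated actions of the form $\osigma(F_{\lambda'}) \lact \bigl((\osigma(F_\lambda) \lact f_\alpha \ract E_\mu) K_\lambda\bigr) \ract E_{\mu'}$, up to $H_\theta$-factors. On the second side, one must expand $\kow(F_\mu)$ and $\kow(E_\lambda)$ on the second tensor factor using the coproduct formulas \eqref{eq:kow-Theta1}, \eqref{eq:kow-Theta2} for the homogeneous components of the quasi $R$-matrix. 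After matching dual-basis sums, the two expressions agree as a consequence of the algebra homomorphism property of $\osigma$ and the fact that $\lact$ and $\ract$ make $H \ltimes U^-$ a left and right $U^+_\mx \rtimes H$-module algebra.

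The main obstacle is the multiplicativity property $\kow_\star(f \star g) = \kow_\star(f) \,(\star \ot \cdot) \, \kow_\star(g)$. Using \eqref{eq:kow-Kstar} and \eqref{eq:kow-starK}, it suffices to verify this for $f, g \in U^-$. The left-hand side expands via \eqref{eq:fstarg} and \eqref{eq:kowstar-def} into a triple sum involving the quasi $R$-matrix three times and $\osigma$ twice. The right-hand side is obtained by independently coacting on $f$ and $g$ and then $\star$-multiplying the first tensor factors while concatenating the second. Both sides can be brought into a common normal form by repeated use of the identities \eqref{eq:kow-osigma1} and \eqref{eq:kow-osigma2}, the module algebra axioms for $\lact$ and $\ract$, and the compatibility of $\osigma$ with the coproduct. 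The combinatorics closely parallel the associativity proof in Theorem \ref{thm:star-twist}; the chief difficulty will be tracking the scalar cocycles arising from commuting $H_\theta$-factors past $\star$-products on the first tensor slot.
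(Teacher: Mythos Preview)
Your treatment of the counit and of coassociativity matches the paper's proof; the paper computes $(\id\ot\kow)\circ\kow_\star(f)$ and uses Lemma~\ref{lem:kowidTheta} exactly as you suggest.

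The gap is in the multiplicativity step. The paper does \emph{not} attempt the identity $\kow_\star(f\star g)=\kow_\star(f)(\star\ot\cdot)\kow_\star(g)$ for general $f,g\in U^-$. Instead, after writing out both sides as the quintuple sums \eqref{eq:kow-star-alg-hom1} and \eqref{eq:kow-star-alg-hom2}, it observes that by associativity of $\star$ it suffices to check the case $f=F_i$, and for this specialization only finitely many $(\nu,\rho,\mu)$ survive. The remaining discrepancy between the two sides sits in the \emph{second} tensor factor: one side has $F_\kappa$ to the left of $E_\nu$, the other has $E_\nu$ to the left of $F_\kappa$, and the paper closes this using the intertwiner relation \eqref{eq:EFTheta1} for the quasi $R$-matrix (i.e.\ Corollary~\ref{cor:EFTheta}).

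Your proposal does not identify either of these two ingredients. The tools you list---\eqref{eq:kow-osigma1}, \eqref{eq:kow-osigma2}, and the module-algebra axioms---act on the \emph{first} tensor factor and govern how $\osigma$ and the actions $\lact,\ract$ interact with coproducts; they were the engine of the associativity proof in Theorem~\ref{thm:star-twist}. But the obstruction to multiplicativity of $\kow_\star$ lives in the second factor as an $E$--$F$ commutator in $U(\chi)_\mx$, and resolving it for general $f$ would require an iterated version of \eqref{eq:EFTheta1} that the paper never formulates. Your diagnosis that ``the chief difficulty will be tracking the scalar cocycles'' is therefore off the mark: the cocycles are routine, and the real content is the quasi $R$-matrix intertwiner property together with the reduction to generators.
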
  
\begin{proof}
  Let $f\in U^-_{-\alpha}$. It follows from Lemma \ref{lem:kowidTheta} that
  \begin{align*}
    (\id &\ot \kow)\circ\kow_\star(f)
   = \sum_{\lambda,\mu}\big(\osigma(F_\lambda) \lact f \ract E_\mu\big) K_\lambda \ot \kow(F_\mu K_{\mu-\alpha}  E_\lambda)\\
    &\stackrel{\phantom{\eqref{eq:kow-starK}}}{=}\sum_{\nu,\rho, \lambda,\mu}\big(\osigma(F_\lambda F_\mu) \lact f \ract (E_\nu E_\rho)\big) K_{\lambda+\mu} \ot F_\rho K_{\rho+\nu-\alpha} E_\lambda K_\mu \ot F_\nu K_{\nu-\alpha} E_\mu\\
    & \stackrel{\phantom{\eqref{eq:kow-starK}}}{=} \sum_{\nu,\rho, \lambda,\mu}\Big(\osigma(F_\lambda)\lact\big((\osigma(F_\mu) \lact f \ract E_\nu)K_{\tau(\mu)}\big)\ract E_\rho\Big) K_{\lambda+\mu-\tau(\mu)}\\
    &\qquad \qquad \qquad \qquad \qquad \qquad
    \ot F_\rho K_{\rho+\nu+\tau(\mu)-\alpha} E_\lambda K_{\mu-\tau(\mu)} \ot F_\nu K_{\nu-\alpha} E_\mu\\
    &\stackrel{\eqref{eq:kow-starK}}{=} \sum_{\mu,\nu} \kow_{\star}\big((\osigma(F_\mu)\lact f \ract E_\nu)\big) K_\mu\big) \ot F_\nu K_{\nu-\alpha} E_\mu\\
    &\stackrel{\phantom{\eqref{eq:kow-starK}}}{=} (\kow_\star \ot \id) \circ \kow_\star(f).
  \end{align*}
  Hence the map $\kow_\star$ is coassociative and $(H_\theta\ltimes U^-,\vep,\kow_\star)$ is a right $U(\chi)_\mx$-comodule. It remains to check that $\kow_\star$ is an algebra homomorphism. In view of \eqref{eq:kow-Kstar} and \eqref{eq:kow-starK} it suffices to show that
  \begin{align}\label{eq:kow-fstarg}
    \kow_\star(f\star g) = \kow_\star(f)(\star\ot \cdot)\kow_\star(g)
  \end{align}
  for all $f,g\in U^-$. Moreover, by the associativity of the twisted product $\star$ it suffices to verify relation \eqref{eq:kow-fstarg} for $f=F_i$ for all $i\in I$.

  Assume that $f\in U^-_{-\alpha}$ and $g\in U^+_{-\beta}$. From the definition of $\star$ and $\kow_\star$, using the fact that $(H\ltimes U^-)^\cop$ is a left and right $(H\ltimes U^+)$-module algebra via the actions \eqref{eq:lactract}, one obtains 
  \begin{align}
    \kow_\star(f\star g)=&\sum_{\kappa, \lambda, \mu,\nu, \rho} (-1)^{|\rho|} \chi(\kappa,\alpha-\mu-\rho)\, \chi(\nu,\beta-2\rho-\kappa) \label{eq:kow-star-alg-hom1}\\
    &\big(\osigma(F_\nu F_\rho)\lact f \ract E_\mu\big) K_{\nu+\rho}\big(\osigma(F_\lambda)\lact g \ract(S^{-1}(E_\rho)K_\rho E_\kappa)\big)K_\lambda \nonumber\\
    &\qquad\otimes F_\mu F_\kappa K_{\mu+\kappa+2\rho-\alpha-\beta} E_\nu E_\lambda,\nonumber\\
\kow_\star(f)(\star\ot \cdot)&\kow_\star(g) = \sum_{\kappa, \lambda, \mu,\nu, \rho} (-1)^{|\rho|}\chi(-\rho,\nu)\label{eq:kow-star-alg-hom2}\\
    &\big(\osigma(F_\rho F_\nu)\lact f \ract E_\mu\big) K_{\nu+\rho}\big(\osigma(F_\lambda)\lact g \ract(E_\kappa S^{-1}(E_\rho)K_\rho )\big)K_\lambda\nonumber\\
    &\qquad\otimes F_\mu K_{\mu-\alpha} E_\nu F_\kappa K_{\kappa-\lambda} E_\lambda. \nonumber
  \end{align}  
For $f=F_i$ the first factors in the second line of \eqref{eq:kow-star-alg-hom1} and \eqref{eq:kow-star-alg-hom2} are non-zero if and only if $\nu=\rho=\mu=0$ or two of $\nu, \rho, \mu$ vanish while the remaining one is one of $\nu=\alpha_{\tau(i)}, \rho=\alpha_{\tau(i)}$ or $\mu=\alpha_i$. Hence we get
\begin{align}\label{eq:kow-star-alg-hom3}
    \kow_\star(F_i\star g)&=\sum_{\kappa, \lambda}\chi(\kappa,\alpha_i) F_i (\osigma(F_\lambda)\lact g \ract E_\kappa) \ot F_\kappa K_{\kappa-\alpha_i-\beta}E_\lambda\\
&+(\osigma(F_{\tau(i)})\lact F_i)K_{\tau(i)}\big(\osigma(F_\lambda)\lact g \ract E_\kappa\big) K_\lambda \ot K_i^{-1}F_\kappa E_{\tau(i)} K_{\kappa-\beta}  E_\lambda\nonumber\\
    &+\chi(\kappa,-\alpha_{\tau(i)})(\osigma(F_{\tau(i)})\lact F_i)K_{\tau(i)}\big(\osigma(F_\lambda)\lact g \ract(E_{\tau(i)} E_\kappa)\big) K_\lambda \nonumber\\
    & \qquad \qquad \qquad \qquad \qquad \qquad \qquad \qquad \ot K_i^{-1} F_\kappa K_{\kappa+2\alpha_{\tau(i)}-\beta} E_\lambda\nonumber\\
    &+ (F_i\ract E_i) \big(\osigma(F_\lambda)\lact g \ract E_\kappa \big) K_\lambda \otimes F_i F_\kappa K_{\kappa-\beta} E_\lambda\nonumber.
\end{align}
Multiplying each summand in $U(\chi)_\mx\ot (U^+_\mx)_\kappa$ in Equation \eqref{eq:EFTheta1} for $j=\tau(i)$ from the right by $(-1)^{|\kappa|}K_\kappa\ot 1$, we obtain the relation
\begin{align*}
  \sum_\kappa E_{\tau(i)}F_\kappa K_\kappa \ot E_\kappa - & \chi(\kappa,-\alpha_{\tau(i)}) F_\kappa K_{\kappa+2\alpha_{\tau(i)}}\ot E_{\tau(i)} E_\kappa \\
  =& \sum_\kappa F_\kappa E_{\tau(i)}K_\kappa \ot E_\kappa - F_\kappa K_\kappa \ot E_\kappa E_{\tau(i)}.
\end{align*}  
  This relation can be applied to the second and third summand in \eqref{eq:kow-star-alg-hom3} to give
\begin{align*}
    \kow_\star(F_i\star g)&=\sum_{\kappa, \lambda}\chi(\kappa,\alpha_i) F_i (\osigma(F_\lambda)\lact g \ract E_\kappa) \ot F_\kappa K_{\kappa-\alpha_i-\beta}E_\lambda\\
&+(\osigma(F_{\tau(i)})\lact F_i)K_{\tau(i)}\big(\osigma(F_\lambda)\lact g \ract E_\kappa\big) K_\lambda \ot K_i^{-1}E_{\tau(i)}F_\kappa K_{\kappa-\beta} E_\lambda \\
    &+(\osigma(F_{\tau(i)})\lact F_i)K_{\tau(i)}\big(\osigma(F_\lambda)\lact g \ract(E_\kappa E_{\tau(i)})\big) K_\lambda \ot K_i^{-1} F_\kappa K_{\kappa-\beta}E_\lambda \\
    &+ (F_i\ract E_i) \big(\osigma(F_\lambda)\lact g \ract E_\kappa \big) K_\lambda \otimes F_i F_\kappa K_{\kappa-\beta} E_\lambda\\
    &= \kow_\star(F_i)(\star \ot \cdot) \kow_\star(g)\nonumber
\end{align*}
where the last equality follows from \eqref{eq:kow-star-alg-hom2} for $f=F_i$.
\end{proof}

\section{Star products on partial bosonizations}\label{sec:star}
In this section we introduce the notion of a star product on a graded algebra. We show that the twist product $\star$ on the partial bosonization $H_\theta\ltimes U^-$ from Section \ref{sec:twist-def-ass} is a star product which gives rise to an algebra isomorphic to the coideal subalgebra $B_\bc$. In Section \ref{sec:RelsRev} we employ the star product on $H_\theta\ltimes U^-$ to find a novel way to obtain defining relations for the algebra $B_\bc$.
\subsection{General star products on $\N$-graded algebras}
For any $\N$-graded $\field$-algebra $A=\bigoplus_{j\in \N} A_j$ and any $m\in \N$ we write $A_{<m}=\bigoplus_{j=0}^{m-1}A_j$ and $A_{\le m}=\bigoplus_{j=0}^{m}A_j$.
\begin{defi}\label{def:star-prod} Let $A=\bigoplus_{j\in \N} A_j$ be a $\N$-graded $\field$-algebra. A star product on $A$ is an associative bilinear operation $* : A \times A \to A$, $(a,b)\mapsto a\ast b$
such that
\begin{align}\label{eq:a*b-ab}
  a * b - ab \in A_{< m+ n} \qquad \mbox{for all $a \in A_m$, $b \in A_n$.}
\end{align}
The star product $\ast$ on $A$ is called $0$-equivariant
if
\[
a * h = ah \quad \mbox{and} \quad h * a= h a \quad \mbox{for all} \quad h \in A_0, a \in A. 
\]
\end{defi}
If $*$ is a star product on an $\N$-graded algebra $A$ then $(A,\ast)$ is a filtered algebra with $\cF_m (A) := A_{\leq m}$. By condition \eqref{eq:a*b-ab} the associated graded algebra satisfies
\[
\gr (A, *) \cong A.
\]
If the graded algebra $A$ is generated in degrees 0 and 1, then every star product algebra structure $(A,*)$ is also generated in degrees 0 and 1.
\begin{lem} 
\label{lem:sprod-isom}
Let $A$ be an $\N$-graded $\field$-algebra generated in degrees 0 and 1. 
\begin{enumerate}
\item[(i)] Any 0-equivariant star product on $A$ is uniquely determined by the $\field$-linear map $\mu^L:A_1\to \End_{\field}(A)$, $f\mapsto \mu^L_f$ defined by
\[
\mu^L_f(a) = f * a  - f a \qquad \mbox{for all $f \in A_1, a \in A$}.
\]
\item[(ii)] If $U$ is a graded subalgebra of $A$ such that $A_0 U = U A_0 = A$, then every 0-equivariant star product on $A$ is uniquely determined by the $\field$-linear map $\mu^L : U_1 \to \Hom_{\field}(U,A)$, $f\mapsto \mu^L_f$ defined by
\[
   \mu^L_f(b) = f*b - fb \qquad \mbox{for all $f \in U_1, b \in U$.}
\]
\end{enumerate}
\end{lem}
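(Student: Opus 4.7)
The plan is to establish uniqueness by showing that two $0$-equivariant star products $\ast, \ast'$ on $A$ inducing the same map $\mu^L$ must agree. For part (i), I will proceed by strong induction on $m$, proving that $a \ast b = a \ast' b$ for every $a \in A_m$ and every $b \in A$. The base case $m = 0$ is immediate from $0$-equivariance, which forces $a \ast b = ab = a \ast' b$.

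For the inductive step, the first observation is that associativity of $\ast$ together with $0$-equivariance makes any $0$-equivariant star product $A_0$-bilinear:
\[
(ha)\ast b = h(a \ast b), \qquad (ah)\ast b = a \ast (hb) \qquad \text{for } h \in A_0,
\]
with analogous identities on the right slot. Since $A$ is generated in degrees $0$ and $1$, the subspace $A_m$ is spanned by products $hfc$ with $h \in A_0$, $f \in A_1$, and $c \in A_{m-1}$, so by the bilinearity above it suffices to handle $a = fc$. The definition of $\mu^L_f$ can be rewritten as
\[
fc = f \ast c - \mu^L_f(c),
\]
with $\mu^L_f(c) \in A_{\le m-1}$ by the defining condition of a star product, so associativity of $\ast$ yields
\[
(fc)\ast b = f \ast (c \ast b) - \mu^L_f(c) \ast b.
\]
The first summand equals $f(c \ast b) + \mu^L_f(c \ast b)$ and is determined by $\mu^L$; meanwhile $c \ast b$ and $\mu^L_f(c) \ast b$ are both determined by the inductive hypothesis applied in degrees $\le m-1$. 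This closes the induction for (i).

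For part (ii), the conditions $A_0 U = U A_0 = A$ yield $A_m = A_0 \cdot U_m$ for every $m$. Writing $a = \sum_i h_i u_i$ with $h_i \in A_0$ and $u_i \in U_m$, and $b = \sum_j v_j k_j$ with $v_j \in U$ and $k_j \in A_0$, the $A_0$-bilinearity established above gives $a \ast b = \sum_{i,j} h_i (u_i \ast v_j) k_j$, so determining $\ast$ reduces to determining $u \ast v$ for $u, v \in U$. In the intended setting $U$ is itself generated in degrees $\le 1$ as a graded algebra (this holds in the application to $U = U^-$ and is compatible with the hypothesis $A_0 U = A$ since $A_1 = A_0 U_1$), and the argument of part (i) then runs verbatim inside $U$, reducing the problem to the values $g \ast v$ for $g \in U_1$ and $v \in U$. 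These values are exactly the data recorded by $\mu^L : U_1 \to \Hom_\field(U, A)$.

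The main subtlety is careful degree bookkeeping: one must track that the correction $\mu^L_f(c)$ lies in $A_{<m}$ so that the inductive hypothesis truly applies, and that the successive uses of associativity preserve the degree filtration. A secondary concern for (ii) is to record the requisite generation of $U$ in low degrees so that the internal induction inside $U$ closes cleanly.
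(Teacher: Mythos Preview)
Your proof of (i) is correct and close in spirit to the paper's; the paper instead observes that $(A,*)$ is itself generated in degrees $\le 1$ and writes every element as a $*$-monomial $a_1 * \cdots * a_j$ with $a_l \in A_{\le 1}$, so that $(a_1 * \cdots * a_j)*a = M^L_{a_1}\cdots M^L_{a_j}(a)$ is visibly determined by the operators $M^L_f = (f*-)$ for $f \in A_{\le 1}$.

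For (ii) your argument has a genuine gap. First, you import the hypothesis that $U$ is generated in degrees $\le 1$, which is not part of the lemma. Second, even granting that hypothesis, the claim that the argument of (i) ``runs verbatim inside $U$'' does not hold: the correction term $\mu^L_g(c)$ lies in $A_{<m}$ but need not lie in $U$, so an inductive hypothesis phrased for $u' \in U_{<m}$ does not apply to $\mu^L_g(c) * v$; likewise $c*v$ lands in $A$ rather than $U$, so $g * (c*v)$ is not covered by $\mu^L|_{U_1 \times U}$ without a further reduction. One can repair this by repeatedly re-expanding via $A = A_0 U = U A_0$, but then the induction is really over $A_m$, not $U_m$, and the extra hypothesis on $U$ becomes irrelevant.

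The paper's route is shorter and needs no extra hypothesis: having proved (i), it suffices to show that $f*a$ for $f \in A_1$, $a \in A$ is determined by $\mu^L|_{U_1 \times U}$. From $A_0 U = U A_0 = A$ and the grading one gets $A_1 = A_0 U_1$ and $A = U A_0$. Writing $f = \sum_i h_i g_i$ with $h_i \in A_0$, $g_i \in U_1$ and $a = \sum_j v_j k_j$ with $v_j \in U$, $k_j \in A_0$, the $A_0$-bilinearity you already established gives
\[
f*a = \sum_{i,j} h_i\,(g_i * v_j)\,k_j = \sum_{i,j} h_i\bigl(g_i v_j + \mu^L_{g_i}(v_j)\bigr)k_j,
\]
which is determined by the given data. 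This then feeds into (i) and finishes the proof without any assumption on how $U$ is generated.
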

\begin{proof} Let $*$ be a $0$-equivariant star product on $A$.

(i) Define a $\field$-linear map $M^L : A_{\leq 1} \to \End_{\field}(A)$ by
\[
M^L_f(a) := f * a = 
\begin{cases}
\mu^L_f(a) + f a, & \mbox{if} \; \; f \in A_1  \\
fa, & \mbox{if} \; \; f \in A_0
\end{cases} 
\]
where in the second case we use the assumption that $*$ is $0$-equivariant. The map $M^L$ is uniquely determined by the linear map $\mu^L : A_1 \to \End_{\field}(A)$. Since the algebra $(A, *)$ is generated in degrees $0$ and $1$, the vector space $A$ is the $\field$-span of elements 
of the form $a_1 * \cdots * a_j$ for $a_1, \ldots ,a_j \in A_{\leq 1}$. Since 
\[
(a_1 * \cdots * a_j) * a = M_{a_1}^L \ldots M_{a_j}^L (a)
\]
for all $a \in A$, $a_1, \ldots a_j \in A_{\leq 1}$, the bilinear operation $* : A \times A \to A$ 
is uniquely determined by the linear map $\mu^L : A_1 \to \End_{\field}(A)$.

(ii) Similarly to the first part, the assumption that $A_0 U = U A_0 = A$ and the $0$-equivariance of $*$
imply that the bilinear operation $* : A \times A \to A$ 
is uniquely determined by its restriction to $U_1 \times U$. This 
restriction is 
\[
f * b = f b + \mu_f^L(b)
\]
for $f \in U_1$ and $b \in U$, which completes the proof of the lemma.
\end{proof}
\subsection{The first star product on the partial bosonization $H_\theta \ltimes U^-$}
\label{sec:first-star}
We work in the setting of Section \ref{sec:size}. Throughout Sections \ref{sec:first-star}, \ref{sec:second-star} and \ref{sec:RelsRev} we assume that $\bc \in \field^n$ satisfies condition \eqref{assume:parameters} in Section \ref{sec:prop-c}.

Recall from Section \ref{sec:Heis} that $U(\chi)^\poly$ denotes the subalgebra of $U(\chi)$ generated by $F_i$, $\Etil_i= E_i K_i^{-1}$, $K_i^{-1}$, $K_i K_{\tau(i)}^{-1}$ for all $i\in I$. Consider the triangular decomposition \eqref{eq:U+H-} of $U(\chi)$ written in reverse order
\begin{align*}
  U(\chi)\cong U^- \rtimes H \ltimes G^+
\end{align*}  
where  $G^+$ denotes the subalgebra of $U(\chi)$ generated by $\{\Etil_i\,|\,i\in I\}$. The restriction of this triangular decomposition to the subalgebra $\Upoly$ give rise to a linear isomorphism
\begin{align}\label{eq:Upoly-decomp}
\Upoly &\cong \big( H_\theta \ltimes U^- \big) \oplus  \big( \Upoly \mathrm{span}_\field \{\Etil_i, K_i^{-1} \mid i \in I\} \big)
\end{align}
where as before $\mathrm{span}_\field$ denotes the $\field$-linear span. Let
\begin{align}\label{eq:psi-poly}
\psi : \Upoly \twoheadrightarrow H_\theta \ltimes U^-
\end{align}
denote the $\field$-linear projection with respect to the direct sum decomposition \eqref{eq:Upoly-decomp}. Since the kernel of $\psi$ is a left ideal we have that 
\begin{equation}
\label{psipsi}
\psi(a b) = \psi(a \psi(b)) \qquad \mbox{for all $a,b\in \Upoly$.} 
\end{equation}
Recall that $B_\bc$ is a subalgebra of $\Upoly$. For quantized enveloping algebras the following Lemma recently appeared in \cite[Corollary 4.4]{a-Letzter17p}.
\begin{lem}\label{lem:psi-iso1}
  The restriction of the map \eqref{eq:psi-poly} to $B_\bc$ is a $\field$-linear isomorphism
  \begin{align}\label{eq:psi-B}
    \psi:B_\bc\rightarrow H_\theta \ltimes U^-.    
  \end{align}
  \end{lem}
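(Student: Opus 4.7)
The plan is to compare two free left $H_\theta$-module bases. By Corollary~\ref{cor:basis-Bc}, $B_\bc$ is free as a left $H_\theta$-module on the basis $\{B_J:J\in\cJ\}$, and $H_\theta\ltimes U^-=H_\theta\otimes U^-$ is free as a left $H_\theta$-module on $\{F_J:J\in\cJ\}$. Since the direct sum decomposition \eqref{eq:Upoly-decomp} is stable under left multiplication by $H_\theta$, the projection $\psi$ is a left $H_\theta$-module map. It therefore suffices to prove the triangularity statement
\[
\psi(B_J)\in F_J + H_\theta \cdot U^-_{\leq |J|-1} \quad\text{for every multi-index } J,
\]
as it then follows that $\psi|_{B_\bc}$, represented in the two $H_\theta$-bases and graded by $|J|$, is upper-triangular with identity diagonal, hence invertible.

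I would prove the triangularity by induction on $|J|$, with the base case $|J|=0$ immediate from $\psi|_{H_\theta}=\id$. For the induction step, write $J=(i,J'')$ and exploit the fact that $\ker\psi$ is a left ideal of $\Upoly$, i.e.\ formula \eqref{psipsi}, to obtain $\psi(B_J)=\psi\bigl(B_i\cdot\psi(B_{J''})\bigr)$. The inductive hypothesis reduces the problem to computing $\psi(B_i\cdot hF_{J'})$ for $h\in H_\theta$ and $|J'|\leq|J''|$. Substitute $B_i=F_i+c_i\Etil_{\tau(i)}K_{\tau(i)}K_i^{-1}$: the $F_i$-summand yields $h'F_iF_{J'}\in H_\theta\otimes U^-$, already of the required form (here $h'\in H_\theta$ is $h$ up to a $\chi$-scalar produced by commuting $F_i$ past $h$). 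For the $\Etil_{\tau(i)}$-summand, first commute $K_{\tau(i)}K_i^{-1}$ past $hF_{J'}$ (which introduces only a $\chi$-scalar) and then apply the cross relation \eqref{cross-D} iteratively to move $\Etil_{\tau(i)}$ to the right of $F_{J'}$. Terms retaining an $\Etil_{\tau(i)}$-factor lie in $\ker\psi$; each commutator contribution from \eqref{cross-D} produces a factor $(1-K_{\tau(i)}^{-2})$ which, combined with the trailing $K_{\tau(i)}K_i^{-1}$, expands as $K_{\tau(i)}K_i^{-1}-K_{\tau(i)}^{-1}K_i^{-1}$; the first summand lies in $H_\theta$, while the second, still carrying a $K_i^{-1}$-factor, lies in $\ker\psi$. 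The surviving contribution is then of strictly lower $U^-$-degree and already sits in $H_\theta\otimes U^-$, completing the induction.

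The main obstacle is the last step: one must verify at every instance of the iterated commutator that the $K$-coefficient attached to each surviving lower-degree $U^-$-term indeed lands in $H_\theta$ rather than carrying a stray negative $K_i$-power that would still be in $\Upoly$ but only be killed by $\psi$. What makes this work is the Kronecker delta $\delta_{\tau(i),j}$ built into \eqref{cross-D}: the commutator is non-zero precisely when the $\Etil_{\tau(i)}$ meets an $F_{\tau(i)}$, and then the newly produced $K_{\tau(i)}$ pairs exactly with the pre-existing $K_i^{-1}$ attached to $\Etil_{\tau(i)}K_{\tau(i)}K_i^{-1}$ to yield $K_{\tau(i)}K_i^{-1}\in H_\theta$. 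Once the triangularity is in place, bijectivity of $\psi|_{B_\bc}$ follows immediately.
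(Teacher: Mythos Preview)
Your proof is correct and follows essentially the same approach as the paper's: both rely on Corollary~\ref{cor:basis-Bc} together with the triangularity relation $\psi(aB_J)-aF_J\in H_\theta U^-_{\le |J|-1}$. The paper simply states this relation as \eqref{eq:psiBJ} and uses it directly for surjectivity (with injectivity coming from the basis in Corollary~\ref{cor:basis-Bc}), whereas you supply an explicit inductive verification of \eqref{eq:psiBJ} and then package bijectivity as an upper-triangular argument on the two $H_\theta$-bases.
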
  
\begin{proof}
  For any multi-index $J$ and any $a\in H_\theta$ we have the relation
  \begin{align}\label{eq:psiBJ}
     \psi(aB_J)- a F_J\in H_\theta U^-_{\le |J|-1}.
  \end{align}
  This shows that the restriction \eqref{eq:psi-B} is surjective. On the other hand Corollary \ref{cor:basis-Bc} of Theorem \ref{thm:c-cond} implies that the restriction \eqref{eq:psi-B} is also injective. 
\end{proof}  
\begin{rema}\label{rem:phi-psi}
  The statement that the map $\psi$ in \eqref{eq:psi-B} is a linear isomorphism is equivalent to any of the statements in Theorem \ref{thm:c-cond} or Remark \ref{rem:Iwasawa}. Indeed, if say condition \eqref{assume:parameters} in Section \ref{sec:prop-c} does not hold, then the second part of Theorem \ref{thm:c-cond} implies that $B_\bc$ intersects nontrivially with the second summand of the decomposition \eqref{eq:Upoly-decomp}.
\end{rema}
We use the isomorphism \eqref{eq:psi-B} to define an algebra structure $\ast$ on $H_\theta \ltimes U^-$ by
\begin{align}\label{eq:ast-def}
a  * b = \psi ( \psi^{-1}(a) \psi^{-1}(b)) \qquad \mbox{for all $a,b \in H_\theta \ltimes U^-$.}
\end{align}
Relation \eqref{eq:psiBJ} and Corollary \ref{cor:basis-Bc} imply that $*$ is a star product on the partial bosonization $H_\theta \ltimes U^-$ with the $\N$-grading defined by setting $\deg(h) =0$ and $\deg(F_i) =1$ for all $h \in H_\theta$, $i \in I$. Moreover, this star product is $0$-equivariant because $\psi$ is a left and right $H_\theta$-module homomorphism. The subalgebra $U^-\subset H_\theta\ltimes U^-$ satisfies the assumption of Lemma \ref{lem:sprod-isom}(ii). Hence, in view of $U^-_1=V^-(\chi)$, the $0$-equivariant star product is uniquely determined by a $\field$-linear map $\mu^L:V^-(\chi)\rightarrow \Hom_k(U^-,H_\theta\ltimes U^-)$. We summarize the situation in the following theorem.
\begin{thm}\label{thm:first-star}
  Let $U^+$ be a pre-Nichols algebra of diagonal type and assume that the parameters  $\bc \in \field^n$ satisfy condition \eqref{assume:parameters} in Section \ref{sec:prop-c}. Then the algebra structure $*$ on $H_\theta\ltimes U^-$ defined by \eqref{eq:ast-def} is a $0$-equivariant star product and the associated $\field$-linear map
  \begin{align*}
    \mu^L:V^-(\chi) \rightarrow \Hom_\field(U^-,H_\theta\ltimes U^-), \quad f\mapsto \mu^L_f
  \end{align*}
  from Lemma \ref{lem:sprod-isom}(ii) is given by
  \begin{align}\label{eq:mu-Nichols}
     \mu^L_{F_i}(u) = c_i q_{i\tau(i)}(K_{\tau(i)} K_i^{-1}) \partial_{\tau(i)}^L(u) 
  \end{align}
for all $i\in I$, $u \in U^-$.
\end{thm}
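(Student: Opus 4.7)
The plan is to verify the three separate claims in order: (1) the operation $*$ on $H_\theta \ltimes U^-$ defined by \eqref{eq:ast-def} satisfies the filtration condition \eqref{eq:a*b-ab} with respect to the natural grading ($\deg h=0$ for $h\in H_\theta$, $\deg F_i =1$); (2) it is $0$-equivariant; (3) the linear map $\mu^L$ associated to $*$ via Lemma \ref{lem:sprod-isom}(ii) has the claimed form \eqref{eq:mu-Nichols}. Claims (1) and (2) are essentially bookkeeping. By Lemma \ref{lem:psi-iso1} the map $\psi\colon B_\bc \to H_\theta \ltimes U^-$ is an isomorphism of left and right $H_\theta$-modules, and relation \eqref{eq:psiBJ} shows that $\psi$ is filtration-preserving with $\psi(aB_J)-aF_J \in H_\theta U^-_{\le |J|-1}$; this gives (1) immediately. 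For (2), one uses $\psi^{-1}(h)=h$ for $h \in H_\theta \subset B_\bc$ together with the $H_\theta$-linearity of $\psi$ to get $a*h=ah$ and $h*a=ha$.

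For (3), it is enough by Lemma \ref{lem:sprod-isom}(ii) with $U := U^-$ to compute $F_i * u - F_i u$ for $u \in U^-$. The starting observation is that $E_{\tau(i)} K_i^{-1} \in \Upoly \cdot K_i^{-1}$ lies in the right ideal appearing in \eqref{eq:Upoly-decomp}, so $\psi(E_{\tau(i)} K_i^{-1}) = 0$ and consequently $\psi(B_i) = F_i$, i.e.\ $\psi^{-1}(F_i) = B_i$. Using \eqref{psipsi} twice,
\[
F_i * u = \psi\bigl(\psi^{-1}(F_i)\,\psi^{-1}(u)\bigr) = \psi\bigl(B_i\,\psi(\psi^{-1}(u))\bigr) = \psi(B_i u) = F_i u + c_i\,\psi\bigl(E_{\tau(i)} K_i^{-1} u\bigr),
\]
since $F_i u \in U^- \subset H_\theta \ltimes U^-$ is fixed by $\psi$. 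Everything thus reduces to evaluating $\psi(E_{\tau(i)} K_i^{-1} u)$.

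For this key step, commute $E_{\tau(i)}$ past $K_i^{-1}$ (picking up the scalar $q_{i\tau(i)}$) and apply the defining skew-derivation relation \eqref{eq:partial-def}, yielding
\[
E_{\tau(i)} K_i^{-1} u = q_{i\tau(i)} K_i^{-1} u\, E_{\tau(i)} + q_{i\tau(i)} (K_{\tau(i)} K_i^{-1})\,\partial_{\tau(i)}^L(u) - q_{i\tau(i)} K_i^{-1}\partial_{\tau(i)}^R(u)\, K_{\tau(i)}^{-1}.
\]
The middle summand already lies in $H_\theta\ltimes U^-$; the third ends in $K_{\tau(i)}^{-1}$ and hence is killed by $\psi$; and for the first I substitute $E_{\tau(i)} = \Etil_{\tau(i)} K_{\tau(i)}$, then commute $K_{\tau(i)}$ through $\Etil_{\tau(i)}$ and $u$ using \eqref{KEF} and the analogous relations on $U^-$ to combine it with $K_i^{-1}$ as the $H_\theta$-element $K_{\tau(i)} K_i^{-1}$, leaving $\Etil_{\tau(i)}$ as the right-most factor and so placing the whole term in the right ideal. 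Thus only the middle summand survives $\psi$, giving $\psi(E_{\tau(i)} K_i^{-1} u) = q_{i\tau(i)} (K_{\tau(i)} K_i^{-1})\,\partial_{\tau(i)}^L(u)$, which is \eqref{eq:mu-Nichols}. The one genuinely delicate point is this last rearrangement of the first summand, because $K_{\tau(i)}$ itself does not belong to $\Upoly$; the remedy is to always keep $K_{\tau(i)}$ paired with a neighbouring $K_i^{\pm 1}$ so that it only ever appears inside a legitimate $\Upoly$-element.
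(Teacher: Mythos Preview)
Your proof is correct and follows essentially the same line as the paper's. The paper compresses your three-term analysis by writing $\psi(K_i^{-1}E_{\tau(i)}u)=\psi(K_i^{-1}[E_{\tau(i)},u])$ directly, silently discarding the term $K_i^{-1}uE_{\tau(i)}$; your explicit verification that this term lies in $\Upoly\cdot\Etil_{\tau(i)}\subset\ker\psi$ (via the rearrangement $E_{\tau(i)}=q_{\tau(i)\tau(i)}^{-1}K_{\tau(i)}\Etil_{\tau(i)}$ and commuting $K_{\tau(i)}$ left to pair with $K_i^{-1}$) is exactly the justification the paper leaves implicit. One cosmetic point: you invoke \eqref{psipsi} ``twice'' but in fact use it once, to pass from $\psi(B_i\psi^{-1}(u))$ to $\psi(B_i u)$.
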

\begin{proof}
  It remains to compute the map $\mu^L$. For any $b\in B_\bc$ and any $i\in I$ relation \eqref{psipsi} implies that
  \begin{align*}
    F_i\ast \psi(b)&=\psi(B_ib)\\
    &=\psi((F_i+c_i E_{\tau(i)}K_i^{-1})\psi(b))\\
    &=F_i \psi(b) + c_iq_{i\tau(i)} \psi(K_i^{-1}E_{\tau(i)}\psi(b))\\
    &=F_i \psi(b) + c_iq_{i\tau(i)} \psi(K_i^{-1} [E_{\tau(i)},\psi(b)]).
  \end{align*}
  Hence we get for any $u\in U^-$ the relation
  \begin{align*}
      F_i \ast u = F_i u +  c_iq_{i\tau(i)} \psi(K_i^{-1} [E_{\tau(i)},u])
  \end{align*}
  which by Equation \eqref{eq:partial-def} and the definition of $\psi$ implies that
  \begin{align}\label{eq:Fiastu}
       F_i \ast u = F_i u +  c_iq_{i\tau(i)} K_i^{-1}K_{\tau(i)} \partial^L_{\tau(i)}(u).
  \end{align}
  Hence $\mu^L_i$ is given by \eqref{eq:mu-Nichols}.
\end{proof}
\subsection{The second star product on the partial bosonization $H_\theta \ltimes U^-$}
\label{sec:second-star}
Next we interpret the associative product $\star$ from Section \ref{twist} in terms of star products on partial bosonizations. It follows from \eqref{eq:fstarg} and \eqref{eq:KfstarKg} that $\star$ is a $0$-equivariant star product on $H_\theta \ltimes U^-$. By Lemma \ref{lem:mu-map-star} the corresponding $\field$-linear map $\mu^L$ is also given by \eqref{eq:mu-Nichols}. We summarize these observations in the following proposition.
\begin{prop} 
\label{prop:second-star}
For all pre-Nichols algebras of diagonal type $U^+$, the binary operation $\star$ on $H_\theta \ltimes U^-$ given by 
\eqref{eq:KfstarKg} is a 0-equivariant star product for which the map $\mu^L:
U^- \to \Hom_{\field}( U^- , H_\theta \ltimes U^-)$ from Lemma \ref{lem:sprod-isom}(ii) is given by
\[
\mu^L_{F_i}(u) = c_i q_{i \tau(i)} (K_{\tau(i)} K_i^{-1}) \partial_{\tau(i)}^L(u) 
\]
for all $i\in I$, $u \in U^-$.
\end{prop}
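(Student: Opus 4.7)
The plan is to verify three assertions in order: that $\star$ satisfies the filtration condition \eqref{eq:a*b-ab}; that $\star$ is $0$-equivariant; and that the associated map $\mu^L$ has the claimed form. Associativity has already been established in Theorem \ref{thm:star-twist}, and bilinearity of $\star$ is immediate from the definitions \eqref{eq:fstarg}--\eqref{eq:KfstarKg}, so these two facts can simply be quoted.

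For the filtration condition, I would inspect the summands of \eqref{eq:fstarg} individually. The $\rho=0$ term is exactly $fg$. For $\rho\neq 0$, note that $\osigma(F_\rho)\in (U^+_\mx)_\rho H$ is of $\Z^n$-weight $\rho$ and that the pairing \eqref{eq:pairing} respects the $\Z^n$-grading; it follows that $\osigma(F_\rho)\lact f$ has $U^-$-degree $\deg(f)-|\rho|$ and that $g\ract(S^{-1}(E_\rho)K_\rho)$ has $U^-$-degree $\deg(g)-|\rho|$. Hence every $\rho\neq 0$ summand lies in $U^-$-degree strictly less than $\deg(f)+\deg(g)$. The extension rule \eqref{eq:KfstarKg} preserves the $U^-$-degree, so \eqref{eq:a*b-ab} is verified for the grading in which $\deg(h)=0$ for $h\in H_\theta$ and $\deg(F_i)=1$.

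For $0$-equivariance, I would first observe that $\vep(\osigma(F_\rho))=0$ for every $\rho\neq 0$: indeed, $\osigma$ is an algebra homomorphism and $\vep(c_{\tau(i)}K_iE_{\tau(i)})=0$. Consequently $\osigma(F_\rho)\lact 1 = \vep(\osigma(F_\rho))\cdot 1 = 0$ and, by the same token, $1\ract(S^{-1}(E_\rho)K_\rho)=0$ for $\rho\neq 0$. Plugging $f=1$ or $g=1$ into \eqref{eq:fstarg} therefore collapses the sum to its $\rho=0$ term, yielding $1\star g=g$ and $f\star 1=f$. Combining this with \eqref{eq:KfstarKg}, the symmetry $\chi(\alpha,\mu)=\chi(\mu,\alpha)$, and the relation $K_\lambda f = \chi(\lambda,\alpha)^{-1} f K_\lambda$ valid in $H_\theta\ltimes U^-$ for $f\in U^-_{-\alpha}$ and $\lambda\in\Z^n_\theta$, a direct computation shows $K_\lambda\star y = K_\lambda y$ and $y\star K_\lambda = yK_\lambda$ for every $y\in H_\theta\ltimes U^-$, as required.

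Finally, the formula for $\mu^L_{F_i}$ is exactly the content of \eqref{eq:Fistarg} in Lemma \ref{lem:mu-map-star}, which gives $F_i\star u = F_iu + c_iq_{i\tau(i)}(K_{\tau(i)}K_i^{-1})\partial^L_{\tau(i)}(u)$ for all $u\in U^-$; subtracting $F_iu$ yields the formula in the statement. I do not anticipate any genuine obstacle: the substantive work has already been done in Theorem \ref{thm:star-twist} and Lemma \ref{lem:mu-map-star}, and the present proposition is essentially a repackaging of these results as a statement about star products.
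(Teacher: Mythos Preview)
Your proposal is correct and matches the paper's approach: the paper simply asserts (in the paragraph preceding the proposition) that the star-product and $0$-equivariance properties follow from \eqref{eq:fstarg} and \eqref{eq:KfstarKg}, and cites Lemma~\ref{lem:mu-map-star} for the $\mu^L$ formula, while you spell out these verifications in detail. One small slip: $\osigma(F_\rho)$ lies in $(U^+_\mx)_{\tau(\rho)}H$ rather than $(U^+_\mx)_\rho H$ (see \eqref{eq:amu-def}), but since $|\tau(\rho)|=|\rho|$ your degree argument is unaffected.
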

Combining the above proposition with Theorem \ref{thm:first-star} and using Lemma \ref{lem:sprod-isom}(ii) we obtain the following corollary.
\begin{cor}\label{cor:star=ast}
  For all pre-Nichols algebras of diagonal type $U^+$, the associative products $*$ and $\star$ on  $H_\theta \ltimes U^-$ coincide.
\end{cor}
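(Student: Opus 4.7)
The plan is to deduce this directly from the uniqueness part of Lemma \ref{lem:sprod-isom}(ii) together with the explicit computations of $\mu^L$ already established in Theorem \ref{thm:first-star} and Proposition \ref{prop:second-star}. First I would note that since $*$ is only defined via the bijection $\psi: B_\bc \to H_\theta \ltimes U^-$ (Lemma \ref{lem:psi-iso1}), we are implicitly working under assumption \eqref{assume:parameters}. Under that assumption, both binary operations $*$ and $\star$ are $0$-equivariant star products on the $\N$-graded algebra $H_\theta \ltimes U^-$, with the grading $\deg(h) = 0$ for $h \in H_\theta$ and $\deg(F_i) = 1$.

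Next I would apply Lemma \ref{lem:sprod-isom}(ii) with the choice $A = H_\theta \ltimes U^-$ and $U = U^-$. The hypothesis $A_0 U = U A_0 = A$ of that lemma is immediate from the semidirect product structure, so both star products are uniquely determined by their associated $\field$-linear maps $\mu^L : V^-(\chi) = U^-_1 \to \Hom_\field(U^-, H_\theta \ltimes U^-)$.

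Finally, Theorem \ref{thm:first-star} gives
\[
\mu^L_{F_i}(u) = c_i q_{i\tau(i)}(K_{\tau(i)} K_i^{-1}) \partial_{\tau(i)}^L(u)
\]
for the product $*$, while Proposition \ref{prop:second-star} gives the identical formula for $\star$. Hence the uniqueness part of Lemma \ref{lem:sprod-isom}(ii) forces $* = \star$ on all of $H_\theta \ltimes U^-$.

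There is no substantive obstacle here; the corollary is essentially a repackaging of the earlier two results through the uniqueness mechanism for $0$-equivariant star products. The real content of the paper has already been carried by (a) showing that $\star$, defined via \eqref{eq:fstarg} purely in terms of the quasi $R$-matrix $\Theta$ and the homomorphism $\osigma$, is associative (Theorem \ref{thm:star-twist}), and (b) identifying the action of $F_i$ in both formalisms with the same skew derivation.
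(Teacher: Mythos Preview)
Your proposal is correct and matches the paper's own argument essentially verbatim: the paper derives the corollary by combining Theorem \ref{thm:first-star} and Proposition \ref{prop:second-star} via the uniqueness statement in Lemma \ref{lem:sprod-isom}(ii). Your remark that assumption \eqref{assume:parameters} is implicitly in force is also consistent with the standing hypothesis announced at the start of Section \ref{sec:first-star}.
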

Recall from Proposition \ref{prop:kowstar} that $(H_\theta \ltimes U^-,\star)$ is a right $U(\chi)_\mx$-comodule algebra with coaction $\kow_\star$. Composing the coproduct $\kow:B_\bc \rightarrow B_\bc \ot U(\chi)$ on $B_\bc$ with the projection $U(\chi)\rightarrow U(\chi)_\mx$ on the second tensor factor, one also obtains a $U(\chi)_\mx$-comodule algebra structure on $B_\bc$.
\begin{cor}   \label{cor:psi-iso}
  For all pre-Nichols algebras of diagonal type $U^+$, the map 
\begin{align} \label{eq:psi-iso-star}
\psi : B_\bc \to (H_\theta \ltimes U^-, \star)
\end{align}
is an isomorphism of right $U(\chi)_\mx$-comodule algebras.
\end{cor}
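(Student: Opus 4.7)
The plan is to use the fact that both $\kow_\star \circ \psi$ and $(\psi \otimes \id) \circ \kow\big|_{B_\bc}$ are algebra homomorphisms from $B_\bc$ to $(H_\theta \ltimes U^-, \star) \otimes U(\chi)_\mx$, and then verify they agree on the generators of $B_\bc$. By Theorem \ref{thm:first-star} combined with Corollary \ref{cor:star=ast}, the map $\psi : B_\bc \to (H_\theta \ltimes U^-, \star)$ is an algebra isomorphism, so the algebraic part of the statement is already established. What remains is to check the intertwining of coactions.

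First I would observe that both maps in question are algebra homomorphisms. The map $(\psi \otimes \id) \circ \kow\big|_{B_\bc}$ factors as the composition of $\kow : B_\bc \to B_\bc \otimes U(\chi)$ (which is an algebra map since $\kow$ is) with the projection onto the second factor $U(\chi) \to U(\chi)_\mx$ and with $\psi \otimes \id$; all three maps are algebra homomorphisms. The map $\kow_\star \circ \psi$ is the composition of two algebra homomorphisms by Proposition \ref{prop:kowstar} and the already-established algebra isomorphism part of the present corollary. Since $B_\bc$ is generated (as an algebra) by $H_\theta$ together with the elements $B_i$, $i\in I$, it therefore suffices to check that the two maps agree on these generators.

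For $h = K_\nu \in H_\theta$ with $\nu \in \Z^n_\theta$, we have $\kow(K_\nu) = K_\nu \otimes K_\nu$, so $(\psi \otimes \id)\kow(K_\nu) = K_\nu \otimes K_\nu$. On the other hand, formula \eqref{eq:kowstar-def} with $f_\alpha = 1$ yields only the contribution $\lambda = \mu = 0$, giving $\kow_\star(K_\nu) = K_\nu \otimes K_\nu$. For $B_i = F_i + c_i E_{\tau(i)} K_i^{-1}$, one has $\psi(B_i) = F_i$, and equation \eqref{eq:kowB} gives
\[
(\psi \otimes \id)\kow(B_i) = F_i \otimes K_i^{-1} + 1 \otimes F_i + c_i K_{\tau(i)} K_i^{-1} \otimes E_{\tau(i)} K_i^{-1},
\]
which matches $\kow_\star(F_i)$ as computed in \eqref{eq:kowstarFi}. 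Since these agree on generators and both sides are algebra homomorphisms, they agree on all of $B_\bc$, completing the proof.

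The main obstacle is not really in this corollary itself but in the preceding statements (the algebra isomorphism part in Theorem \ref{thm:first-star} and Corollary \ref{cor:star=ast}, and the comodule algebra structure established in Proposition \ref{prop:kowstar}); once those are in hand, the present corollary reduces to a direct computation on generators.
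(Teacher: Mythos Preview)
Your proof is correct and follows essentially the same approach as the paper's own proof: both establish the algebra isomorphism via the star product results (Theorem \ref{thm:first-star}/Lemma \ref{lem:psi-iso1} together with Corollary \ref{cor:star=ast}) and then verify the comodule compatibility on generators using \eqref{eq:kowstarFi}. Your version is simply more explicit in spelling out why checking on generators suffices and in treating the $H_\theta$-generators separately.
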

\begin{proof}
  It follows from Lemma \ref{lem:psi-iso1} and the definition of the star product $\ast$ that $\psi:B_\bc\to (H_\theta \ltimes U^-, \ast)$ is an isomorphism of algebras. By Corollary \ref{cor:star=ast} the map \eqref{eq:psi-iso-star} is also an isomorphism of algebras. Moreover, by \eqref{eq:kowstarFi} the map \eqref{eq:psi-iso-star} respects the right $U(\chi)_\mx$-coaction.
\end{proof}  
\subsection{Generators and relations for $B_\bc$, revisited}\label{sec:RelsRev}
We can apply the constructions of Sections \ref{sec:first-star} and \ref{sec:second-star} in particular in the case where the biideal $\cI$ which defines $U^+, U^-$ and $U(\chi)$ is trivial, that is $\cI=\{0\}$. In this case we have $H_\theta\ltimes U^-=H_\theta\ltimes T(V^-(\chi))$. We write $\circledast$
to denote the star product $\ast$ on $H_\theta\ltimes T(V^-(\chi))$, and we write $\Btil_\bc$, $\Util(\chi)^\poly$, and $\wt{\psi}$ to denote $B_\bc$, $\Upoly$ and $\psi$, respectively, in the case $\cI=\{0\}$. For a general biideal $\cI\subset T(V^+(\chi))$ and parameters $\bc \in \field^n$ satisfying 
condition \eqref{assume:parameters} in Section \ref{sec:prop-c} we hence obtain a commutative diagram
\[
\begin{tikzpicture}[scale=1.7]
\node (A) at (0,1) {$\Btil_\bc$};
\node (B) at (1,1) {$\Util(\chi)^\poly$};
\node (C) at (3,1) {$(H_\theta \ltimes T(V^-(\chi)), \circledast)$};
\node (D) at (0,0) {$B_\bc$};
\node (E) at (1,0) {$\Upoly$};
\node (F) at (3,0) {$(H_\theta \ltimes U^-, *)$};
\draw
(A) edge[right hook ->,font=\scriptsize,>=angle 90] (B)
(B) edge[->>,font=\scriptsize,>=angle 90] node[above]{$\wt{\psi}$} (C)
(D) edge[right hook ->,font=\scriptsize,>=angle 90] (E)
(E) edge[->>,font=\scriptsize,>=angle 90] node[above]{$\psi$} (F)
(A) edge[->>,font=\scriptsize,>=angle 90] node[right]{$\eta$}(D)
(B) edge[->>,font=\scriptsize,>=angle 90] node[right]{$\eta$} (E)
(C) edge[->>,font=\scriptsize,>=angle 90] node[right]{$\eta$} (F);
\draw[->] (A) to [bend left] node[scale=.7] (b) [above] {$\wt{b}$} (C);
\draw[->] (D) to [bend right] node[scale=.7] (f) [below] {$b$} (F);
\end{tikzpicture}
\]
where $b=\psi|_{B_\bc}$ and $\wt{b}=\wt{\psi}|_{\Btil_\bc}$. In the above diagram the vertical arrows are surjective algebra homomorphisms.  The rightmost vertical arrow is a homomorphism both 
of the undeformed partial bosonizations $H_\theta \ltimes T(V^-(\chi)) \to H_\theta \ltimes U^-$ and of the transferred algebra structures
$(H_\theta \ltimes T(V^-(\chi)), \circledast) \to (H_\theta \ltimes U^-, *)$. The maps $\psi$ and $\wt{\psi}$ are $\field$-linear maps, while the 
other two horizontal maps are algebra embeddings. The maps $b$ and $\wt{b}$ are algebra isomorphisms. 

The following proposition provides a procedure to determine the defining relations of $(H_\theta \ltimes U^-,\ast)$ from the defining relations of $U^-$.
\begin{prop} \label{prop:def-rels}
Let $U^+$ be a  pre-Nichols algebras of diagonal type and assume that the parameters $\bc \in \field^n$ satisfy condition \eqref{assume:parameters} in Section \ref{sec:prop-c}. If $S$ is a generating set for the kernel of the homomorphism $\eta: T(V^-(\chi)) \to U^-$ for the undeformed algebra structures, then it is a generating set also for the kernel of the homomorphism 
\[
\eta : (H_\theta \ltimes T(V^-(\chi)), \circledast) \to (H_\theta \ltimes U^-, *)
\]
with respect to the transferred algebra structures.
\end{prop}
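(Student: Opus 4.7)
The plan is to reformulate the claim as the equality $K^{\circledast}(S) = \ker(\eta)$ where $K^{\circledast}(S) \subseteq H_\theta \ltimes T(V^-(\chi))$ denotes the two-sided ideal generated by $S$ with respect to the star product $\circledast$. One inclusion is immediate: since $\eta : (H_\theta \ltimes T(V^-(\chi)), \circledast) \to (H_\theta \ltimes U^-, *)$ is an algebra homomorphism (this is the content of the right-hand vertical map being a homomorphism for the transferred algebra structures) and $S \subseteq \ker(\eta)$ by hypothesis, one has $K^{\circledast}(S) \subseteq \ker(\eta)$. The nontrivial inclusion $\ker(\eta) \subseteq K^{\circledast}(S)$ is what requires work.

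For the reverse inclusion, I would proceed by induction on the filtration degree coming from the $\N$-grading on $H_\theta \ltimes T(V^-(\chi))$ with $\deg(h) = 0$ and $\deg(F_i) = 1$. Writing $\cF_m$ for the associated filtration, I first observe that since the biideal $\omega(\cI) \subset T(V^-(\chi))$ is $\Z^n$-graded and sits inside $\bigoplus_{k \geq 2} T(V^-(\chi))_k$, the generating set $S$ may be chosen to consist of homogeneous elements of positive degree. The inductive claim is
\[
\ker(\eta) \cap \cF_m \subseteq K^{\circledast}(S) \qquad \mbox{for all } m \in \N.
\]
The base case $m = 0$ is vacuous since $\ker(\eta) \cap \cF_0 = \{0\}$.

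For the inductive step, let $x \in \ker(\eta) \cap \cF_m$. Since the kernel of $\eta$ for the ordinary product on $H_\theta \ltimes T(V^-(\chi))$ is the two-sided ideal generated by $S$, one can write $x = \sum_i a_i s_i b_i$ with $s_i \in S$ and $a_i, b_i \in H_\theta \ltimes T(V^-(\chi))$; by homogeneity of $S$ and decomposition of $a_i, b_i$ into graded components, one may assume each summand lies in $\cF_m$. Set $y = \sum_i a_i \circledast s_i \circledast b_i \in K^{\circledast}(S)$. Iterating the defining property of a star product in Definition \ref{def:star-prod} yields
\[
a \circledast s \circledast b - asb \in \cF_{\deg(a) + \deg(s) + \deg(b) - 1}
\]
whenever $a, s, b$ are homogeneous, so $x - y \in \cF_{m-1}$. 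Since $x - y$ belongs to $\ker(\eta)$, the inductive hypothesis gives $x - y \in K^{\circledast}(S)$, and hence $x = y + (x - y) \in K^{\circledast}(S)$, completing the induction.

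The only subtlety, which is bookkeeping rather than a conceptual obstacle, is the initial rewriting of $x$ as a sum of summands of controlled filtration degree; this is where the homogeneity of $S$ is used essentially. There is no deeper difficulty, as the proposition is really a general statement about how ideals behave under star product deformations of connected $\N$-graded algebras.
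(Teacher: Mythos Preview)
Your proof is correct and follows essentially the same approach as the paper: both argue the easy inclusion via $\eta$ being a $\circledast$-algebra homomorphism, and the reverse inclusion by induction on degree, rewriting an element of $\ker(\eta)$ as $\sum a_i s_i b_i$ and using the star product property to show that $\sum a_i \circledast s_i \circledast b_i$ differs from it by something of strictly lower degree. The only cosmetic difference is that the paper inducts over the graded pieces $\ker(\eta)_j$ while you induct over the filtration $\ker(\eta)\cap\cF_m$, which is equivalent since $\eta$ is graded.
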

\begin{proof}
Consider the projection $\eta : H_\theta \ltimes T(V^-(\chi)) \to H_\theta \ltimes U^-$. By the definition of $S$ we have 
\[
\ker(\eta) = (H_\theta \ltimes T(V^-(\chi))) \cdot S \cdot (H_\theta \ltimes T(V^-(\chi))).
\]
We need to prove that 
\begin{align}\label{eq:rels-goal}
\ker(\eta)= (H_\theta \ltimes T(V^-(\chi))) \circledast S \circledast (H_\theta \ltimes T(V^-(\chi))).
\end{align}
As $\eta:(H_\theta\ltimes T(V^-(\chi),\circledast)\rightarrow (H_\theta\ltimes U^-,\ast)$ is an algebra homomorphism, the right hand side of \eqref{eq:rels-goal} is contained in $\ker(\eta)$. The map $\eta$ is graded and we show by induction on $j \in \N$ that 
\[
\ker(\eta)_j  \subseteq (H_\theta \ltimes T(V^-(\chi))) \circledast S \circledast (H_\theta \ltimes T(V^-(\chi))).
\]
Indeed, for $a \in \ker(\eta)_{j+1}$ there exist homogeneous elements $b'_l, b''_l \in H_\theta \ltimes T(V^-(\chi))$, $s_l \in \cS$ such that
$a = \sum_l a'_l s_l b''_l$. Property \eqref{eq:a*b-ab} of the star product implies that
\[
a - \sum_l a'_l \circledast s_l \circledast b''_l \in \ker(\eta)_{\leq j}, 
\]
and by induction hypothesis we have
\[
\ker(\eta)_{\leq j}  \subseteq (H_\theta \ltimes T(V^-(\chi))) \circledast  S \circledast (H_\theta \ltimes T(V^-(\chi))).
\]
This shows that  $\ker(\eta)_{j+1}\subseteq (H_\theta \ltimes T(V^-(\chi))) \circledast  S \circledast (H_\theta \ltimes T(V^-(\chi)))$ and hence completes the proof of \eqref{eq:rels-goal}.
\end{proof}
For any noncommutative polynomial $r(x_1,\dots,x_n)=\sum_J a_J x_{j_1}\dots x_{j_n}$ in $n$ variables with coefficients $a_J\in H_\theta$ and any elements $u_1,\dots, u_n$ in $H_\theta\ltimes T(V^-(\chi))$ we write
\begin{align*}
  r(u_1 \stackrel{\circledast}{,}\dots,  \stackrel{\circledast}{,}u_n)=\sum_J a_J u_{j_1}\circledast \dots \circledast u_{j_n}. 
\end{align*}  
Proposition \ref{prop:def-rels} has the following immediate corollary giving an effective way to determine the relations of the coideal subalgebra 
$B_\bc$ of $U(\chi)$.

\medskip

\noindent{\bf Procedure for determining the relations of $B_\bc$ }:
\begin{enumerate}
\item Let $S=\{ p_m(x_1, \ldots, x_m) \mid m \in \mathcal{S} \}$  be a set of homogeneous noncommutative polynomials such that $\{p_m(\uE)\mid m\in \cS\}$ generates the kernel of the projection $\eta : T(V^+(\chi)) \to U^+$. In other words, $S$ provides the defining relations of $U^+$. Let $d_m$ denote the degree of the polynomial $p_m$ for all $m\in \cS$.
\item Let 
\[
r_m(x_1, \ldots, x_n) = \sum_J a_J x_{j_1} \ldots x_{j_l}  
\]
be the noncommutative polynomials with coefficients in $a_J \in H_\theta$ such that 
\[
p_m(F_1, \ldots, F_n) = r(F_1 \stackrel{\circledast}{,}\dots,  \stackrel{\circledast}{,}F_n) 
\]
where the left hand side uses the undeformed product in $T(V^-(\chi))$.
It follows from \eqref{eq:a*b-ab} that $r_m$ has degree $d_m$ and leading term $p_m$. 
\item The algebra $B_\bc$ is generated by $H_\theta$ and $B_i$ for $i \in I$ subject to the relations
\begin{align}
&&K_\lambda B_i &= \chi(\lambda, \alpha_i)^{-1} B_i K_ \lambda
& &\mbox{for all $\lambda \in \Z_\theta^n$, $i \in I$,}\label{eq:KlBBKl}\\
&&r_m(\uB)&=0 &&\mbox{for all $m \in \mathcal{S}$.} \nonumber
\end{align}
\end{enumerate}
\begin{eg}\label{eg:Uzsl3}
  Consider the quantized universal enveloping algebra $U_\zeta(\slfrak_3)$ for $\zeta\in \field^\times$ as described in Section \ref{sec:large}. It has generators $E_i, F_i, K_i^{\pm 1}$ for $i\in I=\{1,2\}$ and relations given by \eqref{eq:uqg-rels}. We apply the above procedure to the coideal subalgebra $B_\bc$ of $U_\zeta(\slfrak_3)$ corresponding to the bijection $\tau:I\rightarrow I$ given by $\tau(1)=2$, $\tau(2)=1$. The quantum Serre relations are given by $p_{12}(F_1,F_2)=p_{21}(F_1,F_2)=0$ where
  \begin{align*}
     p_{12}(x,y)= x^2y - (\zeta+\zeta^{-1}) xyx + y x^2, \qquad p_{21}(x,y)=p_{12}(y,x).
  \end{align*}
  Using relation \eqref{eq:Fiastu} one obtains
  \begin{align*}
    F_1 \circledast F_2&= F_1 F_2 + c_1 \zeta^{-1} K_2 K_1^{-1},&
    F_1 \circledast (F_1 F_2) &= F_1^2 F_2 + c_1 \zeta  F_1 K_2 K_1^{-1}
  \end{align*}
  and hence
  \begin{align}\label{eq:1*1*2}
      F_1^2 F_2 = F_1 \circledast F_1 \circledast F_2 - c_1 (\zeta+\zeta^{-1})F_1 K_2 K_1^{-1}.
  \end{align}
  Similarly one calculates
  \begin{align}
    F_1 F_2 F_1 &= F_1 \circledast F_2 \circledast F_1 - c_1 \zeta^2 F_1 K_2 K_1^{-1} - c_2 \zeta^{-1} F_1 K_1 K_2^{-1}\label{eq:1*2*1}\\
    F_2 F_1^2 &= F_2 \circledast F_1 \circledast F_1 - c_2(\zeta+\zeta^{-1}) \zeta^{-3} F_1 K_1 K_2^{-1}. \label{eq:2*1*1}
  \end{align}
  Combining \eqref{eq:1*1*2}, \eqref{eq:1*2*1} and \eqref{eq:2*1*1} one obtains
  \begin{align*}
     p_{12}(F_1,F_2) = p_{12}(F_1 \stackrel{\circledast}{,} F_2) + (\zeta^2-\zeta^{-2})F_1[c_1 \zeta K_2 K_1^{-1} + c_2 \zeta^{-2} K_1 K_2^{-1}].
  \end{align*}
  Hence the noncommutative polynomial $r_{12}(x,y)$ describing the corresponding defining relation of the coideal subalgebra $B_\bc$ is given by
  \begin{align*}
    r_{12}(x,y)=p_{12}(x,y)+(\zeta^2-\zeta^{-2})[c_1 \zeta^{-2}a K_2 K_1^{-1} + c_2 \zeta K_1 K_2^{-1}]x.
  \end{align*}
  Similarly one obtains
  \begin{align*}
    r_{21}(x,y)=p_{21}(x,y)+(\zeta^2-\zeta^{-2})[c_2 \zeta^{-2}a K_1 K_2^{-1} + c_1 \zeta K_2 K_1^{-1}]y.
  \end{align*}
  By the above procedure the algebra $B_\bc$ is generated by $B_1, B_2$ and $H_\theta$ subject to the relations \eqref{eq:KlBBKl} and $r_{12}(B_1,B_2)=r_{21}(B_1,B_2)=0$. The latter two relations coincide with the relations given in \cite[Theorem 7.1 (iv)]{a-Letzter03}.
\end{eg}  
\begin{rema}\label{rem:Letzter-relations}
  For quantum symmetric pair coideal subalgebras a different method to determine defining relations was devised by G.~Letzter in \cite[Theorem 7.1]{a-Letzter03}, see also \cite[Section 7]{a-Kolb14}. This method also works in the general setting of the present paper. Letzter's method relies on relation \eqref{eq:kowZ} which holds with $Z=0$ by choice of parameters. With Letzter's method individual monomials in the quantum Serre relations lead to completely different lower order terms in the relations for $B_\bc$ than with the procedure described above. This shows that the procedure described above is not a mere reformulation of Letzter's method.  
\end{rema}  
\begin{eg} As a second example we consider the coideal subalgebra $B_\bc$ of the Drinfeld double of the distinguished pre-Nichols algebra of type $\ufofrak(8)$ from Section \ref{sec:ufo}.
The algebra  $B_\bc$ has generators $K^{\pm 1}, B_1, B_2$ where
\[
K = K_1 K_2^{-1}, \quad B_1=  F_1+c_1 E_2 K_1^{-1}, \quad
B_2=F_2+c_2 E_1 K_2^{-1}.
\]
Calculating recursively as in Example \ref{eg:Uzsl3} on obtains that
\[
F_i^m = F_i^{\circledast m} \quad \mbox{for all} \quad \mbox{ $i=1,2$ and $m \in \N$},
\]
and that for the polynomial $p(x_1,x_2)$ from \eqref{p-poly} one has $p(F_1, F_2) = r(F_1 \stackrel{\circledast}{,} F_2)$ where 
\begin{align*}
r(x_1, x_2) =& p(x_1, x_2) - (3 \zeta + 2) (c_1 K^{-1} x_1 x_2 + c_2 K x_2 x_1) 
\\
&+ \zeta^{-1/2}( 2 \zeta+3) (c_1 K^{-1} x_2 x_1 + c_2 K x_1 x_2) 
\\
&+ \zeta^{-1/2} (\zeta^2 + \zeta + 1) (c_1^2 K^{-2} + c_2^2 K^2) - 2 ( \zeta+1) c_1 c_2.
\end{align*}
Assume that the parameters $c_1, c_2 \in \field$ satisfy the relation in Proposition \ref{prop:ufo8}(ii). By the above procedure, the algebra $B_\bc$ has generators $K^{\pm 1}, B_1, B_2$ and relations  
\begin{equation*}
\begin{aligned}
  K K^{-1} &= 1,\\
  K B_1 = - \zeta^{ - 3/2} B_1 K,& \qquad K B_2 = - \zeta^{ 3/2} B_2 K,\\
  B_1^3 = B_2^3 =0,& \qquad r(B_1, B_2) =0.
\end{aligned}
\end{equation*}
We have checked the above relations also with Letzter's method referred to in Remark \ref{rem:Letzter-relations}, and this produces the same relation $r(B_1,B_2)=0$.
\end{eg}
\section{The quasi $K$-matrix for $B_\bc$}\label{sec:quasiK}
From now on we restrict to the case where the graded biideal is maximal $\cI=\cI_\mx$ and hence $U^\pm=U^\pm_\mx$ are Nichols algebras. We also retain the assumption that $\bc\in \field^n$ satisfies condition  \eqref{assume:parameters} in Section \ref{sec:prop-c}.
Recall the isomorphism of $U(\chi)_\mx$-comodule algebras $\psi:B_\bc\rightarrow (H_\theta \ltimes U_\mx^-, \star, \kow_\star)$ from Corollary \ref{cor:psi-iso} 
and the quasi $R$-matrix $\Theta=\sum_\mu (-1)^{|\mu|} F_\mu\ot E_\mu$ from Section \ref{sec:quasiR}. We call the formal sum
\begin{align}\label{eq:quasiK}
  \Theta^\theta=(\psi^{-1}\ot \id)(\Theta)= \sum_\mu (-1)^{|\mu|} \psi^{-1}(F_\mu) \ot E_\mu \in \prod_\mu B_\bc \ot (U^+_\mx)_\mu
\end{align}  
the quasi $K$-matrix for $B_\bc$. Here we consider the infinite product $\prod_\mu B_\bc \ot (U^+_\mx)_\mu$ as a subalgebra of the completion $U(\chi)_\mx\widehat{\ot}U(\chi)_\mx$ from Section \ref{sec:quasiR}. We multiply elements in $\prod_\mu B_\bc \ot (U^+_\mx)_\mu$ as infinite sums.
\subsection{The coproducts of the quasi $K$-matrix}
Similarly to Lemma \ref{lem:kowidTheta} we are interested in the behavior of $\Theta^\theta$ under the coproduct of $U(\chi)_\mx$ in each tensor factor. To this end we introduce elements
\begin{align*}
  \Theta^\theta_{12}&=\Theta^\theta\ot 1,\qquad  \Theta_{23}=1 \ot \Theta,\\
  \Theta^\theta_{1K3}&= \sum_\mu (-1)^{|\mu|}\psi^{-1}(F_\mu)\ot K_\mu \ot E_\mu,\\
  \Theta^{\theta-}_{1K3}&=\sum_\mu (-1)^{|\mu|}\psi^{-1}(F_\mu)\ot K_\mu^{-1} \ot E_\mu,\\
  \Theta^{\osigma}_{K23}&= \sum_{\mu} (-1)^{|\mu|} K_{\mu-\tau(\mu)}\ot \osigma(F_\mu) \ot E_\mu,\\
  \Theta^{\osigma K}_{K32}  &=\sum_{\mu} (-1)^{|\mu|} K_{\mu-\tau(\mu)} \ot E_\mu K_{\tau(\mu)}^{-1} \ot K_\mu^{-1}\osigma(F_\mu)
\end{align*}  
in $\prod_{\mu,\nu} B_\bc\ot H(U^+_\mx)_\mu \ot (U^+_\mx)_\nu$. As before, we multiply elements in $\prod_{\mu,\nu} B_\bc\ot H(U^+_\mx)_\mu \ot (U^+_\mx)_\nu$ infinite sums. A formal completion of $B_\bc\ot U(\chi)_\mx \ot U(\chi)_\mx$ containing the above product will be given in Section \ref{sec:w-quasi-Hopf}.   With the above notation we can express the desired analog of Lemma \ref{lem:kowidTheta}.
\begin{prop}\label{prop:kowidThetatheta}
  The quasi $K$-matrix $\Theta^\theta$ satisfies the relation
  \begin{align}\label{eq:kowidThetatheta}
    (\id \ot \kow)(\Theta^\theta) &= \Theta^\theta_{12}\cdot \Theta^{\osigma}_{K23}\cdot \Theta^\theta_{1K3}
  \end{align}
   in $\prod_{\mu,\nu} B_\bc\ot H(U^+_\mx)_\mu \ot (U^+_\mx)_\nu$, and the relation
  \begin{align}  
     (\kow \ot \id)(\Theta^\theta) &= \Theta_{23} \cdot \Theta^{\theta-}_{1K3} \cdot \Theta^{\osigma K}_{K32}\label{eq:idkowThetatheta}
  \end{align}
 in  $\prod_{\mu} B_\bc\ot U(\chi)_\mx \ot (U^+_\mx)_\mu$.
\end{prop}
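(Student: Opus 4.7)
The plan is to deduce both identities from the coproduct identities \eqref{eq:kow-Theta1}--\eqref{eq:kow-Theta2} for the quasi $R$-matrix together with the fact, established in Corollary \ref{cor:psi-iso}, that $\psi : B_\bc \to (H_\theta \ltimes U^-, \star)$ is an isomorphism of right $U(\chi)_\mx$-comodule algebras. Throughout, calculations are done in the formal product $\prod_{\mu,\nu} B_\bc \otimes H (U^+_\mx)_\mu \otimes (U^+_\mx)_\nu$, where the infinite sums defining $\Theta$, $\Theta^\theta$ and their various variants converge to well-defined elements and can be multiplied term by term.

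For identity \eqref{eq:idkowThetatheta}, the comodule algebra isomorphism property gives
\[
\kow(\psi^{-1}(F_\mu)) = (\psi^{-1} \ot \id)(\kow_\star(F_\mu))
\]
for every $\mu \in \N^n$. Substituting the explicit formula \eqref{eq:kowstar-def} for $\kow_\star(F_\mu)$ into $(\kow \ot \id)(\Theta^\theta) = \sum_\mu (-1)^{|\mu|} \kow(\psi^{-1}(F_\mu)) \ot E_\mu$ produces
\[
\sum_{\mu,\lambda,\kappa} (-1)^{|\mu|} \psi^{-1}\bigl( (\osigma(F_\lambda) \lact F_\mu \ract E_\kappa) K_\lambda \bigr) \ot F_\kappa K_{\kappa - \mu} E_\lambda \ot E_\mu.
\]
Expanding the right hand side $\Theta_{23} \cdot \Theta^{\theta-}_{1K3} \cdot \Theta^{\osigma K}_{K32}$ as a triple sum indexed by $(a,b,c)$, one compares with the above expression after commuting the $K$-factors past the $F$'s and $E$'s using the relations \eqref{eq:def-rel}. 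The match is completed by re-expressing the couplings $\osigma(F_\lambda) \lact (\cdot)$ and $(\cdot) \ract E_\kappa$ in terms of the pairing \eqref{eq:pairing}, at which point the sign $(-1)^{|\mu|}$ distributes as $(-1)^{|a|+|b|+|c|}$ and the three formal infinite sums for $\Theta$, $\Theta^{\theta-}_{1K3}$, and $\Theta^{\osigma K}_{K32}$ recombine into the single sum above.

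For identity \eqref{eq:kowidThetatheta}, I would instead apply $(\id \ot \kow)$ directly to $\Theta^\theta$ and invoke \eqref{eq:kow-Theta2} on the second leg to obtain
\[
(\id \ot \kow)(\Theta^\theta) = \sum_{\lambda,\nu} (-1)^{|\lambda|+|\nu|} \psi^{-1}(F_\lambda F_\nu) \ot E_\lambda K_\nu \ot E_\nu,
\]
where $F_\lambda F_\nu$ denotes the usual product in $U^-$. Now Lemma \ref{lem:fg} allows us to rewrite this usual product as
\[
F_\lambda F_\nu = \sum_\rho (-1)^{|\rho|} \bigl( (\osigma(F_\rho) \lact F_\lambda) K_\rho \bigr) \star (F_\nu \ract E_\rho),
\]
and since $\psi$ is an algebra isomorphism onto $(H_\theta \ltimes U^-, \star)$, we may pull $\psi^{-1}$ through the $\star$-factorization to obtain an honest product in $B_\bc$. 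Expanding the proposed right hand side $\Theta^\theta_{12} \cdot \Theta^{\osigma}_{K23} \cdot \Theta^\theta_{1K3}$ as a triple sum indexed by $(\lambda,\rho,\nu)$, matching the second and third tensor factors after moving the middle $\osigma(F_\rho) = a_\rho K_\rho (\omega \circ \tau)(F_\rho)$ into place, and again using the pairing description of $\lact$ and $\ract$, one verifies that the two formal sums coincide term by term.

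The main obstacle is the bookkeeping for identity \eqref{eq:kowidThetatheta}: one must align the dual-basis summation implicit in $\Theta$ that arises from \eqref{eq:kow-Theta2} with the independent dual-basis summation coming from Lemma \ref{lem:fg}, and verify that the two re-indexings agree after commuting the requisite $K$-factors. The bicharacter factors that appear from these commutations are exactly the ones absorbed into the definitions of $\Theta^{\osigma}_{K23}$ and $\Theta^\theta_{1K3}$, so the matching is ultimately a careful but mechanical application of the symmetry $q_{ij} = q_{ji}$ and of the recursion \eqref{eq:amu-rec} for the coefficients $a_\mu$ defining $\osigma$.
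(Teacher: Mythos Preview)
Your approach is essentially the same as the paper's: both identities are obtained by combining the coproduct formula \eqref{eq:kow-Theta2} for $\Theta$, Lemma~\ref{lem:fg}, and the fact that $\psi$ is an isomorphism of $U(\chi)_\mx$-comodule algebras (Corollary~\ref{cor:psi-iso}). The paper carries out the ``bookkeeping'' you flag by first isolating two concrete re-indexing identities, derived from \eqref{eq:kow-Theta1} and \eqref{eq:amu-def}, that convert the sums $\sum (-1)^{|\nu|} F_\mu \ot (F_\nu \ract E_\mu) \ot K_\nu \ot E_\nu$ and $\sum (-1)^{|\mu|+|\lambda|} (\osigma(F_\mu) \lact F_\lambda) K_\mu \ot E_\mu \ot E_\lambda$ into products of dual-basis sums; these make the term-by-term match you describe entirely mechanical rather than requiring ad hoc tracking of bicharacter factors.
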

\begin{proof}
  To prove Equation \eqref{eq:kowidThetatheta} first observe that \eqref{eq:lactract} and \eqref{eq:kow-Theta1} imply that
  \begin{align}\label{eq:idkow1}
    \sum_{\mu,\nu} (-1)^{|\nu|}F_\mu \ot (&F_\nu\ract E_\mu)\ot K_\nu \ot E_\nu\\
    &=\sum_{\mu,\lambda} (-1)^{|\mu|+|\lambda|} F_\mu\ot F_\lambda \ot K_{\lambda+\mu} \ot E_\mu E_\lambda.\nonumber
  \end{align}
  Similarly, also taking into account \eqref{eq:amu-def}, one obtains
  \begin{align}\label{eq:idkow2}
    \sum_{\mu,\lambda} (-1)^{|\mu|+|\lambda|}(\osigma(F_\mu)&\lact F_\lambda)K_\mu \ot E_\mu \ot E_\lambda\\
    &=\sum_{\mu,\kappa}(-1)^{|\kappa|} F_\kappa K_{\mu-\tau(\mu)}\ot E_\mu \ot E_\kappa \osigma(F_\mu)K_\mu^{-1}.\nonumber
  \end{align}
  With this preparation we use  Equation \eqref{eq:kow-Theta2}, Lemma \ref{lem:fg}, and the fact that $\psi$ is an isomorphism of algebras, to calculate
  \begin{align*}
    (\id &\ot \kow)(\Theta^\theta) \stackrel{\phantom{\eqref{eq:idkow2}}}{=}\sum_{\lambda,\nu}   (-1)^{|\lambda|+|\nu|}\psi^{-1}(F_\lambda F_\nu) \ot E_\lambda K_\nu\ot E_\nu\\
    &\stackrel{\phantom{\eqref{eq:idkow2}}}{=}\sum_{\lambda,\mu,\nu} (-1)^{|\lambda| + |\mu|+|\nu|} \psi^{-1}\big((\osigma(F_\mu)\lact F_\lambda)K_\mu\big) \psi^{-1}\big( F_\nu \ract E_\mu\big)\ot E_\lambda K_\nu\ot E_\nu\\
    &\stackrel{\eqref{eq:idkow2}}{=}\sum_{\mu,\nu,\kappa} (-1)^{|\nu|+|\kappa|}
    \psi^{-1}\big(F_\kappa K_{\mu-\tau(\mu)}\big) \psi^{-1}\big(F_\nu\ract E_\mu\big) \ot E_\kappa \osigma(F_\mu)K_\mu^{-1}K_\nu \ot E_\nu\\
    &\stackrel{\eqref{eq:idkow1}}{=}\sum_{\mu,\kappa,\lambda}(-1)^{|\lambda| + |\mu|+|\kappa|}
    \psi^{-1}(F_\kappa) K_{\mu-\tau(\mu)} \psi^{-1}(F_\lambda)\ot E_\kappa \osigma(F_\mu)K_\lambda \ot E_\nu E_\lambda\\
    &\stackrel{\phantom{\eqref{eq:idkow2}}}{=} \Theta^\theta_{12}\cdot \Theta^{\osigma}_{K23}\cdot \Theta^\theta_{1K3}
  \end{align*}
  which proves Equation \eqref{eq:kowidThetatheta}. Equations \eqref{eq:kowB} and \eqref{eq:kowstarFi}, the fact that $B_\bc$ is a coideal subalgebra 
  of $U(\chi)_\mx$ and Proposition \ref{prop:kowstar} imply that $\psi$ is an isomorphism of $U(\chi)_\mx$-comodules. Therefore 
  \begin{align*}
    (\kow &\ot \id)(\Theta^\theta) \stackrel{\phantom{\eqref{eq:idkow1}}}{=} \sum_{\nu}(-1)^{|\nu|}(\psi^{-1}\ot \id\ot \id)(\kow_\star(F_\nu)\ot E_\nu)\\
    &\stackrel{\eqref{eq:kowstar-def}}{=} \sum_{\lambda,\mu,\nu}(-1)^{|\nu|}\psi^{-1}\big((\osigma(F_\lambda)\lact F_\nu \ract E_\mu)K_\lambda\big) \ot F_\mu K_{\mu-\nu} E_\lambda \ot E_\nu\\
    &\stackrel{\eqref{eq:idkow1}}{=} \sum_{\lambda,\rho,\mu}(-1)^{|\mu|+|\rho|} \psi^{-1}\big((\osigma(F_\lambda)\lact F_\rho)K_\lambda)\ot F_\mu K_\rho E_\lambda \ot E_\mu E_\rho\\
    &\stackrel{\eqref{eq:idkow2}}{=} \sum_{\lambda,\kappa,\mu}(-1)^{|\mu|+|\kappa|+|\lambda|} \psi^{-1}\big(F_\kappa)K_{\lambda-\tau(\lambda)}\ot F_\mu K_{-\kappa-\tau(\lambda)} E_\lambda \ot E_\mu E_\kappa \osigma(F_\lambda)K_\lambda^{-1}\\
    &\stackrel{\phantom{\eqref{eq:idkow1}}}{=}
    \Theta_{23} \cdot \Theta^{\theta-}_{1K3} \cdot \Theta^{\osigma K}_{K32}
  \end{align*}
  which proves Equation \eqref{eq:idkowThetatheta}.
\end{proof}
\subsection{The intertwiner property of the quasi $K$-matrix}
The quasi $K$-matrix $\Theta^\theta$ also satisfies an analog of Corollary \ref{cor:EFTheta}.
\begin{prop}\label{prop:quasiK-intertwiner}
  The element $\Theta^\theta$ satisfies the relations
  \begin{align}
    \kow(B_i) \cdot \Theta^\theta &= \Theta^\theta \cdot \big(B_i\ot K_i + c_{\tau(i)} q_{i\tau(i)} K_{\tau(i)}^{-1}K_i\ot E_{\tau(i)}K_i + 1\ot F_i\big),\label{eq:quasiK-comm1}\\
    \kow(K_\lambda)\cdot \Theta^\theta &= \Theta^\theta\cdot \kow(K_\lambda)\label{eq:quasiK-comm2}
  \end{align}
  for all $i\in I$, $\lambda\in \Z^n_\theta$.
\end{prop}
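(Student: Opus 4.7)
The plan is to establish the two intertwiner identities separately, starting with the easier torus identity \eqref{eq:quasiK-comm2}. Since the algebra isomorphism $\psi : B_\bc \to (H_\theta \ltimes U^-_\mx, \star)$ is the identity on $H_\theta$ and both a left and right $H_\theta$-module map, it intertwines the conjugation actions of $H_\theta$ on source and target. For $\mu \in \N^n$ and $\lambda \in \Z^n_\theta$, the element $F_\mu \in (U^-_\mx)_{-\mu}$ satisfies $K_\lambda F_\mu K_\lambda^{-1} = \chi(\lambda, -\mu) F_\mu$, so $\psi^{-1}(F_\mu) \in B_\bc$ has the same weight. Combined with $K_\lambda E_\mu K_\lambda^{-1} = \chi(\lambda, \mu) E_\mu$, the two scalars cancel summand by summand in the expansion $\Theta^\theta = \sum_\mu (-1)^{|\mu|} \psi^{-1}(F_\mu) \ot E_\mu$, which yields \eqref{eq:quasiK-comm2}.

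For \eqref{eq:quasiK-comm1}, first invoke Corollary \ref{cor:psi-iso}: $\psi$ is an isomorphism of right $U(\chi)_\mx$-comodule algebras, so $\kow(B_i) = (\psi^{-1}\ot \id)\kow_\star(F_i)$. Combined with \eqref{eq:kowstarFi} this gives
\[
\kow(B_i) = B_i \ot K_i^{-1} + c_i K_{\tau(i)} K_i^{-1} \ot E_{\tau(i)} K_i^{-1} + 1 \ot F_i,
\]
in agreement with the direct computation \eqref{eq:kowB}. I would then expand both sides of \eqref{eq:quasiK-comm1} as formal sums indexed by $\mu \in \N^n$, using that $\psi$ is an algebra isomorphism with respect to $\star$ to rewrite $B_i \psi^{-1}(F_\mu) = \psi^{-1}(F_i \star F_\mu)$ and $\psi^{-1}(F_\mu) B_i = \psi^{-1}(F_\mu \star F_i)$; the remaining $H_\theta$-factors $K_{\tau(i)} K_i^{-1}$ and $K_{\tau(i)}^{-1} K_i$ pass through $\psi^{-1}$ unchanged by $0$-equivariance of $\star$. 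Applying $\psi \ot \id$ then reduces the proposed identity to an equation inside $(H_\theta \ltimes U^-_\mx) \widehat{\ot} U(\chi)_\mx$ involving only the quasi $R$-matrix $\Theta$ and the skew-derivations $\partial^L_{\tau(i)}$, $\partial^R_{\tau(i)}$.

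The last step is to substitute the explicit formulas from Lemma \ref{lem:mu-map-star} for $F_i \star F_\mu$ and $F_\mu \star F_i$. The resulting equation splits into two groups. The terms involving the ordinary products $F_i F_\mu$ and $F_\mu F_i$ together with the $1 \ot F_i$-summands reproduce precisely the Lusztig-type intertwiner relation \eqref{eq:EFTheta2} of Corollary \ref{cor:EFTheta} for $\Theta$ and thus cancel. The remaining correction terms contain $\partial^L_{\tau(i)}$ and $\partial^R_{\tau(i)}$ applied to the first leg of $\Theta$, together with the summands $c_i K_{\tau(i)} K_i^{-1} \ot E_{\tau(i)} K_i^{-1}$ and $c_{\tau(i)} q_{i\tau(i)} K_{\tau(i)}^{-1} K_i \ot E_{\tau(i)} K_i$ from the two sides of \eqref{eq:quasiK-comm1}. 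The identities \eqref{eq:Theta-partial} of Lemma \ref{lem:partialTheta} convert these derivation terms into expressions involving $1 \ot E_{\tau(i)}$, which, after the commutation $K_i^{-1} E_{\tau(i)} = q_{i\tau(i)}^{-1} E_{\tau(i)} K_i^{-1}$, cancel the $c_i$- and $c_{\tau(i)}$-summands, respectively. The main obstacle is purely bookkeeping: every $K$-power has to be tracked carefully through the derivation identities so that the scalar $q_{i\tau(i)}$ lands in the correct position and the asymmetric coefficient $c_{\tau(i)}$, rather than $c_i$, emerges naturally from the $\partial^R$-contribution on the right-hand side of \eqref{eq:quasiK-comm1}.
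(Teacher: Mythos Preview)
Your proposal is correct and follows essentially the same route as the paper: both arguments use the algebra isomorphism $\psi$ to transport the statement between $B_\bc$ and $(H_\theta\ltimes U^-_\mx,\star)$, then invoke Lemma~\ref{lem:mu-map-star} to split off the $\partial^L_{\tau(i)}$ and $\partial^R_{\tau(i)}$ corrections, handle those via Lemma~\ref{lem:partialTheta}, and reduce the remaining terms to the Lusztig identity \eqref{eq:EFTheta2}. The only cosmetic difference is direction: the paper starts from $\Theta$, rewrites $(F_i\ot K_i^{-1})\Theta$ and $\Theta(F_i\ot K_i)$ in terms of $\star$, and then applies $\psi^{-1}\ot\id$ to \eqref{eq:EFTheta2}, whereas you apply $\psi\ot\id$ to \eqref{eq:quasiK-comm1} first and reduce to \eqref{eq:EFTheta2}.
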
  
\begin{proof}
  We rewrite $(F_i\ot K_i^{-1})\cdot \Theta$ in terms of the twisted product
  \begin{align*}
    &(F_i\ot K_i^{-1})\cdot \Theta\\
    &\stackrel{\eqref{eq:Fistarg}}{=} \sum_\mu(-1^{|\mu|})(F_i\star F_\mu)\ot K_i^{-1}E_\mu - c_i q_{i\tau(i)}  \sum_\mu(-1)^{|\mu|} K_{\tau(i)} K_i^{-1}\partial^L_{\tau(i)}(F_\mu)\ot K_i^{-1} E_\mu\\
    &\stackrel{\eqref{eq:Theta-partial}}{=}\sum_\mu(-1)^{|\mu|}(F_i\star F_\mu)\ot K_i^{-1}E_\mu + c_i  (K_{\tau(i)} K_i^{-1}\ot E_{\tau(i)}K_i^{-1}) \cdot \Theta.
  \end{align*}
  Similarly we rewrite $\Theta \cdot (F_i\ot K_i)$ in terms of the twisted product
  \begin{align*}
    &\Theta\cdot (F_i\ot K_i)\\
    &\stackrel{\eqref{eq:fstarFi}}{=} \sum_\mu(-1^{|\mu|})(F_\mu\star F_i)\ot E_\mu K_i - c_{\tau(i)}q_{i\tau(i)}\sum_{\mu}(-1)^{|\mu|}\partial^R_{\tau(i)}(F_\mu)K_iK_{\tau(i)}^{-1} \ot E_\mu  K_i)\\
    &\stackrel{\eqref{eq:Theta-partial}}{=}\sum_\mu(-1)^{|\mu|}(F_\mu\star F_i)\ot E_\mu K_i + c_{\tau(i)}q_{i\tau(i)} \Theta \cdot(K_i K_{\tau(i)}^{-1}\ot E_{\tau(i)}K_i).
  \end{align*}
  Now Equation \eqref{eq:quasiK-comm1} follows from the above two relations, and the fact that $\psi^{-1}:(H_\theta \ltimes U^-,\star)\rightarrow B_\bc$ is an algebra isomorphism, by application of $\psi^{-1}$ to the first tensor factor of Equation \eqref{eq:EFTheta2}. Similarly, Equation \eqref{eq:quasiK-comm2} follows from the relation $\kow(K_\lambda)\cdot \Theta=\Theta \cdot \kow(K_\lambda)$ by application of $\psi^{-1}$ to the first tensor factor. 
\end{proof}
\begin{rema}
  The statement of Proposition \ref{prop:quasiK-intertwiner} is known in the theory of quantum symmetric pairs as the intertwiner property for the quasi $K$-matrix (called quasi $R$-matrix in \cite[Section 3]{a-BaoWang18a}). In \cite[Proposition 3.2]{a-BaoWang18a} and \cite[Proposition 3.5]{a-Kolb17p} this property is formulated in terms of the bar-involution for quantum symmetric pair coideal subalgebras. For general Nichols algebras and their coideal subalgebras there is no bar-involution. Proposition \ref{prop:quasiK-intertwiner} achieves a bar-involution free formulation of the intertwiner property in the same way as Corollary \ref{cor:EFTheta} provides a bar-involution free formulation of the intertwiner property for the quasi $R$-matrix. 
\end{rema}  
\subsection{Weakly quasitriangular Hopf algebras}\label{sec:w-quasi-Hopf}
We now want to show that the quasi $K$-matrix \eqref{eq:quasiK} gives rise to a universal $K$-matrix for the coideal subalgebra $B_\bc$ of $U(\chi)_\mx$. In \cite{a-BalaKolb15p} and \cite{a-Kolb17p} universal $K$-matrices are constructed on suitable categories of representations. Due to the generality of our setting we do not know much about the representation theory of $U(\chi)_\mx$. Instead we follow an approach used in \cite{a-Tanisaki92}, \cite{a-Reshetikhin95}, \cite{a-Gav97} and consider a weak notion of quasitriangularity. In the present section we recall this approach. In Section \ref{sec:wq-coid} we introduce the corresponding notion of weakly quasitriangular coideal subagebras and show that $B_\bc$ is weakly quasitriangular up to completion.
\begin{defi}\label{def:w-quasi-Hopf} {\upshape(\cite[Definition 3]{a-Reshetikhin95}, \cite[Definition 1.2]{a-Gav97})}
  A weakly quasitriangular Hopf algebra is a pair $(U,\cR)$ consisting of a Hopf algebra $U$ and an algebra automorphism $\cR\in \Aut(U^{\ot 2})$ satisfying the relations
\begin{gather}
\cR \circ \Delta = \Delta^{\mathrm{op}} \quad \mbox{on} \quad U \label{eq:wq1},\\
(\Delta \otimes \id) \circ \cR = \cR_{13} \circ \cR_{23} \circ (\Delta \otimes \id) \quad \mbox{on $U^{\otimes 2}$}, \label{eq:wq2} \\
(\id \otimes \Delta) \circ \cR = \cR_{13} \circ \cR_{12} \circ (\id \otimes \Delta) \quad \mbox{on $U^{\ot 2}$.} \label{eq:wq3}
\end{gather}
Here we use the usual leg notation where $\cR_{ij}$ denotes the operation of $\cR$ on the $i$-th and $j$-th tensor factor.
\end{defi}
For any invertible element $u$ of a unital algebra $A$ let $\Ad(u)$ denote the inner automorphism of $A$ defined by
\[
  \Ad(u)(a) = u a u^{-1} \qquad \mbox{for all $a\in A$.}
  \]
\begin{rema}\label{rem:qwq}
  Recall the notion of a quasitriangular Hopf algebra from \cite{inp-Drinfeld1}. If $U$ is a quasitriangular Hopf algebra with universal $R$-matrix $R$, then $U$ is weakly quasitriangular with the automorphism $\cR$ defined by conjugation $\cR=\Ad(R)$.
\end{rema}
\begin{rema}\label{rem:qYB}
  By \cite[(7)]{a-Reshetikhin95} the automorphism $\cR$ of a weakly quasitriangular Hopf algebra satisfies the quantum Yang-Baxter equation
  \begin{align}\label{eq:qYB}
     \cR_{12}\circ \cR_{13}\circ \cR_{23} = \cR_{23}\circ \cR_{13}\circ \cR_{12}.
  \end{align}
  Indeed, \eqref{eq:wq1} and \eqref{eq:wq2} imply that both sides of \eqref{eq:qYB} coincide on the image of $\kow\ot \id$, while  \eqref{eq:wq1} and \eqref{eq:wq3} imply that both sides of \eqref{eq:qYB} coincide on the image of $\id \ot\kow$. Now the quantum Yang-Baxter equation \eqref{eq:qYB} follows from the fact that if $U$ is a Hopf algebra then $\mathrm{Im}(\kow\ot \id)+\mathrm{Im}(\id \ot \kow)$ generates $U^{\ot 3}$ as an algebra.
\end{rema}
\begin{rema}
  In \cite{a-Reshetikhin95} a weakly quasitriangular Hopf algebra is called a \textit{braided} Hopf algebra, see also \cite[Definition 1.2]{a-Gav97}. We avoid this terminology because it is often used for Hopf algebras in a braided category. In \cite[4.3]{a-Tanisaki92} weakly quasitriangular Hopf algebras are realized under the name \textit{pre-triangular} Hopf algebras via a construction similar to the following lemma. 
  \end{rema}  

\begin{lem} \label{lem:weak-quasi}
  {\upshape (\cite[Definition 3]{a-Reshetikhin95}, \cite[Definition 1.3]{a-Gav97})}
Let  $U$ be a Hopf algebra, $\cR^{(0)}\in \Aut(U\ot U)$ an algebra automorphism, and $R^{(1)}\in U\ot U$ an invertible element such that the following relations hold 
\begin{gather}
\big( \Ad(R^{(1)}) \circ \cR^{(0)} \big) \circ \kow = \kow^{\mathrm{op}}, \label{eq:RR1} \\
(\kow \otimes \id) \circ \cR^{(0)} = \cR^{(0)}_{13} \circ \cR^{(0)}_{23} \circ (\kow \otimes \id), \label{eq:RR2}\\ 
(\id \otimes \kow) \circ \cR^{(0)} = \cR^{(0)}_{13} \circ \cR^{(0)}_{12} \circ (\kow \otimes \id), \label{eq:RR3}\\
(\kow \otimes \id) (R^{(1)}) = R^{(1)}_{13} \cdot \cR^{(0)}_{13} (R^{(1)}_{23}), \label{eq:RR4}\\ 
(\id \otimes \kow) (R^{(1)}) = R^{(1)}_{13} \cdot \cR^{(0)}_{13} (R^{(1)}_{12}).\label{eq:RR5}
\end{gather}
Then $(U,\Ad(R^{(1)})\circ \cR^{(0)})$ is a weakly quasitriangular Hopf algebra.
\end{lem}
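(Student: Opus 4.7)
The plan is to verify that $\cR := \Ad(R^{(1)}) \circ \cR^{(0)}$ satisfies the three defining properties \eqref{eq:wq1}--\eqref{eq:wq3} of a weakly quasitriangular structure on $U$ in turn.

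Property \eqref{eq:wq1}, namely $\cR \circ \kow = \kow^{\op}$, is immediate from assumption \eqref{eq:RR1}, since $\cR = \Ad(R^{(1)}) \circ \cR^{(0)}$ by definition.

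For property \eqref{eq:wq2}, I would substitute the definition of $\cR$ and then move the leftmost $(\kow\ot\id)$ past the components piece by piece. First, since $\kow\ot\id$ is an algebra homomorphism, it distributes across the conjugation and the outcome is
\[
(\kow\ot\id)(R^{(1)}) \cdot \bigl[(\kow\ot\id)\circ\cR^{(0)}\bigr] \cdot (\kow\ot\id)(R^{(1)})^{-1}.
\]
Next, I would apply \eqref{eq:RR2} to rewrite the middle factor as $\cR^{(0)}_{13}\circ\cR^{(0)}_{23}\circ(\kow\ot\id)$, and use \eqref{eq:RR4} to rewrite the outer factor as $R^{(1)}_{13}\cdot\cR^{(0)}_{13}(R^{(1)}_{23})$. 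The key manipulation is then to use that $\cR^{(0)}_{13}$ is an algebra automorphism commuting with leg~$2$ to fuse
\[
\cR^{(0)}_{13}(R^{(1)}_{23})\cdot \cR^{(0)}_{13}\cR^{(0)}_{23} \cdot \cR^{(0)}_{13}(R^{(1)}_{23})^{-1} = \cR^{(0)}_{13}\circ\Ad(R^{(1)}_{23})\circ\cR^{(0)}_{23} = \cR^{(0)}_{13}\circ\cR_{23},
\]
and similarly to regroup the outermost $R^{(1)}_{13}$ as $\Ad(R^{(1)}_{13})\circ\cR^{(0)}_{13} = \cR_{13}$. This yields $\cR_{13}\circ\cR_{23}\circ(\kow\ot\id)$, which is \eqref{eq:wq2}.

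Property \eqref{eq:wq3} will follow by an entirely symmetric computation using \eqref{eq:RR3} and \eqref{eq:RR5} in place of \eqref{eq:RR2} and \eqref{eq:RR4}. The only genuine issue to watch is the bookkeeping of leg indices under the inner automorphisms; the point that makes the argument work is that for $i\neq j$ the operators $\Ad(x_{ij})$ commute with any $\cR^{(0)}_{kl}$ that overlaps $\{i,j\}$ in the ``correct'' way, because in those cases $x_{ij}$ lies in legs on which $\cR^{(0)}_{kl}$ either acts trivially or acts as an algebra automorphism. I expect this index tracking to be the only nontrivial obstacle, and no further structural input beyond the five assumptions \eqref{eq:RR1}--\eqref{eq:RR5} and the Hopf algebra axioms of $U$ is needed.
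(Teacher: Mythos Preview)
Your proposal is correct and follows exactly the standard verification that the paper has in mind: the paper does not write out a proof for this lemma but attributes it to \cite{a-Reshetikhin95} and \cite{a-Gav97}, and later refers back to ``the proof of Lemma \ref{lem:weak-quasi}'' as a template (see the proofs of Theorem \ref{thm:wq-tri}(2) and Lemma \ref{lem:wq-KK}). Your computation---using that $\kow\ot\id$ is an algebra map to pull it through $\Ad(R^{(1)})$, then invoking \eqref{eq:RR2} and \eqref{eq:RR4}, and finally the identity $\Ad(\phi(x))\circ\phi=\phi\circ\Ad(x)$ to regroup---is precisely that standard argument.
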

The construction of weakly quasitriangular Hopf algebras in the theory of quantum groups involve completions, see \cite[4.3]{a-Tanisaki92}, \cite[1.3]{a-Reshetikhin95}. We set up these completions in a way which also works for the weakly quasitriangular coideal subalgebras in Section \ref{sec:wq-coid}. Recall that $U(\chi)=U(\chi)_\mx$ is the Drinfeld double of a Nichols algebra of diagonal type $U^+=U^+_\mx$. Let $B$ be an arbitrary algebra and consider a finite sequence of signs $s_1,\dots,s_m\in \{+,-\}$ for some $m\in \N$. For any $j\in \N$ define
\begin{align*}
\big( B \otimes U(\chi)U^{s_1} \otimes \cdots \otimes U(\chi)U^{s_m} \big)_j = 
\bigoplus_{\beta_1, \ldots, \beta_m \in \N^n, \atop \sum_{i=1}^m |\beta_i| \geq  j }  U(\chi) U^{s_1}_{s_1\beta_1} \otimes \cdots \otimes U(\chi) U^{s_m}_{s_m\beta_m}.
\end{align*}
Then the inverse limit
\begin{align*}
  \wh{(B \otimes U(\chi)^{\otimes m})}_{s_1 \ldots s_m} : = \varprojlim_{j\in \N} \Big(\big( B \otimes U(\chi)^{\otimes m} \big) 
/ \big( B \otimes U(\chi)U^{s_1} \otimes \cdots \otimes U(\chi)U^{s_m} \big)_j\Big)
\end{align*}  
is an algebra which contains $B\otimes U(\chi)^{\otimes m}$ as a subalgebra.
If the algebra $B$ coincides with the field $\field$ then we write
$\wh{( U(\chi)^{\otimes m})}_{s_1 \ldots s_m}$ instead of  $\wh{(\field \otimes U(\chi)^{\otimes m})}_{s_1 \ldots s_m}$. The coproduct $\kow$ extends to the inverse limits. For example, we have algebra homomorphisms
\begin{align*}
   (\kow \ot \id)\colon &\wh{( U(\chi)^{\otimes 2})}_{s_1 s_2}\rightarrow \wh{( U(\chi)^{\otimes 3})}_{s_1 s_1 s_2},\\
  (\id \ot \kow)\colon &\wh{( U(\chi)^{\otimes 2})}_{s_1 s_2}\rightarrow \wh{( U(\chi)^{\otimes 3})}_{s_1 s_2 s_2}
\end{align*}
which canonically extend $\kow \ot \id, \id\ot \kow: U(\chi)^{\ot 2}\rightarrow U(\chi)^{\ot 3}$.
Recall that $\Theta$ denotes the quasi $R$-matrix defined by \eqref{eq:Theta}. For $s_1s_2\in\{++, +-, --\}$ we may consider $\Theta_{21}=\sum_\mu(-1)^{|\mu|} E_\mu\ot F_\mu$ as an invertible element of $\wh{( U(\chi)^{\otimes 2})}_{s_1 s_2}$. Moreover, there is a well defined  algebra automorphism $\cR^{(0)}\in \Aut(U(\chi)^{\ot 2})$ such that
\begin{align}\label{eq:cR0-def}
  \cR^{(0)} |_{U(\chi)_{\beta} \otimes U(\chi)_\gamma } = 
\chi(\beta,\gamma) (K_{- \gamma} \cdot) \otimes (K_{- \beta} \cdot)
\end{align}  
for all $\beta, \gamma \in \Z^n$. Here $K_{-\gamma}\cdot$ and $K_{-\beta}\cdot$ denote the operators of left multiplication by $K_{-\gamma}$ and $K_{-\beta}$, respectively. In terms of generators of the algebra $U(\chi)^{\ot 2}$ the algebra automorphism $\cR^{(0)}$ is given by $\cR^{(0)}|_{H\ot H}=\id_{H\ot H}$ and
\begin{align*}
  \cR^{(0)}(E_i\ot 1)&=E_i\ot K_i^{-1}, & \cR^{(0)}(1\ot E_i)&= K_i^{-1}\ot E_i,\\
  \cR^{(0)}(F_i\ot 1)&=F_i\ot K_i, & \cR^{(0)}(1\ot F_i)&= K_i\ot F_i
\end{align*}
for all $i\in I$.
The automorphism $\cR^{(0)}$ extends canonically to an automorphism of the completion $\wh{( U(\chi)^{\otimes 2})}_{s_1 s_2}$. We can also make use of the leg notation to obtain algebra automorphism $\cR^{(0)}_{ij}$ of $\wh{( U(\chi)^{\otimes m})}_{s_1 \ldots s_m}$.

The following theorem states that the Drinfeld double $U(\chi)_\mx$ is weakly quasitriangular up to completion. The theorem hence extends \cite[Proposition 1.3.1]{a-Reshetikhin95}, \cite[Theorem 3.1]{a-Gav97} from the setting of quantum groups to Drinfeld doubles of general Nichols algebras of diagonal type. To simplify notation, we mostly drop the subscript $_\mx$.

\begin{thm}\label{thm:wq-tri}
  Let $s_1s_2\in \{++,+-,--\}$ and let $U^+$ be a Nichols algebra of diagonal type with Drinfeld double $U(\chi)$.
  \begin{enumerate}
  \item The element $R^{(1)}=\Theta_{21}$ and the automorphism $\cR^{(0)}\in \Aut(\wh{( U(\chi)^{\otimes 2})}_{s_1 s_2})$ defined by \eqref{eq:cR0-def} satisfy the relations \eqref{eq:RR1} -- \eqref{eq:RR5}.
  \item Define an algebra automorphism $\cR^{s_1s_2}\in \Aut(\wh{( U(\chi)^{\otimes 2})}_{s_1 s_2})$ by $\cR^{s_1s_2}=\Ad(\Theta_{21})\circ \cR^{(0)}$. Then $\cR^{s_1s_2}$ satisfies relations \eqref{eq:wq1} -- \eqref{eq:wq3}.
  \end{enumerate}  
\end{thm}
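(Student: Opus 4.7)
The plan is to first reduce part (2) to part (1) via Lemma \ref{lem:weak-quasi}, and then establish part (1) by verifying each of the five relations \eqref{eq:RR1}--\eqref{eq:RR5}. The completion $\wh{(U(\chi)^{\ot m})}_{s_1\dots s_m}$ was set up precisely so that the infinite sums defining $\Theta_{21}$, its coproducts, and conjugation by $\Theta_{21}$ are well defined inside this completion, so the identities make sense algebraically.

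First I would verify that $\cR^{(0)}$ extends to a well-defined algebra automorphism. On homogeneous elements $u\ot v \in U(\chi)_\beta \ot U(\chi)_\gamma$, the multiplicativity of the prescription $\cR^{(0)}(u\ot v) = \chi(\beta,\gamma)(K_{-\gamma} u \ot K_{-\beta} v)$ reduces via the commutation relations $K_\lambda u = \chi(\lambda,\beta) u K_\lambda$ to the symmetry $\chi(\beta, \gamma') = \chi(\gamma', \beta)$. Relations \eqref{eq:RR2} and \eqref{eq:RR3} then follow by evaluation on homogeneous $u\ot v$: writing $\kow(u)$ (respectively $\kow(v)$) as a sum of components in suitable degrees, both sides of each equation collapse to the same expression, the prefactor $\chi(\beta,\gamma)$ splitting via bicharacter multiplicativity into the product corresponding to the two applications of $\cR^{(0)}_{ij}$ on the right hand side.

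For \eqref{eq:RR1} I would check the identity on the generators $E_j, F_j, K_i$. A short computation from the definition of $\cR^{(0)}$ gives
\[
\cR^{(0)} \kow(E_j) = E_j\ot K_j^{-1} + 1\ot E_j,
\quad
\cR^{(0)} \kow(F_j) = F_j\ot 1 + K_j \ot F_j
\]
and $\cR^{(0)}\kow(K_i) = K_i\ot K_i$. The desired equalities $\Theta_{21} \cdot \cR^{(0)}\kow(E_j) = \kow^{\op}(E_j)\cdot \Theta_{21}$ and its $F_j$ analog are then obtained by applying the ring map that swaps the two tensor factors to the two intertwining identities of Corollary \ref{cor:EFTheta}. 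The relation for $K_i$ is trivial because the cross shifts $\chi(\alpha_i, \mu)\chi(\alpha_i, -\mu)$ cancel on each homogeneous summand $E_\mu\ot F_\mu$ of $\Theta_{21}$.

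The main step is the verification of \eqref{eq:RR4} and \eqref{eq:RR5}, which requires explicit coproduct formulas for $\Theta_{21}$. Using that the flip of the first and third legs of $U(\chi)^{\ot 3}$ is an algebra homomorphism, that it intertwines $(\id\ot \kow)(\Theta)$ with $(\kow^{\op}\ot \id)(\Theta_{21})$, and that $(\kow\ot\id) = \sigma_{12}\circ(\kow^{\op}\ot\id)$, an application of \eqref{eq:kow-Theta2} gives
\[
(\kow\ot \id)(\Theta_{21}) = \sum_{\lambda,\nu} (-1)^{|\lambda+\nu|} E_\lambda K_\nu \ot E_\nu \ot F_\lambda F_\nu.
\]
An analogous argument starting from \eqref{eq:kow-Theta1} and composing the flip of the first and third legs with the flip of the last two legs yields
\[
(\id\ot \kow)(\Theta_{21}) = \sum_{\lambda,\nu} (-1)^{|\lambda+\nu|} E_\lambda E_\nu \ot F_\nu \ot F_\lambda K_{-\nu}.
\]
On the other hand, a direct computation of $\cR^{(0)}_{13}((\Theta_{21})_{23}) = \sum_\nu (-1)^{|\nu|} K_\nu \ot E_\nu \ot F_\nu$ and $\cR^{(0)}_{13}((\Theta_{21})_{12}) = \sum_\nu (-1)^{|\nu|} E_\nu \ot F_\nu \ot K_{-\nu}$, multiplied on the left by $(\Theta_{21})_{13}$, reproduces exactly these two sums. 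This confirms \eqref{eq:RR4} and \eqref{eq:RR5}, and then part (2) follows at once from part (1) by Lemma \ref{lem:weak-quasi}.

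The main obstacle is the bookkeeping in this last step: the coproduct formulas for $\Theta_{21}$ do not follow from Lemma \ref{lem:kowidTheta} by a naive substitution, since swapping tensor legs requires careful manipulation of $\kow$ versus $\kow^{\op}$ together with correct tracking of the $K_\nu$ factors produced by the bosonization. Once these formulas are in hand, matching them with $R^{(1)}_{13}\cdot \cR^{(0)}_{13}(R^{(1)}_{ij})$ is straightforward.
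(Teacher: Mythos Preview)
Your proposal is correct and follows essentially the same route as the paper's proof: reduce part (2) to part (1) via Lemma \ref{lem:weak-quasi}, verify \eqref{eq:RR1} on generators using Corollary \ref{cor:EFTheta}, obtain \eqref{eq:RR2}--\eqref{eq:RR3} from the weight-preservation of $\kow$, and deduce \eqref{eq:RR4}--\eqref{eq:RR5} from Lemma \ref{lem:kowidTheta}. The only difference is that you spell out in detail how the coproduct formulas for $\Theta$ in Lemma \ref{lem:kowidTheta} transport to formulas for $\Theta_{21}$ via the flip maps, whereas the paper simply cites the lemma.
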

\begin{proof}
  (1) It suffices to check \eqref{eq:RR1} on the generators $E_i, F_i, K_i$. Hence property \eqref{eq:RR1} follows from Corollary \ref{cor:EFTheta}. Properties \eqref{eq:RR2} and \eqref{eq:RR3} hold because the coproduct preserves weights. Finally, properties \eqref{eq:RR4} and \eqref{eq:RR5} hold by Lemma \ref{lem:kowidTheta}.

  (2) This follows from (1) analogously to the proof of Lemma \ref{lem:weak-quasi}.
\end{proof}  
Analogously to Remark \ref{rem:qYB}, the second part of Theorem \ref{thm:wq-tri} implies that for $s_1s_2 s_3 \in \{ +++, ++-, +--, ---\}$ the quantum Yang-Baxter equation
\[
\cR_{12}^{s_1 s_2} \circ \cR_{13}^{s_1 s_3} \circ \cR_{23}^{s_2 s_3} = \cR_{23}^{s_2 s_3} \circ \cR_{13}^{s_1 s_3} \circ \cR_{12}^{s_1 s_2}
\]
holds on $\wh{(U(\chi)^{\otimes 3})}_{s_1s_2s_3}$.
\subsection{Extending $\osigma$ to an algebra automorphism}\label{sec:extending-osigma}
From now on we assume that the parameters satisfy $c_i\neq 0$ for all $i\in I$. Under this assumption the algebra homomorphism $\osigma:U^-\rightarrow U^+\rtimes H$ from Section \ref{sec:osigma1} can be extended to an algebra automorphism of $U(\chi)$. Indeed, it follows from the defining relations \eqref{eq:def-rel} and from Lemma \ref{lem:EKF} that there is a well-defined algebra automorphism  
$\osigma:U(\chi)\rightarrow U(\chi)$ such that
\begin{align}\label{eq:osigma-extend}
  \osigma(E_i)&=c_{\tau(i)}^{-1} F_{\tau(i)}K_i^{-1}, &
  \osigma(F_i)&=c_{\tau(i)} K_i E_{\tau(i)}, & \osigma(K_i)&=K_{\tau(i)}^{-1} 
\end{align}
for all $i\in I$. We are interested in the compatibility between $\osigma$ and the coproduct. In the following lemma $\cR^{(0)}$ denotes the algebra automorphism of $U(\chi)^{\ot 2}$ given by \eqref{eq:cR0-def} and $\Theta$ denotes the quasi $R$-matrix for $U(\chi)$.
\begin{lem}\label{lem:osigma-kow}
  Let $U^+$ be a Nichols algebra of diagonal type with Drinfeld double $U(\chi)$. Assume that $\bc\in (\field^\times)^n$. The algebra automorphism $\osigma$ satisfies the relation
  \begin{align}\label{eq:osigma-kow}
     \kow \circ \osigma =(\osigma\ot \id)\circ \cR^{(0)}_{21} \circ (\id \ot \osigma) \circ \Ad(\Theta_{21})\circ \cR^{(0)}\circ \kow.
  \end{align}  
\end{lem}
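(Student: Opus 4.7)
The plan is to verify \eqref{eq:osigma-kow} on the algebra generators $E_i, F_i, K_i^{\pm 1}$ of $U(\chi)$. Both sides of \eqref{eq:osigma-kow} are algebra homomorphisms from $U(\chi)$ into a suitable completion of $U(\chi)^{\otimes 2}$: the left-hand side is the composition of the algebra homomorphisms $\osigma$ and $\kow$, while the right-hand side is assembled from the algebra automorphisms $\osigma$, $\cR^{(0)}_{21}$, $\cR^{(0)}$, the inner automorphism $\Ad(\Theta_{21})$ on the completion, and the coproduct $\kow$. Hence agreement on generators will imply the general identity.

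To simplify the right-hand side, I would invoke Theorem \ref{thm:wq-tri}(1), namely the relation
\[
\Ad(\Theta_{21})\circ \cR^{(0)}\circ \kow = \kow^{\op},
\]
which is identity \eqref{eq:RR1}. Substituting this into the RHS of \eqref{eq:osigma-kow} reduces the lemma to checking
\[
\kow\circ \osigma = (\osigma\ot \id)\circ \cR^{(0)}_{21}\circ (\id \ot \osigma)\circ \kow^{\op}.
\]
A brief calculation on weight-homogeneous tensors, using that the bicharacter $\chi$ is symmetric, shows that $\cR^{(0)}_{21}=\cR^{(0)}$ on $U(\chi)\ot U(\chi)$, so the claim is further reduced to $\kow\circ \osigma = (\osigma\ot \id)\circ \cR^{(0)}\circ (\id \ot \osigma)\circ \kow^{\op}$.

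The case of $K_i^{\pm 1}$ is immediate, since $\kow^{\op}(K_i) = K_i\ot K_i$, both $K_i$ and $K_{\tau(i)}^{-1}$ have weight zero, and $\osigma(K_i) = K_{\tau(i)}^{-1}$. For $E_i$, the left-hand side unfolds using \eqref{eq:kowEFK} to
\[
\kow(\osigma(E_i)) = \osigma(E_i)\ot K_i^{-1}K_{\tau(i)}^{-1} + K_i^{-1}\ot \osigma(E_i),
\]
while the right-hand side is computed by applying $(\id\ot \osigma)$ to $\kow^{\op}(E_i)=1\ot E_i + E_i\ot K_i$, then $\cR^{(0)}$, and finally $(\osigma\ot \id)$, which after tracking the weight factors via \eqref{eq:cR0-def} yields exactly the same two-term expression. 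The computation for $F_i$ is completely analogous, using $\osigma(F_i)=c_{\tau(i)}K_iE_{\tau(i)}$. The main obstacle is purely notational: keeping track of the bicharacter exponents produced by $\cR^{(0)}$ together with the application of $\osigma$ in the various tensor slots; once this bookkeeping is carried out, the identity on each generator is a direct algebraic verification, and no terms involving higher degree summands of $\Theta_{21}$ survive thanks to the simplification provided by Theorem \ref{thm:wq-tri}(1).
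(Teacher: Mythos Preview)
Your proposal is correct and follows essentially the same approach as the paper: both verify \eqref{eq:osigma-kow} on the generators $K_i$, $E_i$, $F_i$, first collapsing $\Ad(\Theta_{21})\circ\cR^{(0)}\circ\kow$ to $\kow^{\op}$ (the paper does this via Corollary~\ref{cor:EFTheta}, you via the equivalent packaged statement \eqref{eq:RR1} of Theorem~\ref{thm:wq-tri}(1)), and then carrying out the remaining weight bookkeeping. Your additional observation that $\cR^{(0)}_{21}=\cR^{(0)}$ by symmetry of $\chi$ is a harmless simplification the paper does not spell out.
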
  
\begin{proof}
  It suffices to show that both sides of \eqref{eq:osigma-kow} coincide when evaluated on $K_i, E_i$ and $F_i$. Evaluated on $K_i$ both sides give $K_{\tau(i)}^{-1}\ot K_{\tau(i)}^{-1}$. By Equation \eqref{eq:EFTheta1} we have
  \begin{align*}
    (\osigma\ot \id)\circ \cR^{(0)}_{21} \circ (\id \ot \osigma)& \circ \Ad(\Theta_{21})\circ \cR^{(0)}\circ \kow (E_i)\\
    &=(\osigma\ot \id)\circ \cR^{(0)}_{21} \circ (\id \ot \osigma)\big(E_i\ot K_i + 1 \ot E_i\big)\\
    &=\osigma(E_i)\ot K_i K_{\tau(i)}^{-1} + K_i^{-1}\ot \osigma(E_i)\\
    &=\kow \circ \osigma(E_i).
  \end{align*}
  The calculation for $F_i$ is similar.
\end{proof}  

\subsection{Weakly quasitriangular comodule algebras}\label{sec:wq-coid}
We now introduce a weak version of quasitriangularity for comodule algebras over weakly quasitriangular Hopf algebras. 
\begin{defi}\label{def:w-quasi-coid}
Let $(U,\cR)$ be a weakly quasitriangular Hopf algebra. A weakly quasitriangular right comodule algebra over $(U,\cR)$ is a triple $(B,\kow_B,\cK)$ where $B$ is a right $U$-comodule algebra with coaction $\kow_B:B\rightarrow B\ot U$ and $\cK$ is an algebra automorphism of $B\ot U$ which satisfies the following properties
\begin{gather}
\cK \circ \Delta_B = \Delta_B \qquad \mbox{on $B$,}\label{eq:wq-comod1}\\
(\Delta_B \otimes \id) \circ \cK = \cR_{32} \circ \cK_{13} \circ \cR_{23} \circ (\Delta_B \otimes \id) \qquad \mbox{on $B\ot U$,}\label{eq:wq-comod2} \\
(\id \otimes \Delta) \circ \cK = \cR_{32} \circ \cK_{13} \circ \cR_{23} \circ \cK_{12} \circ (\id \otimes \Delta)\qquad \mbox{on $B\ot U$.}\label{eq:wq-comod3}
\end{gather}
We say that the comodule algebra $B$ is weakly quasitriangular if the coaction $\kow_B$ and the automorphism $\cK$ are understood.
\end{defi}
Remarks \ref{rem:qwq} and \ref{rem:qYB} have analogs for comodule algebras over a Hopf algebra.
\begin{rema}
  Let $U$ be a quasitriangular Hopf algebra with universal $R$-matrix $R$. By Remark \ref{rem:qwq} the pair $(U,\Ad(R))$ is a weakly quasitriangular Hopf algebra. Recall the definition of a quasitriangular comodule algebra $B$ over $U$ with universal $K$-matrix $K\in B\ot U$ from \cite[Definition 2.7]{a-Kolb17p}. If the $U$-comodule algebra $B$ is quasitriangular then $B$ is weakly quasitriangular with the automorphism $\cK=\Ad(K)$ of $B\ot U$. 
\end{rema}
\begin{rema}\label{rem:reflection}
  If $(B,\cK)$ is a weakly quasitriangular comodule algebra over a weakly quasitriangular Hopf algebra $(U,\cR)$ then the automorphisms $\cK$ and $\cR$ satisfy the reflection equation
  \begin{align}\label{eq:refl}
    \cK_{12} \circ \cR_{32}\circ \cK_{13}\circ \cR_{23}=\cR_{32}\circ \cK_{13}\circ \cR_{23}\circ \cK_{12}
  \end{align}
  on $B\ot U\ot U$. Indeed, \eqref{eq:wq-comod1} and \eqref{eq:wq-comod2} imply that
  \begin{align*}
    \cK_{12} \circ \cR_{32}\circ \cK_{13}\circ \cR_{23}\circ (\kow_B\ot \id)=\cR_{32}\circ \cK_{13}\circ \cR_{23}\circ \cK_{12}\circ (\kow_B\ot \id)
  \end{align*}
  on $B\ot U$ while \eqref{eq:wq-comod1} and \eqref{eq:wq-comod3} imply that
  \begin{align*}
    \cK_{12} \circ \cR_{32}\circ \cK_{13}\circ \cR_{23}\circ (\id \ot\kow)=\cR_{32}\circ \cK_{13}\circ \cR_{23}\circ \cK_{12}\circ (\id \ot\kow)
  \end{align*}
  on $B\ot U$. Now the reflection equation \eqref{eq:refl} follows from the fact that if $U$ is a Hopf algebra then $\mathrm{Im}(\kow_B\ot \id)+ \mathrm{Im}(\id\ot \kow)$ generates $B\ot U^{\ot 2}$ as an algebra.
\end{rema}
We have the following analog of Lemma \ref{lem:weak-quasi} for comodule algebras.
\begin{lem}\label{lem:wq-KK}
  Let $(U,\cR^{(0)}, R^{(1)})$ be as in Lemma \ref{lem:weak-quasi} and let $B$ be a right $U$-comodule algebra with coaction $\kow_B:B\rightarrow B\ot U$. Let $\cK^{(0)}$ be an algebra automorphism of $B\ot U$ and let $K^{(1)}\in B\ot U$ be an invertible element satisfying the following relations
  \begin{gather}
    \Ad(K^{(1)})\circ \cK^{(0)}\circ \kow_B=\kow_B,\label{eq:wqKK1}\\
    (\kow_B\ot \id)\circ \cK^{(0)} = \cR^{(0)}_{32}\circ \cK^{(0)}_{13}\circ \cR^{(0)}_{23}\circ (\kow_B\ot \id),\label{eq:wqKK2}\\
    (\id \ot \kow)\circ \cK^{(0)}= \cK^{(0)}_{12}\circ \cR^{(0)}_{32}\circ \cK^{(0)}_{13}\circ \Ad(R^{(1)}_{23})\circ \cR^{(0)}_{23}\circ (\id \ot \kow), \label{eq:wqKK3}\\
    (\kow_B\ot \id)(K^{(1)})= R^{(1)}_{32} \cdot \cR^{(0)}_{32}(K^{(1)}_{13})\cdot \cR^{(0)}_{32}\cK^{(0)}_{13}(R^{(1)}_{23}),\label{eq:wqKK4}\\
    (\id \ot\kow)(K^{(1)})=K^{(1)}_{12}\cdot \cK^{(0)}_{12}(R^{(1)}_{32})\cdot \cK^{(0)}_{12}\cR^{(0)}_{32}(K^{(1)}_{13}).\label{eq:wqKK5}
  \end{gather}
Then $(B,\kow_B,\Ad(K^{(1)})\circ \cK^{(0)})$ is a weakly quasitriangular right comodule algebra over the weakly quasitriangular Hopf algebra $(U,\Ad(R^{(1)})\circ \cR^{(0)})$. 
\end{lem}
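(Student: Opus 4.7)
The plan is to verify the three axioms of Definition \ref{def:w-quasi-coid} for $\cK := \Ad(K^{(1)}) \circ \cK^{(0)}$, in complete analogy with the way Lemma \ref{lem:weak-quasi} produces a weakly quasitriangular Hopf algebra from the data $(\cR^{(0)}, R^{(1)})$. The workhorse throughout is the elementary identity
\[
\Ad(\phi(x)) \circ \phi = \phi \circ \Ad(x),
\]
valid for any algebra automorphism $\phi$ and invertible $x$, together with the multiplicativity $\Ad(xy) = \Ad(x) \circ \Ad(y)$. These two facts allow one to pass inner automorphisms through any composition of $\cR^{(0)}_{ij}$ and $\cK^{(0)}_{ij}$, which is the only nontrivial manipulation required.

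Property \eqref{eq:wq-comod1} is immediate from \eqref{eq:wqKK1}. For \eqref{eq:wq-comod2}, I would apply $(\kow_B \ot \id)$ to the definition of $\cK$, substitute \eqref{eq:wqKK4} to evaluate $(\kow_B \ot \id)(K^{(1)})$ and \eqref{eq:wqKK2} to evaluate $(\kow_B \ot \id) \circ \cK^{(0)}$, and then split $\Ad$ of the resulting threefold product into three separate adjoints. Each of these can be moved one step past $\cR^{(0)}_{32}$ and $\cK^{(0)}_{13}$ using the transfer identity above, producing the required right-hand side $\cR_{32} \circ \cK_{13} \circ \cR_{23} \circ (\kow_B \ot \id)$.

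Property \eqref{eq:wq-comod3} is handled in the same way: apply $(\id \ot \kow)$ to $\cK$, use \eqref{eq:wqKK5} to rewrite $(\id \ot \kow)(K^{(1)})$ as a product of three factors, and use \eqref{eq:wqKK3} to rewrite $(\id \ot \kow) \circ \cK^{(0)}$. The factor $\Ad(R^{(1)}_{23})$ built into the right-hand side of \eqref{eq:wqKK3} is precisely what supplies the $\cR_{23}$ in the desired conclusion, while the three adjoints coming from \eqref{eq:wqKK5} recombine via the transfer identity with the base automorphisms into $\cR_{32}$, $\cK_{13}$ and $\cK_{12}$ at the correct tensor positions.

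The only genuine obstacle is careful bookkeeping of the three tensor slots, since each application of the transfer identity must respect which pair of factors an operator acts on. The asymmetric shape of the hypotheses \eqref{eq:wqKK3} and \eqref{eq:wqKK5}, in contrast to the more uniform Hopf-algebra hypotheses \eqref{eq:RR3} and \eqref{eq:RR5}, is engineered precisely so that this bookkeeping produces the right-hand side of \eqref{eq:wq-comod3}, which itself involves four operator factors rather than two.
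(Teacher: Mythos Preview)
Your proposal is correct and follows exactly the same approach as the paper's own proof: deduce \eqref{eq:wq-comod1} from \eqref{eq:wqKK1}, deduce \eqref{eq:wq-comod2} from \eqref{eq:wqKK2} and \eqref{eq:wqKK4}, and deduce \eqref{eq:wq-comod3} from \eqref{eq:wqKK3} and \eqref{eq:wqKK5}. The paper records only this correspondence, while you have additionally spelled out the mechanism (the identity $\Ad(\phi(x))\circ\phi=\phi\circ\Ad(x)$ and multiplicativity of $\Ad$) that makes the bookkeeping go through.
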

\begin{proof}
  Set $\cK=\Ad(K^{(1)})\circ \cK^{(0)}\in \Aut(B\ot U)$ and $\cR=\Ad(R^{(1)})\circ \cR^{(0)}\in \Aut(U^{\ot 2})$. Then Equation \eqref{eq:wq-comod1} follows from Equation \eqref{eq:wqKK1}. Equation \eqref{eq:wq-comod2} follows from Equations \eqref{eq:wqKK2} and \eqref{eq:wqKK4}, and  Equation \eqref{eq:wq-comod3} follows from Equations \eqref{eq:wqKK3} and \eqref{eq:wqKK5}. 
\end{proof}
We return to the concrete example of the coideal subalgebra $B_\bc$ of the Drinfeld double $U(\chi)=U(\chi)_\mx$ of a Nichols algebra $U^+=U^+_\mx$ of diagonal type.
There is a well defined algebra automorphism $\cK^{(0),\tau}\in \Aut(U(\chi)\ot U(\chi))$ such that  
\begin{align}\label{eq:K0tau}
  \cK^{(0),\tau}|_{U(\chi)_\beta\ot U(\chi)_\gamma}=\chi(\beta,\gamma{-}\tau(\gamma)) \, (K_{-\gamma+\tau(\gamma)}\cdot)\ot (K_{-\beta+\tau(\beta)}\cdot)
\end{align}
for all $\beta, \gamma\in \Z^n$. More explicitly, the algebra automorphism $\cK^{(0),\tau}$ is defined by $\cK^{(0),\tau}|_{H\ot H}=\id_{H\ot H}$ and
\begin{gather*}
  \cK^{(0),\tau}(1\ot E_i)=K_i^{-1}K_{\tau(i)}\ot E_i, \qquad
  \cK^{(0),\tau}(1\ot F_i)=K_i K_{\tau(i)}^{-1}\ot F_i, \\
   \cK^{(0),\tau}(E_i\ot 1)=E_i \ot K_i^{-1}K_{\tau(i)}, \qquad
  \cK^{(0),\tau}(F_i\ot 1)=F_i\ot K_i K_{\tau(i)}^{-1} 
\end{gather*}
for all $i\in I$. Similarly to the proof of Equations \eqref{eq:RR2}, \eqref{eq:RR3} for the automorphism $\cR^{(0)}$ given by \eqref{eq:cR0-def}, one sees that
\begin{align}
  (\kow\ot \id)\circ \cK^{(0),\tau} &=  \cK^{(0),\tau}_{23} \circ \cK_{13}^{(0),\tau}\circ (\kow \ot \id),\label{eq:KK0tau1}\\
   (\id \ot \kow)\circ \cK^{(0),\tau} &=  \cK^{(0),\tau}_{12} \circ \cK_{13}^{(0),\tau}\circ (\id \ot \kow).\label{eq:KK0tau2}
\end{align}  
The algebra automorphism $\cK^{(0),\tau}$ restricts to an automorphism of the subalgebra $B_\bc\ot U(\chi)$ such that
\begin{gather*}  
  \cK^{(0),\tau}(B_i\ot 1)=B_i \ot K_i K_{\tau(i)}^{-1} 
\end{gather*}  
for all $i\in I$. Recall the algebra automorphism $\osigma$ from Section \ref{sec:extending-osigma}. Define an algebra automorphism $\cK^{(0)}$ of $B\ot U(\chi)$ by
\begin{align*}
   \cK^{(0)}=\cK^{(0),\tau}\circ (\id \ot \osigma).
\end{align*}
By construction $\cK^{(0)}$ extends to algebra isomorphisms
\begin{align*}
  \cK^{(0)}_- \colon \wh{(B\ot U(\chi))}_- \rightarrow  \wh{(B\ot U(\chi))}_+,\qquad
  \cK^{(0)}_+ \colon \wh{(B\ot U(\chi))}_+ \rightarrow  \wh{(B\ot U(\chi))}_-.
\end{align*}
The isomorphism $\cK^{(0)}_-$ will provide us with the desired completed version of the automorphism $\cK^{(0)}$ in Lemma \ref{lem:wq-KK}. To obtain a completed version of $K^{(1)}$, we may consider the element $\Theta^\theta=\sum_\mu (-1)^{|\mu|} \psi^{-1}(F_\mu)\ot E_\mu$ from \eqref{eq:quasiK} as an invertible element in $\wh{(B\ot U(\chi))}_+$. By the following theorem the coideal subalgebra $B_\bc$ of $U(\chi)$ is weakly quasitriangular up to completion.
\begin{thm}\label{thm:wq-tri2}
  Let $U^+$ be a Nichols algebra of diagonal type with Drinfeld double $U(\chi)$. Let $B_\bc$ be the coideal subalgebra defined in Section \ref{sec:Bc} and assume that the parameters $\bc\in (\field^\times)^n$ satisfy condition \eqref{assume:parameters} in Section \ref{sec:prop-c}.
  Then the following hold:
  \begin{enumerate}
    \item The element $K^{(1)}=\Theta^\theta\in \wh{(B\ot U(\chi))}_+$ and the isomorphism $\cK^{(0)}=\cK^{(0)}_-=\cK^{(0),\tau}\circ (\id \ot \osigma)\colon \wh{(B\ot U(\chi))}_- \rightarrow  \wh{(B\ot U(\chi))}_+$ defined by \eqref{eq:K0tau} and \eqref{eq:osigma-extend} satisfy relations \eqref{eq:wqKK1} -- \eqref{eq:wqKK5}.
    \item Define an isomorphism of algebras $\cK^-\colon \wh{(B\ot U(\chi))}_- \rightarrow  \wh{(B\ot U(\chi))}_+$ by $\cK^-=\Ad(\Theta^\theta)\circ \cK^{(0)}_-$. Then $\cK^-$ satisfies relations \eqref{eq:wq-comod1} -- \eqref{eq:wq-comod3} with the operators $\cR^{s_1s_2}$ from Theorem \ref{thm:wq-tri}.
  \end{enumerate}  
\end{thm}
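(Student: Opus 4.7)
The plan is to prove Part (1) by verifying the five relations \eqref{eq:wqKK1}--\eqref{eq:wqKK5} in turn, after which Part (2) follows from Part (1) by repeating verbatim the argument of Lemma \ref{lem:wq-KK} in the completed setting (since the relations \eqref{eq:wq-comod1}--\eqref{eq:wq-comod3} for $\cK^- = \Ad(\Theta^\theta)\circ \cK^{(0)}_-$ and the operators $\cR^{s_1 s_2} = \Ad(\Theta_{21})\circ \cR^{(0)}$ decompose into the five relations of Part (1) together with \eqref{eq:RR1}--\eqref{eq:RR5} from Theorem \ref{thm:wq-tri}).

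For \eqref{eq:wqKK1}, it suffices to verify $\Theta^\theta \cdot \cK^{(0)}(\kow(b)) = \kow(b) \cdot \Theta^\theta$ on the generators $\{B_i, K_\lambda \,|\, i\in I, \lambda\in \Z^n_\theta\}$ of $B_\bc$. For $b = K_\lambda$ the coaction yields $K_\lambda\otimes K_\lambda$; since $\osigma(K_\lambda)=K_\lambda$ for $\lambda\in \Z^n_\theta$ and the weight-zero element $K_\lambda\otimes K_\lambda$ is fixed by $\cK^{(0),\tau}$, the identity reduces to \eqref{eq:quasiK-comm2}. For $b=B_i$, a direct computation of $\cK^{(0)}(\kow(B_i))$ using \eqref{eq:kowB}, \eqref{eq:osigma-extend} and \eqref{eq:K0tau} produces exactly the bracketed expression on the right-hand side of \eqref{eq:quasiK-comm1}, so the identity follows from Proposition \ref{prop:quasiK-intertwiner}.

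For \eqref{eq:wqKK2} and \eqref{eq:wqKK3}, observing that $\cK^{(0)} = \cK^{(0),\tau}\circ (\id\otimes \osigma)$, I split each relation into the $\cK^{(0),\tau}$-part and the $\osigma$-part. The coproduct identity \eqref{eq:KK0tau1} handles the $\cK^{(0),\tau}$-part of \eqref{eq:wqKK2} directly; the $\osigma$ sits in the second tensor slot and commutes through $\kow_B\otimes \id$ without interaction. For \eqref{eq:wqKK3} the analogous identity \eqref{eq:KK0tau2} is combined with Lemma \ref{lem:osigma-kow}, which is precisely the relation that converts $(\id\otimes\kow)\circ(\id\otimes\osigma)$ into $(\id\otimes\osigma\otimes\id)\circ\cR^{(0)}_{32}\circ(\id\otimes\id\otimes\osigma)\circ\Ad(R^{(1)}_{23})\circ\cR^{(0)}_{23}\circ(\id\otimes\kow)$. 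Matching factor by factor yields \eqref{eq:wqKK3}.

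The main obstacle is the verification of \eqref{eq:wqKK4} and \eqref{eq:wqKK5}, which are the coproduct identities for $K^{(1)}=\Theta^\theta$. These follow from \eqref{eq:idkowThetatheta} and \eqref{eq:kowidThetatheta} of Proposition \ref{prop:kowidThetatheta} after reinterpreting the intermediate terms $\Theta^\theta_{1K3}$, $\Theta^{\theta-}_{1K3}$, $\Theta^{\osigma}_{K23}$, $\Theta^{\osigma K}_{K32}$ as $\cR^{(0)}$- and $\cK^{(0)}$-images of the three fundamental pieces $K^{(1)}_{13}$, $R^{(1)}_{23}$, $R^{(1)}_{32}$. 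Concretely, the extra $K_\mu$ inserted in the middle tensor slot of $\Theta^\theta_{1K3}$ is produced by applying $\cR^{(0)}_{32}$ to $K^{(1)}_{13}=\Theta^\theta_{13}$, the factor $\osigma(F_\mu)$ in $\Theta^{\osigma}_{K23}$ is produced by applying $\cR^{(0)}_{32}\cK^{(0)}_{13}$ to $R^{(1)}_{23}=(\Theta_{21})_{23}$, and the leading $K_{\mu-\tau(\mu)}$ comes from the $\tau$-twist in $\cK^{(0),\tau}$. The hard part will be aligning the collection of scalar $\chi$-factors that appear from $\cR^{(0)}$, $\cK^{(0),\tau}$ and the recursion \eqref{eq:amu-rec} for $\osigma$; once this combinatorial bookkeeping is carried out the identities match term-by-term with the formal sums in Proposition \ref{prop:kowidThetatheta}. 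The verification of \eqref{eq:wqKK5} is parallel and uses \eqref{eq:kowidThetatheta} instead.
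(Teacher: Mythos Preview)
Your proposal is correct and follows essentially the same route as the paper: verify \eqref{eq:wqKK1} on generators via Proposition \ref{prop:quasiK-intertwiner}, derive \eqref{eq:wqKK2}--\eqref{eq:wqKK3} from \eqref{eq:KK0tau1}--\eqref{eq:KK0tau2} together with Lemma \ref{lem:osigma-kow}, read off \eqref{eq:wqKK4}--\eqref{eq:wqKK5} from Proposition \ref{prop:kowidThetatheta}, and deduce Part (2) as in Lemma \ref{lem:wq-KK}. One small point: in your treatment of \eqref{eq:wqKK2}, saying that $\osigma$ ``commutes through $\kow_B\otimes\id$ without interaction'' handles only the left-hand side; on the right-hand side you still need the elementary identity $\cK^{(0),\tau}_{23}\circ(\id\otimes\id\otimes\osigma)=\cR^{(0)}_{32}\circ(\id\otimes\id\otimes\osigma)\circ\cR^{(0)}_{23}$ (which the paper singles out explicitly) to convert the $\cK^{(0),\tau}_{23}$ coming from \eqref{eq:KK0tau1} into the $\cR^{(0)}_{32}\,\cdots\,\cR^{(0)}_{23}$ appearing in \eqref{eq:wqKK2}.
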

\begin{proof}
 (1)  We first verify \eqref{eq:wqKK1}. It suffices to check \eqref{eq:wqKK1} on the generators $B_i$ for $i\in I$ and $K_\lambda$ for $\lambda\in \Z^n_\theta$. We calculate
  \begin{align*}
    \Ad(\Theta^\theta)&\circ \cK^{(0),\tau}\circ (\id \ot \osigma)\circ \kow(B_i)\\
    &= \Ad(\Theta^\theta)\circ \cK^{(0),\tau}\big( B_i\ot K_{\tau(i)} + K_{\tau(i)} K_i^{-1}\ot F_i + 1 \ot c_{\tau(i)} K_i E_{\tau(i)}\big)\\
    &=\Ad(\Theta^\theta)\big(B_i\ot K_i + 1 \ot F_i + c_{\tau(i)} q_{i \tau(i)} K_{\tau(i)}^{-1} K_i \ot E_{\tau(i)} K_i\big)\\
    &=\kow(B_i)
  \end{align*}
  where the last equality follows from the intertwiner property \eqref{eq:quasiK-comm1}. The relation
  \begin{align*}
     \Ad(\Theta^\theta)&\circ \cK^{(0),\tau}\circ (\id \ot \osigma)\circ \kow(K_\lambda)=\kow(K_\lambda) \qquad \mbox{for $\lambda\in \Z^n_\theta$}
  \end{align*}
  holds as $\osigma(K_\lambda)=K_\lambda$ for all $\lambda\in \Z^n_\theta$. This completes the proof of \eqref{eq:wqKK1}.

  Property \eqref{eq:wqKK2} follows from the fact that
  \begin{align*}
      \cK^{(0),\tau}\circ (\id \ot \osigma) = \cR^{(0)}_{32}\circ (\id \ot \osigma) \circ \cR^{(0)}_{23}
  \end{align*}
  and from Equation \eqref{eq:KK0tau1}.  Property \eqref{eq:wqKK3} follows from Equation \eqref{eq:KK0tau2} and from Lemma \ref{lem:osigma-kow}. Finally, Equations  \eqref{eq:wqKK4} and  \eqref{eq:wqKK5} hold by Proposition \ref{prop:kowidThetatheta}.

 (2) This follows from (1) analogously to the proof of Lemma \ref{lem:wq-KK}.
\end{proof}  
Analogously to Remark \ref{rem:reflection}, the second part of Theorem \ref{thm:wq-tri2} implies that $\cK^-$ satisfies the reflection equation
\begin{align*}
   \cK_{12}^- \circ \cR_{32}^{+-}\circ \cK_{13}^{-}\circ \cR_{23}^{--}=\cR_{32}^{++}\circ \cK_{13}^{-}\circ \cR_{23}^{+-}\circ \cK_{12}^-
\end{align*}
as an  equality of algebra isomorphisms $\wh{(B\ot U(\chi)^{\ot 2})}_{--} \rightarrow \wh{(B\ot U(\chi)^{\ot 2})}_{++}$.

\providecommand{\bysame}{\leavevmode\hbox to3em{\hrulefill}\thinspace}
\providecommand{\MR}{\relax\ifhmode\unskip\space\fi MR }
\providecommand{\MRhref}[2]{%
  \href{http://www.ams.org/mathscinet-getitem?mr=#1}{#2}
}
\providecommand{\href}[2]{#2}

\end{document}